
\documentclass[11pt]{amsart}
\makeatletter
\def\l@section{\@tocline{1}{0pt}{1pc}{}{}}

\def\@cite#1#2{{\m@th\upshape\bfseries[{#1\if@tempswa{\m@th\upshape\mdseries, #2}\fi}]}}
\makeatother

\usepackage{graphicx}
\usepackage{amsmath}
\usepackage{amscd}
\usepackage{amsfonts}
\usepackage{amssymb}
\usepackage{color}
\usepackage{mathtools}
\usepackage{MnSymbol}
\usepackage{enumitem}
\usepackage[
hypertexnames=false, colorlinks, citecolor=red,linkcolor=blue, urlcolor=red]{hyperref}

\usepackage{tikz}
\usetikzlibrary{calc,decorations.markings}

\newtheorem{theorem}{Theorem}[section]
\newtheorem{lemma}[theorem]{Lemma}
\newtheorem{proposition}[theorem]{Proposition}

\newtheorem{corollary}[theorem]{Corollary}

\theoremstyle{definition}
\newtheorem{definition}[theorem]{Definition}
\newtheorem{example}[theorem]{Example}
\newtheorem{remark}[theorem]{Remark}

\newtheorem{question}[theorem]{Question}
\newtheorem{assumption}{Assumption}


\mathtoolsset{centercolon}

\newcommand{\bA}{{\mathbb{A}}}

\newcommand{\bC}{{\mathbb{C}}}
\newcommand{\bD}{{\mathbb{D}}}

\newcommand{\bF}{{\mathbb{F}}}

\newcommand{\bN}{{\mathbb{N}}}

\newcommand{\bT}{{\mathbb{T}}}

\newcommand{\bZ}{{\mathbb{Z}}}

\newcommand{\cA}{\mathcal{A}}
\newcommand{\cB}{\mathcal{B}}

\newcommand{\cH}{\mathcal{H}}
\newcommand{\cI}{\mathcal{I}}

\newcommand{\cK}{\mathcal{K}}
\newcommand{\cL}{\mathcal{L}}
\newcommand{\cM}{\mathcal{M}}
\newcommand{\cN}{\mathcal{N}}
\newcommand{\cO}{\mathcal{O}}

\newcommand{\cT}{\mathcal{T}}

\newcommand{\cV}{\mathcal{V}}
\newcommand{\cW}{\mathcal{W}}
\newcommand{\cX}{\mathcal{X}}

\newcommand{\fA}{{\mathfrak{A}}}

\newcommand{\fC}{{\mathfrak{C}}}

\newcommand{\fF}{{\mathfrak{F}}}

\newcommand{\fJ}{{\mathfrak{J}}}

\newcommand{\fL}{{\mathfrak{L}}}
\newcommand{\fM}{{\mathfrak{M}}}
\newcommand{\fN}{{\mathfrak{N}}}

\newcommand{\fR}{{\mathfrak{R}}}
\newcommand{\fS}{{\mathfrak{S}}}
\newcommand{\fT}{{\mathfrak{T}}}

\newcommand{\rA}{\mathrm{A}}

\newcommand{\rL}{\mathrm{L}}

\newcommand{\rN}{\mathrm{N}}

\newcommand{\rS}{\mathrm{S}}
\newcommand{\rT}{\mathrm{T}}
\newcommand{\rU}{\mathrm{U}}

\newcommand{\ep}{\varepsilon}
\renewcommand{\phi}{\varphi}
\newcommand{\upchi}{{\raise.35ex\hbox{$\chi$}}}

\newcommand{\AND}{\text{ and }}
\newcommand{\IF}{\text{ if }}
\newcommand{\FOR}{\text{ for }}

\newcommand{\qand}{\quad\text{and}\quad}
\newcommand{\qif}{\quad\text{if}\quad}

\newcommand{\qfor}{\quad\text{for}\quad}
\newcommand{\qforal}{\quad\text{for all}\quad}

\renewcommand{\ae}{\operatorname{a.\!e.}}
\newcommand{\bFGv}{\bF_{G,v}^+}

\newcommand{\ca}{\mathrm{C}^*}
\newcommand{\dirlim}{\varinjlim}

\newcommand{\Fd}{\bF_d^+}

\newcommand{\la}{\langle}
\newcommand{\ra}{\rangle}
\newcommand{\ip}[1]{\la #1 \ra}
\newcommand{\bip}[1]{\big\la #1 \big\ra}

\newcommand{\linf}{{\ell^\infty}}
\newcommand{\mt}{\varnothing}
\newcommand{\ol}{\overline}

\newcommand{\sot}{\textsc{sot}}
\newcommand{\wot}{\textsc{wot}}
\newcommand{\wotclos}[1]{\ol{#1}^{\textsc{wot}}}

\DeclareMathOperator*{\wotsum}{\textsc{wot--}\!\!\sum}
\DeclareMathOperator*{\sotsum}{\textsc{sot--}\!\!\sum}
\DeclareMathOperator*{\wotlim}{\textsc{wot}--lim}

\newcommand{\Ad}{\operatorname{Ad}}
\newcommand{\Alg}{\operatorname{Alg}}
\newcommand{\Lat}{\operatorname{Lat}}
\newcommand{\Dim}{\operatorname{dim}}

\newcommand{\degin}{\operatorname{deg}_{\textup{in}}}
\newcommand{\degout}{\operatorname{deg}_{\textup{out}}}
\newcommand{\id}{\operatorname{id}}

\newcommand{\ran}{\operatorname{Ran}}
\newcommand{\rank}{\operatorname{rank}}

\newcommand{\spn}{\operatorname{span}}
\newcommand{\supp}{\operatorname{supp}}

\DeclareMathOperator{\rad}{Rad}

\begin{document}

\title[Structure of Free semigroupoid algebras]{Structure of free semigroupoid algebras}

\author[K.R. Davidson]{Kenneth R. Davidson}
\address{Pure Mathematics Department, University of Waterloo,
Waterloo, ON, Canada}
\email{krdavids@uwaterloo.ca  \vspace{-2ex}}

\author[A. Dor-On]{Adam Dor-On}
\address{Department of Mathematics, University Illinois at Urbana-Champaign, Urbana, IL, USA}
\email{adoron@illinois.edu \vspace{-2ex}}

\author[B. Li]{Boyu Li}
\address{Mathematics and Statistics Department, University of Victoria,
Victoria, BC, Canada}
\email{boyuli@uvic.ca}

\begin{abstract}
A free semigroupoid algebra is the \wot-closure of the algebra generated by a TCK family of a graph. We obtain a structure theory for these algebras analogous to that of free semigroup algebra. We clarify the role of absolute continuity and wandering vectors. These results are applied to obtain a Lebesgue-von Neumann-Wold decomposition of TCK families, along with reflexivity, a Kaplansky density theorem and classification for free semigroupoid algebras. Several classes of examples are discussed and developed, including self-adjoint examples and a classification of atomic free semigroupoid algebras up to unitary equivalence.\vspace*{-5mm}
\end{abstract}

\subjclass[2010]{Primary: 47L80, 47L55, 47L40. Secondary: 46L05.}
\keywords{graph algebra, free semigroupoid algebra, road coloring theorem, absolute continuity, wandering vectors, Lebesgue decomposition, operator algebras, Cuntz-Krieger algebras}

\thanks{The first author was partially supported by a grant from NSERC.\\
\strut\indent The second author was partially supported by an Ontario Trillium Scholarship}

\maketitle
\tableofcontents
\thispagestyle{empty}

\section{Introduction}

By the work of Glimm \cite{Gli61} (see also \cite{Dix77}) we know that for any non-type I C*-algebra, there is no countable family of Borel functions which distinguishes its irreducible representations up to unitary equivalence. 
In short, determining unitary equivalence of representations of non-type I C*-algebras is an intractable question in general.

Two of the simplest examples of a non-type I C*-algebra are the Toeplitz-Cuntz and Cuntz algebras $\cT_n$ and $\cO_n$ respectively. 
There are several good reasons to want to distinguish their representations up to unitary equivalence. 
One of these lies in the work of Bratteli and Jorgensen \cite{BJ97, BJ99, BJP96} where various families of representations of $\cO_n$ are considered, and are used to generate wavelet bases on self-similar sets, as well as to investigate endomorphisms of $B(\cH)$. 
The theme in these results is to restrict to subclasses of representations, so as to parametrize them up to unitary equivalence. 
This has been done successfully by Davidson and Pitts \cite{DP1999} for atomic representations, by Davidson, Kribs and Shpigel \cite{DKS} for finitely correlated representations, and by Dutkay and Jorgensen \cite{DJ14} for monic representations.

Extending these results to subclasses of representations of classical Cuntz-Krieger algebras has become important, as can be seen in the work of Marcolli and Paolucci \cite{MP11} and the work of Bezuglyi and Jorgensen \cite{BJ15}. 
In these works, representations of Cuntz-Krieger algebras associated to semibranching function systems are investigated, where monic representations are classified up to unitary equivalence, wavelets are constructed and Hausdorff dimension is computed for self-similar sets. 
In fact, in recent years this has been pushed further by Farsi, Gillaspy, Kang and Packer \cite{FGKP16} and together with Jorgensen \cite{FGJKP17a} to representations of higher-rank graphs, where previous results were extended and expanded. 
For instance, building on work in \cite{aHLRS15} and \cite{FGJKP17b}, KMS states were realized on higher-rank graph C*-algebras, and connections with Hausdorff dimensions of associated spaces were made.

Here we address the problem of unitary equivalence, without restricting to a subclass, is by weakening the invariant. 
For representations of $\cT_n$, this is done through the work on free semigroup algebras. 
Every representation of $\cT_n$ gives rise to a free semigroup algebra by taking the weak-$*$ closed algebra generated by the row-isometry that defines the representation. 
When two representations are unitarily equivalent, their free semigroup algebras are completely isometrically and weak-$*$ homeomorphically isomorphic. 
Hence, one tries to determine isomorphism classes of free semigroup algebras as dual operator algebras instead.

The theory of free semigroup algebras originates from the work of Popescu \cite{Pop1989a, Pop1989b, Pop1991, Pop1995}, and Popescu and Arias \cite{AP1995} where the non-commutative analytic Toeplitz algebra was introduced, and used to establish a robust dilation theory for row contractions. 
Popescu's non-commutative analytic Toeplitz algebra $\fL_n$ is then just the free semigroup algebra generated by creation operators on full Fock space $\fF_n$ on $n$ symbols.
In \cite{AP1995}, Popescu and Arias describe the invariant subspaces and prove reflexivity of Popescu's non-commutative analytic Toeplitz algebra $\fL_n$ in $B(\fF_n)$. 
In a sequence of papers, Davidson and Pitts \cite{DP1999, DP1998a, DP1998b} introduce and study properties of Popescu's non-commutative analytic Toeplitz algebra. 
They establish its hyperreflexivity, define free semigroup algebras and classify atomic free semigroup algebras. 

One turning point in the theory was the structure theorem of Davidson, Katsoulis and Pitts \cite{DKS}. 
This structure theorem allows for a tractable $2\times 2$ block structure where the $(1,1)$ corner is a von Neumann algebra, the $(1,2)$ corner is $0$ and the $(2,2)$ corner is analytic in the sense that it is completely isometrically and weak-$*$ homeomorphically isomorphic to $\fL_n$. 
This allowed Davidson, Katsoulis and Pitts to lift many results known for $\fL_n$ to general free semigroup algebras.

A serious roadblock in the theory, identified in \cite{DKS}, was the existence of wandering vectors for analytic free semigroup algebras. In \cite{DKS,DLP} it was shown that many spatial results that hold for $\fL_n$ will hold for general free semigroup algebras if this roadblock was removed. 
This wandering vectors roadblock was eventually removed in the work of Kennedy \cite{Kennedy2011}, leading to a very satisfactory structure and decomposition theory for free semigroup algebras \cite{Kennedy2013, FK2013}. 
These structure and decomposition results can then be applied to distinguish irreducible representations of $\cO_n$. 
For instance, we may classify them into analytic, von Neumann and dilation types, all of which are preserved under unitary equivalence. 
Representations obtained from dilating finite dimensional row contractions, known as finitely correlated representations, were classified by Davidson, Kribs and Shpigel \cite{DKS}. 
They show that unitary equivalence of such representations reduces to unitary equivalence for associated (reduced) row contractions on a finite dimensional space, a problem in matrix theory.

For a directed graph $G = (V,E)$ with countably many edges $E$ and vertices $V$, each edge $e\in E$ has a source $s(e)$ and range $r(e)$ in $V$. We denote by $\bF_G^+$ the collection of all directed paths of finite length in $G$, called the path space of $G$, or the \emph{free semigroupoid of $G$}. Vertices are considered as paths of length $0$.

Let $\cH_G : = \ell^2(\bF_G^+)$ be the Hilbert space with an orthonormal basis $\{ \xi_{\lambda} :\lambda \in \bF_G^+  \}$. For each vertex $v \in V$ and edge $e\in E$ we may define projections and partial isometries given by
\[
L_v(\xi_{\mu}) = \begin{cases} 
\xi_{\mu} & \text{if } r(\mu) = v \\ 
0 & \text{if } r(\mu) \neq v
\end{cases} \ \ \text{and} \ \
L_e(\xi_{\mu}) = \begin{cases} 
\xi_{e \mu} & \text{if } r(\mu) = s(e) \\ 
0 & \text{if } r(\mu) \neq s(e)
\end{cases}
\]
and
\[
R_v(\xi_{\mu}) = \begin{cases} 
\xi_{\mu} & \text{if } s(\mu) = v \\ 
0 & \text{if } s(\mu) \neq v
\end{cases} \ \ \text{and} \ \
R_e(\xi_{\mu}) = \begin{cases} 
\xi_{\mu e} & \text{if } s(\mu) = r(e) \\ 
0 & \text{if } s(\mu) \neq r(e)
\end{cases}
\]
The \emph{left} and \emph{right} free semigroupoid algebras given by $G$ are
\[
\fL_G := \wotclos{\Alg} \{ L_v, L_e : v\in V, e\in E \}
\]
and
\[
\fR_G := \wotclos{\Alg} \{ R_v, R_e : v\in V, e\in E \}.
\]

$\fL_G$ can be thought of as the analogue of the non-commutative analytic Toeplitz algebra for arbitrary directed graph, where $\fL_n$ corresponds to the graph on a single vertex with $n$ loops. 
In \cite{KP2004, JK2005, JP2006, KK2005} many of the results of Popescu and Arias, and of Davidson and Pitts on $\fL_n$ were extended and expanded to $\fL_G$ for arbitrary graphs. 
While in these papers $\fL_G$ is called a free semigroupoid algebra, we will expand that definition here so that it extends the original definition of free semigroup algebras. We recommend \cite{Raeburn} for more on Toeplitz-Cuntz-Krieger families of operators, their associated operator algebras and uniqueness theorems.

The family $\rL=(L_v,L_e)$ is an example of a family satisfying the Toeplitz-Cuntz-Krieger relations. More precisely, we say that a family $\rS= (S_v, S_e) :=\{S_v, S_e : v\in V,\ e \in E\}$ of operators on a Hilbert space $\cH$ is a Toeplitz-Cuntz-Krieger (TCK) family if 
\begin{enumerate}[leftmargin=15mm]
\item[(P)] $\{S_v : v \in V\}$ is a set of pairwise orthogonal projections;
 \item[(IS)] $S_e^*S_e=S_{s(e)}$, for every $e\in E$ ;
 \item[(TCK)] $\sum_{e\in F} S_e S_e^* \leq S_{v}$ for every finite subset $F \subseteq r^{-1}(v)$.
\end{enumerate}
The universal C*-algebra $\cT(G)$ generated by families $\rS$ satisfying the above relations is called the \emph{Toeplitz-Cuntz-Krieger} algebra.
If a Toeplitz-Cuntz-Krieger family $\rS=(S_v, S_e)$ satisfies the additional relation
\begin{enumerate}[leftmargin=15mm]
\item[(CK)]
$\sum_{r(e)=v} S_e S_e^* = S_v$, for every $v \in V$ with $0 < |r^{-1}(v)| < \infty$, 
\end{enumerate}
then we say that $\rS$ a Cuntz-Krieger (CK) family.
The universal such family generates the well-known graph C*-algebra $\cO(G)$ associated to $G$, which is a quotient of $\cT(G)$. Since we are dealing with \wot-closed algebras, following Muhly and Solel \cite{MS1999} we will say that a CK family $\rS$ is a \textit{fully coisometric} if it satisfies the additional property
\begin{enumerate}[leftmargin=15mm]
\item[(F)]
$\sotsum_{r(e)=v} S_e S_e^* = S_v$ for every $v \in V$.
\end{enumerate}
We will also say that a TCK family $\rS$ is \textit{non-degenerate} if
\begin{enumerate}[leftmargin=15mm]
\item[(ND)]
$\sotsum_{v\in V} S_v = I$. 
\end{enumerate}

It basically follows from the universal property of $\cT(G)$ that $*$-rep\-re\-sent\-ations of $\cT(G)$ are in bijective correspondence with TCK families, where one identifies a representation by the image of the canonical generators. 
As such, one can talk interchangeably about TCK families and $*$-representations of $\cT(G)$. The representations which factor through the quotient to $\cO(G)$ are precisely those associated to CK families. Throughout the paper, we will denote the representation of $\cT(G)$ associated to a TCK family $\rS$ by $\pi_{\rS}$.

By a combination of a Wold type decomposition theorem (such as Theorem \ref{thm:Wold-decomp} or \cite[Theorem 2.7]{JK2005}) and the gauge invariant uniqueness theorem for $\cO(G)$, it follows that $\cT(G)$ is $*$-isomorphic to $C^*(\rL)$ via a map that sends generators to generators. Hence, we will always identify these algebras.

\begin{definition}
Let $G$ be a countable directed graph, and let $\rS=(S_v, S_e)$ be a non-degenerate TCK family for $G$ acting on a Hilbert space $\cH$. The \wot~closed algebra $\fS$ generated by $\rS$ is called a \emph{free semigroupoid algebra} of $G$.
\end{definition}

An important class of examples for free semigroupoid algebras for a graph $G$ are finitely correlated algebras that arise from the work of \cite{Fuller2011}. Recall that when $\cA \subseteq B(\cH)$ is an operator algebra, a subspace $\cV \subseteq \cH$ is \emph{invariant} for $\cA$ if for each $T\in \cA$ we have $T(\cV) \subseteq \cV$, and that $\cV$ is coinvariant for $\cA$ if $\cV$ is invariant for $\cA^*$. 

A free semigroupoid algebra $\fS\subseteq\cB(\cH)$ is called \emph{finitely correlated} if there exists a \emph{finite dimensional} coinvariant subspace $\cV\subseteq\cH$ which is cyclic for $\fS$. 
Finitely correlated representations of free semigroup algebras were first studied in \cite{DKS} as an important class of examples, and results on them were extended to representations of product systems of C*-correspondences over $\bN^k$ by Fuller \cite{Fuller2011} (See also \cite{BDZ06} in this context). 

The following is obtained as a special case of \cite[Theorem 2.27]{Fuller2011}, and some methods from \cite[Theorem 4.9]{Fuller2011} when the C*-correspondence arises from a directed graph. The key observation here is that finiteness of $G$ and the strong double-cycle property assumption in \cite[Theorem 4.5]{Fuller2011} can be removed by using \cite[Theorem 3.1]{JK2005} instead, so that \cite[Theorem 4.9]{Fuller2011} holds without the assumption of the strong double-cycle property. We denote by $\rL_{G,v}$ the restriction of each operator in $\rL$ to the reducing subspace $\cH_{G,v}:= \{ \xi_{\lambda} :\lambda \in \bF_G^+, \ s(\lambda) =v  \}$ of basis vectors associated to paths emanating from $v$. 

We will say that a free semigroupoid algebra is \emph{left-regular type} if up to unitary equivalence it is generated by a TCK family of the form $\oplus_{v\in V}\rL_{G,v}^{(\alpha_v)}$ for some multiplicities $\alpha_v$ (some of which could be zero).

\begin{theorem}[Fuller]\label{T:fuller}
Let $\fS$ be the free semigroupoid algebra for a fully coisometric CK family $\rS$ for a graph $G$. Assume that $\fS$ is finitely correlated with finite dimensional cyclic coinvariant subspace $\cV$. Then
\begin{enumerate} [label=\normalfont{(\arabic*)}]

\item There is a unique \emph{minimal} cyclic coinvariant subspace $\hat\cV \subseteq \cV$ given by the span of all minimal coinvariant subspaces for $\rS$.

\item $\fS|_{\hat\cV^\perp}$ is a left-regular type free semigroupoid algebras.

\item $\hat\fA = P_{\hat\cV} \fS|_{\hat\cV}$ is a finite dimensional C*-algebra, whose Wedderburn decomposition is given by $\hat\fA \simeq \bigoplus_i \cM_{d_i}$ on $ \hat\cV \simeq \bigoplus_i \cV_i$ where each $\cV_i$ is a minimal coinvariant subspace of dimension $d_i$.

\item $\fS[\cV_i]$ is a minimal reducing subspace of $\rS$, and $\rS_i = \rS|_{\fS[\cV_i]}$ is irreducible.
The decomposition of $\hat\cV$ in item $(3)$ yields a decomposition of $\rS$ into a finite direct sum of irreducible TCK families $\rS \simeq \bigoplus_i \rS_i$.
\end{enumerate}
\end{theorem}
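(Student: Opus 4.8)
The plan is to reduce everything to the finite-dimensional compression of $\rS$ to $\cV$ and then combine Burnside's theorem with the dilation theory of \cite{JK2005}. Since $\cV$ is coinvariant, each $S_v$ and each $S_e^*$ leaves $\cV$ invariant, so setting $A_v=S_v|_\cV$ and $A_e=P_\cV S_e|_\cV$ produces a finite family on the finite-dimensional space $\cV$; the $A_v$ are pairwise orthogonal projections summing to $I_\cV$ by non-degeneracy, and the fully coisometric CK relations for $\rS$ compress to the corresponding relations for $A=(A_v,A_e)$. Because $\cV$ is cyclic, $\rS$ is recovered as a minimal dilation of $A$, so all four assertions become statements about the finite-dimensional contraction $A$ and its dilation.

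First I would record the elementary but crucial observation that for any coinvariant $\cW\subseteq\cV$ the generated invariant subspace $\fS[\cW]=\ol{\fS\cW}$ is in fact reducing: using the TCK relation $S_e^*S_f=\delta_{e,f}S_{s(e)}$ one checks that $S_e^*$ maps $\fS\cW$ back into $\fS\cW+\cW\subseteq\fS\cW$, so $\fS[\cW]$ is invariant for $\fS^*$ as well. Next, since $\dim\cV<\infty$, every nonzero coinvariant subspace contains a minimal one, and coinvariant subspaces of $\cV$ are exactly the orthogonal complements of invariant subspaces of the compression $P_\cV\fS|_\cV$. On a minimal coinvariant subspace $\cW$ the same correspondence shows $P_\cW\fS|_\cW$ is transitive, hence by Burnside's theorem equals $B(\cW)\cong\cM_{\dim\cW}$; in particular it is self-adjoint. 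A standard argument shows that for two minimal coinvariant subspaces the reducing subspaces they generate are either equal or orthogonal, so choosing one representative $\cV_i$ from each class gives an orthogonal family with $\fS[\cV_i]\perp\fS[\cV_j]$ for $i\neq j$.

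With these pieces in hand the four items fall out. Defining $\hat\cV$ to be the span of all minimal coinvariant subspaces, the orthogonality just described gives $\hat\cV=\bigoplus_i\cV_i$, and since $T\cV_i\subseteq\fS[\cV_i]\perp\cV_j$ for $j\neq i$ and $T\in\fS$, the compression block-diagonalizes, yielding $\hat\fA=P_{\hat\cV}\fS|_{\hat\cV}=\bigoplus_iB(\cV_i)\cong\bigoplus_i\cM_{d_i}$, a finite-dimensional C*-algebra, which is item (3). Irreducibility of $\rS_i=\rS|_{\fS[\cV_i]}$ and the identification of $\fS[\cV_i]$ as a minimal reducing subspace in item (4) I would obtain from the dilation criterion \cite[Theorem 3.1]{JK2005}: the minimal coisometric dilation of the irreducible reduced contraction on $\cV_i$ is irreducible, and its complement $\fS[\cV_i]\ominus\cV_i$ carries a left-regular family $\oplus_v\rL_{G,v}^{(\alpha_{v,i})}$. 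Summing over $i$ and invoking cyclicity of $\hat\cV$ then gives $\rS\cong\bigoplus_i\rS_i$ and presents $\fS|_{\hat\cV^\perp}$ as $\oplus_v\rL_{G,v}^{(\alpha_v)}$, i.e. items (2) and (4).

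The main obstacle is item (1): showing that $\hat\cV$ is genuinely cyclic, not merely coinvariant, and is the smallest cyclic coinvariant subspace. The content here is a dichotomy — each finitely correlated fully coisometric piece must split into an irreducible reduced summand detected by a minimal coinvariant subspace and a complementary left-regular part carrying no minimal coinvariant subspaces. I would prove cyclicity by contradiction: if $\ol{\fS\hat\cV}\neq\cH$, its reducing complement would be a nonzero finitely correlated fully coisometric family whose cyclic coinvariant generator contains, by finite-dimensionality, a minimal coinvariant subspace, contradicting the maximality built into the definition of $\hat\cV$. Minimality of $\hat\cV$ follows because any cyclic coinvariant subspace must contain every minimal coinvariant subspace, as otherwise its $\fS$-invariant closure would omit the corresponding irreducible reducing summand and fail to be cyclic. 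Alternatively, all of this is subsumed by \cite[Theorem 2.27]{Fuller2011} once the strong double-cycle hypothesis of \cite[Theorem 4.5]{Fuller2011} is removed using \cite[Theorem 3.1]{JK2005}, as indicated in the discussion preceding the statement.
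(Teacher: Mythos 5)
The paper offers no proof of this theorem: it is attributed to Fuller, and the paper's entire justification is the remark preceding the statement that \cite[Theorem 2.27]{Fuller2011} and the methods of \cite[Theorem 4.9]{Fuller2011} apply in the graph setting once the strong double-cycle hypothesis of \cite[Theorem 4.5]{Fuller2011} is removed using \cite[Theorem 3.1]{JK2005}. Your closing sentence reproduces exactly that route, so to that extent you agree with the paper. The self-contained sketch you give in addition is a genuinely different argument, and several of its ingredients are correct and do appear elsewhere in the paper: that $\fS[\cW]$ is reducing for a coinvariant $\cW$ is Lemma~\ref{L:cyclic reducing}; the bijection between coinvariant subspaces of a coinvariant $\cW$ and invariant subspaces of the compression $P_\cW\fS|_\cW$, together with the Burnside argument giving $P_\cW\fS|_\cW=B(\cW)$ for minimal $\cW$, is sound; and the contradiction argument for cyclicity of $\hat\cV$ (projecting $\cV$ into the reducing complement of $\fS[\hat\cV]$ and extracting a minimal coinvariant subspace there) works.

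The genuine gap is the orthogonality step on which your derivation of (3) and (4) rests. It is false in general that the reducing subspaces generated by two minimal coinvariant subspaces are either equal or orthogonal. Take $\rS\cong\rT\oplus\rT$ with $\rT$ irreducible, finitely correlated and fully coisometric, with minimal coinvariant subspace $\cW$ (e.g.\ $\rT=\rS_{w,1}$ of cycle type from Section 11). Then every diagonal $\cW_\lambda=\{w\oplus\lambda w:w\in\cW\}$ for $\lambda\in\bT$ is a minimal coinvariant subspace, and the reducing subspaces $\fS[\cW_\lambda]=\{x\oplus\lambda x: x\in\cH_\rT\}$ are pairwise distinct and pairwise non-orthogonal. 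Consequently choosing one representative per ``class'' does not produce an orthogonal family whose span is $\hat\cV$ (here $\hat\cV=\cW\oplus\cW$ has dimension $2\dim\cW$ while any single representative has dimension $\dim\cW$), and the block-diagonalization of $\hat\fA$ does not follow as written. Handling this multiplicity is precisely the content of the Wedderburn statement in (3): one must first prove that $\hat\fA$ is a self-adjoint algebra and then extract an orthogonal family of minimal coinvariant subspaces adapted to its central and multiplicity structure, as in \cite{DKS} and \cite{Fuller2011}, rather than deduce the C*-structure from a claimed orthogonality dichotomy. A related non sequitur occurs in your minimality argument for (1): from $\cU\not\supseteq\cW$ one cannot conclude that $\fS[\cU]$ omits the summand $\fS[\cW]$, since a cyclic coinvariant subspace can have nonzero, even cyclic, projection onto $\fS[\cW]$ without containing $\cW$; so the inclusion of every minimal coinvariant subspace in every cyclic coinvariant subspace also needs the finer structure theory (or Fuller's theorem, as the paper uses).
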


Hence, up to unitary equivalence we obtain the following $2 \times 2$ block and finite sum decomposition 
\[
 \fS =   
 \bigoplus_i \begin{bmatrix}
  \cM_{d_i} & 0 \\
  B(\bC^{d_i}, \fS[\cV_i] \ominus \cV_i)  & \mathfrak{L}
 \end{bmatrix},
\]
Where $\mathfrak{L}$ is the left-regular type free semigroupoid algebra generated by $\bigoplus_{v\in V}\rL_{G,v}^{(\alpha^{(i)}_{v})}$ for some multiplicities $\alpha_v^{(i)} \geq 0$.

Suppose $\rS$ and $\rT$ finitely-correlated TCK families  with spaces $\hat \cV_{\rS}$ and $\hat \cV_{\rT}$ as in Theorem \ref{T:fuller}. By \cite[Corollary 2.28]{Fuller2011} they are unitarily equivalent if and only if the operator families $P_{\hat \cV_{\rS}} \rS |_{\hat \cV_{\rS}}$ and $P_{\hat \cV_{\rT}} \rT|_{\hat \cV_{\rS}}$ are unitarily equivalent. When $\rS$ is irreducible, so that the above direct sum decomposition of $\fS$ has only a single summand, and $\rA = P_{\hat \cV_{\rS}} \rS |_{\hat \cV_{\rS}} = (A_v, A_e)$, a computation that we leave to the reader shows that the multiplicities $\alpha_v$ for $v\in V$ can be recovered from the ranks of the projections $\{A_v\}_{v\in V}$, via the formula 
$$
\alpha_v = -\rank(A_v) + \sum_{r(e)=v}\rank(A_{s(e)}).
$$

In this paper we will put Theorem \ref{T:fuller} in the wider context of the structure for general free semigroupoid algebras. 
We next describe how this is accomplished.


\begin{definition}
Let $G$ be a directed graph and $\fS$ be a free semigroupoid algebra generated by a TCK family $\rS$ on $\cH$. A vector $0 \neq x \in \cH$ is a \emph{wandering vector} for $\fS$ if $\{S_{\mu}x\}_{\mu \in \bF_G^+}$ are pairwise orthogonal.
\end{definition}

Wandering vectors are useful for detecting invariant subspaces which are unitarily equivalent to left-regular free semigroupoid algebras. At the end of \cite{MS2011}, Muhly and Solel ask if Kennedy's wandering vector results can be extended beyond free semigroup algebras. The following is a simplified version of our structure theorem, and extends Kennedy's wandering vector results in \cite{Kennedy2011} to free semigroupoid algebras.

\vspace{6pt}

\noindent {\bf Theorem \ref{thm:structure}} (Structure theorem){\bf .} Let $\fS$ be a free semigroupoid algebra on $\cH$ generated by a TCK family $\rS=(S_v,S_e)$ for a directed graph $G = (V,E)$. Let $\fM = W^*(\rS)$ be the von Neumann algebra generated by $\rS$. Then there is a projection $P \in \fS$ such that
\begin{enumerate}

\item $\fS P = \fM P$ so that $P\fS P$ is self-adjoint.

\item If $\fS$ is nonself-adjoint then $P\neq I$.

\item $P^{\perp}\cH$ is invariant for $\fS$.

\item With respect to $\cH = P\cH \oplus P^{\perp}\cH$, there is a $2 \times 2$ block decomposition 
\[
 \fS = 
 \begin{bmatrix}
  P\mathfrak{M} P & 0 \\
  P^{\perp}\mathfrak{M}P & \fS P^{\perp}
 \end{bmatrix}
\]

\item When $P \neq I$ then $\fS P^{\perp}$ is completely isometrically and weak-$*$ homeomorphically isomorphic to $\fL_{G'}$ for some induced subgraph $G'$ of $G$.

\item $P^{\perp}\cH$ is the closed linear span of wandering vectors for $\fS$.

\item If every vertex $v\in V$ lies on a cycle, then $P$ is the largest projection such that $P\fS P$ is self-adjoint.

\end{enumerate}

\vspace{6pt}

Hence, we see that the situation described after Theorem \ref{T:fuller} holds in greater generality, in line with the structure theorem from \cite{DKP}.

In an attempt to resolve the wandering vector problem, Davidson, J. Li and Pitts \cite{DLP} introduced a notion of absolute continuity for free semigroup algebras. 
This proved to be a critical new idea that identified analyticity in the algebra via a property of vector states.
Muhly and Solel \cite{MS2011} extended this definition and generalized many of the results in \cite{DLP} to isometric representations of W*-correspondences. 
In his second paper, Kennedy \cite{Kennedy2013} related absolute continuity to wandering vectors, and proved that for free semigroup algebras on at least two generators, absolute continuity is equivalent to analyticity. 
As a consequence, he established a Lebesgue-von Neumann-Wold decomposition of representations of $\cT_n$.

We say that a free semigroupoid algebra $\fS$ generated by a TCK family $\rS$ on $\cH$ is \emph{absolutely continuous} if the associated representation $\pi_{\rS} : \cT(G) \rightarrow B(\cH)$ has a weak-$*$ continuous extension to $\fL_G$. 
This definition was introduced in \cite{MS2011} for representations of W*-correspondences, and is motivated from operator theory where we ask for an extension of the analytic functional calculus of a contraction to an $H^{\infty}$ functional calculus. 

For every free semigroupoid algebra, there is a largest invariant subspace $\cV_{ac}$ on which it is absolutely continuous. We will say that $\fS$ is \emph{regular} if the absolutely continuous part $\cV_{ac}$ coincides with $P^{\perp}\cH$ where $P$ is the structure projection from Theorem \ref{thm:structure}. The following extends Kennedy's characterization of absolute continuity.

\vspace{6pt}

\noindent {\bf Theorem \ref{T:abs_cont}.} Let $\fS$ be a free semigroupoid algebra on $\cH$ generated by a TCK family $\rS=(S_v,S_e)$ of a graph $G$. Then $\fS$ is regular if and only if 
\begin{itemize} 
\item[\normalfont{(M)}] Whenever $\mu \in \bF_G^+$ is a cycle such that $S_\mu$ is unitary on $S_{s(\mu)}\cH$, the spectral measure of $S_\mu$ is either singular or dominates Lebesgue on $\bT$.
\end{itemize}

\vspace{6pt}

The above allows also us to extend Kennedy's decomposition theorem to representations of $\cT(G)$. We will say that a free semigroupoid algebra $\fS$ is of \emph{von Neumann type} if it is a von Neumann algebra, and that it is of \emph{dilation type} if $P\cH$ is cyclic for $\fS$, and $P^{\perp}\cH$ is cyclic for $\fS^*$ where $P$ is the structure projection of Theorem \ref{thm:structure}. We have already seen in Theorem \ref{T:fuller} that dilation type free semigroupoid algebras are certainly abundant, even when $P\cH$ is finite dimensional. The following extends Kennedy's Lebesgue decomposition theorem to regular TCK families.

\vspace{6pt}

\noindent {\bf Theorem \ref{T:LvNW-decomp}.} Let $G$ be a graph and $\rS$ be a TCK family on $\cH$ that generates a \emph{regular} free semigroupoid algebra $\fS$. Then up to unitary equivalence we may decompose
\[
\rS \cong \rS_l \oplus \rS_{a} \oplus \rS_{s} \oplus \rS_d
\]
where $\rS_l$ is left-regular type, $\rS_{a}$ is absolutely continuous, $\rS_{s}$ is of von Neumann type and $\rS_d$ is of dilation type.

\vspace{6pt}

Our results are applied to obtain a number of consequences including reflexivity of free semigroupoid algebras (see Theorem \ref{T:reflexivity}), a Kaplansky density theorem for regular free semigroupoid algebras (see Theorem \ref{T:Kaplansky}) and an isomorphism type theorem for nonself-adjoint free semigroupoid algebras of transitive row-finite graphs (see Corollary \ref{C:isomorphisms}). 
The first two results show that regular free semigroupoid algebras behave much like von Neumann algebras, while the third result shows that despite the above, when they are nonself-adjoint they still completely encode transitive row-finite graphs up to isomorphism.

We conclude our paper by developing classes of examples of free semigroupoid algebras that illustrate our structure and decomposition theorems, as we explain next. Examples are important for illuminating the results. Earlier in the paper, the left regular representations are examined carefully.
Also representations for a cycle are carefully studied. Here we provide two large classes of rather different characters.

In \cite{DKP} it was asked if a free semigroup algebra on at least two generators can in fact be self-adjoint. In \cite{Read} Read was able to find a surprising example of two isometries $Z_1,Z_2$ with pairwise orthogonal ranges, such that the weak operator topology algebra generated by them is $B(\cH)$. This points to the curious phenomenon that taking the \wot~closure of a nonself-adjoint algebra can suddenly make it into a von-Neumann algebra. Hence, it is natural to ask whether or not such self-adjoint examples occur for graphs that are on more than a single vertex.

Suppose $G=(V,E)$ is a finite, transitive and in-degree regular directed graph with in-degree $d$. 
A \emph{strong edge colouring} of $E$ is a function $c: E \rightarrow \{1,\dots,d\}$ such that any two distinct edges with the same range have distinct colours. 
Every colouring of $E$ then naturally induces a labeling of paths by words in the colours. 
We will say that a colouring as above is \emph{synchronizing} if for any (some) vertex $v\in V$, there is a word in colours $\gamma \in \bF_d^+$ such that every path $\mu$ with the coloured label $\gamma$ has source $v$.

A famous conjecture of Adler and Weiss from the 70's \cite{AW70} asks if aperiodicity of the graph implies the existence of a synchronized colouring. 
In \cite{Trahtman}, Avraham Trahtman proved this conjecture, now called the road colouring theorem. 
We use the road colouring theorem to provide examples of self-adjoint free semigroupoid algebras for larger collection of graphs.

\vspace{6pt}

\noindent {\bf Theorem \ref{T:Read ex}.} Let $G$ be an aperiodic, in degree regular, transitive and finite directed graph. 
Then there exists a CK family $\rS$ on $\cK$ such that its associated free semigroupoid algebra $\fS$ is $B(\cK)$.

\vspace{6pt}

Finally, we discuss atomic representations of $\cT(G)$, which are combinatorial analogues of TCK families. 
These are generalizations of those developed for representations of $\cT_n$ in \cite{DP1999} by Davidson and Pitts, which are more general than permutative representations introduced by Bratteli and Jorgensen in \cite{BJ99}. 
These examples have proven to be extremely useful for generating interesting representations of the Cuntz algebra, and it is expected that 
these examples in the graph algebra context will be equally important.
The associated TCK families have the property that there is an orthonormal basis on which the partial isometries associated to paths in the graph act by partial permutations, modulo scalars. 
We classify atomic representations up to unitary equivalence by decomposing them into left-regular, inductive and cycle types. 
For each type, we also characterize irreducibility of the representation.

This paper contains 11 sections, including this introduction section. 
In Section \ref{S:prelim} we provide some preliminaries and notation to be used in the paper. 
In Section \ref{S:left-reg-struc} we explain how the left-regular free semigroupoid algebra changes by restricting to reducing subspaces and by taking directed closures of the underlying graph. 
In Section \ref{S:wold} we provide our variant of the Wold decomposition for TCK families and use it to characterize free semigroupoid algebras of certain acyclic graphs. 
In Section \ref{S:special-alg} we reduce problems on wandering vectors to free semigroup algebras and characterize all free semigroupoid algebras of a single cycle. 
These results are used extensively in the rest of the paper. 
In Section \ref{S:stucture} we give a proof of the structure theorem. 
In Section \ref{S:abs cont} we develop absolute continuity of free semigroupoid algebras and relate them to wandering vectors. 
In Section \ref{S:LvNW} we generalize and extend Kennedy's Lebesgue decomposition to free semigroupoid algebras. 
In Section \ref{S:applications} we show free semigroupoid algebras are reflexive, provide a Kaplansky density theorem and prove isomorphism theorems for non-self-adjoint free semigroupoid algebras. 
In Section \ref{S:self-adjoint} we show that $B(\cK)$ is a free semigroupoid algebra for transitive, in-degree regular, finite and aperiodic graphs. 
Finally, in Section \ref{S:atomic} we classify atomic representations up to unitary equivalence by decomposing them into left-regular, inductive and cycle types.

\section{Preliminaries} \label{S:prelim}

Here we lay out some of the theory of algebras associated to directed graphs. We recommend \cite{Raeburn, JK2005, KK2005, KP2004} for further material and proofs for some standard results mentioned here.

A (countable) directed graph $G=(V,E,s,r)$ consists of a countable vertex set $V$ and a countable edge set $E$, and  maps $s,r:E\to V$.
To each edge $e\in E$, we call $s(e)$ its source and $r(e)$ its range in $V$. One should think of $e$ as an arrow or directed edge from $s(e)$ to $r(e)$. 
A finite path in $G$ is a finite sequence $\mu = e_n \dots e_1$ where $e_i \in E$ and $s(e_{i+1})=r(e_i)$ for $1 \le i < n$. 
We denote by $\bF_G^+$ the collection of all paths of finite length in $G$, called the path space of $G$, or the free semigroupoid of $G$. 
For each element $\mu \in \bF_G^+$, define the source $s(\mu) = s(e_1)$ and range $r(\mu) = r(e_n)$ in $V$. 
We can compose two paths by concatenation provided that the range and source match, i.e.\ if $\mu, \nu \in \bF_G^+$, 
say $\nu = f_m \dots f_1$, and $s(\mu) = r(\nu)$, then $\mu\nu = e_n\dots e_1f_m\dots f_1$. This makes $\bF_G^+$ into a semigroupoid. Also, we denote by $|\mu| :=n$ the length of the path $\mu = e_n...e_1 \in \bF_G^+$. A vertex $v$ is considered an element in $\bF^+_G$ as a path of length $0$ with $s(v)=r(v)=v$.

When $\rS = (S_v,S_e)$ is a TCK family, we define partial isometries for each path $\mu \in \bF_G^+$, where $\mu = e_n \dots e_1$ for $e_i\in E$, by 
\[
S_{\mu} = S_{e_n}\cdots S_{e_1}.
\]
We will make use of the fact that the C*-algebra $\ca(\rS)$ generated by a TCK family $\rS=(S_v, S_e)$ has the following description as a closed linear span,
\[
\ca(\rS) = \ol{\spn} \{ S_{\mu} S_{\nu}^* : \mu,\nu \in \bF_G^+  \} ,
\]
as can be verified by the use of (IS). In other words, the $*$-algebra generated by $\rS$, which is norm dense in $\ca(\rS)$, is the linear span of the terms $S_{\mu} S_{\nu}^*$ where $\mu,\nu \in \bF_G^+$.

Since we are interested in the \wot-closed algebra, it is worth noting that property (F) is not readily identified at the norm closed level. Also, if $V$ is finite, then $\cT(G)$ is unital, and non-degeneracy of the representation reduces to the statement that it is also unital. However, if $V$ is infinite, $\cT(G)$ is not unital, and non-degeneracy is also not readily detected in the norm closed algebra $\cT(G)$. For this reason, we will concentrate on non-degenerate TCK families rather than their corresponding non-degenerate representations of $\cT(G)$.

The (left) tensor algebra determined by $G$ is the norm closed algebra generated by $\rL$,
\[
 \cT_+(G) := \ol{\spn \{ L_{\mu} : \mu \in \bF_G^+ \}}.
\]
From their universality, we can deduce that the algebras $\cT_+(G)$, $\cT(G)$ and $\cO(G)$ carry an additional grading by $\bZ$ implemented by a point-norm continuous action of the unit circle. However, a spatial description for this grading is more useful to us. The space $\cH_G$ is $\bN$-graded by components $\cH_{G,n}$ with orthonormal basis $\{ \xi_{\mu} :|\mu| = n, \ \mu \in \bF_G^+  \}$ for $n \in \bN$. 
We denote by $E_n$ the projection onto $\cH_{G,n}$.

We may then define a unitary $W_{\lambda} : \cH_G \rightarrow \cH_G$ for every $\lambda \in \bT$ by specifying $W_{\lambda}(\xi_{\mu}) = \lambda^{|\mu|}\xi_{\mu}$. 
Then $\alpha : \bT \rightarrow B(\cH_G)$ given by $\alpha_{\lambda}(T) = W_{\lambda} T W_{\lambda}^*$ becomes a group action satisfying $\alpha_{\lambda}(L_v) = L_v$ and $\alpha_{\lambda}(L_e) = \lambda \cdot L_e$ for each $v\in V$ and $e\in E$. Since $\cT(G)$, $\cT_+(G)$ and $\fL_G$ are invariant under this action, we may look at $\alpha$ as an action on the algebras $\fL_G$, $\cT_+(G)$ and $\cT(G)$. 
In the first case it becomes a point-\wot\ continuous action, and in the latter two it becomes a point-norm continuous action.

For each of these algebras, this then enables the definition of Fourier coefficients, which are maps $\Phi_m$ from the algebra to itself.
The formula is given for every $m\in \bZ$ and $T\in \cT(G)$ by
\[
 \Phi_m(T) = \int_{\bT} \alpha_{\lambda}(T) \lambda^{-m-1} \,d\lambda 
 = \frac1{2\pi} \int_{-\pi}^\pi \alpha_{e^{i\theta}}(T) e^{-im\theta} \,d\theta.
\]
The operator $\Phi_m(T)$ maps each subspace $\cH_{G,n}$ into $\cH_{G,n+m}$ (where $\cH_{G,n+m}$ is $\{0\}$ when $n+m < 0$).
We define the Ces\`aro means by
\[
 \Sigma_k(T) = \sum_{|m| < k} \Big(1-\frac{|m|}{k} \Big)\Phi_m(T).
\]
These are completely contractive maps, and they are \wot-continuous on $\fL_G$. For $T \in \fL_G$, the means $\Sigma_k(T)$ converge in the \wot\ topology to $T$; whereas for $T$ in either $\cT(G)$ or $\cT_+(G)$, they converge in norm.
When $T \in \fL_G$, we have a Fourier series of the form $\sum_{\mu \in \bF_G^+}a_{\mu} L_{\mu}$, where $a_{\mu} = \langle T \xi_{s(\mu)}, \xi_{\mu} \rangle$ for $\mu \in \bF_G^+$, which uniquely determines the operator $T$. 
Thus, by uniqueness of Fourier coefficients we have that $\Phi_m(T) = \wotsum_{|\mu|=m} a_\mu L_\mu$, so that the Ces\`aro means can thus be rewritten in the form 
\[
\Sigma_k(T) = \wotsum_{|\mu|<k} \Big( 1 - \frac{|\mu|}{k} \Big) a_{\mu}L_{\mu}.
\]

\begin{remark} \label{R:Cesaro_in_T_+(G)}
Since there can be infinitely many edges, even between two vertices, $\Sigma_k(T)$ may not belong to $\cT_+(G)$ when $T \in \fL_G$.
This can be remedied using the net $\Sigma_k(A)L_F$ where $L_F = \sum_{v\in F} L_v$ for finite subsets $F \subseteq V$.
Observe that $\Phi_m(T) L_v  = \wotsum_{|\mu|=m,\ s(\mu) = v} a_\mu L_\mu$. 
Because the isometries $L_\mu$ with $|\mu|=m$ and $s(\mu) = v$ have pairwise orthogonal ranges and a common domain, we have
\[
 \|T\| \ge \| \Phi_m(T) \| 
 \ge \Big\| \wot\text{--}\!\!\!\!\!\sum_{\substack{|\mu|=m\\s(\mu) = v}} a_\mu L_\mu \Big\| 
 = \Big( \sum_{\substack{|\mu|=m\\s(\mu) = v}} |a_\mu|^2 \Big)^{1/2} .
\]
It follows that the sequence $(a_\mu)$ running over paths $\mu$ with $|\mu|=m$ and $s(\mu) = v$ belongs to $\ell_2$.
Hence, the sum $\sum_{\substack{|\mu|=m\\s(\mu) = v}} a_\mu L_\mu$ converges \textit{in norm}.
Therefore $\Phi_m(T) L_F$ belongs to $\cT_+(G)$; whence so does $\Sigma_k(T) L_F$.
Moreover  $\Sigma_k(T) L_F$ converges in the \wot\ topology to $T\in \fL_G$ as $k\to\infty$ and $F \to V$.
\end{remark}

For a non-degenerate TCK family $\rS=(S_v, S_e)$ of a directed graph $G$ acting on a Hilbert space $\cH$, we denote by
\[
\fS:= \wot\!\!-\!\!\Alg\{ S_{\mu} :\mu \in \bF_G^+  \}
\]
the free semigroupoid algebra generated by $S$ inside $B(\cH)$, which is unital. The underlying graph $G$ will be clear from the context.

An important way of constructing free semigroupoid algebras is through dilation of contractive $G$-families. Let $\rA=(A_v,A_e)$ be a family of operators on a Hilbert space $\cH$. We say that $\rA$ is a contractive $G$-family if
\begin{enumerate} [leftmargin=15mm]
\item[(P)] $\{A_v : v \in V\}$ is a set of pairwise orthogonal projections;
\item[(C)] $A_e^*A_e \le A_{s(e)}$, for every $e\in E$ ;
\item[(TCK)] $\sum_{e\in F} A_e A_e^* \le A_{v}$ for every finite subset $F \subseteq r^{-1}(v)$.
\end{enumerate}
The family above is \emph{CK-coisometric} if 
\begin{enumerate}[leftmargin=15mm]
\item[(CK)]
$\sum_{r(e)=v} A_e A_e^* = A_v$, for every $v \in V$ with $0 < |r^{-1}(v)| < \infty$, 
\end{enumerate}
and \emph{fully coisometric} if we have
\begin{enumerate}[leftmargin=15mm]
\item[(F)] $\sotsum_{v\in V} \sum_{r(e)=v} A_e A_e^* = A_v$ for every $v \in V$.
\end{enumerate}

From the work of Muhly and Solel \cite[Theorem 3.10]{MS1998}, we know that completely contractive representations of $\cT_+(G)$ are in bijection with contractive $G$-families $\rA$ as above. Hence, once again we can treat contractive $G$-families and completely contractive representations of $\cT_+(G)$ interchangeably. 

Contractive $G$-families are a lot easier to construct than TCK families. 
For instance, there are graphs $G$ with no TCK families on a finite dimensional Hilbert spaces, while on the other hand one can construct a contractive $G$-family for any graph on any finite dimensional space. 
In fact, free semigroupoid algebras generated by TCK dilations of contractive $G$-families on finite dimensional spaces are exactly the finitely correlated free semigroupoid algebras in Theorem \ref{T:fuller}.

Given a free semigroupoid algebra $\fS$ that is generated by a TCK family $\rS = (S_v,S_e)$ on $\cH$, for any co-invariant subspace $\cK \subseteq \cH$, define $A_\mu=P_\cK S_\mu \big|_\cK$ for $\mu\in \bF_G^+$. Then $\rA$ is a contractive $G$-family.

The graph-analogue of the Bunce, Frahzo, Popescu dilation theorem \cite{Fr1982, Bun84, Pop1989a} gives us the converse. 
Indeed, if we start with a contractive $G$-family $\rA = (A_v,A_e)$ on a Hilbert space $\cK$, by \cite[Theorem 3.3]{MS1998} we see that $\rA$ always has a unique minimal dilation to a TCK family. 
More precisely, there is a TCK family $\rS = (S_v,S_e)$ for $G$ on a Hilbert space $\cH$ containing $\cK$ as a co-invariant subspace such that $A_\mu=P_\cK S_\mu \big|_\cK$ for $\mu\in \bF_G^+$. 
In fact, $\rS$ can be chosen \emph{minimal} in the sense that $\cK$ is cyclic for $\fS$, and any two minimal TCK dilations for $\rA$ are unitarily equivalent. 
Moreover, by \cite[Corollary 5.21]{MS1998} we have that $\rS$ is CK when $\rA$ is CK-coisometric, and is fully coisometric when $\rA$ is fully coisometric. 
Hence, contractive $G$-families yield many interesting examples of TCK families via dilation.


Let $G=(V,E)$ be a directed graph. We call a subset $F\subseteq V$ {\em directed} if whenever $e\in E$ is such that $s(e)\in F$, then $r(e) \in F$. If $F \subseteq V$, let $F_G$ denote the smallest directed subset of $V$ containing $F$ in $G$. In general, we will denote by $G[F]:= G[F_G]$ the subgraph induced from $G$ on the smallest directed subset of $V$ containing $F$.

Given a TCK family $\rS$ for $G$, denote by $\supp(\rS)$ the collection of vertices $v\in V$ such that $S_v \neq 0$. 
Since for each edge $e\in E$, we have $S_{s(e)} =S_e^*S_e$ and $S_e S_e^* \leq S_{r(e)}$, we see that $\supp(\rS)$ is directed. Hence, if we take the subgraph $G[\rS] := G[\supp(\rS)]$, we obtain a reduced TCK family 
\[ \rS_r :=\{ S_v, S_e : v\in \supp(\rS),\ e\in s^{-1}(\supp(\rS)) \} \]
for $G[\rS] = (\supp(\rS), s^{-1}(\supp(\rS)))$. This means that we may assume from the outset that $S_v \neq 0$ for all $v\in V$ by identifying $\rS$ with a TCK family for an induced subgraph on a directed subset of vertices. 
We will say that a TCK family $\rS$ for a graph $G$ is \emph{fully supported} if $\supp(\rS) = V$.

We say that a directed graph $G$ is {\em transitive} if for any two vertices $v,w \in V$ we have a path $\mu \in \bF_G^+$ with $s(\mu) = v$ and $r(\mu) = w$. If $G$ is transitive, then since $\supp(\rS)$ is directed, either $\supp(\rS) = \emptyset$ or $\supp(\rS) = V$. Thus, when the TCK family is non-degenerate and $G$ is transitive, we must have $\supp(\rS)=V$.

Suppose that the graph $G$ is the union of countably many connected components as an undirected graph, say $G = \bigcupdot G_i$ is a union of undirected connected components $G_i = (V_i,E_i)$. 
We have a corresponding decomposition of $\fS$.
Let $\cH_i:= (\sotsum_{v\in V_i}S_v)\cH$. 
Then $\cH = \bigoplus \cH_{i}$ and $\fS_G = \prod \fS_{G_i}$ is the $\linf$-direct sum (direct product) of free semigroupoid algebras $\fS_{G_i}$ for $G_i$ acting on $\cH_{i}$. 
Hence, we may always assume that $G=(V,E)$ is connected as an undirected graph.

\section{Structure of the left regular algebra} \label{S:left-reg-struc}

In this section, we give some basic results about the structure of the left regular free semigroupoid algebra, relating them to their associated graph. we will assume that $G$ is connected as an undirected graph.

If $F,F' \subseteq V$ are two subsets, and $F'\subseteq F_G$, we write $F \succ F'$; i.e., $F \succ F'$ if $F'$ is contained in the smallest directed subset containing $F$. This is a transitive relation on subsets of $V$. One can have $F\succ F'$ and $F' \not \succ F$. 
We will also write $v \succ w$ to mean that $\{v\} \succ \{w\}$. Note that $G$ is transitive precisely when $v \succ w$ for any two vertices $v,w\in V$. 

Every path has a source, and hence $\cH_G \cong \bigoplus_{v\in V}R_v \cH_G$. For each subset $F\subseteq V$, we define 
\[ \cH_{G,F}:= \bigoplus_{v\in F}R_v \cH_G = \spn\{ \xi_{\lambda} : s(\lambda)\in F \} .\]
In particular, we use 
\[ \cH_{G,v} := \cH_{G,\{v\}}= R_v \cH_G = \spn\{ \xi_\lambda : s(\lambda) = v \} \qfor v\in V. \]
Each $\cH_{G,F}$ is easily seen to be a reducing subspace for $\fL_G$. For a subset $F\subseteq V$, we also write $\rL_{G,F}= (L_v|_{\cH_{G,F}}, L_e|_{\cH_{G,F}} : v \in V,\ e \in E )$ for the family of operators obtained by restricting to $\cH_{G,F}$ and similarly $\rL_{G,v}:= \rL_{G,\{v\}}$.
We define  $\fL_{G,F}:= \wotclos{\fL_G |_{\cH_{G,F}}}$ and set $\rL_{G,v}: = \rL_{G,\{v\}}$ and $\fL_{G,v} :=\fL_{G,\{v\}} = \wotclos{\fL_G |_{\cH_{G,v}}}$. We will see in Theorem~\ref{T:left_quotient} that the closure is unnecessary; i.e., $\fL_{G,F} = \fL_G|_{\cH_{G,F}}$. 

\begin{theorem} \label{T:left_quotient}
Suppose that $F\succ F'$ are subsets of vertices of $G$. Then there is a complete quotient map $\rho_{F,F'}:\fL_{G,F} \to \fL_{G,F'}$ given by the natural restriction map; i.e. $ \fL_{G,F'}$ is completely isometrically isomorphic and weak-$*$ homeomorphic to $\fL_{G,F}/\ker\rho_{F,F'}$. 
Moreover, there is a canonical weak-$*$ continuous completely isometric homomorphism $j_{F',F}: \fL_{G,F'} \to \fL_{G,F}$ which is a section of the quotient map.

In particular, there is a canonical weak-$*$ continuous completely isometric homomorphism $j_{F'}: \fL_{G,F'} \to \fL_G$ which is a section of the quotient map from $\fL_G$ to $\fL_{G,F'}$.
\end{theorem}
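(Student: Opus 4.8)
The plan is to realize every restriction algebra $\fL_{G,F}$ as a quotient of $\fL_G$ by a kernel that is visible on Fourier coefficients, and to split that quotient using a distinguished coefficient-supported subalgebra of $\fL_G$. Since $\cH_{G,F}$ is reducing for $\fL_G$, the restriction map $\Psi_F\colon\fL_G\to\fL_{G,F}$, $\Psi_F(S)=S|_{\cH_{G,F}}$, is a unital, completely contractive, \wot-continuous homomorphism with \wot-dense range. Writing $S=\wotsum_\mu a_\mu L_\mu$ and using that $L_\mu|_{\cH_{G,v}}\neq 0$ exactly when $v\succ s(\mu)$, together with $\{r(\nu):s(\nu)\in F\}=F_G$, one computes
\[
\ker\Psi_F=\{\,S\in\fL_G : a_\mu=0 \text{ whenever } s(\mu)\in F_G\,\},
\]
which depends only on the directed closure $F_G$. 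Hence, when $F\succ F'$ (so $F'_G\subseteq F_G$) we obtain the crucial inclusion $\ker\Psi_F\subseteq\ker\Psi_{F'}$. For a directed set $D\subseteq V$ I introduce $\cN_D:=\{S\in\fL_G : a_\mu=0 \text{ for } s(\mu)\notin D\}$, a \wot-closed (each coefficient is a \wot-continuous functional) subalgebra with unit $\sotsum_{u\in D}L_u$. The isometry statement below, applied with $V\succ F$, shows $\Psi_F$ is completely isometric on $\cN_{F_G}$; since its image is \wot-closed by Krein--Smulian and contains the \wot-dense restrictions of the $L_\mu$, it follows that $\Psi_F|_{\cN_{F_G}}$ is a completely isometric \wot-homeomorphism onto $\fL_{G,F}$, so $\Psi_F$ is onto and $\fL_{G,F}=\fL_G|_{\cH_{G,F}}$ with no closure needed.

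The heart of the proof, and the step I expect to be the main obstacle, is the following: \emph{if $S\in\cN_{F'_G}$ and $F\succ F'$, then $\|S\|=\|S|_{\cH_{G,F}}\|=\|S|_{\cH_{G,F'}}\|$, completely isometrically.} I would prove this in two moves. (a) Monotonicity: if $u\succ u'$, choose a path $\tau$ with $s(\tau)=u,\ r(\tau)=u'$; the right creation operator $R_\tau$ (with $R_\tau\xi_\nu=\xi_{\nu\tau}$ for $s(\nu)=u'$) restricts to an isometry $\cH_{G,u'}\to\cH_{G,u}$ commuting with every $L_\mu$, so $\|S|_{\cH_{G,u'}}\|\le\|S|_{\cH_{G,u}}\|$ for all $S$, and tensoring $R_\tau$ with $I_n$ gives the matrix version. (b) First-entry factorization: because $F'_G$ is forward closed, any path $\eta$ from a vertex $u$ that meets $F'_G$ factors uniquely as $\eta=\alpha\beta$, where $\beta$ runs from $u$ to the first vertex of $F'_G$ it hits and $\alpha$ lies entirely in $F'_G$. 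For $S\in\cN_{F'_G}$ this shows $S$ annihilates the paths from $u$ that avoid $F'_G$ and, via $\xi_{\alpha\beta}\leftrightarrow\xi_\alpha$, identifies $S|_{\cH_{G,u}}$ with a direct sum of copies of $S|_{\cH_{G,r(\beta)}}$ over the first-entry paths $\beta$. Hence $\|S|_{\cH_{G,u}}\|=\sup_\beta\|S|_{\cH_{G,r(\beta)}}\|$; since each $r(\beta)\in F'_G$ is dominated by some vertex of $F'$, monotonicity bounds this by $\|S|_{\cH_{G,F'}}\|$. Taking the supremum over $u$ (using $\cH_G=\bigoplus_u\cH_{G,u}$) gives $\|S\|\le\|S|_{\cH_{G,F'}}\|$, while $\|S|_{\cH_{G,F'}}\|\le\|S|_{\cH_{G,F}}\|\le\|S\|$ follows from monotonicity and the fact that $\cH_{G,F'},\cH_{G,F}$ are reducing; repeating with $R_\tau\otimes I_n$ upgrades everything to a complete isometry.

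With the Key Lemma in hand the assembly is formal. Define $j_{F'}:=(\Psi_{F'}|_{\cN_{F'_G}})^{-1}\colon\fL_{G,F'}\to\cN_{F'_G}\subseteq\fL_G$; it is a \wot-continuous, completely isometric homomorphism with $\Psi_{F'}\circ j_{F'}=\id$, which is exactly the canonical section into $\fL_G$ in the ``in particular'' clause. Define $\rho_{F,F'}:=\Psi_{F'}\circ(\Psi_F|_{\cN_{F_G}})^{-1}\colon\fL_{G,F}\to\fL_{G,F'}$; it is \wot-continuous, completely contractive, satisfies $\rho_{F,F'}\circ\Psi_F=\Psi_{F'}$ (so it is the natural restriction map), and $\ker\rho_{F,F'}=\Psi_F(\ker\Psi_{F'})$. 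Finally set $j_{F',F}:=\Psi_F\circ j_{F'}\colon\fL_{G,F'}\to\fL_{G,F}$: it is a \wot-continuous homomorphism, it is completely isometric because $j_{F'}$ lands in $\cN_{F'_G}$, on which $\Psi_F$ is completely isometric by the Key Lemma, and $\rho_{F,F'}\circ j_{F',F}=\Psi_{F'}\circ j_{F'}=\id$. A completely contractive surjection possessing a completely isometric section is automatically a complete quotient map, so the induced map $\fL_{G,F}/\ker\rho_{F,F'}\to\fL_{G,F'}$ is a complete isometry; \wot-compactness of bounded balls together with Krein--Smulian promotes the continuous bijections here to \wot-homeomorphisms. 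Specializing to $F=V$, where $\Psi_V=\id$ and $\cN_{F_G}=\fL_G$, recovers the section $j_{F'}$ and the quotient $\fL_G\to\fL_{G,F'}$, completing the ``in particular'' statement.
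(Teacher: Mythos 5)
Your proposal is correct, and its two technical pillars are exactly the ones the paper's proof rests on: monotonicity of restricted norms via right creation operators (the paper's identity $R_{\mu_w}^* L_\lambda|_{\cH_{G,s(\mu_w)}}R_{\mu_w} = L_\lambda|_{\cH_{G,w}}$) and the first-entry factorization of paths into the forward-closed set $F'_G$ (the paper's subspace $\cK$ decomposed over the set $M$ of minimal-length paths into $F'_G$). What differs is the packaging, and the difference is genuine. The paper proves the norm equality only for the bounded Ces\`aro polynomials $\Sigma_k(A)L_J$ attached to a given $A\in\fL_{G,F'}$, and then manufactures the section $j_{F',F}(A)$ as a \wot-cluster point $B\cong 0\oplus\bigoplus_{\mu\in M}A|_{\cH_{G,r(\mu)}}$, identified via its Fourier series. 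You instead isolate the global coefficient-supported subalgebra $\cN_{F'_G}\subseteq\fL_G$ and prove the complete isometry $\|S\|=\|S|_{\cH_{G,F}}\|=\|S|_{\cH_{G,F'}}\|$ \emph{exactly}, for every element of $\cN_{F'_G}$, by a purely spatial direct-sum decomposition with no approximation; all maps then come from inverting restriction maps. This buys cleaner functoriality ($\rho_{F,F'}\circ\Psi_F=\Psi_{F'}$, $j_{F',F}=\Psi_F\circ j_{F'}$, the Fourier-support description of $\ker\Psi_F$), makes independence of the path choices automatic, proves Corollary \ref{C:fL_{G,F}} en route rather than as an afterthought, and confines compactness arguments to a single spot: surjectivity of the restrictions $\Psi_F|_{\cN_{F_G}}$.

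That surjectivity step is the one place you are too quick. ``The image is weak-$*$ closed by Krein--Smulian and contains a \wot-dense set'' does not by itself yield that the image is all of $\fL_{G,F}$, because $\fL_{G,F}$ is \emph{defined} as a \wot-closure, and an ultraweakly closed subspace of $B(\cH)$ with \wot-compact balls need not be \wot-closed (e.g.\ $\{T:\Tr(TK)=0\}$ for an infinite-rank trace-class $K$ is ultraweakly closed yet \wot-dense). The correct finish is a bounded approximation, which is available inside your framework: the matrix-support condition $\langle T\xi_\eta,\xi_\theta\rangle=0$ unless $\theta=\mu\eta$, together with independence of $\langle T\xi_\eta,\xi_{\mu\eta}\rangle$ from $\eta$, passes to \wot-limits, so every $T\in\fL_{G,F}$ has Fourier coefficients $a_\mu$ supported on $s(\mu)\in F_G$, and the gauge-action Ces\`aro means (Remark \ref{R:Cesaro_in_T_+(G)} transplanted to $\cH_{G,F}$, which is exactly what the paper also uses) produce polynomials $q$ with $\|q|_{\cH_{G,F}}\|\le\|T\|$ converging \wot\ to $T$. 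Your Key Lemma then lifts each such $q$ isometrically into the $\|T\|$-ball of $\cN_{F_G}$, and \wot-compactness of the image of that ball gives $T\in\Psi_F(\cN_{F_G})$. Since this is a one-line repair using machinery you already invoke, the proof stands as a valid, and arguably more systematic, alternative to the paper's argument.
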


\begin{proof}
Since $F\succ F'$, for each $w\in F'$ there is a path $\mu_w$ with $s(\mu_w) = v_w \in F$ and $r(\mu_w) = w$.
There is a natural injection of $\cH_{G,w}$ into $\cH_{G,v_w}$ given by $R_{\mu_w}$, where $R_{\mu_w} \xi_\nu = \xi_{\nu\mu_w}$ for each $\nu \in \bF_G^+$ with $s(\nu)=w$. 
Thus, $R= \oplus_{w\in F'} R_{\mu_w}$ provides a natural injection of $\cH_{G,F'}$ into $\cH_{G,F}^{(\alpha)}$, where $\alpha = |F'|$. 
For each $\lambda\in \bF_G^+$ with $s(\lambda) = w$, we then have
\[   R_{\mu_w}^* L_\lambda|_{\cH_{G,s(\mu_w)}} R_{\mu_w} = L_\lambda|_{\cH_{G,w}} .\]
That is, the range of $R_{\mu_w}$ is an invariant subspace of $\cH_{G,s(\mu_w)}$ such that the compression of $L_\lambda|_{\cH_{G,s(\mu_w)}}$ to $R_{\mu_w}\cH_{G,w}$ is canonically unitarily equivalent to $L_\lambda|_{\cH_{G,w}}$. 
Hence, as this occurs on each block separately, we get
\[   R^* (L_\lambda|_{\cH_{G,F}})^{(\alpha)} R = L_\lambda|_{\cH_{G,F'}} .\]

Therefore, the map $\rho_{F,F'}:\fL_{G,F} \to \fL_{G,F'}$ given by $\rho_{F,F'}(A) = R^*A^{(\alpha)} R$ is a completely contractive homomorphism. Since this map is the composition of an ampliation and a spatial map, it is weak-$*$ continuous.

This isometry $R$ depends on the choice of the path $\mu_w$, but the map $\rho_{F,F'}$ is independent of this choice.
To see this, suppose that $\mu'_w$ is another set of paths as above; and let $R' = \oplus_{w\in F'}R_{\mu'_w}$ be the corresponding isometry.
Then the unitary map $W:\ran R \to \ran R'$ given by $W \xi_{\nu\mu_w} = \xi_{\nu \mu'_w}$ is readily seen 
to satisfy $WR=R'$ and $L_\lambda R' = W L_\lambda R$ for each $\lambda\in \bF_G^+$. So the map $\rho_{F,F'}$ is the unique map satisfying 
\[ \rho_{F,F'}(L_\lambda|_{\cH_{G,F}}) = L_\lambda|_{\cH_{G,F'}} .\]

Let $A\in\fL_{G,F'}$, and for a finite set $J \subseteq V$ denote $L_J := \oplus_{v\in J}L_v$. Then $\|\Sigma_k(A) L_J\|\le \|A\|$, and the bounded net $\Sigma_k(A)L_J$  for $k\ge1$ and finite subsets $J\subseteq V$ converges to $A$ in the \wot\ topology by Remark~\ref{R:Cesaro_in_T_+(G)}.
Observe that paths $\lambda\in \bF_G^+$ with non-zero coefficients in the Fourier expansion of $A$ all have source (and range) in the minimal directed subset $F'_G$ of $G$ containing $F'$.
However, being a polynomial,  $\Sigma_k(A)L_J$ makes sense as an element of $\fL_{G,F}$.
Observe that when acting on $\cH_{G,F}$, we have $\Sigma_k(A) \xi_\mu = 0$ if $r(\mu) \not\in F'_G$.
Indeed, the subspace 
\[ \cK = \spn\{ \xi_\mu  \in \cH_{G,F} : r(\mu)\in F'_G \} \]
is an invariant subspace of $\cH_{G,F}$. 

We claim that $\cK$ is contained in some multiple of $\cH_{G,F'}$ and conversely $\cH_{G,F'}$ is contained in some multiple of $\cK$.
Let $M$ be the set of all minimal length paths $\mu = e_n\dots e_1 \in \bF_G^+$ with $s(\mu)\in F$ and $r(\mu) \in F'_G$. Hence, we have that $r(e_i) \not\in F'_G$ for $i<n$.
We see that
\[
 \cK = \oplus_{\mu\in M} \fL_{G,r(\mu)}[\xi_\mu] 
 = \oplus_{\mu\in M} R_\mu \cH_{G,r(\mu)} 
 = R'[\oplus_{\mu\in M}  \cH_{G,r(\mu)}],
\]
where $R' = \oplus_{\mu \in M} R_{\mu}$. Thus, $\cK$ may be identified as a reducing subspace of multiplicity $|M|$ of $\cH_{G,F'}$. This identification is again canonical and intertwines the action of each $L_\lambda$.
On the other hand, since $F \succ F'$, we have for each $w\in F'$ that $\cH_{G,v_w}$ may be identified as a subspace of $\cK$ via $R_{\mu_w}$. Thus $\cH_{G,F'}$ may be identified as an invariant subspace of $K^{(\beta)}$ via $R= \oplus_{w\in F'} R_{\mu_w}$ with $\beta = |F'|$.

The subspace $\cH_{G,F} \ominus \cK$ is spanned by the vectors $\xi_\mu$ with $\mu = f_k\dots f_1$ such that $r(f_i) \not\in F'_G$ for $1 \le i \le k$.
It follows that $\Sigma_k(A)$ vanishes on $\cH_{G,v} \ominus \cK$.
Therefore, we see that
\[ 
 \Sigma_k(A) L_J|_{\cH_{G,F}} \cong  
 0|_{\cH_{G,v} \ominus \cK } \oplus \bigoplus_{\mu\in\cM} \Sigma_k(A) L_J |_{\cH_{G,r(\mu)}} .
\]
We have already observed above that $\| \Sigma_k(A)L_J |_{\cH_{G,r(\mu_w)}} \| \le \| \Sigma_k(A)L_J |_{\cH_{G,w}} \|$ for $w\in F'$.
Therefore, we deduce that 
\[ \| \Sigma_k(A)L_J |_{\cH_{G,F}} \| \le \| \Sigma_k(A)L_J |_{\cH_{G,F'}} \| \le \| \Sigma_k(A)L_J |_{\cH_{G,F}} \| .\]
Thus we have equality. Note that the identical argument is valid for matrices in $M_n(\fL_{G,F'})$. 

We use this to show that $\rho_{F,F'}$ is a complete quotient map. Take $A$ in $M_n(\fL_{G,F'})$. 
By the previous paragraph, for a finite subset $J\subset V$ we have 
\[ \| \Sigma_k^{(n)}(A)L_J  \| = \| \Sigma_k^{(n)}(A)L_J |_{\cH_{G,F'}^{(n)}} \| \le \|A\| .\]
By \wot-compactness of the unit ball of $M_n(\fL_{G,F})$, there is a cofinal convergent subnet with limit $B\in M_n(\fL_{G,F})$
with $\|B\|\le \|A\|$.
However, the Fourier series of $B$ is readily seen to coincide with the Fourier series of $A$.
Indeed, using the setup of the previous paragraph, we see that
\[
 B \cong  0|_{\cH_{G,F} \ominus \cK } \oplus \bigoplus_{\mu\in\cM} A|_{\cH_{G,r(\mu)}} .
\]

In particular, the map $j_{F',F}: \fL_{G,F'} \to \fL_{G,F}$ given by $j_{F',F}(A) = B$ is a completely isometric homomorphism of $\fL_{G,F'}$ into $\fL_{G,F}$.
The structure of $B$ makes it apparent that $\rho_{F,F'} \circ j_{F',F} = \id_{\fL_{G,F'}}$. So this is the desired section.
Since $j_{F',F}$ is completely isometric, it follows that $\rho_{F,F'}$ is a weak-$*$ continuous complete quotient map.
Since the composition of the section produced in the previous paragraph and the quotient is weak-$*$ continuous as well,
it follows that the section is weak-$*$ continuous.

The last statement of the theorem now follows from the fact that $\cH_G = \bigoplus_{v\in V} \cH_{G,v} = \cH_{G,V}$, and $V\succ F'$ for any subset $F' \subseteq V$.
\end{proof}

We obtain the following useful consequences, which will be useful to us throughout the paper.

\begin{corollary} \label{C:fL_{G,F}}
If $G$ is a graph and $F \subseteq V$, then $\fL_G|_{\cH_{G,F}} = \fL_{G,F}$.
\end{corollary}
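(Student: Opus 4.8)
The plan is to show that the restriction map from $\fL_G$ to $\fL_{G,F}$ is already surjective, so that the closure in the definition $\fL_{G,F} = \wotclos{\fL_G|_{\cH_{G,F}}}$ is redundant. By that definition the inclusion $\fL_G|_{\cH_{G,F}} \subseteq \fL_{G,F}$ is automatic, so the entire content is the reverse inclusion, namely that $\fL_G|_{\cH_{G,F}}$ is already WOT-closed.

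First I would apply Theorem~\ref{T:left_quotient} with the larger set taken to be $V$ itself and the smaller set taken to be $F$. This is legitimate because $V \succ F$ holds trivially for every $F \subseteq V$ (indeed $V_G = V$). Since $\cH_{G,V} = \cH_G$ we have $\fL_{G,V} = \fL_G$, and the theorem produces a complete quotient map $\rho_{V,F} : \fL_G \to \fL_{G,F}$. A complete quotient map is by definition surjective onto its target $\fL_{G,F}$; equivalently, the last statement of the theorem supplies a section $j_F : \fL_{G,F} \to \fL_G$ of this very map, which again forces surjectivity.

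The one point that needs checking is that $\rho_{V,F}$ is the \emph{literal} restriction $A \mapsto A|_{\cH_{G,F}}$, so that its image is exactly $\fL_G|_{\cH_{G,F}}$ rather than merely an isomorphic copy sitting on a multiple of $\cH_{G,F}$. In the construction inside the proof of Theorem~\ref{T:left_quotient} one is free to choose, for each $w \in F$, the connecting path $\mu_w$; here, because $F \subseteq V$, I would take $\mu_w = w$ to be the trivial path at $w$. Then the comparison isometry $R = \oplus_{w\in F} R_{\mu_w}$ is simply the inclusion $\cH_{G,F} \hookrightarrow \cH_G$ (no multiplicity is required), and $\rho_{V,F}(A) = R^* A R = A|_{\cH_{G,F}}$, using that $\cH_{G,F}$ reduces $\fL_G$. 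Alternatively, since $\rho_{V,F}$ is WOT-continuous and agrees with the restriction map on the generators $L_\lambda$, it must coincide with restriction on all of $\fL_G$ by WOT-density of the polynomials.

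Combining these, the image of $\rho_{V,F}$ is $\{A|_{\cH_{G,F}} : A \in \fL_G\} = \fL_G|_{\cH_{G,F}}$, and surjectivity of $\rho_{V,F}$ onto $\fL_{G,F}$ gives $\fL_G|_{\cH_{G,F}} = \fL_{G,F}$, as claimed. I do not expect a genuine obstacle: the real work is contained in Theorem~\ref{T:left_quotient}, and the corollary is merely its specialization to the pair $(V,F)$, the only subtlety being the observation that for $F \subseteq V$ the comparison isometry can be taken to be the identity inclusion, pinning $\rho_{V,F}$ down as honest restriction.
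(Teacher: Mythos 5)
Your proof is correct and follows essentially the same route as the paper, which states this corollary as an immediate consequence of Theorem~\ref{T:left_quotient} applied to the pair $(V,F)$: surjectivity of the complete quotient map $\rho_{V,F}$ (guaranteed by the section $j_{F,V}$) gives the missing inclusion. Your observation that choosing the trivial paths $\mu_w=w$ makes $R$ the inclusion $\cH_{G,F}\hookrightarrow\cH_G$, so that $\rho_{V,F}$ is literal restriction, is exactly the right point to pin down (and is also forced by WOT-continuity plus agreement on the generators, as you note).
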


\begin{corollary} \label{C:lr_max_element}
If $G$ is a graph and $F \subseteq V$ such that $F \succ w$ for all $w\in V$, then $\rho_F:\fL_G \to \fL_{G,F}$ is a completely isometric isomorphism and weak-$*$ homeomorphism. In particular when $G$ is transitive, this occurs for any subset $F=\{v\}$ for any vertex $v\in V$.
\end{corollary}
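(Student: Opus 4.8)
The plan is to realize $\rho_F$ as one of the quotient maps from Theorem~\ref{T:left_quotient} and then exhibit an explicit two-sided inverse coming from the opposite direction of the preorder $\succ$. First I would unwind the hypothesis: the assertion that $F \succ w$ for every $w\in V$ means exactly $V \subseteq F_G$, and since $F_G \subseteq V$ always holds (because $V$ is itself directed, so $V_G = V$), this is precisely $F_G = V$, i.e. $F \succ V$. Conversely $V \succ F$ holds automatically, since $F \subseteq V = V_G$. Thus $F$ and $V$ are equivalent in the preorder, and Theorem~\ref{T:left_quotient} is available in both directions. Taking the large set to be $V$ and recalling $\cH_G = \cH_{G,V}$, the map $\rho_F$ in the statement is exactly $\rho_{V,F}\colon \fL_G \to \fL_{G,F}$, the natural restriction to the reducing subspace $\cH_{G,F}$ (the section statement at the end of Theorem~\ref{T:left_quotient}, together with $\fL_G|_{\cH_{G,F}} = \fL_{G,F}$ from Corollary~\ref{C:fL_{G,F}}).

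Next I would feed $F \succ V$ back into Theorem~\ref{T:left_quotient} to produce a second weak-$*$ continuous completely contractive homomorphism $\rho_{F,V}\colon \fL_{G,F} \to \fL_G$. By the uniqueness clause of that theorem, $\rho_{V,F}$ and $\rho_{F,V}$ are characterized on generators by $L_\lambda|_{\cH_G} \mapsto L_\lambda|_{\cH_{G,F}}$ and $L_\lambda|_{\cH_{G,F}} \mapsto L_\lambda|_{\cH_G}$ respectively, so they are mutually inverse on generators. The core step is to upgrade this to genuine two-sided inverses. I would examine the composition $\rho_{F,V}\circ\rho_{V,F}$, a weak-$*$ continuous, completely contractive endomorphism of $\fL_G$ fixing each generator $L_\lambda|_{\cH_G}$. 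Being norm continuous it fixes the norm-closed algebra generated by the $L_\lambda$, hence every Ces\`aro mean $\Sigma_k(A)L_J$; since $\Sigma_k(A)L_J \to A$ in the \wot\ topology by Remark~\ref{R:Cesaro_in_T_+(G)}, weak-$*$ continuity forces $\rho_{F,V}\circ\rho_{V,F} = \id_{\fL_G}$. The symmetric argument inside $\fL_{G,F}$, where the same Ces\`aro approximation is available, gives $\rho_{V,F}\circ\rho_{F,V} = \id_{\fL_{G,F}}$.

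With the two maps now identified as mutual inverses, the conclusion follows formally: mutually inverse complete contractions are each complete isometries, so $\rho_F = \rho_{V,F}$ is a completely isometric isomorphism, and since both $\rho_{V,F}$ and its inverse $\rho_{F,V}$ are weak-$*$ continuous, $\rho_F$ is a weak-$*$ homeomorphism. For the transitive case I would simply observe that transitivity supplies, for a fixed vertex $v$ and any $w\in V$, a path from $v$ to $w$, whence $w \in \{v\}_G$; thus $F = \{v\}$ satisfies $F \succ w$ for all $w\in V$ and the main statement applies.

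I expect the only genuine obstacle to lie in the core step, namely justifying that a weak-$*$ continuous, completely contractive endomorphism fixing the generators must be the identity. This rests on two facts already established in the excerpt: the uniqueness of Fourier coefficients of elements of $\fL_G$ (so the generators determine the homomorphisms) and the weak-$*$ density of the Ces\`aro/polynomial approximants from Remark~\ref{R:Cesaro_in_T_+(G)}. Everything else reduces to a direct double application of Theorem~\ref{T:left_quotient}, made possible by the elementary observation that the hypothesis is equivalent to $F_G = V$.
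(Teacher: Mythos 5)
Your proof is correct and follows the route the paper intends: the corollary is stated without proof as an immediate consequence of Theorem~\ref{T:left_quotient}, and your observation that the hypothesis is equivalent to $F_G=V$, so that the theorem applies in both directions to produce $\rho_{V,F}$ and $\rho_{F,V}$ as mutually inverse weak-$*$ continuous complete contractions, is a clean and complete way to fill in that deduction. The only cosmetic remark is that one could alternatively check $\ker\rho_{V,F}=0$ directly from uniqueness of Fourier coefficients and then use the section $j_{F,V}$ from the theorem as the inverse, but your symmetric-composition argument (identity on generators, hence on Ces\`aro means, hence everywhere by weak-$*$ continuity) accomplishes exactly the same thing.
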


\begin{corollary} \label{C:lr_inductive}
Suppose that $G$ has a collection $\{F_i\}_{i\in I}$ of subsets of $V$ linearly ordered by $I$ such that $F_i \succ F_j$ when $i\geq j$, and for every $w\in V$, there is some $i$ so that $F_i \succ w$.
Then $\fL_G$ is a \wot-projective limit of the system $\rho_{F_i,F_j} :  \fL_{G,F_i} \to \fL_{G,F_j}$ for $j \leq i$,
and a \wot-inductive limit of the system $j_{F_j,F_i} : \fL_{G,F_j} \to \fL_{G,F_i}$ for $j \leq i$.
\end{corollary}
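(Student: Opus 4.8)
The plan is to exhibit $\fL_G$ explicitly as the two limits rather than merely invoking an abstract universal property. Recall that $\cH_G=\cH_{G,V}$ and $V\succ F'$ for every $F'\subseteq V$, so Theorem~\ref{T:left_quotient} supplies, for each $i$, a weak-$*$ continuous complete quotient map $\rho_{F_i}:=\rho_{V,F_i}:\fL_G\to\fL_{G,F_i}$ together with a completely isometric weak-$*$ continuous section $j_{F_i}:\fL_{G,F_i}\to\fL_G$ (the maps $j_{F'}$ of that theorem); for $F=V$ the map $\rho_{F_i}$ is just the restriction $A\mapsto A|_{\cH_{G,F_i}}$, since $\cH_{G,F_i}$ reduces $\fL_G$. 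The first routine step is to record the compatibility relations
\[
 \rho_{F_i,F_j}\circ\rho_{F_i}=\rho_{F_j},\qquad j_{F_i}\circ j_{F_j,F_i}=j_{F_j}\qquad(j\le i),
\]
and the cocycle identities $\rho_{F_j,F_k}\circ\rho_{F_i,F_j}=\rho_{F_i,F_k}$ and $j_{F_j,F_i}\circ j_{F_k,F_j}=j_{F_k,F_i}$ for $k\le j\le i$. Each follows from the uniqueness clause of Theorem~\ref{T:left_quotient}: the quotient maps are the unique maps sending $L_\lambda|_{\cH_{G,F}}$ to $L_\lambda|_{\cH_{G,F'}}$, and the sections preserve Fourier coefficients, so two such composites with matching Fourier series coincide. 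This makes $(\fL_{G,F_i},\rho_{F_i,F_j})$ an inverse system and $(\fL_{G,F_i},j_{F_j,F_i})$ a direct system, and it remains to identify their limits with $\fL_G$ via $\Theta(A)=(\rho_{F_i}(A))_i$.

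For the projective limit I would first prove $\Theta$ injective by recovering Fourier coefficients. Fix $A\in\fL_G$ and a path $\lambda$ with $s(\lambda)=w$. By hypothesis there is $i$ with $F_i\succ w$, i.e.\ a path $\mu$ with $s(\mu)\in F_i$ and $r(\mu)=w$; then $\xi_\mu=R_\mu\xi_w$ and $\xi_{\lambda\mu}=R_\mu\xi_\lambda$ lie in $\cH_{G,F_i}$, and since $A$ commutes with $R_\mu\in\fR_G$,
\[
 \langle \rho_{F_i}(A)\xi_\mu,\xi_{\lambda\mu}\rangle
 =\langle A R_\mu\xi_w, R_\mu\xi_\lambda\rangle
 =\langle A\xi_w,\xi_\lambda\rangle = a_\lambda .
\]
Thus every Fourier coefficient of $A$ is read off from some $\rho_{F_i}(A)$, so $\rho_{F_i}(A)=0$ for all $i$ forces $A=0$.

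The crucial step is surjectivity together with the norm identity, and here the sections do the work. Given a bounded compatible family $(A_i)$ with $C=\sup_i\|A_i\|$, set $\td A_i=j_{F_i}(A_i)$, so $\|\td A_i\|=\|A_i\|\le C$. By weak-$*$ compactness of the ball of $\fL_G$ a subnet $\td A_{i_\gamma}$ converges weak-$*$ to some $A$; for fixed $j$, once $i_\gamma\ge j$ one has $\rho_{F_j}(\td A_{i_\gamma})=\rho_{F_{i_\gamma},F_j}(A_{i_\gamma})=A_j$, whence $\rho_{F_j}(A)=A_j$ by weak-$*$ continuity, so $\Theta(A)=(A_i)$. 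Applying this to $(A_i)=(\rho_{F_i}(A))$ and using injectivity shows that the net $j_{F_i}\rho_{F_i}(A)$ itself converges weak-$*$ to $A$, with $\|j_{F_i}\rho_{F_i}(A)\|\le\sup_i\|\rho_{F_i}(A)\|$; by weak-$*$ lower semicontinuity of the norm, $\|A\|\le\sup_i\|\rho_{F_i}(A)\|$, and combined with complete contractivity of the $\rho_{F_i}$ this gives $\|A\|=\sup_i\|\rho_{F_i}(A)\|$ (the identical argument on matrix ampliations yields complete isometry). Hence $\Theta$ is a completely isometric bijection onto the projective limit; it is weak-$*$ continuous, and a Krein--Smulian argument on the weak-$*$ compact ball shows $\Theta^{-1}$ is weak-$*$ continuous, so $\Theta$ is a weak-$*$ homeomorphism.

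Finally, the inductive limit follows from the same computation: the convergence $j_{F_i}\rho_{F_i}(A)\to A$ established above shows $\bigcup_i j_{F_i}(\fL_{G,F_i})$ is weak-$*$ dense in $\fL_G$, while the connecting maps $j_{F_j,F_i}$ are completely isometric and compatible via $j_{F_i}\circ j_{F_j,F_i}=j_{F_j}$; this identifies $\fL_G$ as the \wot-inductive limit of the direct system. The one point demanding care---and the main obstacle---is the norm identity $\|A\|=\sup_i\|\rho_{F_i}(A)\|$: because the subspaces $\cH_{G,F_i}$ may fail to exhaust $\cH_G$ (for instance when a single $F_i$ already satisfies $F_i\succ V$), one cannot obtain it by a naive density argument on the Hilbert space, and must instead push the bounded net of lifts $j_{F_i}(A_i)$ through weak-$*$ compactness as above.
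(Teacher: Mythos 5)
Your proof is correct and follows the same route as the paper: the paper's own proof of this corollary simply declares that the maps $\rho_{F_i}$ and $j_{F_i}$ of Theorem~\ref{T:left_quotient} are the connecting maps which establish the two limits, which is exactly the structure you use. The only difference is one of detail — you actually verify the compatibility and cocycle relations, injectivity via recovery of Fourier coefficients through the right-regular intertwiners $R_\mu$, and surjectivity together with the norm identity $\|A\|=\sup_i\|\rho_{F_i}(A)\|$ via the sections and weak-$*$ compactness, all of which the paper leaves implicit.
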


\begin{proof}
The maps $\rho_{F_i}:\fL_{G} \to \fL_{G,F_i}$ are the connecting maps which establish the projective limit.
Likewise, the maps $j_{F_i} : \fL_{G,F_i} \to \fL_G$ are the connecting maps which establish the inductive limit.
\end{proof}

\section{Wold decomposition} \label{S:wold}

We prove a Wold-decomposition type theorem for general Toeplitz-Cuntz-Krieger families which is similar to \cite[Section 2]{JK2005} and \cite[Theorem 3.2]{DS2017}. 
A more general Wold decomposition in the setting of C*-correspondences can be found in \cite[Corollary 2.15]{MS1999}. 
We show that once one removes all the reducing subspaces determined by the left regular part, what remains is a fully coisometric CK family for the largest sourceless subgraph $G^0$ of $G$. 

\begin{definition}
Given a free semigroupoid algebra $\fS$ generated by a TCK family $\rS= (S_v,S_e)$, 
we say that a non-trivial subspace $\cW$ is a \textit{wandering subspace} provided that $\{ S_\mu\cW : \mu \in \bF_G^+ \}$ are pairwise orthogonal. (Note that some of these subspaces may be $\{0\}$.)
A wandering subspace determines the invariant subspace
\[ \fS[\cW] = \ol{\fS \cW} = \bigoplus_{\lambda\in \bF_G^+} S_\lambda \cW .\]
We call a wandering subspace $\cW$ \emph{separated} if $\cW = \oplus_{v\in V} S_v\cW$.

The \textit{support} of $\cW$ is the set $\supp(\cW) = \{ v\in V : S_v \cW \ne \{0\} \}$.
A vector $0\neq \xi$ is called \textit{wandering} provided that $\bC \xi$ is a wandering subspace, and $\supp(\xi) := \supp(\bC \xi)$. We say that $\cW$ has \textit{full support} if $\supp(\cW) = V$.
\end{definition}

For a subspace $\cW$, denote by 
$$
\cW':= \fS[\cW] \ominus \bigoplus_{|\lambda| \geq 1} S_\lambda \cW
$$
where $\lambda \in \bF_G^+$ is a path. Then $\cW'$ is always a separated wandering space with $\fS[\cW']=\fS[\cW]$, and $\cW$ is a separated wandering subspace exactly when $\cW = \cW'$. 
When $\rS$ is non-degenerate, every wandering subspace supported on a single vertex is separated.

For a TCK family $\rS=(S_v,S_e)$ on a Hilbert space $\cH$ and a common invariant subspace $\cK \subseteq \cH$, we will write $\rS|_{\cK}:= (S_v|_{\cK}, S_e|_{\cK})$. 
Furthermore, for another TCK family $\rT=(T_v,T_e)$ on $\cH'$, we will let 
\[ \rT\oplus \rS := (T_v \oplus S_v, T_e \oplus S_e) \]
denote the TCK family direct sum on $\cH \oplus \cH'$, and $\rT^{(\alpha)}$ denotes the direct sum of $\rT$ with itself $\alpha$ times. The support of a subspace $\cW$ will generally not be directed, but it is easy to see that $\supp(\rS|_{\fS[\cW]})$ is the smallest directed set containing $\supp(\cW)$. In the following, any infinite sum of orthogonal projections is evaluated as an \sot-limit.

\begin{theorem}[Wold decomposition I] \label{thm:Wold-decomp}
Let $\rS=(S_v, S_e)$ be a non-degenerate TCK family for a graph $G$ acting on a Hilbert space $\cH$ generating the free semigroupoid algebra $\fS$. 
For any $v\in V$, define 
\[ \cW_v:= \big( S_v - \!\!\!\!\sum_{e\in r^{-1}(v)}\!\! S_e S_e^*\big) \cH .\]
If $\cW_v \ne \{0\}$, then $\cW_v$ is a wandering subspace for the reducing subspace $\cH_v = \fS[\cW_v]$ with $\supp(\cW_v) = \{v\}$.

In this case, let $\{\xi_{v,i} : 0 \le i <\alpha_v\}$ be an orthonormal basis for $\cW_v$, where $\alpha_v = \Dim \cW_v$.
Then for each $0 \leq i < \alpha_v$, the set $\{S_{\mu}\xi_{v,i} : s(\mu)=v\}$ is an orthonormal basis for a reducing subspace $\cH_{v,i}$ generated by the wandering vector $\xi_{v,i}$ supported on $v$ such that $\rS_{v,i}:= \rS|_{\cH_{v,i}}$ is unitarily equivalent to $\rL_{G,v}$; 
and $\cH_v = \bigoplus_{i<\alpha_v} \cH_{v,i}$. Furthermore, $\cW := \cH \ominus \sum_{e\in E} S_eS_e^*\cH = \bigoplus_{v\in V} \cW_v$ is a separated wandering space
and $\fS[\cW] = \bigoplus_{v\in V} \fS[\cW_v]$ is the maximal reducing subspace such that $\fS|_{\fS[\cW]}$ is of left regular type.

Thus, the TCK family $\rS$ is unitarily equivalent to
\[
\rT \oplus \bigoplus_{v\in V} \rL_{G,v}^{(\alpha_v)}
\]
where $\rT=(T_v, T_e)$ is a non-degenerate and fully coisometric CK family. This decomposition is unique up to unitary equivalence of $\rT$.
\end{theorem}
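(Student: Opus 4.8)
The plan is to construct the decomposition by hand from the \emph{defect projections}
\[ Q_v := S_v - \!\!\!\sum_{e\in r^{-1}(v)}\!\! S_e S_e^*, \qquad \cW_v = Q_v\cH, \]
noting first that this is a genuine projection: the $S_e S_e^*$ with $r(e)=v$ are pairwise orthogonal projections dominated by $S_v$ (by (IS) and (TCK)), so the sum converges \sot\ and $Q_v$ is the orthogonal complement of $\sum_{r(e)=v}S_eS_e^*\cH$ inside $S_v\cH$. The computational engine throughout will be the multiplication rule for the $S_\mu$: using (IS) together with $S_e^*S_f=0$ for distinct $e\ne f$ sharing a range (which follows from orthogonality of $S_eS_e^*$ and $S_fS_f^*$), one cancels common left-prefixes to obtain $S_\mu^*S_\nu \in \{\,S_{\nu'},\ S_{\mu'}^*,\ 0\,\}$ according to whether $\nu=\mu\nu'$, $\mu=\nu\mu'$, or the paths are incomparable. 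I would establish this rule first.

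For items (1)--(2) I would argue as follows. That $\supp(\cW_v)=\{v\}$ and that $\cW_v$ is separated are immediate from $Q_v\le S_v$ and orthogonality of the $S_w$. To prove $\cW_v$ is wandering, take distinct paths $\mu,\nu$ with $s(\mu)=s(\nu)=v$ and $w,w'\in\cW_v$ and expand $\langle S_\mu w, S_\nu w'\rangle = \langle w, S_\mu^*S_\nu w'\rangle$. By the multiplication rule the only nonzero possibility is that one path extends the other, say $\nu=\mu\nu'$, forcing $S_\mu^*S_\nu=S_{\nu'}$ where $\nu'$ is a \emph{nontrivial cycle at $v$}; writing its last edge as $g$ (so $r(g)=v$), one checks $Q_vS_g=0$, hence $\langle w,S_{\nu'}w'\rangle=\langle Q_vw, S_gS_{\nu''}w'\rangle=0$. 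Fixing an orthonormal basis $\{\xi_{v,i}\}$ of $\cW_v$, the wandering property shows $\{S_\mu\xi_{v,i}:s(\mu)=v\}$ is orthonormal (each vector has norm one since $S_\mu^*S_\mu\xi_{v,i}=S_v\xi_{v,i}=\xi_{v,i}$), and the assignment $\xi_\mu\mapsto S_\mu\xi_{v,i}$ extends to a unitary $U_{v,i}\colon\cH_{G,v}\to\cH_{v,i}$ that intertwines $\rL_{G,v}$ with $\rS|_{\cH_{v,i}}$, yielding $\rS_{v,i}\cong\rL_{G,v}$ and $\cH_v=\bigoplus_{i<\alpha_v}\cH_{v,i}$.

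For item (3) and the final statement, non-degeneracy gives $\cH=\bigoplus_{v}S_v\cH$, whence $\cW=\cH\ominus\sum_eS_eS_e^*\cH=\bigoplus_vQ_v\cH=\bigoplus_v\cW_v$ and $\fS[\cW]=\bigoplus_v\cH_v$ restricts to $\bigoplus_v\rL_{G,v}^{(\alpha_v)}$, i.e.\ is of left-regular type; maximality holds because any reducing subspace on which $\rS$ is left-regular type is generated by wandering vectors annihilated by every $S_e^*$, and such vectors lie in $\cW$. I then set $\rS_l:=\rS|_{\fS[\cW]}$ and $\rT:=\rS|_{\fS[\cW]^\perp}$. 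As $\fS[\cW]$ is reducing, $\rT$ is a non-degenerate TCK family, and its defect $T_v-\sum_{r(e)=v}T_eT_e^*$ is exactly the compression of $Q_v$ to $\fS[\cW]^\perp$, which vanishes since $\ran Q_v\subseteq\fS[\cW]$; thus $\rT$ satisfies (F) and is fully coisometric and CK. Uniqueness up to unitary equivalence of $\rT$ follows because a fully coisometric CK family has zero defect, hence no wandering vectors, so in any splitting $\rS\cong\rA\oplus\rB$ with $\rA$ left-regular type and $\rB$ fully coisometric CK, the canonical space $\cW$ must sit in the $\rA$-summand with multiplicities $\alpha_v$, forcing $\rA\cong\rS_l$ and $\rB\cong\rT$.

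The main obstacle I anticipate is the wandering verification in the second paragraph: ruling out overlaps $S_\mu\cW_v$ against $S_\nu\cW_v$ precisely in the comparable case, where $\mu$ and $\nu$ differ by a cycle at $v$, is the one place where the defining property of $\cW_v$ (orthogonality to all edge ranges) is genuinely used, and it is easy to overlook. Running alongside this is the bookkeeping of the possibly infinite, \sot-convergent sums $\sum_{r(e)=v}S_eS_e^*$ in the non-row-finite case, which must be tracked carefully so that the compression argument establishing full coisometry of $\rT$ goes through without finiteness assumptions.
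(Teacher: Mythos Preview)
Your proposal is correct and follows essentially the same route as the paper: both hinge on the identity $Q_vS_f=0$ for every edge $f$, from which the wandering property, the reducing property of each $\cH_{v,i}$, and the orthogonality $\cH_v\perp\cH_w$ for $v\ne w$ all follow. The paper carries out the wandering and cross-vertex orthogonality in a single inner-product computation and explicitly verifies that each $\cH_{v,i}$ is reducing (via $S_f^*\xi_{v,i}=0$); these are two points you leave implicit, but they are immediate from the same identity you already use.
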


\begin{proof}
Note that $\cW_v = S_v \cW_v$, so clearly $\supp(\cW_v) = \{v\}$. 
Suppose that $f\in E$. Then
\[
 \big( S_v - \!\!\!\!\sum_{e\in r^{-1}(v)}\!\! S_e S_e^*\big) S_f =
 \begin{cases} S_f-S_f = 0 &\qif r(f) = v,\\ 0 &\qif r(f) \ne v .\end{cases} 
\]
We will simultaneously show that $\cW_v$ is wandering for all $v\in V$, and that $\cH_v$ and $\cH_w$ are orthogonal for all $v\neq w$ in $V$.
Consider $S_\mu \cW_v$ and $S_\nu \cW_w$ where either $v\ne w$ or $\mu \ne \nu$.
We may suppose that $|\mu|\ge |\nu|$. Write $\mu = \nu'\mu'$ where $|\nu'|=|\nu|$.
Also, if $|\mu'|\ge1$, write $\mu' = e'\mu''$. Then we calculate for $\xi, \eta\in\cH$,
{\allowdisplaybreaks 
\begin{align*}
 \Big\la S_{\mu} &\big( S_v - \!\!\!\!\sum_{e\in r^{-1}(v)}\!\! S_e S_e^*\big) \xi, S_{\nu}\big( S_w - \!\!\!\!\sum_{f\in r^{-1}(v)}\!\! S_f S_f^*\big)\eta \Big\ra \\&=
 \Big \la \big( S_w - \!\!\!\!\sum_{f\in r^{-1}(v)}\!\! S_f S_f^*\big) S_{\nu}^*S_{\nu'}S_{\mu'}\big( S_v - \!\!\!\!\sum_{e\in r^{-1}(v)}\!\! S_e S_e^*\big) \xi, \eta  \Big \ra \\&=
 \delta_{\nu,\nu'} \Big \la \big( S_w - \!\!\!\!\sum_{f\in r^{-1}(v)}\!\! S_f S_f^*\big) S_{\mu'} \big( S_v - \!\!\!\!\sum_{e\in r^{-1}(v)}\!\! S_e S_e^*\big) \xi, \eta  \Big \ra \\&=
 \begin{cases}
 0 &\qif \nu \ne \nu',\\
 0 &\qif \nu = \nu'=\mu \AND v \ne w \text{ since } S_w S_v = 0,\\
 0 &\qif \nu=\nu' \AND \mu' = e'\mu'' \text{ since } \Big( S_w - \!\sum_{f\in r^{-1}(v)}\! S_f S_f^*\Big) S_{e'} = 0.
 \end{cases}
\end{align*}
}
It follows that each $\cW_v$ is wandering, and that $\cH_v$ and $\cH_w$ are orthogonal for $v\ne w$.

Evidently $\cH_v$ is an invariant subspace. The first computation in the previous paragraph shows that if $f \in E$, then
$S_f^* \big( S_v -\sum_{e\in r^{-1}(v)} S_e S_e^*\big)  = 0$. \vspace{.2ex}
Therefore $S_f^* \cW_v = \{0\}$. Moreover, if $\lambda = e\lambda'$, then $S_f^* \ol{S_\lambda \cW} = \delta_{f,e} \ol{S_{\lambda'} \cW}$. It follows that $S_f^* \cH_v \subseteq \cH_v$. Therefore, $\cH_v$ is a reducing subspace.

When $\cW_v \ne \{0\}$, it is clear from the wandering property that the set $\{S_{\mu}\xi_{v,i} : s(\mu)=v\}$ is orthonormal and
spans an invariant subspace $\cH_{v,i}$. Again it is reducing because $S_f^* \xi_{v,i}=0$ for all $f\in E$, 
and when $\mu = e\mu'$, we have $S_f^* S_{\mu}\xi_{v,i} = \delta_{f,e} S_{\mu'}\xi_{v,i} $.
A similar calculation shows that $\cH_{v,i}$ is orthogonal to $\cH_{v,j}$ when $i \ne j$.
Finally, it is also clear that  $\cH_v = \bigoplus_{i<\alpha_v} \cH_{v,i}$. 

Define a unitary $U_{v,i} : \cH_{G,v} \to \cH_{v,i}$ by setting $U_{v,i}\xi_{\mu} = S_{\mu}\xi_{v,i}$ and extending linearly. 
This unitary then implements a unitary equivalence between $\rL_{G,v}$ and $\rS|_{\cH_{v,i}}$. 
Now define $\cK= (\bigoplus_{v\in V} \cH_v)^{\perp}$, and set $\rT = \rS|_\cK$. Then 
\[
 \rS \cong \rT \oplus \bigoplus_{v\in V} \rL_{G,v}^{(\alpha_v)} .
\]
From the definition of $\cW_v$, it follows that $T_v = \sum_{e\in r^{-1}(v)}T_e T_e^*$ for every $v\in V$.
Hence, $\rT$ is fully coisometric. Also, $\rT$ is non-degenerate because $\rS$ is.

It is clear from the definition that $\cW=\bigoplus_{v\in V}\cW_v$ is a separated wandering space.
Since the complement of $\fS[\cW]$ is fully coisometric, it contains no reducing subspace generated by a wandering space.
Hence, this space is maximal with this property.
Indeed, it is unique, because any reducing subspace determined by a separated wandering space $\cN$ has the property that $S_v \cN$ is not in the range of any edge, and thus is contained in $\cW_v$.
So $\cN \subseteq \cW$.

It is clear that the definition of $\cW_v$ is canonical, and that $\alpha_v = \Dim\cW_v$ is a unitary invariant. 
Thus, the construction of $\cH_v$ and $\cK$ is also canonical.  
Hence, this decomposition is essentially unique up to choice of bases, which is a unitary invariant, and thus two 
such TCK families $\rS$ and $\rS'$ are unitarily equivalent if and only if $\alpha_v=\alpha'_v$ for $v\in V$ and the remainders $\rT$ and $\rT'$ are unitarily equivalent.
\end{proof}

Now we wish to obtain some more precise information about the fully coisometric CK family $\rT$ in this decomposition.
Suppose we have a directed graph $G$. Let 
\[ V^s:= \{ v \in V : r^{-1}(v) = \emptyset  \} \]
denote the collection of sources in $G$. 
Let  $G_1$ be the induced graph on the vertices $V_1 = V \setminus  V^s$. 
Proceeding inductively on ordinals, if $G_{\lambda}$ is given with vertex set $V_{\lambda}$, 
we define the graph $G_{\lambda+1}$ to be the graph induced from $G$ on the vertices $V_{\lambda+1} = V_{\lambda} \setminus  V_{\lambda}^s$.
When $\lambda$ is a limit ordinal, we define $G_{\lambda}$ to be the graph induced from $G$ on the vertices $V_{\lambda} = \bigcap_{\kappa < \lambda} V_{\kappa}$.

Since there are countably many vertices, there is a countable ordinal $\kappa_G$ such that $G^0 := G_{\kappa_G}$ is sourceless. 
We call $G^0$ the \emph{source elimination} of $G$.

Note that since $G^0$ has no sources, a CK family for $G^0$ can be considered as a representation of $G$
which annihilates all vertices in $V\setminus V^0$. 
However, $G^0$ may have vertices which are infinite receivers.
If such a vertex $v$ does not satisfy $S_v = \sotsum_{r(e)=v} S_eS_e^*$, then there is a wandering space as defined in Theorem~\ref{thm:Wold-decomp}.
Once this is eliminated, we obtain a fully coisometric CK family.

\begin{theorem}[Wold decomposition II] \label{thm:precise-wold}
Suppose $G = (V,E)$ is a directed graph, and let $G^0 = (V^0,E^0)$ be the source elimination of $G$. 
Suppose $\rS = (S_v, S_e)$ is a non-degenerate TCK family for $G$ on a Hilbert space $\cH$; and let
\[
\rS \cong \rT \oplus \bigoplus_{v\in V} \rL_{G,v}^{(\alpha_v)}
\]
be the Wold-decomposition for $\rS$ as in Theorem~$\ref{thm:Wold-decomp}$.
Then $\rT$ is supported on $V^0$; and $\rT = (T_v,T_e)_{v\in V^0, e\in E^0}$ is a fully coisometric CK family for $G^0$.
\end{theorem}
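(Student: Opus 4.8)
The plan is to exploit the full coisometry of $\rT$ established in Theorem~\ref{thm:Wold-decomp}, namely $T_v = \sotsum_{r(e)=v} T_e T_e^*$ for every $v \in V$, and to peel off the sources of $G$ by transfinite induction until only $G^0$ survives. Since $\rT$ is the restriction of $\rS$ to a reducing subspace, it is itself a (non-degenerate, fully coisometric) TCK family for $G$ and so inherits relations (P), (IS) and (TCK); in particular $T_e^* T_e = T_{s(e)}$ and $T_e T_e^* \le T_{r(e)}$ for every edge $e$. I would phrase the whole argument in terms of these two identities.

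First I would prove, by transfinite induction on the ordinal $\lambda$, that $T_v = 0$ for every $v \in V \setminus V_\lambda$. For $\lambda = 0$ there is nothing to prove. At a successor stage, note that $V \setminus V_{\lambda+1} = (V\setminus V_\lambda) \cup V_\lambda^s$, so by the inductive hypothesis it suffices to treat $v \in V_\lambda^s$. The crucial observation is that $v$ being a source of the induced graph $G_\lambda$ means precisely that every edge $e \in E$ with $r(e) = v$ has $s(e) \notin V_\lambda$. For each such edge the inductive hypothesis gives $T_{s(e)} = 0$, whence $T_e^* T_e = T_{s(e)} = 0$ and so $T_e = 0$; full coisometry then forces $T_v = \sotsum_{r(e)=v} T_e T_e^* = 0$. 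At a limit ordinal $\lambda$, since $V_\lambda = \bigcap_{\kappa<\lambda} V_\kappa$ we have $V\setminus V_\lambda = \bigcup_{\kappa<\lambda}(V\setminus V_\kappa)$, so the conclusion follows at once from the inductive hypotheses. Taking $\lambda = \kappa_G$ yields $T_v = 0$ for all $v \notin V^0$, i.e.\ $\supp(\rT) \subseteq V^0$.

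With the support located inside $V^0$, I would next eliminate the extraneous edges: if $e \notin E^0$, then either $s(e) \notin V^0$, in which case $T_e^* T_e = T_{s(e)} = 0$, or $r(e) \notin V^0$, in which case $T_e T_e^* \le T_{r(e)} = 0$; either way $T_e = 0$. Consequently the family $(T_v, T_e)_{v\in V^0,\, e\in E^0}$ satisfies (P), (IS) and (TCK) for $G^0$ by restricting the corresponding relations for $G$. Finally, for each $v \in V^0$ the full coisometry sum over $G$ collapses onto $E^0$, since every edge $e$ with $r(e)=v$ and $s(e)\notin V^0$ contributes $T_e = 0$; hence $T_v = \sotsum_{e\in E^0,\, r(e)=v} T_e T_e^*$, which is exactly full coisometry (and in particular the CK relation) for $G^0$. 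This establishes that $\rT$ is a fully coisometric CK family for $G^0$.

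I expect the only delicate point to be the successor step of the induction: one must correctly identify that a source of $G_\lambda$ receives, within $G$, only from vertices already eliminated (i.e.\ lying outside $V_\lambda$), so that the inductive hypothesis genuinely applies to every incoming edge. The SOT-convergence of the defining sums causes no trouble, because each relevant summand $T_e T_e^*$ is individually zero.
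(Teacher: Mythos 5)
Your proof is correct and takes essentially the same approach as the paper: the paper argues by choosing a minimal ordinal $\lambda$ admitting a vertex $w\in V_\lambda^s$ with $T_w\neq 0$ and deriving a contradiction from full coisometry and $T_e^*T_e=T_{s(e)}$, which is just the contrapositive packaging of your direct transfinite induction. Your version is in fact slightly more careful at the successor step (the paper writes $r^{-1}(w)\subseteq s^{-1}(V_{\lambda'}^s)$ where it should really be $s^{-1}(V\setminus V_\lambda)$, covering all previously eliminated vertices, exactly as you do), and your closing verification that the restricted family satisfies the TCK and full-coisometry relations for $G^0$ makes explicit what the paper leaves implicit.
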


\begin{proof}
By Theorem \ref{thm:Wold-decomp} we know that $\rT$ is a fully coisometric CK family, so it must satisfy $T_v = \sum_{e\in r^{-1}(v)}T_e T_e^*$ for every $v\in V$. 
So in particular, for $v\in V^s$, we have that $T_v = 0$. 
This means that for any $e\in s^{-1}(V^s)$ we have $T_e =0$ since $0 = T_{s(e)} = T_e^*T_e$. 

Now, suppose towards contradiction there is some minimal ordinal $\lambda$ for which $T_w \neq 0$ for some $w \in V_{\lambda}^s$. 
If $\lambda = \lambda' + 1$ is a successor, then $T_v = 0$ for all $v\in V_{\lambda'}^s$. 
Hence, for any $e\in s^{-1}(V_{\lambda'}^s)$ we have $T_e= 0$ since $0 = T_{s(e)} =T_e^*T_e$. 
Next, since $r^{-1}(w) \subseteq s^{-1}(V_{\lambda'}^s)$, we must have that $T_w = \sum_{e\in r^{-1}(w)} T_e T_e^* = 0$, a contradiction.
If $\lambda$ is a limit ordinal, we have that $r^{-1}(w) \subseteq s^{-1}(\bigcup_{\kappa < \lambda} V_{\kappa}^s)$.
Also, for any $e\in s^{-1}(\bigcup_{\kappa < \lambda} V_{\kappa}^s)$, we have $T_e= 0$. 
It follows that $T_w = \sum_{e\in r^{-1}(w)} T_e T_e^* = 0$, again a contradiction. Thus $\rT$ is supported on $V_0$, and is a fully coisometric CK family for $G_0$.
\end{proof}

We can explicitly describe the subspace on which the full CK family lives.

\begin{proposition} \label{P:fully-coisometric space}
Suppose $G = (V,E)$ is a directed graph, and let $G^0 = (V^0,E^0)$ be the source elimination of $G$. 
Suppose $\rS = (S_v, S_e)$ is a non-degenerate TCK family for $G$ on a Hilbert space $\cH$. 
The space $\cM$ on which the fully coisometric portion of the Wold decomposition $($Theorem~$\ref{thm:precise-wold})$ acts is given by:
\[ \cM = \bigcap_{k\ge1} \sum_{|\mu|=k} S_\mu S_\mu^* \cH .\]
\end{proposition}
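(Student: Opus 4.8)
The plan is to read off the answer from the Wold decomposition (Theorem~\ref{thm:Wold-decomp}), which splits $\cH = \cK \oplus \bigoplus_{v\in V}\cH_v$, where $\cK$ carries the fully coisometric CK family $\rT$ and is exactly the space $\cM$ of Theorem~\ref{thm:precise-wold}, while $\cH_v = \fS[\cW_v] = \bigoplus_{i<\alpha_v}\cH_{v,i}$ with each $\rS|_{\cH_{v,i}}$ unitarily equivalent to $\rL_{G,v}$. Writing $Q_k := \sum_{|\mu|=k} S_\mu S_\mu^*$, I would first record that $Q_k$ is an orthogonal projection: distinct paths $\mu\ne\nu$ of equal length satisfy $S_\mu^* S_\nu = 0$ (an immediate induction from $S_e^* S_f = \delta_{e,f}S_{s(e)}$), so the ranges $\ran S_\mu$ with $|\mu|=k$ are pairwise orthogonal and $Q_k$ is the projection onto their closed span, the sum being read as an \sot-limit. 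Since both $\cK$ and $\bigoplus_v \cH_v$ reduce every $S_\mu$, they reduce each $Q_k$; hence it suffices to identify $\ran Q_k$ on each summand and intersect over $k$.

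On $\cK$ I would prove $Q_k|_\cK = I_\cK$ for all $k\ge1$ by induction. For $k=1$, grouping edges by range and using (F) then (ND) gives $\sum_{e\in E}T_eT_e^* = \sum_{v}\sum_{r(e)=v}T_eT_e^* = \sum_v T_v = I_\cK$. For the inductive step I would factor each path of length $k+1$ uniquely as $\mu=\nu f$ with $|\nu|=k$, $f\in E$ and $r(f)=s(\nu)$, so that
\[
 \sum_{|\mu|=k+1} T_\mu T_\mu^* = \sum_{|\nu|=k} T_\nu \Big( \sum_{r(f)=s(\nu)} T_f T_f^* \Big) T_\nu^* = \sum_{|\nu|=k} T_\nu T_{s(\nu)} T_\nu^* = \sum_{|\nu|=k} T_\nu T_\nu^* = I_\cK ,
\]
using (F) on the inner sum and $T_\nu T_{s(\nu)} = T_\nu$ for the partial isometry $T_\nu$. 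Thus $\cK \subseteq \ran Q_k$ for every $k$, and so $\cK \subseteq \bigcap_{k\ge1} Q_k\cH$.

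For the reverse inclusion I would compute $Q_k$ on $\bigoplus_v\cH_v$. Transporting through the unitary $U_{v,i}:\cH_{G,v}\to\cH_{v,i}$ of Theorem~\ref{thm:Wold-decomp}, $Q_k$ becomes $\sum_{|\mu|=k}L_\mu L_\mu^*$ on $\cH_{G,v}$; since every path $\lambda$ with $|\lambda|\ge k$ factors uniquely as $\lambda=\mu\lambda'$ with $|\mu|=k$, while every shorter path is orthogonal to all $\ran L_\mu$, this is precisely the projection onto $\spn\{\xi_\lambda : s(\lambda)=v,\ |\lambda|\ge k\}$. Hence $\bigcap_{k}\ran(Q_k|_{\cH_{v,i}}) = \{0\}$, and as $Q_k|_{\bigoplus_v\cH_v}$ is the orthogonal direct sum of these pieces, $\big(\bigoplus_v\cH_v\big)\cap\bigcap_k \ran Q_k = \{0\}$. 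Because the decomposition $\cH=\cK\oplus\bigoplus_v\cH_v$ reduces every $Q_k$, a vector of $\bigcap_k\ran Q_k$ has each of its two components again in $\bigcap_k\ran Q_k$; the $\cK$-component is unconstrained and the other must vanish, giving $\bigcap_{k\ge1} Q_k\cH = \cK = \cM$.

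The main obstacle is the identity $Q_k|_\cK = I_\cK$: one must check that the reduction of $Q_k$ to the reducing subspace $\cK$ is genuinely $\sum_{|\mu|=k}T_\mu T_\mu^*$, and that the rearrangement of these possibly infinite sums is a legitimate \sot-limit, which is exactly where (F) and (ND) are used. The left-regular computation is routine once the factorization direction for $L_\mu L_\mu^*$ is fixed.
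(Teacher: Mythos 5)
Your proposal is correct and follows essentially the same route as the paper: decompose $\cH$ via the Wold decomposition, show the fully coisometric part satisfies $\sum_{|\mu|=k}T_\mu T_\mu^* = I_\cK$ for all $k$ (your induction is just an explicit unpacking of the paper's one-line appeal to property (F) and \sot-continuity of multiplication on the unit ball), and observe that on each left-regular summand the projections $Q_k$ cut down to $\spn\{\xi_\lambda : |\lambda|\ge k\}$, whose intersection is trivial. The only cosmetic difference is that the paper passes from $T_\mu T_\mu^* \le S_\mu S_\mu^*$ to get $\cM \subseteq \ran Q_k$, while you use that $\cK$ reduces $Q_k$; these are equivalent.
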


\begin{proof}
Write the Wold decomposition as $\rS = \rT \oplus \rL$ where $\rT$ is a fully coisometric family and $\rL$ is a direct sum of left regular representations.
Observe that since $\rT$ is a fully coisometric family on $\cM$ and multiplication is \sot\ continuous on the unit ball, we have that for $\mu \in \bF_G^+$ that
\begin{align*}
T_v = \sotsum_{|\mu|=k, r(\mu)=v} T_\mu T_\mu^*
\end{align*}
so that
\begin{align*}
  I_\cM &= \sum_{v\in V} T_v = \sum_{v\in V} \sotsum_{|\mu|=k, r(\mu)=v} T_\mu T_\mu^*
  = \sotsum_{|\mu|=k} T_\mu T_\mu^*.
\end{align*}
As $T_\mu T_\mu^* \leq S_\mu S_\mu^*$ we get that $I_\cM 
 \le \sotsum_{|\mu|=k} S_\mu S_\mu^*$, so that $\cM$ is contained in the intersection in the theorem statement.

On the other hand, it is completely different in the left regular portion. Indeed, in each $\cH_{G,v}$ we have that
\begin{align*}
 \bigcap_{k\ge1} \bigoplus_{|\mu|=k} S_\mu S_\mu^* \cH_{G,v} &=
 \bigcap_{k\ge1} \bigoplus_{|\mu|=k} \spn\{ \xi_\nu:|\nu|\ge k\} = \{0\}.
\end{align*}
It follows that the intersection is exactly $\cM$.
\end{proof}

\begin{definition}
Let $G$ be a countable directed graph. 
We say that $G$ has a \emph{source elimination scheme} (SES) if $G^0$ is the empty graph;  
i.e., in the elimination procedure described before Theorem~\ref{thm:precise-wold}, every vertex appears as a source at some step.
\end{definition}

\begin{example}
It is clear by construction that a graph with SES must be acyclic. When $G$ is a graph with finitely many vertices, it is acyclic if and only if it has SES.
However, the graph $G$ with 
\[ V=\bZ \qand  E = \{e_n: s(e_n)=n,\ r(e_n)=n+1,\ n\in \bZ\} \]
is an acyclic graph with $G^0=G$. 
\end{example}

For directed graphs with SES, it is easy to classify all free semigroupoid algebras up to unitary equivalence.

\begin{corollary}
Let $G$ be a directed graph with SES. 
Then a non-degenerate TCK family $\rS = (S_v, S_e)$ is uniquely determined up to unitary equivalence by 
the set of multiplicities $\{ \alpha_v\}_{v\in V}$ appearing in Theorem~$\ref{thm:precise-wold}$.
\end{corollary}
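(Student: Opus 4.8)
The plan is to read this off directly from the two Wold decomposition theorems, with the SES hypothesis doing the single piece of real work. First I would apply Theorem~\ref{thm:Wold-decomp} to write
\[
\rS \cong \rT \oplus \bigoplus_{v\in V} \rL_{G,v}^{(\alpha_v)},
\]
and then invoke Theorem~\ref{thm:precise-wold}, which identifies $\rT$ as a fully coisometric CK family supported on the vertex set $V^0$ of the source elimination $G^0$.

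Next I would use the SES hypothesis. By definition $G^0$ is the empty graph, so $V^0 = \emptyset$. Since $\rT$ is supported on $V^0$, every vertex projection $T_v$ vanishes. But $\rT$ inherits non-degeneracy from $\rS$ (as recorded in the proof of Theorem~\ref{thm:Wold-decomp}), so the identity on the space $\cK$ carrying $\rT$ equals $\sotsum_{v\in V} T_v = 0$. Hence $\cK = \{0\}$ and the $\rT$ summand is trivial, leaving
\[
\rS \cong \bigoplus_{v\in V} \rL_{G,v}^{(\alpha_v)}.
\]

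Finally I would invoke the uniqueness clause of Theorem~\ref{thm:Wold-decomp}. The multiplicities $\alpha_v = \Dim \cW_v$ are unitary invariants of $\rS$, so unitarily equivalent families necessarily share the same $\{\alpha_v\}_{v\in V}$. Conversely, when the remainders $\rT$ and $\rT'$ are both trivial (as they are here), that theorem asserts $\rS \cong \rS'$ precisely when $\alpha_v = \alpha'_v$ for all $v\in V$; concretely the equivalence is implemented by the direct sum of the canonical unitaries $U_{v,i}\colon \cH_{G,v}\to\cH_{v,i}$ constructed in that proof. This yields both directions of the stated classification.

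There is essentially no obstacle beyond checking that the simultaneous vanishing of all $T_v$, combined with non-degeneracy, forces $\rT$ to act on the zero space; the substantive content is carried entirely by the two preceding theorems, so the corollary is genuinely a short consequence.
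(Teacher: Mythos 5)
Your proposal is correct and follows essentially the same route as the paper: apply the two Wold decomposition theorems, observe that SES forces $V^0=\emptyset$ so the fully coisometric summand $\rT$ vanishes, and then invoke the uniqueness clause of Theorem~\ref{thm:Wold-decomp}. Your extra remark that non-degeneracy forces the space carrying $\rT$ to be zero is a slightly more careful version of the paper's statement that $\rT=(0,0)$, but the substance is identical.
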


\begin{proof}
It is clear that $\alpha_v$ is a unitary invariant by uniqueness of Wold-decomposition. 
Since for graphs with SES by Theorem \ref{thm:precise-wold}, if 
\[
\rT \oplus \bigoplus_{v\in V} \rL_{G,v}^{(\alpha_v)}
\]
is the Wold decomposition of the TCK family $\rS$, then $\rT= (T_v, T_e) = (0,0)$.
Therefore, the unitary equivalence class is determined by the multiplicities $\{ \alpha_v\}_{v\in V}$.
\end{proof}

\section{Single vertex and cycle algebras} \label{S:special-alg}

In this section, we develop two of the main tools that will arise in the proof of the structure theorem in the next section. First, we reduce the existence of wandering vectors to the same problem for free semigroup algebras associated to every vertex of the graph. Second, we carefully examine the free semigroupoid algebras of a cycle.

\begin{assumption}
From this point on we will assume throughout the paper that our TCK families are nondegenerate in the sense that they satisfy condition (ND).
\end{assumption}

Let $\mu = e_n \dots e_1$ be a cycle. We say that a $\mu$ is \emph{irreducible} if $s(e_i) \neq s(e_1)$ for all $1\leq i \leq n$. Note that an irreducible cycle may fail to be vertex-simple, as it may revisit other vertices, except for its initial vertex, along the way. The following observation will be useful in establishing many properties of free semigroupoid algebras by accessing deep results that have been established for free semigroup algebras.

\begin{proposition} \label{P:compress_to_vertex}
Let $\fS$ be a free semigroupoid algebra for $G=(V,E)$. 
Fix $v \in V$, and let $\bFGv =\{ \mu \in \bF_G^+ : s(\mu) = r(\mu) = v \}$. 
Then $\bFGv$ is the free semigroup generated by the set of irreducible cycles $\mu \in \bFGv$.
Moreover, $\fS_v := S_v \fS |_{S_v\cH}$ is a free semigroup algebra on $S_v\cH$ for $\bF_{G,v}^+$.
\end{proposition}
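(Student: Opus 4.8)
The proposition has a combinatorial half---the free-semigroup structure of $\bFGv$---and an operator-algebraic half, and the second rests on the first, so I would prove the combinatorial statement first. Given a cycle $\mu = e_n\cdots e_1 \in \bFGv$, I would look at its \emph{returns to} $v$, namely the indices $1\le i\le n$ with $r(e_i)=v$ (the endpoints $s(e_1)=v$ and $r(e_n)=v$ always occur). These cut $\mu$ into consecutive subpaths, each of which starts and ends at $v$ and meets $v$ nowhere in between, i.e.\ is an irreducible cycle; this gives a factorization of $\mu$ into irreducible cycles. Uniqueness is immediate, because the cut points are intrinsic to $\mu$---they record exactly when the journey sits at $v$---so every factorization into irreducible cycles must use these same cuts. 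Hence $\bFGv$ is free on the set of irreducible cycles, with the length-$0$ path $v$ as the identity.

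For the algebra I would first produce the generating row isometry. For an irreducible cycle $c$, the partial isometry $S_c$ restricts to an isometry of $S_v\cH$ into itself, since $S_c^*S_c = S_{s(c)} = S_v$ and $\ran S_c \subseteq S_{r(c)}\cH = S_v\cH$. Using the standard relation that $S_\alpha^* S_\beta = 0$ unless $\beta = \alpha\beta'$ or $\alpha = \beta\alpha'$ for some path, I would check that distinct irreducible cycles have orthogonal ranges: if $c' = c\rho$ with $c,\rho$ nontrivial, then $\rho$ is a cycle at $v$ (as $s(\rho)=r(\rho)=v$) and is a proper initial segment of $c'$ returning to $v$, contradicting irreducibility of $c'$; so $S_c^*S_{c'}=0$ for $c\ne c'$. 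Thus $\{\,S_c|_{S_v\cH}\,\}$ is a non-degenerate TCK family for the single-vertex graph with one loop per irreducible cycle, and $\wotclos{\Alg}\{S_c|_{S_v\cH}\}$ is by definition a free semigroup algebra on $S_v\cH$.

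It then remains to show $\fS_v = \wotclos{\Alg}\{S_c|_{S_v\cH}\}$. Writing $p=S_v\in\fS$, the corner $p\fS p = \fS \cap \{T: (I-p)T = 0 = T(I-p)\}$ is \wot-closed, being the intersection of $\fS$ with the kernels of the \wot-continuous maps $T\mapsto(I-p)T$ and $T\mapsto T(I-p)$; restricting to $S_v\cH$ presents $\fS_v$ as a \wot-closed algebra containing each $S_c|_{S_v\cH}$, which gives one inclusion. For the reverse inclusion I would use $\fS = \wotclos{\spn}\{S_\mu:\mu\in\bF_G^+\}$ and the \wot-continuity of $T\mapsto pTp$, which sends $S_\mu$ to $S_\mu$ if $\mu\in\bFGv$ and to $0$ otherwise; each surviving $S_\mu|_{S_v\cH}$ is, by the factorization from the first half, a product of the $S_c|_{S_v\cH}$. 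Hence every element of $\fS_v$ is a \wot-limit of polynomials in the $S_c|_{S_v\cH}$, and the two inclusions give equality.

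The main obstacle is this last identification rather than the combinatorics: one must ensure that compressing the \wot-dense set $\spn\{S_\mu\}$ and then closing recovers exactly the corner. This is handled by isolating the \wot-closedness of $p\fS p$ and combining the \wot-continuity of compression with the explicit factorization of $S_\mu$ (for $\mu\in\bFGv$) into irreducible cycles---so the freeness proved in the first half is exactly what lets one recognize the compressed polynomials as polynomials in the generators, and the two halves of the proposition are genuinely intertwined.
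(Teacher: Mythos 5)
Your proof is correct and follows essentially the same route as the paper: factor each cycle in $\bF_{G,v}^+$ at its returns to $v$ to get freeness on the irreducible cycles, show distinct irreducible cycles give partial isometries with orthogonal ranges by computing $S_\nu^*S_\mu$, and identify the compression $S_v\fS|_{S_v\cH}$ with the \wot-closed algebra generated by the restricted irreducible-cycle isometries. The only difference is that you spell out the steps the paper dismisses as ``clear'' and ``evident'' (the uniqueness of the factorization and the \wot-closedness of the corner together with the \wot-continuity of compression), which is a welcome elaboration rather than a different argument.
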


\begin{proof}
Clearly every $\mu \in \bFGv$ is a product of $\lambda \in \bFGv$ which are irreducible, and $\bFGv$ is isomorphic to the free semigroup generated by the set of irreducible cycles at $v$.

Next, if $\mu$ and $\nu$ are two distinct irreducible paths, then $S_\mu$ and $S_\nu$ have orthogonal ranges.
To see this, suppose that $|\nu|\le |\mu|$ and compute $S_\nu^* S_\mu$. This product is zero unless $\mu = \nu\mu'$. 
However, this would mean that $\mu$ returned to $v$ at an intermediate point, and this does not occur as $\mu$ is irreducible. Therefore, $S_\nu^* S_\mu=0$ as claimed.

It is evident that 
\[  \fS_v = \wotclos{\spn}\{ S_\mu |_{S_v\cH} : \mu \in \bFGv \} = S_v \fS|_{S_v\cH} \]
is the algebra generated by these irreducible paths. Therefore, this is a free semigroup algebra on $S_v\cH$.
\end{proof}

\begin{remark}
Let $I_v$ be the set of irreducible cycles at $v$. The above proposition is most useful when the number of generators $|I_v|$ is at least 2. In this case, results for free semigroup algebras are readily available to us. When $I_v = \mt$, we have $\fS_v = \bC S_v$; and usually this does not cause difficulties. However, when $|I_v|=1$, this means that the transitive component of $v$ is a simple cycle. The analysis here is more complicated. See Theorem~\ref{T:cycle_structure} below for the full story about cycles.
\end{remark}

The following proposition simplifies the issue of identifying wandering vectors for $\fS$, by reducing this problem to free semigroup algebras.

\begin{proposition} \label{P:extend wandering}
Let $\fS$ be a free semigroupoid algebra generated by a TCK family $\rS=(S_v,S_e)$ for $G$ on $\cH$, and let $x\in \cH$. Then $x$ is a wandering vector for $\fS_v := S_v \fS |_{S_v\cH}$ if and only if it is a wandering vector for $\fS$ supported on $v$. 
Hence, each $S_v\cH$ is spanned by wandering vectors for $\fS_v$ if and only if $\cH$ is spanned by wandering vectors for $\fS$.
\end{proposition}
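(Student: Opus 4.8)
The plan is to establish the single-vector equivalence first, since the spanning statement follows formally from it together with non-degeneracy. Recall from Proposition~\ref{P:compress_to_vertex} that $\fS_v$ is the free semigroup algebra for $\bFGv$ acting on $S_v\cH$, so a wandering vector for $\fS_v$ is a nonzero $x \in S_v\cH$ for which $\{S_\mu x : \mu \in \bFGv\}$ is pairwise orthogonal. The forward implication is immediate: if $x$ is wandering for $\fS$ and supported on $v$, then by non-degeneracy $x = S_v x \in S_v\cH$, and restricting the orthogonal family $\{S_\mu x : \mu \in \bF_G^+\}$ to the subset $\bFGv \subseteq \bF_G^+$ exhibits $x$ as wandering for $\fS_v$.

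For the converse, suppose $x \in S_v\cH$ is wandering for $\fS_v$. Since the $S_w$ are pairwise orthogonal projections and $x \in S_v\cH$, we have $\supp(x) = \{v\}$, so it remains only to verify that $\{S_\mu x : \mu \in \bF_G^+\}$ is orthogonal. First I would observe that $S_\mu x = S_\mu S_v x$ vanishes unless $s(\mu) = v$, so only paths emanating from $v$ contribute. For two distinct such paths $\mu, \nu$, say with $|\nu| \le |\mu|$, I would write $\ip{S_\mu x, S_\nu x} = \ip{S_\nu^* S_\mu x, x}$ and evaluate $S_\nu^* S_\mu$ using the relations $S_f^* S_e = \delta_{f,e} S_{s(e)}$ coming from (IS) and (TCK). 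The decisive combinatorial point is that this product vanishes unless $\mu$ factors as $\mu = \nu\mu'$, in which case $S_\nu^* S_\mu = S_{\mu'}$; and since $s(\mu) = s(\nu) = v$, the remaining path satisfies $s(\mu') = r(\mu') = v$, so $\mu'$ is a \emph{nonempty} element of $\bFGv$ (nonempty precisely because $\mu \ne \nu$). As $v$ is the length-$0$ element of $\bFGv$ with $S_v x = x$, the hypothesis that $x$ is wandering for $\fS_v$ then forces $\ip{S_\mu x, S_\nu x} = \ip{S_{\mu'} x, S_v x} = 0$. The main obstacle is exactly this computation: identifying $S_\nu^* S_\mu$ with $S_{\mu'}$ and recognizing $\mu'$ as a genuine cycle at $v$, which is what lets the single-vertex hypothesis be applied.

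For the spanning statement I would use non-degeneracy in the form $\cH = \bigoplus_{v \in V} S_v\cH$. If each $S_v\cH$ is spanned by wandering vectors for $\fS_v$, then by the equivalence just proved these are wandering vectors for $\fS$ supported on $v$, and their union over $v$ spans $\cH$. Conversely, given a wandering vector $x$ for $\fS$, I would note that for each $v$ the vector $S_v x$, when nonzero, is again wandering for $\fS$ and supported on $v$: the family $\{S_\mu S_v x : \mu \in \bF_G^+\}$ consists, apart from zeros, of the subfamily $\{S_\mu x : s(\mu) = v\}$ of an orthogonal family. By the single-vector equivalence $S_v x$ is then wandering for $\fS_v$. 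Finally, since $S_v$ is a bounded and hence continuous projection, applying it to a spanning set of wandering vectors for $\fS$ yields vectors $\{S_v x\}$ lying in $S_v\cH$ whose closed span is all of $S_v\cH$; as these are wandering for $\fS_v$, each $S_v\cH$ is spanned by wandering vectors for $\fS_v$, completing the proof.
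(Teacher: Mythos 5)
Your proposal is correct and follows essentially the same route as the paper: the same reduction of $\ip{S_\mu x, S_\nu x}$ to $\ip{S_{\mu'}x, x}$ with $\mu = \nu\mu'$ and $\mu'$ a nonempty cycle in $\bFGv$, and the same use of non-degeneracy and the projections $S_v$ for the spanning statement. The only difference is that you spell out a couple of small verifications (that $S_vx$ remains wandering for $\fS$, and why $\mu'$ lands in $\bFGv$) that the paper leaves implicit.
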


\begin{proof}
It is clear that if $x$ is a wandering vector for $\fS$ supported on $v$, then it is a wandering vector for $\fS_v$. For the converse, we need to verify that if $\mu,\nu$ are distinct paths in $\bF_G^+$ with $|\mu|\ge |\nu|$ and $s(\mu)=s(\nu)=v$, 
then $0 = \ip{S_\mu x, S_\nu x} =  \ip{S_\nu^*S_\mu x,  x}$.
However, $S_\nu^*S_\mu = 0$ unless $\mu = \nu\mu'$ and $\mu'\in \bFGv$.
Since $x$ is wandering for $\fS_v$ and $|\mu'|\ge 1$, we have $\ip{S_{\mu'}x,x}=0$ as desired.

Next, if each $S_v\cH$ is spanned by wandering vectors for $\fS_v$, each wandering vector $x$ for some $\fS_v$ is a wandering vector for $\fS$.
Therefore, $\cH = \bigoplus_{v\in V}S_v \cH$ is spanned by wandering vectors for $\fS$ in $\oplus_{v\in V}S_v\cH = \cH$. 
Conversely, if $\cH$ is spanned by wandering vectors for $\fS$, each such $x=S_v x$ is wandering for $\fS_v$ for every $v\in \supp(x)$. Hence, each $S_v\cH$ is the span of wandering vectors for $\fS_v$.
\end{proof}

To obtain our structure theorem in the next section, we need to understand free semigroupoid algebras arising from a cycle graph. Let $G=C_n$ be the cycle on $n$ vertices $v_1,\dots,v_n$ with edges $e_i$ with $s(e_i)=v_i$ and $r(e_i) = v_{i+1\!\! \pmod{n} }$ for $1 \le i \le n$.

We let $H^\infty$ denote the algebra of all bounded analytic functions on $\bD$, and let $H^\infty_0$ denote the ideal of all functions in $H^\infty$ which vanish at $0$. We introduce the notation $M_n^+(H^\infty)$ for the set of $n\times n$ matrices with coefficients in $H^\infty$
such that the matrix entries $f_{ij}\in H^\infty_0$ when $i>j$ (i.e., the strictly lower triangular entries lie in $H^\infty_0$). This has an operator algebra structure obtained from considering this as a subalgebra of $M_n(L^\infty(m))$, where $m$ is Lebesgue measure on the circle $\bT$.

\begin{lemma}\label{L:cycle_wand}
Consider the left regular TCK family $\rL=(L_v,L_e)$ of the cycle $C_n$ on the summand $\cH_{C_n,v_1}$. There is an identification of $\cH_{C_n,v_1}$ with $\bC^n \otimes \ell^2$ such that 
$L_{e_i} \cong E_{i,i+1} \otimes I$ for $1 \le i < n$ and $L_{e_n} \cong E_{n,1} \otimes U_+$, where $U_+$ is the unilateral shift. With this identification, $\fL_{C_n,v_1}$ is unitarily equivalent to the subalgebra of $M_n(\cB(\ell^2))$ generated by $\big[f_{ij}(U_+) \big]$ where $\big[ f_{ij} \big] \in M_n^+(H^\infty)$.

The algebra $\fL_{C_n}$ is completely isometrically isomorphic and weak-$*$ homeomorphic to $M_n^+(H^\infty)$, 
and the canonical restriction maps to $\cH_{C_n,v_i}$ are completely isometric weak-$*$ homeomorphic isomorphisms.
\end{lemma}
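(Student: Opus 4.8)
The plan is to make the spatial picture of the first paragraph completely explicit, recognise the generated algebra as a triangular algebra of analytic Toeplitz operators, and then pass to the remaining summands using transitivity of $C_n$. First I would enumerate the paths emanating from $v_1$: because $C_n$ is a simple cycle there is, for each $k\ge 0$, a unique path $\mu_k$ of length $k$ with $s(\mu_k)=v_1$. Writing $k=qn+r$ with $0\le r<n$, I declare $\xi_{\mu_k}\leftrightarrow\epsilon_{r+1}\otimes\delta_q$, where $\{\epsilon_j\}$ and $\{\delta_q\}$ are the standard bases of $\bC^n$ and $\ell^2$; this gives the identification $\cH_{C_n,v_1}\cong\bC^n\otimes\ell^2$. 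A direct computation from $L_e\xi_\mu=\xi_{e\mu}$ (when $r(\mu)=s(e)$, and $0$ otherwise) then yields $L_{e_i}\cong E_{i,i+1}\otimes I$ for $i<n$ and $L_{e_n}\cong E_{n,1}\otimes U_+$, and, more generally, $L_\mu|_{\cH_{C_n,v_1}}\cong E_{s(\mu),r(\mu)}\otimes U_+^{w(\mu)}$, where $w(\mu)$ is the number of occurrences of the edge $e_n$ in $\mu$ (equivalently, the number of times $\mu$ wraps around the cycle); in particular $L_{v_i}\cong E_{ii}\otimes I$.

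Next I would identify the \wot-closed algebra these generate. Identifying $\ell^2\cong H^2$ via $\delta_q\leftrightarrow z^q$ and $U_+$ with the unilateral shift, so that $f(U_+)$ is the analytic Toeplitz operator $T_f$ for $f\in H^\infty$, one checks that $\cA:=\{[f_{ij}(U_+)]:[f_{ij}]\in M_n^+(H^\infty)\}$ is a \wot-closed subalgebra of $M_n(\cB(\ell^2))$. The containment $\fL_{C_n,v_1}\subseteq\cA$ is immediate from the monomial formula above: a monomial with $s(\mu)=v_i$ and $r(\mu)=v_j$ lands in the $(i,j)$ block as $U_+^{w(\mu)}$, and $w(\mu)\ge 1$ is forced exactly when $i>j$, which is precisely the strictly-lower-triangular $H^\infty_0$ constraint defining $M_n^+(H^\infty)$. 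For the reverse containment I would invoke the Fourier/Ces\`aro machinery of Remark~\ref{R:Cesaro_in_T_+(G)}: each entry $f_{ij}(U_+)$ is a \wot-limit of analytic polynomials in $U_+$, and these lift to the Ces\`aro means $\Sigma_k(T)L_F$ of the element $T\in\fL_{C_n,v_1}$ whose Fourier coefficients assemble to $[f_{ij}]$, so that $\cA\subseteq\fL_{C_n,v_1}$.

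It then remains to see that $[f_{ij}]\mapsto[f_{ij}(U_+)]$ is completely isometric and a weak-$*$ homeomorphism. This map is the compression to $H^2\otimes\bC^n$ of multiplication by $[f_{ij}]$ on $L^2(\bT)\otimes\bC^n$; since the symbols are analytic the compression is multiplicative, and the classical identity $\|M_F\|_{H^2(\bC^n)}=\|F\|_{L^\infty}$ for $F\in M_n(H^\infty)$, applied to all matrix ampliations, yields complete isometry and matches the $M_n(L^\infty(m))$ norm used to define the operator-algebra structure on $M_n^+(H^\infty)$. Weak-$*$ bicontinuity follows because on bounded sets the weak-$*$ topology on $M_n^+(H^\infty)$ and the \wot\ on $\cA$ both coincide with entrywise convergence of Fourier coefficients. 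This establishes $\fL_{C_n,v_1}\cong M_n^+(H^\infty)$. Finally, since $C_n$ is transitive, Corollary~\ref{C:lr_max_element} shows each restriction $\rho_{\{v_i\}}:\fL_{C_n}\to\fL_{C_n,v_i}$ is a completely isometric weak-$*$ homeomorphic isomorphism; composing with the identification just obtained (and its analogue at each $v_i$) gives the last two assertions.

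The hard part will be the second step together with the norm computation in the third: one must prove both inclusions to pin the generated algebra down to exactly $M_n^+(H^\infty)$ --- in particular that the forced wraps match the strictly-lower-triangular $H^\infty_0$ condition and nothing more --- and then identify the analytic-Toeplitz matrix norm with the multiplier norm, which is where the classical $H^\infty$ theory carries the weight.
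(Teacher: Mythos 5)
Your proposal is correct and follows essentially the same route as the paper: the same explicit basis identification of $\cH_{C_n,v_1}$ with $\bC^n\otimes\ell^2$, the same monomial computation forcing the strictly lower-triangular $H^\infty_0$ condition, the same Ces\`aro argument for the reverse inclusion, and the same appeal to transitivity for the restriction maps. The only cosmetic difference is that you cite the classical identity $\|T_F\|=\|F\|_\infty$ for matrix-valued analytic Toeplitz operators where the paper reproves it by exhausting $\bC^n\otimes\ell^2(\bZ)$ by the invariant subspaces $\bC^n\otimes\ell^2(\bN_0-k)$ and passing to $L^2(\bT)$ via the Fourier transform.
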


\begin{proof}
The paths for $C_n$ with $s(\mu)=v_1$ are just indexed by their length, say $\mu_k$ for $k\ge0$. 
Moreover, $r(\mu_k) = v_{k \!\!\pmod n +1}$. 
So the spaces $\cH_{v_i} := L_{v_i}\cH_{C_n, v_1}$ have orthonormal basis $\{ \xi_{i,s} =\xi_{\mu_{i-1+ns}} : s \ge 0 \}$. 
Thus, there is a natural identification of each $\cH_{v_i}$ with $\ell^2$. 
With this basis, we readily identify $L_{v_i}$ with $E_{ii}\otimes I$, and $L_{e_i}$ with $E_{i,i+1} \otimes I$ for $1 \le i < n$ while $L_{e_n}$ is identified with $E_{n,1} \otimes U_+$. 

A calculation then shows that the simple cycle starting and ending at a vertex $v_i$ corresponds to $E_{ii}\otimes U_+$. This means that if $p \in \bC[z]$ is a polynomial such that $E_{ij} \otimes f(U_+)$ with $i > j$ is in the algebra generated by $E_{i,i+1} \otimes I$ for $1 \le i < n$ and $E_{n,1} \otimes U_+$, then it must be the case that $p$ vanishes at $0$ (i.e. no free coefficient). Thus, it follows that the \wot-closed algebra generated by $E_{i,i+1} \otimes I$ for $1 \le i < n$ and $E_{n,1} \otimes U_+$ is the \wot-closed generated by $[p_{ij}(U_+)]$ where $p_{ij} \in \bC[z]$ with $p_{ij}$ vanishing at $0$ when $i >j$. 

From this, it follows that for every polynomial matrix $\big[ p_{ij} \big]$ in $M_n^+(\bC[z]) \subseteq M_n^+(H^{\infty})$, there is a corresponding polynomial in $L_{v_i}$ and $L_{e_i}$ that is mapped to $\big[ p_{ij}(U_+) \big]$. 
Since any element $[f_{ij}] \in M_n^+(H^\infty)$ is the weak-$*$ Cesaro limit of polynomial matrices, $[f_{ij}]$ must be mapped to $\big[ f_{ij}(U_+) \big]$ under the unitary equivalence. 
As the algebra of matrices $\big[ f_{ij}(U_+) \big]$ for $[f_{ij}] \in M_n^+(H^\infty)$ is clearly a \wot-closed subspace of $M_n(\cB(\ell^2))$, we see that $\fL_{C_n,v_1}$ is unitarily equivalent to the algebra of matrices $\big[ f_{ij}(U_+) \big]$ for $[f_{ij}] \in M_n^+(H^\infty)$.

We next show that the algebra of matrices $\big[ f_{ij}(U_+) \big]$ for $[f_{ij}] \in M_n^+(H^\infty)$ is isomorphic to $M_n^+(H^\infty)$. Since the compression of $\bC^n \otimes \ell^2(\bZ)$ to $\bC^n \otimes \ell^2$ is completely contractive, 
it is evident that the norms of the matrix operators on $\bC^n \otimes \ell^2(\bZ)$ are at least as large. On the other hand, since $\bC^n \otimes \ell^2(\bZ)$ is the closed union of $\bC^n \otimes \ell^2(\bN_0 - k)$ as $k$ increases,
and the restriction to each of these invariant subspaces is unitarily equivalent to $\fL_{C_n,v_1}$, one sees that
the limit algebra is in fact completely isometrically isomorphic and weak-$*$ homeomorphic to $\fL_{C_n,v_1}$ via the canonical map that takes generators to generators.

Now identify $\ell^2(\bZ)$ with $L^2(\bT)$ by the Fourier transform which implements a unitary equivalence between $U$ to $M_z$. 
The von Neumann algebra $W^*(M_z)$ is the algebra $\{M_f:f\in L^\infty(\bT) \} \simeq L^\infty(\bT)$, where this is a normal $*$-isomorphism.
Then $\fL_{C_n,v_j}$ is unitarily equivalent to the subalgebra of $M_n(L^\infty(\bT))$ corresponding to $M_n^+(H^\infty)$.
So this map is completely isometric and a weak-$*$ homeomorphism.
Hence, this yields a completely isometric and weak-$*$ homeomorphic isomorphism between $\fL_{C_n,v_1}$ and $\fL_{C_n,v_1}$.

Finally, since for each $1\leq i \leq n$ we similarly have a completely isometric and weak-$*$ homeomorphic isomorphism $\phi_i : M_n^+(H^{\infty}) \rightarrow \fL_{C_n,v_i}$, the rest follows from Corollaries \ref{C:fL_{G,F}} and \ref{C:lr_max_element}.
\end{proof}

A similar analysis shows that a general CK family depend only on a single unitary.

\begin{lemma}\label{L:cycle_CK}
Let $\rS=(S_v,S_e)$ be a CK family for $C_n$ on a Hilbert space $\cH$. 
Then there is an identification of $\cH$ with $\bC^n \otimes \cK$ and a unitary operator $V$ on $\cK$ such that 
$S_{e_i} \cong E_{i,i+1} \otimes I$ for $1 \le i < n$ and $S_{e_n} \cong E_{n,1} \otimes V$. 
\end{lemma}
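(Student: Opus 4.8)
The plan is to exploit the special combinatorics of the cycle $C_n$, where every vertex receives exactly one edge. This forces each generating partial isometry to be a genuine unitary between consecutive vertex subspaces, and the only remaining freedom is a single ``monodromy'' unitary obtained by traversing the whole cycle once.

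First I would record the consequences of the relations. Since $s(e_i)=v_i$, relation (IS) gives $S_{e_i}^*S_{e_i}=S_{v_i}$ for every $i$. In $C_n$ the in-neighbourhood of each vertex is a single edge, namely $r^{-1}(v_{i+1})=\{e_i\}$ for $1\le i<n$ and $r^{-1}(v_1)=\{e_n\}$, so $0<|r^{-1}(v)|=1<\infty$ and (CK) applies as an equality, yielding $S_{e_i}S_{e_i}^*=S_{v_{i+1}}$ for $1\le i<n$ and $S_{e_n}S_{e_n}^*=S_{v_1}$. Consequently each $S_{e_i}$ restricts to a \emph{unitary} $S_{e_i}\colon S_{v_i}\cH\to S_{v_{i+1}}\cH$ (full initial space $S_{v_i}\cH$ by (IS), full final space $S_{v_{i+1}}\cH$ by (CK)), with $S_{e_n}\colon S_{v_n}\cH\to S_{v_1}\cH$.

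Next I would build the identification. By the standing non-degeneracy assumption, $\sotsum_{i}S_{v_i}=I$, so $\cH=\bigoplus_{i=1}^nS_{v_i}\cH$. Put $\cK:=S_{v_1}\cH$ and define unitaries $U_i\colon\cK\to S_{v_i}\cH$ by $U_1=\id_\cK$ and $U_i=S_{e_{i-1}}\cdots S_{e_1}|_\cK$ for $2\le i\le n$; each is unitary as a composite of the unitaries above. The map $W\colon\bC^n\otimes\cK\to\cH$ determined by $W(\epsilon_i\otimes k)=U_ik$ is then a unitary identifying $\cH$ with $\bC^n\otimes\cK$, with $S_{v_i}\cH$ corresponding to $\epsilon_i\otimes\cK$.

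Finally I would transport the generators through $W$. Because $S_{e_i}=S_{e_i}S_{v_i}$ annihilates $S_{v_j}\cH$ for $j\ne i$, and $S_{e_i}U_i=U_{i+1}$ for $i<n$, we get $S_{e_i}|_{S_{v_i}\cH}=U_{i+1}U_i^*$, i.e.\ $W^*S_{e_i}W=E_{i,i+1}\otimes I$, where $E_{i,i+1}$ carries the $i$-th coordinate summand onto the $(i+1)$-th as in Lemma~\ref{L:cycle_wand}. For the closing edge, set $V:=S_{e_n}S_{e_{n-1}}\cdots S_{e_1}|_\cK$, the monodromy unitary on $\cK$; then $S_{e_n}U_n=V$, so $S_{e_n}|_{S_{v_n}\cH}=VU_n^*$ and $W^*S_{e_n}W=E_{n,1}\otimes V$, as required. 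I expect no serious obstacle here; the only points demanding care are checking that (CK) genuinely forces the final projections $S_{e_i}S_{e_i}^*=S_{v_{i+1}}$ (which relies on each vertex of $C_n$ having exactly one incoming edge, so that (CK) is a bona fide equality) and keeping the matrix-unit and tensor conventions aligned with Lemma~\ref{L:cycle_wand}, so that the straight edges become $\otimes I$ and all the nontriviality is concentrated in the single unitary $V$ on the return edge.
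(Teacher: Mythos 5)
Your proposal is correct and follows essentially the same route as the paper: the paper's proof likewise uses the edges $S_{e_1},\dots,S_{e_{n-1}}$ (which (IS) and (CK) force to be unitaries between consecutive vertex subspaces) to identify each $S_{v_i}\cH$ with $\cK=S_{v_1}\cH$, leaving the monodromy around the cycle as the arbitrary unitary $V$ on the return edge. You have simply written out in full the details that the paper delegates to ``this follows in a similar manner to the previous lemma.''
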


\begin{proof}
This follows in a similar manner to the previous lemma. We denote $\cH_{v_i} = S_{v_i}\cH$, and use $S_{e_i}$ to identify $\cH_{v_i}$ with $\cH_{v_{i+1}}$ for $1\leq i < n$. The only difference now is that the unitary edge $S_{e_n}$ mapping $\cH_{v_n}$ onto $\cH_{v_1}$ will be identified with an arbitrary unitary operator $V$ instead of the unilateral shift.
\end{proof}

We can split the spectral measure $\mu$ of $V$ into a portion which is absolutely continuous $\mu_a$ with respect to Lebesgue measure $m$ and one that is singular $\mu_s$ so that we get a decomposition $V \cong V_a \oplus V_s$ on $\cK = \cK_a \oplus \cK_s$. This yields a corresponding decomposition of the CK family $\rS$ on $(\bC^n \otimes \cK_a) \oplus (\bC^n \otimes \cK_s)$ into $\rS \cong \rS_a \oplus \rS_s$ where $\rS_a$ is its absolutely continuous part and $\rS_s$ is its singular part. This allows us to describe general free semigroupoid algebras for a cycle.

\begin{theorem}\label{T:cycle_structure}
Let $\rS=(S_v,S_e)$ be a TCK family for $C_n$. 
Write its Wold decomposition as $\rS = \rT \oplus \sum_{i=1}^n \oplus \rL_{C_n, v_i}^{(\alpha_i)}$ where $\rT$ is CK.
Let $V$ be the unitary from Lemma~$\ref{L:cycle_CK}$ in the structure of $\rT$. Decompose $V=V_a \oplus V_s$ according to the spectral measure $\mu$ of $V$, where $V_a$ is a unitary with spectral measure $\mu_a$ absolutely continuous with respect to Lebesgue measure $m$, and $V_s$ is a unitary with spectral measure $\mu_s$ singular with respect to Lebesgue measure $m$. Then there are two situations,
\begin{enumerate} [label=\normalfont{(\arabic*)}]
\item If $\rS$ is a CK family $($i.e.\ $\alpha_i = 0$ for $1\le i \le n )$ and $m \not \ll \mu$, then $\fS$ is isomorphic to the von Neumann algebra $M_n(L^\infty(\mu))$.
\item If $\sum_{i=1}^n \alpha_i \ge 1$ or $m \ll \mu$, then $\fS$ is isomorphic to $\fL_{C_n} \oplus M_n(L^\infty(\mu_s))$.
\end{enumerate}
All of these isomorphisms are completely isometric weak-$*$ homeomorphic isomorphisms.

Furthermore, in case $(1)$ there are no wandering vectors, and in case $(2)$ the wandering vectors span the left regular part and the absolutely continuous portion of the CK part.
\end{theorem}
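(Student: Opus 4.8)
The plan is to combine the Wold decomposition (Theorem~\ref{thm:Wold-decomp}) with the two model computations of Lemmas~\ref{L:cycle_wand} and~\ref{L:cycle_CK}, and then reduce everything to a single operator, the cycle operator at $v_1$. First I would observe that the rotation $v_i\mapsto v_{i+1}$, $e_i\mapsto e_{i+1}$ (indices mod $n$) is a graph automorphism of $C_n$, so the summands $\rL_{C_n,v_i}$ are mutually unitarily equivalent and $\bigoplus_i \rL_{C_n,v_i}^{(\alpha_i)}\cong \rL_{C_n,v_1}^{(\alpha)}$ with $\alpha=\sum_{i=1}^n\alpha_i$. By Lemmas~\ref{L:cycle_wand} and~\ref{L:cycle_CK} both $\rT$ and $\rL_{C_n,v_1}^{(\alpha)}$ have the same matrix form over $\bC^n$: $S_{e_i}\cong E_{i,i+1}\otimes I$ for $1\le i<n$, while $S_{e_n}\cong E_{n,1}\otimes C$ for the single operator $C:=V\oplus U_+^{(\alpha)}$, the direct sum of the unitary $V$ (spectral measure $\mu=\mu_a+\mu_s$) with $\alpha$ copies of the unilateral shift. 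Compressing to $S_{v_1}\cH$ as in Proposition~\ref{P:compress_to_vertex}, and noting that $C_n$ has a \emph{single} irreducible cycle at $v_1$, the free semigroup algebra $\fS_{v_1}=S_{v_1}\fS|_{S_{v_1}\cH}$ is exactly the weak-$*$ closed unital algebra $\cA$ generated by the one operator $C$.

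The heart of the argument, and the step I expect to be the main obstacle, is to identify $\cA$ by the classical ($n=1$, single-contraction) theory, which amounts to describing the weak-$*$ closure of the analytic polynomials $\bC[z]$ inside $L^\infty(\mu)$ together with the shift summand. The governing principle is the F.\ and M.\ Riesz theorem: a measure on $\bT$ all of whose analytic moments vanish is absolutely continuous, and consequently an $H^1_0$-function that vanishes on a set of positive Lebesgue measure is zero. Using this I would split into the two cases. In case $(1)$, $\alpha=0$ and $m\not\ll\mu$, so the density of $\mu_a$ vanishes on a set of positive $m$-measure; the Riesz theorem then forces the annihilator of $\cA$ in $L^1(\mu)$ to be trivial, whence $\bar z\in\cA$, so $\cA=W^*(V)=L^\infty(\mu)$ is self-adjoint. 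In case $(2)$ the ``analytic'' summand $V_a\oplus U_+^{(\alpha)}$ always has an absolutely continuous part equivalent to $m$ (the shift supplies $m$ when $\alpha\ge1$, and $\mu_a\sim m$ when $m\ll\mu$), so on it the weak-$*$ closure of $\bC[z]$ is exactly $H^\infty$, while the singular unitary $V_s$ (with $\mu_s\perp m$) is totally non-analytic and polynomials are weak-$*$ dense in $L^\infty(\mu_s)$. The delicate point here is the \emph{decoupling}: because $\mu_s\perp m$, the $H^\infty$-behavior and the $L^\infty(\mu_s)$-behavior can be realized by a single bounded net of polynomials simultaneously and independently, giving $\cA\cong H^\infty\oplus L^\infty(\mu_s)$; conversely the shift (or the a.c.\ unitary with $\mu_a\sim m$) pins the analytic part to $H^\infty$ and keeps it from escaping to all of $L^\infty(\mu_a)$.

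I would then bootstrap back to the $M_n$ level by rerunning the matrix computation in the proof of Lemma~\ref{L:cycle_wand} verbatim, with $C$ in place of $U_+$. In case $(1)$, since $\cA=L^\infty(\mu)$ is self-adjoint and $V$ is invertible there, the strict-lower-triangular vanishing constraint that produced $M_n^+(H^\infty)$ becomes vacuous (one may always factor out the invertible $V$), so the generated matrix algebra is all of $M_n(L^\infty(\mu))=M_n(\cA)$; this coincides with $\fM=W^*(\rS)$, exhibiting $\fS$ as the asserted von Neumann algebra. In case $(2)$ the splitting of $\cA$ passes to the matrices: on the $H^\infty$-summand the constraint is precisely the one of Lemma~\ref{L:cycle_wand}, giving $M_n^+(H^\infty)\cong\fL_{C_n}$, while on the $L^\infty(\mu_s)$-summand invertibility of $V_s$ again voids the constraint, yielding $M_n(L^\infty(\mu_s))$; hence $\fS\cong\fL_{C_n}\oplus M_n(L^\infty(\mu_s))$. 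Every identification in the chain is either a spatial unitary equivalence (Lemmas~\ref{L:cycle_wand},~\ref{L:cycle_CK}, Corollaries~\ref{C:fL_{G,F}} and~\ref{C:lr_max_element}) or a functional calculus / normal $*$-isomorphism on a function algebra, so each is automatically completely isometric and a weak-$*$ homeomorphism, and they compose to give the stated isomorphisms.

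Finally, for the wandering-vector assertions I would invoke Proposition~\ref{P:extend wandering} to reduce wandering vectors of $\fS$ supported on $v_1$ to wandering vectors for the single operator $C$: a nonzero $x$ is wandering iff $\{C^kx\}_{k\ge0}$ are pairwise orthogonal. On the unitary direction this says the scalar spectral measure $\mu_x$ has all nonzero Fourier coefficients equal to $0$, i.e.\ $\mu_x$ is a multiple of $m$; since $\mu_x\ll\mu$, such an $x$ exists only where $m\ll\mu$. In case $(1)$ ($\alpha=0$, $m\not\ll\mu$) this forces $x=0$, so there are no wandering vectors. In case $(2)$ the shift copies $U_+^{(\alpha)}$ and, when present, the absolutely continuous unitary $V_a$ (with $\mu_a\sim m$) are each spanned by their wandering vectors, whereas $V_s$ admits none; assembling over all vertices via Proposition~\ref{P:extend wandering} shows that the wandering vectors span exactly the left-regular part together with the absolutely continuous portion of the CK part.
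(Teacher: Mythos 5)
Your overall skeleton matches the paper's proof (compress to a vertex, identify the algebra generated by the single cycle operator by a Wermer-type theorem -- which you rederive from F.\ and M.\ Riesz where the paper simply cites Wermer -- then bootstrap to the $M_n$ level as in Lemma~\ref{L:cycle_wand}), and your case~(1) arguments, including the Fourier-coefficient proof that there are no wandering vectors, are correct and arguably cleaner than the paper's. But there is a genuine gap in the final wandering-vector claim for case~(2). The delicate subcase is $\sum_i \alpha_i \ge 1$ with $0 \ne \mu_a \not\sim m$, say $\mu_a \approx m|_A$ with $0 < m(A) < 1$. Your own criterion shows that a vector lying entirely in the unitary part is wandering only if its spectral measure is a multiple of $m$, so \emph{no} vector inside $\cK_a$ is wandering in this subcase; and the wandering vectors of $U_+^{(\alpha)}$ alone span only the shift part. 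Hence "assembling" the wandering vectors of the two summands, as you propose, spans only the left-regular part and misses $\cK_a$ entirely -- yet the theorem asserts the span is all of $H^2$-part plus $L^2(A)$. One needs genuinely \emph{mixed} wandering vectors with components in both the shift and the $M_{z,A}$ directions. The paper produces these via strong logmodularity of $H^\infty$ (\cite[Theorem II.4.5]{Garnett}): choose $h \in H^\infty$ with $|h|^2 = \upchi_{A^c} + \ep\upchi_A$ and set $x = h \oplus (1-\ep)^{1/2} z^k \upchi_A \oplus 0$, for which $\ip{C^p x, x} = \int z^p = 0$ for $p \ge 1$. Your parenthetical "(with $\mu_a \sim m$)" indicates this subcase was overlooked. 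Even in the subcase $m \ll \mu$, the assertion that $\cK_a$ is spanned by wandering vectors of $V_a$ alone requires an argument when the multiplicity function of $V_a$ is non-constant (wandering vectors must have constant total modulus across multiplicity layers); the paper handles this by splitting $V_a \cong \bigoplus_{i\ge0} M_{z,A_i}$ with $A_0 = \bT \supseteq A_i$ and pairing $L^2(A_i^c) \oplus L^2(A_i) \cong L^2(\bT)$.

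A secondary, repairable error: the claim that $\bigoplus_i \rL_{C_n,v_i}^{(\alpha_i)} \cong \rL_{C_n,v_1}^{(\alpha)}$ is false as a unitary equivalence of TCK families for $C_n$ -- the multiplicities $(\alpha_v)_{v \in V}$ are unitary invariants by the uniqueness clause of Theorem~\ref{thm:Wold-decomp}. Concretely, in $\rL_{C_n,v_j}$ the unique non-surjective edge operator is the one entering $v_j$, and surjectivity of each $S_{e_i}$ is preserved under any unitary equivalence (which is necessarily block-diagonal with respect to the vertex projections), so a direct sum with deficiencies at several vertices cannot be put in your clean model with all the non-unitarity at the $(n,1)$ corner; "rerunning Lemma~\ref{L:cycle_wand} verbatim with $C$ in place of $U_+$" is therefore applied to a family that is not unitarily equivalent to $\rS$. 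This can be fixed, since the compression of Proposition~\ref{P:extend wandering} still gives $\fS_{v_1} = \wotclos{W}(V \oplus U_+^{(\alpha)})$ with $\alpha = \sum_i \alpha_i$ regardless of where the deficiencies sit, and only the completely isometric isomorphism class of $\fS$ (not a spatial model) is asserted; but as written the bootstrap step has a hole that the wandering-vector gap above does not excuse.
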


\begin{proof}
Suppose $\rS$ is a CK family with unitary $V$ as above with spectral measure $\mu = \mu_a + \mu_s$. A theorem of Wermer \cite{Wermer} shows that the \wot-closed algebra $W(V)$ generated by $V$ is completely isometrically and weak-* homeomorphically isomorphic to $L^\infty(\mu)$ if $m \not\ll \mu$,
and $W(V)$ is completely isometrically and weak-* homeomorphically isomorphic to $H^\infty \oplus L^\infty(\mu_s)$ if $m\ll\mu$.
Thus, when $m \not\ll \mu$, up to the identification of Lemma~$\ref{L:cycle_CK}$ it follows that for each function $f \in L^\infty(\mu)$ we have that the operators $E_{i,j}\otimes f(V)$ all belong to $\fS$ so that $\fS \simeq  M_n(L^\infty(\mu))$.

When $m\ll\mu$, we have that $W(V) \cong H^\infty(V_a) \oplus W^*(V_s)$, where $V \cong V_a\oplus V_s$ is the decomposition of $V$ into its absolutely continuous and singular parts, and $W^*(V_s)$ is the von-Neumann algebra generated by $V_s$. Since $\mu_a$ is equivalent to $m$, it follows that for the bilateral shift $U$ we have that $V_a^{(\infty)}$ and $U^{(\infty)}$ are unitarily equivalent. 

The form of generators of $\fS$ as in Lemma~$\ref{L:cycle_CK}$ yields that for any polynomial $p \in \bC[z]$ such that $E_{ij} \otimes p(V)$ is an element of $\fS$ with $i >j$, we must have that $p$ vanishes at $0$. Thus, we see that $[p_{ij}(V)]$ generates $\fS$ when $p_{ij} \in \bC[z]$ are polynomials that vanish at $0$ when $i >j$. This identification preserves the direct sum decomposition in the previous paragraph, and we get that $\fS \cong M_n^+(H^{\infty}(V_a)) \oplus M_n(L(\mu_s))$. By Lemma~\ref{L:cycle_wand} and as $V_a^{(\infty)}$ and $U^{(\infty)}$ are unitarily equivalent, it follows that $\fS \simeq \fL_{C_n} \oplus M_n(L^\infty(\mu_s))$.

Next, consider the case in which there is a left regular part and a CK part. Wermer's analysis \cite{Wermer} of the \wot-closed algebra of an isometry shows that the \wot-closed algebra $W(U_+ \oplus V)$ is isomorphic to $H^\infty \oplus L^\infty(\mu_s)$. By using similar arguments as before as well as Lemma~\ref{L:cycle_wand} again, we see that still $\fS \simeq \fL_{C_n} \oplus M_n(L^\infty(\mu_s))$.

In case (1), the algebra is self-adjoint and thus cannot have wandering vectors. 
Indeed, if there was a wandering vector, it would yield an invariant subspace on which the algebra $\fS$ is unitarily equivalent to a nonself-adjoint algebra. 
This will contradict the previous analysis.

In case (2), the argument from the previous paragraph shows that the wandering vectors are contained in the left regular and absolutely continuous parts of the space, which we identified with $\bC^n \otimes \cK_l$ and $\bC^n \otimes \cK_a$ respectively. 
In order to show that $\bC^n \otimes (\cK_l \oplus \cK_a)$ is the closed span of wandering vectors, we turn to Proposition~\ref{P:extend wandering}, which reduces the problem to showing for each $1\leq i \leq n$ that the free semigroup algebra $\fS_{v_i}$ has $\cK_l \oplus \cK_a$ as the closed span of wandering vectors.
Clearly $\cK_l$ is spanned by wandering vectors.
If $\mu_a \approx m$, then by the proof of \cite[Theorem II.1.3]{Dav96}, and perhaps after splitting and unifying subsets, we can arrange for
\[ V_a \cong \bigoplus_{i\ge0} M_{z,A_i} \quad\text{where}\quad \bT = A_0 \supseteq A_i \ \ \forall i>0 , \]
where $M_{z,A}$ is multiplication by $z$ on $L^2(A)$.  Then for $i>0$ 
\[ L^2(A_0) \oplus L^2(A_i) \supseteq L^2(A_i^c) \oplus L^2(A_i) \cong L^2(\bT) .\]
Thus, $L^2(A_i^c) \oplus L^2(A_i)$ is a reducing subspace on which the restriction of $V$ is unitarily equivalent to $M_z$, and is thus spanned by wandering vectors. Since the union of subspaces $\{L^2(A_i^c) \oplus L^2(A_i)\}_{i\geq 0}$ has dense span in $\cK_a$ (up to the above unitary identification), we see that $\cK_a$ is also spanned by wandering vectors.

In the remaining case, some $\alpha_i\ge1$ and $\mu_a \approx m|_A$ for some measurable subset $A \subseteq \bT$.
Thus $\fS_{v_i}$ is generated by an isometry $V$ which, modulo multiplicity, has the form $V = U_+ \oplus M_{z,A} \oplus V_s$ on $\cH = H^2 \oplus L^2(A) \oplus L^2(\mu_s)$. 
In this case, the wandering vectors span a dense subset of $H^2 \oplus L^2(A)$. 
This is a well-known fact, but we review the argument. Denote $S := U_+ \oplus M_{z,A}$, and let $\ep>0$ and $k\in\bZ$. 
By strong logmodularity of $H^\infty$ (see \cite[Theorem II.4.5]{Garnett}), 
there is a function $h\in H^\infty\subseteq H^2$ such that $|h|^2 = \upchi_{A^c} + \ep \upchi_A$.
Let $x = h \oplus (1-\ep)^{1/2}z^k \upchi_A \oplus 0$.
Then if $m=l+p$, and $p>0$,
\begin{align*}
 \ip{S^m x, S^l x} &= \ip{S^px,x} \\
 &= \bip{z^ph \oplus (1-\ep)^{1/2}z^{k+p} \upchi_A \oplus 0, h \oplus (1-\ep)^{1/2}z^k \upchi_A \oplus 0}\\
 &= \int z^p |h|^2 + (1-\ep) \int z^p \upchi_A = \int z^p = 0.
\end{align*}
It follows that the wandering vectors span $H^2 \oplus L^2(A)$. 
\end{proof}

\begin{example}
Consider two CK families on $C_n$ as in Lemma~\ref{L:cycle_CK} with unitaries $M_{z,A}$ and $M_{z,A^c}$,
where $A$ is a subset of $\bT$ with $0 < m(A) < 1$ and $A^c=\bT\setminus A$. 
Then by Theorem~\ref{T:cycle_structure}(1), they both generate free semigroupoid algebras which are von Neumann algebras. However, their direct sum forms the CK family with unitary $M_z$, which has spectral measure $m$. This yields a representation with wandering vectors and is hence \emph{nonself-adjoint}. 

This shows that when the graphs are cycles, there are nonself-adjoint free semigroupoid algebra with complementary reducing subspaces so that the cutdown by each is a von Neumann algebra with absolutely continuous associated isometries $M_{z,A}$ and $M_{z,A^c}$ (see Section~\ref{S:abs cont}).
\end{example}

\section{The structure theorem} \label{S:stucture}

In this section, we establish a structure theorem for free semigroupoid algebras of a graph $G$ akin to the one in \cite{DKP} for free semigroup algebras, along with a complete identification of wandering vectors. For a TCK family $\rS=(S_v, S_e)$ of a directed graph $G$ acting on a Hilbert space $\cH$, in this section we denote by
\[
\fA_S : = \overline{\Alg}^{\| \cdot \|} \{ S_{\mu} :\mu \in \bF_G^+  \} 
\]
and 
\[
\fT_S : = \ca(\{ S_{\mu} :\mu \in \bF_G^+  \})
\]
the norm closed algebra and C*-algebra generated by $S$ inside $B(\cH)$, respectively.

\begin{proposition} \label{P:complete_quotient}
Let $G$ be a countable directed graph, and let $\fS$ be the free semigroupoid algebra generated by a TCK family $\rS=(S_v,S_e)$. 
Suppose that $\phi: \fS \rightarrow \fL_G$ is a \wot-continuous and completely contractive homomorphism such that $\phi(S_{\lambda}) = L_{\lambda}$ for all paths $\lambda \in \bF_G^+$.
Then $\phi$ is a surjective complete quotient map. Hence, $\fS / \ker \phi$ is completely isometrically isomorphic and weak-$*$-homeomorphic to $\fL_G$.
\end{proposition}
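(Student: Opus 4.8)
The plan is to show $\phi$ is a complete quotient map by producing, for every $n$ and every $A\in M_n(\fL_G)$ with $\|A\|\le 1$, an element $B\in M_n(\fS)$ with $\|B\|\le\|A\|$ and $\phi^{(n)}(B)=A$; surjectivity, the complete isometry of $\fS/\ker\phi$ onto $\fL_G$, and finally the weak-$*$ statement will then all follow. The essential device is the same \wot-compactness argument used in the proof of Theorem~\ref{T:left_quotient}, fed by a completely contractive section defined on the tensor algebra.

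First I would build the section. Since $\rS$ is in particular a contractive $G$-family, the Muhly--Solel correspondence \cite[Theorem 3.10]{MS1998} recalled in Section~\ref{S:prelim} supplies a completely contractive homomorphism $\sigma:\cT_+(G)\to\fS$ with $\sigma(L_\mu)=S_\mu$ for all $\mu\in\bF_G^+$. Because $\phi(S_\mu)=L_\mu$, the composite $\phi\circ\sigma$ fixes every generator $L_\mu$, so $\phi\circ\sigma=\id_{\cT_+(G)}$. Now fix $A\in M_n(\fL_G)$ with $\|A\|\le 1$. For each $k\ge 1$ and each finite $F\subseteq V$, Remark~\ref{R:Cesaro_in_T_+(G)} (applied entrywise) gives $\Sigma_k^{(n)}(A)L_F\in M_n(\cT_+(G))$, and since $\Sigma_k$ together with right multiplication by the projection $L_F=\sum_{v\in F}L_v$ are completely contractive, $\|\Sigma_k^{(n)}(A)L_F\|\le\|A\|$. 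Setting $B_{k,F}:=\sigma^{(n)}\!\big(\Sigma_k^{(n)}(A)L_F\big)\in M_n(\fS)$, complete contractivity of $\sigma$ yields $\|B_{k,F}\|\le\|A\|$, while $\phi^{(n)}(B_{k,F})=(\phi\circ\sigma)^{(n)}\big(\Sigma_k^{(n)}(A)L_F\big)=\Sigma_k^{(n)}(A)L_F$.

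Next I would take a limit. The net $\{B_{k,F}\}$, indexed by $(k,F)$ with $k\to\infty$ and $F\to V$, lies in the ball of radius $\|A\|$ of $M_n(\fS)$, which is \wot-compact; let $B$ be the \wot-limit of a convergent subnet, so $\|B\|\le\|A\|$. Along this subnet $\Sigma_k^{(n)}(A)L_F\to A$ in the \wot\ topology by Remark~\ref{R:Cesaro_in_T_+(G)}, whereas \wot-continuity of $\phi^{(n)}$ gives $\phi^{(n)}(B_{k,F})\to\phi^{(n)}(B)$. Hence $\phi^{(n)}(B)=A$. This proves that $\phi^{(n)}$ carries the closed unit ball of $M_n(\fS)$ onto the closed unit ball of $M_n(\fL_G)$ for every $n$, so $\phi$ is a surjective complete quotient map and the induced map $\fS/\ker\phi\to\fL_G$ is a completely isometric isomorphism. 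For the topological statement, since $\phi$ is \wot-continuous and the \wot\ and weak-$*$ topologies agree on bounded sets of $B(\cH)$, the Krein--Smulian theorem shows $\phi$ is weak-$*$ continuous; the resulting completely isometric weak-$*$ continuous bijection $\fS/\ker\phi\to\fL_G$ of dual operator algebras then has a preadjoint that is itself an isomorphism, so its inverse is weak-$*$ continuous and the map is a weak-$*$ homeomorphism.

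The step I expect to be the main obstacle is obtaining the norm estimate in the correct direction. Pulling a preimage back through $\phi$ only gives $\|\phi(B)\|\le\|B\|$, which is useless for bounding the preimage; the crux is therefore the completely contractive section $\sigma$ on $\cT_+(G)$, which forces $\|B_{k,F}\|\le\|\Sigma_k^{(n)}(A)L_F\|\le\|A\|$ and so keeps the approximating net inside a fixed \wot-compact ball. Everything else is the routine Ces\`aro-approximation-and-compactness mechanism already in play in Theorem~\ref{T:left_quotient}.
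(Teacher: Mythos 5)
Your proof is correct and follows essentially the same route as the paper: a completely contractive ``polynomial-level'' section giving $\|B_{k,F}\|\le\|\Sigma_k^{(n)}(A)L_F\|\le\|A\|$, followed by the Ces\`aro/\wot-compactness argument and the standard predual argument for the weak-$*$ homeomorphism. The only cosmetic difference is that you obtain the key norm inequality from the Muhly--Solel correspondence for contractive $G$-families on $\cT_+(G)$, whereas the paper gets it from the universal property of the C*-algebra $\cT(G)$ (the $*$-representation $\pi_\rS$ is automatically completely contractive); both are available from the preliminaries and yield the same estimate.
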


\begin{proof}
By the universality of $\cT(G)$, we have a surjective $*$-homomorphism $\pi_\rS : \cT(G) \rightarrow \fT_S$ given by $\pi_\rS(L_{\lambda}) = S_{\lambda}$ for all $\lambda \in \bF_G^+$. 
Hence, if $X:= \sum_{\ell} A_{\ell}\otimes S_{\lambda_{\ell}}$ is a finite sum in $M_p(\fA_S)$, then $\|X\| \leq \|A\|$ where $A = \sum_{\ell} A_{\ell}\otimes L_{\lambda_{\ell}} \in M_p(\cT_+(G))$.

For $A\in M_p(\fL_G)$ with $\|A\|\le1$, consider its Fourier series expansion $\sum A_{\lambda} \otimes L_{\lambda}$, 
and the Ces\`aro means $\Sigma_k(A) = \sum_{|\lambda|<k}\big(1 - \frac{|\lambda|}{k}\big) A_{\lambda} \otimes L_{\lambda}$. 
If $F$ is a finite subset of $V$, recall that $L_F := \sum_{v\in F} L_v$. By Remark~\ref{R:Cesaro_in_T_+(G)}, the net of operators $\Sigma_k(A) L_F$ belongs to the unit ball of $\cT_+(G)$,
and converges to $A$ in the \wot\ topology.
Define a net 
\[
 X_{k,F} = \sum_{\substack{|\lambda|<k\\s(\lambda) \in F}} \big(1 - \frac{|\lambda|}{k}\big) A_{\lambda} \otimes S_{\lambda} .
\]
Then, by the first paragraph of the proof, we have 
\[ \|X_{k,F}\| \leq \| \Sigma_k(A) L_F \| \le \| A \| .\]
Moreover, $\phi^{(p)}(X_{k,F}) = \Sigma_k(A)L_F$.

Since the ball of radius $\|A\|$ in $M_p(\fS)$ is \wot-compact, there is a cofinal subnet $X_{\alpha}$ of $\{X_{k,F}\}$ that converges \wot\ to some element $X\in M_p(\fS)$. 
By the \wot-continuity of $\phi^{(p)}$, we see that $\phi^{(p)}(X) = A$.
By complete contractivity of $\phi$, we have  
\[
\|X \| \leq \|A\| = \|\phi^{(p)}(X) \| \leq \|X\| .
\]
Thus the map $\phi$ is surjective, and the map 
\[ \tilde\phi : \fS/\ker\phi \to \fS_G \]
is a completely isometric isomorphism.

Since $\phi$ is \wot-continuous and both $\fS$ and $\ker \phi$ are weak-$*$ closed, we see that $\widetilde{\phi}$ is weak-$*$ to \wot\ continuous. 
However, the weak-$*$ and \wot\ topologies on $\fL_G$ coincide by \cite[Corollary 3.2]{JK2005}, and so $\widetilde{\phi}$ is weak-$*$ to weak-$*$ continuous. 
Using the fact that $\widetilde{\phi}$ is an isometry, the usual Banach space pre-duality arguments show that the inverse of $\widetilde{\phi}$ is also weak-$*$ to weak-$*$ continuous.
Therefore, $\widetilde{\phi}$ is a weak-$*$ homeomorphism.
\end{proof}

We recall that for a TCK family $\rS=(S_v,S_e)$, the support of $\rS$ is given by $\supp(\rS) = \{ v\in V \ | \ S_v\neq 0 \}$,
which is a directed subset of $V$; 
and  we denote by $G[\rS] := G[\supp(\rS)]$ the subgraph induced from $G$ on the support of $\rS$.

\begin{definition}
We say that a free semigroupoid algebra $\fS$ generated by a TCK family $\rS=(S_v,S_e)$ is {\em analytic} (or \emph{type L}) if there is a completely isometric isomorphism and weak-$*$-homeomorphism $\phi : \fS \rightarrow \fL_{G[\rS]}$ such that $\phi(S_{\lambda}) = L_{\lambda}$ for any $\lambda \in \bF^+_{G[\rS]}$.
\end{definition}

By Proposition \ref{P:complete_quotient} it suffices to require that $\phi$ above only be a \wot-continuous completely contractive \emph{injective} homomorphism. Moreover, if $G$ is a transitive graph, then the only non-empty directed subgraph of $G$ is $G$ itself. So if $\fS$ is of analytic, then it must be isomorphic to $\fL_G$. 

For a wandering subspace $\cW \subseteq \cH$ of a free semigroupoid algebra $\fS$, we will denote by $G[\cW]$ the graph $G[\supp(\cW)]$. For a wandering vector $x\in \cH$, we will also denote $G[x]$ for the graph $G[\{x\}]$. To streamline the presentation of our results, we make the following definition.

\begin{definition}
Let $G$ be a directed graph and let $\fS$ be the free semigroupoid algebra of a TCK family $\rS=(S_v,S_e)$ on a Hilbert space $\cH$. We say that a vertex is \emph{wandering} for $\fS$ if there is a non-zero wandering vector for $\fS$ supported on $v$. We will denote by $V_w$ the collection of all wandering vertices for $\fS$.
\end{definition}

When $x\in \cH$ is a wandering vector supported on $v$ for a free semigroupoid algebra $\fS$, and $S_{\lambda}x \neq 0$ for a path $\lambda \in \bF_G^+$ with $s(\lambda)=v$, then $S_{\lambda}x$ is a wandering vector supported on $r(\lambda)$. Hence, it is easy to see that $V_w$ is directed.

\begin{corollary} \label{cor:type-L-wandering}
Let $\fS$ be a free semigroupoid algebra of a graph $G$ generated by a TCK family $\rS=(S_v,S_e)$ on $\cH$.
If $\fS$ has a maximal wandering subspace $\cW \subseteq \cH$, then there is a \wot-continuous completely contractive homomorphism $\phi: \fS \rightarrow \fL_{G[\cW]}$ obtained via the restriction map to the subspace $\fS[\cW]$ which is surjective, and is a complete quotient map. Hence, the induced map $\widetilde{\phi} : \fS / \ker \phi \rightarrow \fL_{G[\cW]}$ is a completely isometric weak-$*$ homeomorphic isomorphism.

Moreover, when $\cM$ is the closed span of all wandering vectors for $\fS$, then $\fS|_{\cM}$ is completely isometrically isomorphic and weak-$*$-homeomorphic to $\fL_{G[V_w]}$. In particular, if $\cH$ is the closed span of wandering vectors, then $\fS$ is analytic type.
\end{corollary}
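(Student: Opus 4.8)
The plan is to prove the two assertions separately, deriving the first from the left-regular structure of $\fS[\cW]$ together with Proposition~\ref{P:complete_quotient}, and the second from a Fourier-symbol computation on individual wandering vectors. For the first paragraph of the statement, I would begin with the fact that $\fS[\cW]$ is invariant for $\fS$, so the restriction $R\colon T\mapsto T|_{\fS[\cW]}$ is a \wot-continuous completely contractive homomorphism. Arguing exactly as in Theorem~\ref{thm:Wold-decomp}, a choice of orthonormal basis of each $S_v\cW$ identifies $\rS|_{\fS[\cW]}$ with $\bigoplus_{v\in\supp(\cW)}\rL_{G,v}^{(\alpha_v)}$, $\alpha_v=\Dim S_v\cW$, so $\fS|_{\fS[\cW]}$ is of left-regular type. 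On the induced graph $G[\cW]$, whose vertex set is the smallest directed set containing $\supp(\cW)$ and hence satisfies $\supp(\cW)\succ w$ for every vertex $w$, Corollaries~\ref{C:fL_{G,F}} and~\ref{C:lr_max_element}, together with the fact that a per-vertex ampliation is a completely isometric \wot-homeomorphism onto the algebra it generates, identify $\fS|_{\fS[\cW]}$ completely isometrically and \wot-homeomorphically with $\fL_{G[\cW]}$ via $S_\lambda|_{\fS[\cW]}\mapsto L_\lambda$. Composing with $R$ produces $\phi\colon\fS\to\fL_{G[\cW]}$ with $\phi(S_\lambda)=L_\lambda$, and that $\phi$ is a complete quotient map follows by rerunning the proof of Proposition~\ref{P:complete_quotient} with $G$ replaced by $G[\cW]$: since $\rS$ restricts to a (possibly degenerate) TCK family for $G[\cW]$, the universal property of $\cT(G[\cW])$ supplies the estimate $\|X_{k,F}\|\le\|\Sigma_k(A)L_F\|$ for the Ces\`aro liftings, and the remainder is unchanged.

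For the ``moreover'' statement I would \emph{not} attempt to reduce to the first part, because $\cW=\wotclos{\cM\ominus\sum_e S_e\cM}$ need not generate $\cM$ (for a cycle carrying an absolutely continuous unitary, $\cM$ is analytic yet contains no nonzero separated wandering subspace). Instead I build the isomorphism from a symbol. By Proposition~\ref{P:extend wandering}, $\cM$ is $\fS$-invariant and splits as the orthogonal sum $\bigoplus_{v\in V_w}\cM_v$ of spans of wandering vectors of the free semigroup algebras $\fS_v$ of Proposition~\ref{P:compress_to_vertex}. For each wandering $x$ supported on $v$, the vectors $\{S_\mu x:s(\mu)=v\}$ are orthogonal of constant norm, so $U_x\colon S_\mu x\mapsto\xi_\mu$ identifies $\fS[x]$ with $\cH_{G,v}$ and carries $\fS|_{\fS[x]}$ onto $\fL_{G,v}$; set $A_x=U_x(T|_{\fS[x]})U_x^*$. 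A short computation gives $\langle A_x\,\xi_v,\xi_\mu\rangle=\langle Tx,S_\mu x\rangle/\|x\|^2$, and writing $T=\wotlim_\alpha p_\alpha(\rS)$ with $p_\alpha\in\spn\{S_\lambda\}$ this equals $\lim_\alpha c^\alpha_\mu$, the limit of the coefficient of $S_\mu$ in $p_\alpha$. As this limit is independent of $x$, a well-defined symbol $a_\mu(T)$ is obtained over each $v\in V_w$; since $V_w$ is directed, $a_\mu(T)$ is defined for every $\mu\in\bF_{G[V_w]}^+$.

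I would then assemble the $A_x$ into $\phi(T)\in\fL_{G[V_w]}$. Choosing $A_v:=A_x$ for any wandering $x$ over $v$, the operator $B=\bigoplus_{v\in V_w}A_v$ on $\cH_{G[V_w]}=\bigoplus_{v\in V_w}\cH_{G,v}$ is bounded with $\|B\|=\sup_v\|A_v\|\le\|T\|$, and its Ces\`aro means $\Sigma_k(B)=\sum_{|\mu|<k}(1-|\mu|/k)\,a_\mu(T)\,L_\mu$ are polynomials in $\fL_{G[V_w]}$ of norm $\le\|B\|$ converging \wot\ to $B$; hence $B=\phi(T)\in\fL_{G[V_w]}$. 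This gives a \wot-continuous completely contractive homomorphism with $\phi(S_\lambda)=L_\lambda$ that factors through $\fS|_\cM$ (the symbol sees only the action on $\cM$). Surjectivity and the complete quotient property follow again from the subgraph version of Proposition~\ref{P:complete_quotient}, while injectivity on $\fS|_\cM$ is immediate: if all $a_\mu(T)$ vanish then each $A_x=0$, since Fourier coefficients determine elements of $\fL_{G,v}$, whence $Tx=0$ for every wandering $x$ and so $T|_\cM=0$. Therefore $\fS|_\cM\cong\fL_{G[V_w]}$. The final sentence is the case $\cM=\cH$: then Proposition~\ref{P:extend wandering} forces $V_w=\supp(\rS)$, so $G[V_w]=G[\rS]$ and $\fS\cong\fL_{G[\rS]}$ is analytic.

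The main obstacle is the construction and control of the symbol in the ``moreover'' part: verifying that the Fourier data read off distinct wandering vectors over a common vertex agree, and that these glue across the whole induced graph $G[V_w]$ into a genuine bounded element of $\fL_{G[V_w]}$. The remaining ingredients are either routine or direct transcriptions of Proposition~\ref{P:complete_quotient} to an induced subgraph; the one subtlety there is that the restricted family $\rS$ is typically degenerate, so one must invoke the universal property of $\cT(G[\cW])$ (respectively $\cT(G[V_w])$) in place of non-degeneracy.
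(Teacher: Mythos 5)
Your proof is correct and follows essentially the same route as the paper: restrict to the wandering subspace to obtain a \wot-continuous completely contractive homomorphism sending $S_\lambda\mapsto L_\lambda$ and invoke Proposition~\ref{P:complete_quotient} (adapted to the induced subgraph), then for the ``moreover'' part assemble the per-vertex maps coming from individual wandering vectors and prove injectivity from the fact that wandering vectors span $\cM$. The only differences are cosmetic: you verify well-definedness of the per-vertex symbol by an explicit Fourier-coefficient computation where the paper cites Corollary~\ref{C:fL_{G,F}}, and in the first part you upgrade the restriction to a completely isometric identification of $\fS|_{\fS[\cW]}$ with $\fL_{G[\cW]}$, which is slightly more than is needed.
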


\begin{proof}
Since $\cW$ is wandering, $\fS[\cW] = \bigoplus_{v\in \supp(\cW)} \fS[S_v \cW]$. 
Let $\cM = \fS[\cW]$, and for $v\in \supp(\cW)$, fix a unit vector $x_v$ in $S_v\cW$. 
Let $U_v: \cH_{G,v} \rightarrow \cM$ be given by $U_v \xi_{\lambda} = S_{\lambda}x_v$ for $\lambda\in \bF_G^+$ with $s(\lambda)=v$, which is an isometry. 
Then $U := \bigoplus_{v\in \supp(\cW)}U_v$ is an isometry of $\cH_{G,\supp(\cW)} = \bigoplus_{v\in\supp(\cW)} H_{G,v}$ into $\cH$. 
The map $\phi(A) = U^*AU$ is a \wot-continuous completely contractive homomorphism that takes $S_{\lambda}$ to $L_{\lambda}|_{\cH_{G,\supp(\cW)}}$. 
By Proposition \ref{P:complete_quotient}, we get that $\phi : \fS \rightarrow \fL_{G[\cW]}$ and the induced map $\widetilde{\phi}$ have the properties required in the statement.

Next, we construct a map $\psi: \fS|_{\cM} \rightarrow \fL_{G[V_w]}$ given by $\psi(S_{\lambda}|_{\cM}) = L_{\lambda}$ for $\lambda \in \bF_G^+$ that will be completely contractive and \wot~continuous. 
Indeed, if $\{x_i\}_{i \in I}$ is the collection of all wandering vectors, we have a completely contractive and \wot~continuous map $\phi_i : \fS|_{\cM} \rightarrow \fL_{G[\supp(x_i)]}$. 
After restricting, Corollary \ref{C:fL_{G,F}} yields that for each $v\in V_w= \bigcup_{i\in I} \supp(x_i)$ we have a well-defined completely contractive and \wot~continuous map $\phi_v : \fS|_{\cM} \rightarrow \fL_{G,v}$ that maps $S_{\lambda}|_{\cM}$ to $L_{\lambda}$ for each $\lambda \in \bF_G^+$. 
Since each $\cH_{G,v}$ is reducing for $\fL_{G[V_w]}$, we see that $\psi = \oplus_{v\in V_w} \phi_v : \fS|_{\cM} \rightarrow \fL_{G[V_w]}$ is a well-defined completely contractive \wot~continuous map.

Thus, by Proposition \ref{P:complete_quotient} it will suffice to show that $\psi$ is injective. For any $T\in \fS|_{\cM}$ and a wandering vector $x\in \cH$, let $T_x:= T|_{\fS[x]}$ be the restriction. 
If $\psi(T) = 0$, it is clear that $\psi(T)|_{\cH_{G,\supp(x)}} = 0$. So by the first paragraph we know that $T_x$ is unitarily equivalent to $\psi(T)|_{\cH_{G,\supp(x)}}$; whence $T_x = 0$. As this was done for an arbitrary wandering vector $x\in \cM$, we conclude that $T|_{\cM} = 0$.
Therefore, the map $\psi$ is an \emph{injective} \wot-continuous completely contractive homomorphism. 
Thus, by Proposition \ref{P:complete_quotient} it is a completely isometric isomorphism that is also a weak-$*$-homeomorphism.
\end{proof}

\begin{definition}
For a free semigroupoid algebra $\fS$ of a graph $G$ generated by a TCK family $\rS= (S_v,S_e)$, we define
\[
\fS_0:= \overline{\langle S_e \rangle}^{\wot} = \wotclos{\spn} \{ S_{\lambda} :|\lambda|\geq 1  \}
\]
to be the \wot-closed ideal generated by the edge partial isometries. Similarly, we define
\[
 \fS_0^k := \overline{\langle S_\mu  : |\mu|=k \rangle}^{\wot} = \wotclos{\spn} \{ S_{\lambda} :|\lambda|\geq k  \} .
\]
In particular, we have the ideals $\fL_{G,0}^k$ in $\fL_G$.
\end{definition}

\begin{lemma} \label{L:SGA_0^k}
For $k\ge1$,  
\[
 \fS_0^k = \big\{ \sotsum_{|\lambda|=k} S_{\lambda} A_{\lambda} : A_{\lambda}\in \fS  \big\}.
\]
Each element of $\fS_0^k$ has a unique such decomposition with $A_{\lambda} = S_{s(\lambda)} A_{\lambda}$.
\end{lemma}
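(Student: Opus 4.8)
The plan is to identify the coefficients explicitly as $A_\lambda = S_\lambda^* T$ and to read the decomposition off from the projection $Q_k := \sotsum_{|\lambda|=k} S_\lambda S_\lambda^*$. First I would record the elementary relations among equal-length path isometries: for $|\mu|=|\lambda|=k$ one has $S_\lambda^* S_\lambda = S_{s(\lambda)}$, and since distinct paths of the same length have pairwise orthogonal ranges, $S_\lambda^* S_\mu = \delta_{\lambda,\mu}\,S_{s(\lambda)}$. Hence the increasing net of finite sums $\sum_{\lambda\in F} S_\lambda S_\lambda^*$ of pairwise orthogonal projections converges \sot\ to a projection $Q_k$. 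A short computation, peeling off the top $k$ edges $\mu=\lambda\mu'$, gives $S_\lambda^* S_\mu = S_{\mu'}$ when $\mu=\lambda\mu'$ and $0$ otherwise, so that $S_\lambda^* S_\mu\in\fS$ for every $\mu$ with $|\mu|\ge k$, and moreover $Q_k S_\mu = S_\mu$ for all such $\mu$.

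For the inclusion $\supseteq$, each $S_\lambda A_\lambda$ lies in $\fS_0^k$: left multiplication by $S_\lambda$ is \wot-continuous and carries the spanning set $\{S_\mu:\mu\in\bF_G^+\}$ of $\fS$ into $\spn\{S_\nu:|\nu|\ge k\}$ (since $S_\lambda S_\mu = S_{\lambda\mu}$ has length $\ge k$, or is $0$), so $S_\lambda\fS\subseteq\fS_0^k$. Any \sot-convergent sum $\sotsum_{|\lambda|=k} S_\lambda A_\lambda$ is then a \wot-limit of finite partial sums lying in $\fS_0^k$, hence lies in the \wot-closed space $\fS_0^k$. Note one may normalize $A_\lambda$ to $S_{s(\lambda)}A_\lambda\in\fS$ without changing the value, since $S_\lambda S_{s(\lambda)} = S_\lambda$.

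For $\subseteq$, I would take $T\in\fS_0^k$ and set $A_\lambda := S_\lambda^* T$. Since $Q_k S_\mu = S_\mu$ on the spanning set, \wot-continuity of $X\mapsto Q_k X$ together with \wot-closedness of $\fS_0^k$ forces $Q_k T = T$. Writing $Q_k=\sotsum_{|\lambda|=k} S_\lambda S_\lambda^*$ and using that right multiplication by $T$ is \sot-continuous gives $T = Q_k T = \sotsum_{|\lambda|=k} S_\lambda S_\lambda^* T = \sotsum_{|\lambda|=k} S_\lambda A_\lambda$, with $A_\lambda = S_{s(\lambda)}A_\lambda$ because $S_\lambda^* = S_{s(\lambda)}S_\lambda^*$. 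The one genuinely substantive point, and the step I expect to be the main obstacle, is that $A_\lambda=S_\lambda^* T$ actually belongs to $\fS$: the naive statement $S_\lambda^*\fS\subseteq\fS$ is \emph{false} (for instance $S_e^* S_{r(e)} = S_e^*\notin\fS$), so the argument must exploit both $|\lambda|=k$ and $T\in\fS_0^k$. The clean way is topological: the map $X\mapsto S_\lambda^* X$ is \wot-continuous on all of $B(\cH)$, so it sends $\fS_0^k=\wotclos{\spn}\{S_\mu:|\mu|\ge k\}$ into the \wot-closure of its image on the spanning set; by the preliminary computation that image lies in $\fS$, and as $\fS$ is \wot-closed we conclude $S_\lambda^* T\in\fS$.

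Finally, uniqueness together with the formula $A_\lambda=S_\lambda^* T$ follows by applying $S_\mu^*$ on the left. Since left multiplication by $S_\mu^*$ is \sot-continuous, it commutes with the \sot-sum, so for any admissible decomposition with $A_\lambda=S_{s(\lambda)}A_\lambda$ we get $S_\mu^*\big(\sotsum_{|\lambda|=k} S_\lambda A_\lambda\big) = \sotsum_{|\lambda|=k}(S_\mu^* S_\lambda)A_\lambda = S_{s(\mu)}A_\mu = A_\mu$. Thus every admissible decomposition of a given $T$ must have $A_\mu = S_\mu^* T$, which yields both existence with the stated normalization and its uniqueness.
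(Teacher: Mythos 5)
Your proof is correct and follows essentially the same route as the paper: both identify the coefficients as $A_\lambda = S_\lambda^*T$, use the projection $\sotsum_{|\lambda|=k}S_\lambda S_\lambda^*$ to recover $T$, and deduce $S_\lambda^*T\in\fS$ by a \wot-limiting argument (the paper via a net of polynomials in the algebraic ideal, you via \wot-continuity of left multiplication, which amounts to the same thing). Your explicit verification of the inclusion $\supseteq$ and your caution about why $S_\lambda^*T\in\fS$ is the genuinely substantive step are both well placed.
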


\begin{proof}  
The algebraic ideal $\cI_k$ generated by $\{ S_{\lambda}: |\lambda| =k\}$ consists of polynomials $\sum_{|\mu|\ge k} a_\mu S_\mu$ in the sense that $a_\mu=0$ for all but finitely many $\mu$.
By combining terms for $\mu$ of the form $\mu=\lambda\mu'$ for $|\lambda| = k$, we get an expression $B= \sum_{|\lambda|=k} S_\lambda A_\lambda$
where $A_\lambda$ is a polynomial in the (not necessarily norm-closed) algebra generated by $\rS$. 
Moreover, one computes that $A_\lambda = S_\lambda^* B$.

Given $A \in \fS_0^k$, there is a net $A_{\alpha} \in \cI_k$ \wot-converging to $A$.
We write $A_\alpha = \sum_{|\lambda|=k} S_\lambda A_{\alpha,\lambda}$.
Then define
\[
 A_{\lambda} := S_{\lambda}^*A = \wotlim S_{\lambda}^*A_{\alpha} = \wotlim A_{\alpha,\lambda}.
\]
Thus $A_\lambda$ lie in $\fS$.
The projection $P = \sotsum_{|\lambda|=k} S_{\lambda}S_{\lambda}^*$ satisfies $PS_{\mu} = S_{\mu}$ for all $\mu \in \bF_G^+$ with $|\mu|\geq k$.
It follows that $PA_\alpha = A_\alpha$ for all $\alpha$, and therefore $PA = A$.
Hence,
\[
A = PA = \sotsum_{|\lambda|=k} S_{\lambda}S_{\lambda}^*A = \sotsum_{|\lambda|=k} S_{\lambda}A_{\lambda} .
\]
The $A_{\lambda}$ are uniquely determined by the equation $A_{\lambda} = P_{s(\lambda)} A_{\lambda} = S_{\lambda}^*A$.
\end{proof}

\begin{corollary} \label{C:shift SGA_0^k}
If $|\mu|= n \le k$, then $S_\mu^* \fS_0^k \subseteq \fS_0^{k-n}$, including $k=n$ where we interpret $\fS_0^0$ to be $\fS$.
\end{corollary}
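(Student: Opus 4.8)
The plan is to read off both $\fS_0^k$ and $\fS_0^{k-n}$ from the explicit normal form furnished by Lemma~\ref{L:SGA_0^k}, and then simply left-multiply by $S_\mu^*$. Concretely, I would start with an arbitrary $A \in \fS_0^k$ and invoke Lemma~\ref{L:SGA_0^k} to write $A = \sotsum_{|\lambda|=k} S_\lambda A_\lambda$ with $A_\lambda = S_{s(\lambda)}A_\lambda = S_\lambda^* A \in \fS$. The goal is then to exhibit $S_\mu^* A$ in the corresponding normal form $\sotsum_{|\nu|=k-n} S_\nu B_\nu$ with $B_\nu \in \fS$, which by Lemma~\ref{L:SGA_0^k} is precisely membership in $\fS_0^{k-n}$.

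The key step is to move $S_\mu^*$ inside the \sot-sum. Since left multiplication by the fixed bounded operator $S_\mu^*$ is \sot-continuous, it carries the convergent net of partial sums defining $A$ to that defining $S_\mu^* A$, so that
\[
 S_\mu^* A = \sotsum_{|\lambda|=k} S_\mu^* S_\lambda A_\lambda .
\]
Next I would compute $S_\mu^* S_\lambda$ using the relations (P) and (IS) together with the fact that distinct paths have partial isometries with orthogonal ranges: because $|\mu| = n \le k = |\lambda|$, this product vanishes unless $\mu$ is an initial sub-path of $\lambda$, i.e. unless $\lambda = \mu\lambda'$ for a (necessarily unique) path $\lambda'$ with $|\lambda'| = k-n$ and $r(\lambda') = s(\mu)$, in which case $S_\lambda = S_\mu S_{\lambda'}$ and hence $S_\mu^* S_\lambda = S_{s(\mu)} S_{\lambda'} = S_{\lambda'}$. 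Substituting and discarding the vanishing terms reindexes the sum to
\[
 S_\mu^* A = \sotsum_{|\lambda'|=k-n} S_{\lambda'} A_{\mu\lambda'},
\]
where $\lambda'$ runs over the paths with $r(\lambda') = s(\mu)$. Setting $B_{\lambda'} := A_{\mu\lambda'} \in \fS$ for those $\lambda'$ and $B_\nu := 0$ otherwise puts $S_\mu^* A$ in exactly the normal form of Lemma~\ref{L:SGA_0^k} for $\fS_0^{k-n}$, giving $S_\mu^* A \in \fS_0^{k-n}$.

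Finally I would dispatch the boundary case $k = n$, where $\fS_0^0$ is interpreted as $\fS$: here only the single index $\lambda = \mu$ survives, $S_\mu^* S_\mu = S_{s(\mu)}$, and $S_\mu^* A = S_{s(\mu)} A_\mu = A_\mu \in \fS$, as required. I do not expect a genuine obstacle; the only points needing care are the passage of $S_\mu^*$ through the \sot-sum and the claim that the reindexed family $\{A_{\mu\lambda'}\}$ still lies in $\fS$ and sums in \sot. Both are immediate: $S_\mu^* A$ is the image of an \sot-convergent net under a fixed bounded operator, and the coefficients are recovered intrinsically as $B_{\lambda'} = S_{\lambda'}^*(S_\mu^* A) = (S_\mu S_{\lambda'})^* A \in \fS$, so no separate convergence argument is needed.
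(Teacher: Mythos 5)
Your proposal is correct and follows essentially the same route as the paper: write $A \in \fS_0^k$ in the normal form of Lemma~\ref{L:SGA_0^k}, compute $S_\mu^* S_\lambda = S_{\lambda'}$ when $\lambda = \mu\lambda'$ and $0$ otherwise, and reindex the \sot-sum to land in $\fS_0^{k-n}$. The extra care you take with the \sot-continuity of left multiplication and the boundary case $k=n$ is fine but not needed beyond what the paper records.
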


\begin{proof}
Note that if $|\lambda|=k$, then
\[
S_\mu^* S_\lambda = \begin{cases}S_{\lambda'} &\IF \lambda = \mu\lambda'\\0&\text{ otherwise}.\end{cases}
\] 
Therefore if $A$ in $\fS_0^k$ is written $A = \sotsum_{|\lambda|=k} S_{\lambda} A_{\lambda}$, we obtain that
\[ S_\mu^*A = \sotsum_{|\lambda'|=k-n, \ r(\lambda')=s(\mu)} S_{\lambda'} A_{\mu\lambda'} \]
which lies in $\fS_0^{k-n}$.
\end{proof}

\begin{theorem} \label{T:dichotomy}
Let $\fS$ be the free semigroupoid algebra of a TCK family $\rS=(S_v,S_e)$ on a Hilbert space $\cH$ for a graph $G$. 
Let $V_0 = \{S_v : S_v\in \fS_0\}$. Then $V_w = V \setminus V_0$, and we have a completely isometric isomorphism and weak-$*$ homeomorphism
\[ \fS/ \fS_0 \cong \ell^\infty(V_w) .\]
Moreover, if we denote by $P_0 = \sotsum _{v\in V_0}S_v$, then we have that
\begin{enumerate} [label=\normalfont{(\arabic*)}]
\item
$P_0^{\perp} \cH$ is invariant for $\fS$.
\item
$P_0\fS P_0$ is self-adjoint.
\end{enumerate}
\end{theorem}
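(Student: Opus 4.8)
\medskip

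The plan is to establish the three assertions in sequence, using the wandering-vector machinery from Section~\ref{S:special-alg} and the ideal structure from Lemma~\ref{L:SGA_0^k}. First I would pin down the claim $V_w = V\setminus V_0$. Recall that $V_0 = \{v : S_v \in \fS_0\}$, and by the definition of $\fS_0$ as the \wot-closed ideal $\wotclos{\spn}\{S_\lambda : |\lambda|\ge 1\}$, the condition $S_v\notin \fS_0$ should be exactly the condition that there is a vector in $S_v\cH$ orthogonal to all $S_\lambda\cH$ with $|\lambda|\ge 1$ and $r(\lambda)=v$ — that is, a wandering vector supported on $v$. Concretely, $S_v\in\fS_0$ iff $S_v = \sotsum_{r(e)=v} S_eS_e^* S_v$ in the appropriate sense, i.e.\ iff the wandering space $\cW_v = (S_v - \sum_{e\in r^{-1}(v)}S_eS_e^*)\cH$ of Theorem~\ref{thm:Wold-decomp} vanishes. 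Since $\cW_v\neq\{0\}$ is precisely the existence of a wandering vector supported on $v$ (by Proposition~\ref{P:extend wandering} combined with the Wold decomposition), this gives $v\in V_w \iff v\notin V_0$. The quotient isomorphism $\fS/\fS_0 \cong \ell^\infty(V_w)$ I would then obtain by showing that the images $\{S_v + \fS_0 : v\in V_w\}$ are a family of orthogonal ``projections'' summing to the identity in the quotient and that the quotient is abelian and generated by them; the Fourier/Ces\`aro apparatus from Remark~\ref{R:Cesaro_in_T_+(G)} identifies the quotient map with sending $T$ to its collection of diagonal coefficients $(\langle T\xi_{s(\mu)},\xi_\mu\rangle)$ restricted to length-zero paths, which lands in $\ell^\infty(V_w)$.

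\medskip

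For assertion~(1), that $P_0^\perp\cH$ is invariant for $\fS$, I would argue that $P_0\fS\subseteq\fS P_0$, equivalently that $P_0^\perp \fS P_0^\perp = \fS P_0^\perp$ has no ``upper-triangular'' part mapping into $P_0\cH$. The key point is that $P_0 = \sotsum_{v\in V_0}S_v$ is the support projection of the ideal $\fS_0$ in the following sense: for $v\in V_0$ we have $S_v\in\fS_0$, so $S_v = S_v P \cdots$ is absorbed by the length-$\ge1$ structure. I would show that for any generator $S_e$, $S_e P_0^\perp$ again lands in $P_0^\perp\cH$: if $s(e)\in V_w$ then $r(e)\in V_w$ as well (since $V_w$ is directed, as noted before Corollary~\ref{cor:type-L-wandering}), so $S_e$ maps $S_{s(e)}\cH\subseteq P_0^\perp\cH$ into $S_{r(e)}\cH\subseteq P_0^\perp\cH$. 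This directedness of $V_w$ is exactly what makes $P_0^\perp\cH = \bigoplus_{v\in V_w}S_v\cH$ invariant under all the $S_\lambda$, hence under $\fS$.

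\medskip

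For assertion~(2), that $P_0\fS P_0$ is self-adjoint, I expect this to be the main obstacle and would reduce it to the single-vertex picture via Proposition~\ref{P:compress_to_vertex}. For $v\in V_0$, the compression $\fS_v = S_v\fS|_{S_v\cH}$ is a free semigroup algebra with $v$ \emph{not} a wandering vertex, so $\fS_v$ has no wandering vectors. By Kennedy's structure theory for free semigroup algebras (the absence of wandering vectors forcing the self-adjoint/von Neumann alternative), each such $\fS_v$ is a von Neumann algebra. The harder part is to pass from self-adjointness of the diagonal compressions $S_v\fS S_v$ for individual $v\in V_0$ to self-adjointness of the full corner $P_0\fS P_0$: one must control the off-diagonal blocks $S_w\fS S_v$ for distinct $w,v\in V_0$. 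I would handle these by using that $P_0\cH$ contains no wandering vectors at all (since $V_0\cap V_w=\emptyset$), invoking Corollary~\ref{cor:type-L-wandering} in contrapositive form — any nonzero ``analytic'' off-diagonal piece would produce a wandering vector — so the restriction $\fS|_{P_0\cH}$ can have no type-L summand and must be a von Neumann algebra, whence $P_0\fS P_0$ is self-adjoint. The delicate step here is ensuring $P_0\cH$ is genuinely reducing for $\fS$ so that $\fS|_{P_0\cH}$ makes sense as an algebra and the wandering-vector characterization applies; this follows from assertion~(1) together with the fact that $P_0\cH = (P_0^\perp\cH)^\perp$ and $P_0^\perp\cH$ is invariant, so $P_0\cH$ is coinvariant, and combined with the von Neumann conclusion it will in fact be reducing.
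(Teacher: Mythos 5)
Your handling of assertion (1) and of the quotient $\fS/\fS_0\cong\ell^\infty(V_w)$ matches the paper (the directedness of $V_w$, and the map $A\mapsto P_0^\perp\Phi_0(A)$), but there are two genuine gaps elsewhere. The first is your claimed equivalence ``$S_v\in\fS_0$ iff the Wold wandering space $\cW_v=(S_v-\sum_{e\in r^{-1}(v)}S_eS_e^*)\cH$ vanishes,'' on which your proof of $V_w=V\setminus V_0$ rests. This is false: take $G$ a single vertex with one loop and $S_e$ the bilateral shift on $\ell^2(\bZ)$. Then $\cW_v=\{0\}$ and no vector is orthogonal to all the ranges $S_\lambda\cH=\cH$, yet $\fS\cong H^\infty$, $S_v=I\notin\fS_0\cong H^\infty_0$, and every standard basis vector is wandering, so $v\in V_w$. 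The space $\cW_v$ detects left-regular \emph{summands}, not wandering \emph{vectors}; fully coisometric families can be spanned by wandering vectors. The inclusion $V\setminus V_0\subseteq V_w$ is the deep direction of the theorem: after reducing to the free semigroup algebra $\fS_v$ via Propositions \ref{P:compress_to_vertex} and \ref{P:extend wandering}, the paper must invoke Kennedy's wandering vector theorem when there are at least two irreducible cycles at $v$, and Theorem \ref{T:cycle_structure} when there is exactly one. An argument that produces wandering vectors by a formal orthogonality computation cannot be correct.

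The second gap is in assertion (2), where the actual mechanism is missing. The paper's argument is purely ideal-theoretic: if $P_0S_eP_0=S_e$ then $r(e)\in V_0$, so $S_{r(e)}\in\fS_0$ and $S_e^*=S_{s(e)}S_e^*S_{r(e)}\in P_0(S_e^*\fS_0)P_0\subseteq P_0\fS P_0$ by Corollary \ref{C:shift SGA_0^k}; the adjoints of the generators of the corner already lie in the corner. Your substitute --- that $P_0\cH$ carries no wandering vectors, hence $\fS|_{P_0\cH}$ ``must be a von Neumann algebra'' --- is circular, since the implication ``no wandering vectors implies self-adjoint'' is Corollary \ref{C:fS=fS_0} and the Dichotomy (Corollary \ref{C:dichotomy}), both of which the paper derives \emph{from} this theorem. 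It is also ill-posed: $P_0\cH$ is only coinvariant, and for dilation-type families there are edges $e$ with $s(e)\in V_0$, $r(e)\in V_w$ and $S_eP_0\neq 0$, so $P_0\cH$ is not reducing and your closing claim that it ``will in fact be reducing'' fails. The compression $P_0\fS P_0$ is not the free semigroupoid algebra of a TCK family, so the wandering-vector dichotomy cannot be applied to it directly; the off-diagonal corners $S_w\fS S_v$ for $v,w\in V_0$ are controlled only by the ideal computation above.
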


\begin{proof}
First, we verify that $V_w = V \setminus V_0$. If there's a wandering vector supported at $v$, we will show that $v \in V\setminus V_0$. Indeed, by Corollary \ref{cor:type-L-wandering} there is a completely contractive \wot~continuous map $\phi$ from $\fS$ to $\fL_{G,v}$ such that $\phi(S_v) = L_v$. If on the other hand $v\in V_0$, then we have that $S_v \in \cap_{k\geq 1} \fS_0^k$. By Lemma \ref{L:SGA_0^k} we may write $S_v = \sum_{|\lambda| = k} S_{\lambda}A_{\lambda}$ with $A_{\lambda} \in \fS$. Then 
$$
L_v = \phi(S_v) = \wotlim \sum_{|\lambda| = k}L_{\lambda} \phi(A_{\lambda}) \in \fL_{G,0}^k.
$$
However, as $\bigcap_{k\geq 1} \fL_{G,0}^k = \{0\}$ we get in contradiction that $L_v = 0$.  

Conversely, for each $v\in V$, by Proposition~\ref{P:compress_to_vertex} the algebra $\fS_v:= S_v \fS|_{S_v\cH}$ is a free semigroup algebra with generators indexed by the irreducible cycles around $v$. 
By Proposition~\ref{P:extend wandering} there is a wandering vector for $\fS$ supported on $v$ if and only if the free semigroup algebra $\fS_v:= S_v \fS|_{S_v\cH}$ has a wandering vector. 

If $v\in V\setminus V_0$ and there are no cycles at $v$, then any vector in $S_v \cH$ is wandering for $\fS_v$, and hence is wandering for $\fS$ and supported at $v$ by Proposition~\ref{P:extend wandering}.
If $v\in V\setminus V_0$ and there are at least 2 irreducible cycles at $v$, then $\fS_v$ is a free semigroup algebra on 2 or more generators. Since $\fS_v \neq \fS_{v,0}$, $\fS_v$ has a wandering vector by Kennedy's result \cite[Theorem 4.12]{Kennedy2011}; and again this provides a wandering vector for $\fS$ supported at $v$.

Finally, if $v\in V\setminus V_0$ and there is exactly one irreducible cycle at $v$, then the compression to this cycle is a free semigroupoid algebra $\fC$ classified by Theorem~\ref{T:cycle_structure}. Since $\fC \ne \fC_0$, we see that $\fC$ is not a von Neumann algebra, and hence lies in cases (2) of Theorem~\ref{T:cycle_structure}, which yields the desired wandering vector.
Therefore, we can find a wandering vector for $\fS$ for every $v\in V\setminus V_0$, and hence $V_w = V \setminus V_0$.

Now consider $(1)$. Clearly, $S_v P_0\cH \subseteq P_0 \cH$ for all $v\in V$. Next, if $e\in E$ is such that $r(e) \notin V_0$, then $S_e^*P_0 \cH = \{0\} \subseteq P_0 \cH$. 
So assume $r(e) \in V_0$. As $V_w$ is directed, we must have that $s(e)\in V_0$, so that 
\[ S_e^*P_0 \cH = P_{s(e)}S_e^* P_0 \cH \subseteq P_{s(e)} \cH \subseteq P_0 \cH .\]
Thus $P_0\cH$ is invariant for $\fS^*$; whence $P_0^\perp\cH$ is invariant for $\fS$.

For item $(2)$, let $e\in E$ be such that $P_0S_e P_0 = S_e$. 
In particular, this means that $r(e) \in V_0$, so that $S_{r(e)} \in \fS_0$. 
Hence,
\[ S_e^* = S_{s(e)} S_e^* S_{r(e)} \in P_0 (S_e^* \fS_0) P_0 \subseteq P_0 \fS P_0 \]
by Corollary~\ref{C:shift SGA_0^k}.
Therefore, $P_0 \fS P_0$ is self-adjoint.

Finally, observe that 
\[ \fS = \wotclos{\spn} \{ S_{\lambda} :|\lambda|\geq 0  \} = \wotclos{\ell^{\infty}(V\setminus V_0) + \fS_0} .\]
Define a map
\[ \Psi(A) = P_0^\perp \Phi_0(A) = \wotsum_{v\in V\setminus V_0} a_v S_v \]
where $\Phi_0$ is the 0-th order Fourier coefficient map. 
This is clearly a completely contractive weak-$*$ continuous homomorphism onto $\ell^{\infty}(V \setminus V_0)$.
The kernel contains $\spn\{S_{\lambda} :|\lambda|\ge 1\}$ and thus contains $\fS_0$.
Also, for any polynomial $A$, it is clear that $A - \Psi(A)$ belongs to $\fS_0$.
Taking limits shows that $A - \Psi(A)$ belongs to $\fS_0$ in general.
Therefore, $\fS = \ell^{\infty}(V \setminus V_0) + \fS_0$ and $\ker\Psi = \fS_0$.
The natural injection of $\ell^\infty(V\setminus V_0)$ into $\fS$ is a completely isometric section of this map.
It follows that $\Psi$ is a complete quotient map, and induces a completely isometric weak-$*$ homeomorphism of $\fS/ \fS_0$ and $\ell^\infty(V_w)$.
\end{proof}

\begin{remark}
To get that $V_w = V \setminus V_0$ in the above proof we apply a deep result of Kennedy \cite{Kennedy2011}. We note that results in \cite{Kennedy2011,Kennedy2013} are true for free semigroup algebras with $\aleph_0$ generators. 
It may appear on a casual reading that $d \geq 2$ must be finite, but a careful look at the proofs and the cited references shows that they work for $d=\aleph_0$ as well.
\end{remark}

The above result can be considered as a generalization of the Dichotomy in \cite[Theorem 1.5]{DKP}. 
A bona fide dichotomy is obtained when the graph $G$ is transitive.

\begin{corollary}[Dichotomy] \label{C:dichotomy}
Let $\fS$ be a free semigroupoid algebra of a \emph{transitive} graph $G = (V,E)$ generated by a TCK family $\rS=(S_v,S_e)$ on $\cH$.
Then either $\fS$ is a von Neumann algebra or there is a \wot-continuous completely contractive homomorphism $\phi: \fS \to \fL_G$ such that $\phi(S_{\lambda}) = L_{\lambda}$ for every $\lambda \in \bF_G^+$.
\end{corollary}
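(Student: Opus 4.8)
The plan is to deduce this dichotomy almost entirely from Theorem~\ref{T:dichotomy} and Corollary~\ref{cor:type-L-wandering}, using transitivity to force the set of wandering vertices $V_w$ to be one of the two trivial directed subsets. Recall that $V_w$ is directed, and that in a transitive graph any nonempty directed subset of vertices must be all of $V$: this is the same argument already used for $\supp(\rS)$ in the preliminaries, since starting from any $v\in V_w$ transitivity produces a path $\mu=e_n\cdots e_1$ to an arbitrary $w$, and directedness propagates membership along $r(e_1),\dots,r(e_n)=w$. Hence either $V_w=\emptyset$ or $V_w=V$, and I would treat these as the two alternatives in the statement.

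In the case $V_w=\emptyset$ we have $V_0=V$, so by non-degeneracy (ND) the projection $P_0=\sotsum_{v\in V}S_v$ equals $I$. Theorem~\ref{T:dichotomy}(2) then gives that $\fS=P_0\fS P_0$ is self-adjoint, and since $\fS$ is \wot-closed by definition it is a von Neumann algebra. This is the first horn of the dichotomy.

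In the case $V_w=V$, let $\cM$ be the closed span of all wandering vectors for $\fS$. This is nonzero since $V_w\neq\emptyset$, and it is invariant for $\fS$ because applying any $S_\lambda$ to a wandering vector yields either $0$ or another wandering vector. Thus the restriction map $A\mapsto A|_{\cM}$ is a \wot-continuous completely contractive homomorphism of $\fS$ onto $\fS|_{\cM}$ carrying $S_\lambda$ to $S_\lambda|_{\cM}$. The ``moreover'' clause of Corollary~\ref{cor:type-L-wandering} identifies $\fS|_{\cM}$, completely isometrically and weak-$*$ homeomorphically, with $\fL_{G[V_w]}=\fL_G$ (using $G[V]=G$) via $S_\lambda|_{\cM}\mapsto L_\lambda$. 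Composing the restriction with this isomorphism produces the desired \wot-continuous completely contractive homomorphism $\phi:\fS\to\fL_G$ with $\phi(S_\lambda)=L_\lambda$ for all $\lambda\in\bF_G^+$. I do not expect a genuine obstacle here: all the analytic weight (Kennedy's wandering-vector theorem and the cycle analysis of Theorem~\ref{T:cycle_structure}) has already been absorbed into Theorem~\ref{T:dichotomy} and Corollary~\ref{cor:type-L-wandering}. The only points needing care are the reduction of the directed set $V_w$ to $\emptyset$ or $V$ under transitivity, and the appeal to (ND) to conclude $P_0=I$ in the self-adjoint case.
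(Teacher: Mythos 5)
Your proposal is correct and follows essentially the same route as the paper: reduce the directed set $V_w$ to $\emptyset$ or $V$ by transitivity, use Theorem~\ref{T:dichotomy} with (ND) to get $P_0=I$ and self-adjointness in the first case, and use the ``moreover'' clause of Corollary~\ref{cor:type-L-wandering} to produce $\phi$ in the second. You merely spell out details (directedness of $V_w$, invariance of the span of wandering vectors) that the paper's one-line proof leaves implicit.
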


\begin{proof}
Since $G$ is transitive, the only directed subgraphs are $G$ and the empty graph, so that either $V_w = \emptyset$ or $V_w = V$. 
In the first case $P_0 = I$ so $\fS = P_0 \fS P_0$ is a von Neumann algebra, and in the second case there is a \wot-continuous completely contractive homomorphism $\phi: \fS \to \fL_G$ such that $\phi(S_{\lambda}) = L_{\lambda}$ for every $\lambda \in \bF_G^+$.
\end{proof}

\begin{lemma} \label{L:unique decomp}
Suppose that $G$ is a graph and $\fS$ is nonself-adjoint. Denote $G':= G[V_w] = (V_w,E_w)$, $\fS':=P_0^{\perp}\fS P_0^{\perp}$, 
and let $\phi : \fS \rightarrow \fL_{G[V_w]}$ be the canonical surjection given in Corollary $\ref{cor:type-L-wandering}$. 
Then every element $A\in \fS'$ can be uniquely represented as
\[
A = \sum_{|\lambda| < k} a_\lambda S_{\lambda} + \sum_{|\mu| = k}S_{\mu} A_{\mu}
\]
with $a_{\lambda} \in \bC$ and $A_{\mu} \in \fS'$ satisfying $A_{\mu} = S_{s(\mu)}A_{\mu}$.
Moreover, $\phi^{-1}(\fL_{G',0}^k) = \fS_0^k$ for all $k\geq 1$.
\end{lemma}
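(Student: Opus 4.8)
The plan is to first establish the ``moreover'' identity $\phi^{-1}(\fL_{G',0}^k)=\fS_0^k$ and then feed it, together with the Fourier expansion available in $\fL_{G'}$, into the construction of the decomposition. The one structural fact I will lean on throughout is that, since $P_0^\perp\cH$ is invariant for $\fS$ by Theorem~\ref{T:dichotomy}(1), the compression $\Theta(X):=P_0^\perp X P_0^\perp$ is a weak-$*$ continuous completely contractive \emph{homomorphism} of $\fS$ onto $\fS'$; it fixes $\fS'$ pointwise, satisfies $\phi\circ\Theta=\phi$ (as $\cM\subseteq P_0^\perp\cH$ is invariant), and, crucially, annihilates every monomial with $s(\mu)\in V_0$, since $S_\mu P_0^\perp=S_\mu S_{s(\mu)}P_0^\perp=0$ there. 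Thus the only compressed monomials that survive are those indexed by paths of $G'$.

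For the identity I would induct on $k$. The inclusion $\fS_0^k\subseteq\phi^{-1}(\fL_{G',0}^k)$ is immediate: $\phi$ is weak-$*$ continuous, $\phi(S_\lambda)=L_\lambda\in\fL_{G',0}^k$ for $|\lambda|\ge k$, and $\fL_{G',0}^k$ is weak-$*$ closed. For the reverse, the base case $k=1$ comes from comparing quotient maps: the map $\Psi\colon\fS\to\ell^\infty(V_w)$ of Theorem~\ref{T:dichotomy} and the analogous $\bar\Psi\colon\fL_{G'}\to\ell^\infty(V_w)$ satisfy $\bar\Psi\circ\phi=\Psi$ (they agree on all generators), whence $\phi^{-1}(\fL_{G',0})=\phi^{-1}(\ker\bar\Psi)=\ker\Psi=\fS_0$. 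For the step, given $A$ with $\phi(A)\in\fL_{G',0}^k\subseteq\fL_{G',0}^{k-1}$, the hypothesis puts $A\in\fS_0^{k-1}$, so Lemma~\ref{L:SGA_0^k} gives $A=\sotsum_{|\nu|=k-1}S_\nu A_\nu$ with $A_\nu=S_\nu^*A$; applying $\phi$ and invoking the uniqueness in Lemma~\ref{L:SGA_0^k} \emph{inside} $\fL_{G'}$ yields $\phi(A_\nu)=L_\nu^*\phi(A)\in\fL_{G',0}$ by Corollary~\ref{C:shift SGA_0^k}. The base case forces $A_\nu\in\fS_0$, so each $S_\nu A_\nu\in\fS_0^k$ and hence $A\in\fS_0^k$.

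Granting this, I would produce the decomposition of $A\in\fS'$ as follows. Put $B=\phi(A)\in\fL_{G'}$ with Fourier coefficients $a_\lambda$; the sum $\sum_{|\lambda|<k}a_\lambda S_\lambda$ converges in $\fS'$ and runs only over $\lambda\in\bF_{G'}^+$ (for a fixed length and source the $a_\lambda$ are in $\ell^2$ while the $S_\lambda$ have orthogonal ranges, exactly as in Remark~\ref{R:Cesaro_in_T_+(G)}). Since $\phi$ carries $A-\sum_{|\lambda|<k}a_\lambda S_\lambda$ to the Fourier tail $B-\sum_{|\lambda|<k}a_\lambda L_\lambda\in\fL_{G',0}^k$, the identity above places $A-\sum_{|\lambda|<k}a_\lambda S_\lambda$ in $\fS_0^k$, and it plainly lies in $\fS'$. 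Expanding it in $\fS$ by Lemma~\ref{L:SGA_0^k} as $\sotsum_{|\mu|=k}S_\mu C_\mu$ with $C_\mu=S_\mu^*(A-\sum a_\lambda S_\lambda)$, I then apply the homomorphism $\Theta$ to obtain $\sotsum_{|\mu|=k}\Theta(S_\mu)\,\Theta(C_\mu)$. Here $\Theta(S_\mu)=0$ unless $\mu\in\bF_{G'}^+$, so the offending terms with $s(\mu)\in V_0$ drop out and $A_\mu:=\Theta(C_\mu)\in\fS'$ satisfies $A_\mu=S_{s(\mu)}A_\mu$, giving $A=\sum_{|\lambda|<k}a_\lambda S_\lambda+\sum_{|\mu|=k}S_\mu A_\mu$.

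Uniqueness is then routine: applying $\phi$ shows the $a_\lambda$ are forced to be the Fourier coefficients of $\phi(A)$ (the remainder maps into $\fL_{G',0}^k$ and contributes nothing below level $k$), and applying $X\mapsto(P_0^\perp S_\mu P_0^\perp)^*X$ recovers each $A_\mu$ from the remainder as in Lemma~\ref{L:SGA_0^k}. I expect the main obstacle to be exactly the point that the coefficients can be taken in $\fS'$ and not merely in $\fS$: because $V_w$ need not be closed under predecessors, there may be edges from $V_0$ into $V_w$, so the raw coefficients $S_\mu^*(\cdots)$ produced by Lemma~\ref{L:SGA_0^k} can carry range into $P_0\cH$. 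The resolution is that compressing by $\Theta$ simultaneously kills those monomials and confines the surviving coefficients to the corner $\fS'$, and the legitimacy of this step rests on $\Theta$ being genuinely multiplicative on $\fS$, which requires only invariance (not reducibility) of $P_0^\perp\cH$.
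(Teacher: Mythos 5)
Your argument is correct and rests on the same essential ingredients as the paper's proof --- the base case $\fS_0=\phi^{-1}(\fL_{G',0})$ obtained by matching the two quotients onto $\ell^\infty(V_w)$ from Theorem~\ref{T:dichotomy}, the expansion of Lemma~\ref{L:SGA_0^k}, and an induction on $k$ --- but you run the logic in the opposite order. You first prove $\phi^{-1}(\fL_{G',0}^k)=\fS_0^k$ for all $k$ by a direct induction and then extract the decomposition in one step from the degree-$(k-1)$ Fourier polynomial of $\phi(A)$, whereas the paper first builds the decomposition by iterating the degree-one step and only afterwards reads off the ideal identity. Your explicit use of the compression homomorphism $\Theta=P_0^\perp(\cdot)P_0^\perp$ to force the coefficients into the corner $\fS'$ is a clean way of handling the point, left implicit in the paper, that edges may enter $V_w$ from $V_0$, so the raw coefficients $S_\mu^*(\cdot)$ need not lie in $\fS'$.

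One small repair is needed in your inductive step. When you expand $A=\sotsum_{|\nu|=k-1}S_\nu A_\nu$, the sum runs over all $\nu\in\bF_G^+$ of length $k-1$, and the uniqueness clause of Lemma~\ref{L:SGA_0^k} applied inside $\fL_{G'}$ only identifies $\phi(A_\nu)$ with $L_\nu^*\phi(A)$ for $\nu\in\bF_{G'}^+$; when $s(\nu)\in V_0$ the operator $L_\nu$ vanishes in $\fL_{G'}$ and the uniqueness argument says nothing about $\phi(A_\nu)$. Those terms must be handled directly: there $A_\nu=S_{s(\nu)}A_\nu$ with $S_{s(\nu)}\in\fS_0$ by the definition of $V_0$, so $A_\nu\in\fS_0$ because $\fS_0$ is an ideal, and hence $S_\nu A_\nu\in\fS_0^{k-1}\fS_0\subseteq\fS_0^k$ as required. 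With that sentence added the induction closes, and the remainder of your argument goes through.
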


\begin{proof}
By Lemma~\ref{L:SGA_0^k}, if $A \in \fS_0^k$, then $A = \sum_{|\mu|=k} S_\mu A_\mu$ for $A_\mu \in \fS$.
Hence, $\phi(A) = \sum_{|\mu|=k} L_\mu \phi(A_\mu)$ belongs to $\fL_{G',0}^k$. 

On the other hand, since $\fS$ is nonself-adjoint, Theorem~\ref{T:dichotomy} applied to $\fS$ and $\fL_{G'}$ provides canonical isomorphisms
\[ \fS / \fS_0 \cong \ell^{\infty}(V_w) \cong \fL_{G'}/\fL_{G',0} .\] 
Therefore, the induced quotient map from $\fS / \fS_0$ to $\fS / \phi^{-1}(\fL_{G',0})$ is injective, 
and we see that $\fS_0 = \phi^{-1}(\fL_{G',0})$. 
Furthermore, since $\varphi(P_0) = 0$, it is clear that both $P_0 \fS$ and $\fS P_0$ are subsets of $\varphi^{-1}(\fL_{G',0})$. 
Thus, $\phi$ factors through the compression $\fS' = P_0^{\perp} \fS P_0^{\perp}$.

For $A \in \fS'$, let the Fourier series of $\phi(A)$ be  $\sum_{\lambda \in \bF_{G'}^+}a_{\lambda}L_{\lambda}$. 
In particular, $a_v$ for $v\in V_w$ are the unique scalars such that $A - \sum_{v\in V_w} a_v S_v$ lies in $\phi^{-1}(\fL_{G',0}) = \fS_0$. 
Thus by Lemma~\ref{L:SGA_0^k}, $A$ has the form 
\[ A = \sum_{v\in V_w} a_v S_v + \sum_{e\in E_w} S_e A_e ,\]
where $A_e$ are uniquely determined by $S_{s(e)}A_e = A_e \in \fS'$. 
Now apply this same decomposition to each $A_e$. 
By induction, we obtain a decomposition
\[
 A = \sum_{|\lambda|<k} a_{\lambda}S_{\lambda} + \sum_{|\mu|=k} S_{\mu}A_{\mu}
\]
where $A_{\mu} \in \fS'$ satisfy $S_{s(\mu)}A_{\mu} = A_{\mu}$. 
Moreover, this decomposition is unique again by Lemma~\ref{L:SGA_0^k}.
Finally, for $A\in \fS'$ we have $\phi(A) \in \fL_{G',0}^k$ if and only if $a_\lambda = 0$ for all $\lambda \in \bF_{G'}^+$ with $|\lambda|<k$. Therefore $(\fS'_0)^k = \phi^{-1}(\fL_{G',0}^k)$, and as $\varphi$ factors through $\fS'$ we have that $\fS_0^k = \varphi^{-1}(\fL^k_{G',0})$ as required.
\end{proof}

\begin{lemma} \label{L:left ideal}
Let $\fM = W^*(\fS)$ be the von Neumann algebra generated by $\fS$.
Then the ideal $\fJ:= \bigcap_{k\geq 1} \fS_0^k$ is a \wot-closed left ideal of $\fM$. 
There is a projection $P\in \fS$ such that $\fJ = \fM P$.
Moreover, $P$ commutes with each $S_v$ for $v\in V$.
Thus, $\fS$ contains the von Neumann algebra $P\fM P$.
\end{lemma}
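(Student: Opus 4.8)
The plan is to establish, in order, that $\fJ := \bigcap_{k\ge 1}\fS_0^k$ is a \wot-closed left ideal of $\fM$, that such an ideal is automatically of the form $\fM P$ for a projection $P$, and then to read off the two extra properties (that $P$ lies in $\fS$ and commutes with the vertex projections) from the ideal structure. The only inputs beyond the abstract theory of von Neumann algebras are Corollary~\ref{C:shift SGA_0^k}, which lets the adjoints $S_\mu^*$ act on $\fJ$, and the fact that $\fJ$ is a two-sided ideal of $\fS$, which is what forces the commutation.

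First I would check that $\fJ$ is a \wot-closed left ideal of $\fM = W^*(\rS)$. It is \wot-closed as an intersection of the \wot-closed spaces $\fS_0^k$, and it is a two-sided ideal of $\fS$ since each $\fS_0^k$ is. Left multiplication by any $S_\mu$ preserves each $\fS_0^k = \wotclos{\spn}\{S_\lambda : |\lambda|\ge k\}$, as it carries $S_\lambda$ to $S_{\mu\lambda}$ or to $0$, and hence preserves $\fJ$. For the adjoints, if $A\in\fJ$ and $n = |\mu|$, then $A\in\fS_0^{\,j+n}$ for every $j\ge 0$, so Corollary~\ref{C:shift SGA_0^k} gives $S_\mu^* A\in\fS_0^{\,j}$ for every $j\ge 0$, whence $S_\mu^* A\in\fJ$. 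Thus $\fJ$ is invariant under left multiplication by every word in the $S_\mu$ and $S_\mu^*$, that is, by the unital $*$-algebra $\cA$ generated by $\rS$. Since right multiplication by a fixed operator is \wot-continuous and $\fM = \wotclos{\cA}$, writing $T\in\fM$ as $T = \wotlim B_\alpha$ with $B_\alpha\in\cA$ yields $TA = \wotlim B_\alpha A\in\fJ$ for $A\in\fJ$, using the \wot-closedness of $\fJ$. Hence $\fM\fJ\subseteq\fJ$. I expect this to be the main obstacle, since it is the only place one must pass outside $\fS$ to all of $\fM$, and the only place where Corollary~\ref{C:shift SGA_0^k} is indispensable; everything afterwards is formal.

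By the standard description of \wot-closed left ideals of a von Neumann algebra, there is then a unique projection $P\in\fM$ with $\fJ = \fM P$. In particular $P = IP\in\fJ$, and $P$ is a right identity for $\fJ$, i.e.\ $XP = X$ whenever $X\in\fJ$. Since $\fJ\subseteq\fS_0\subseteq\fS$, we obtain $P\in\fS$ at once.

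Finally, for each $v\in V$ the element $PS_v$ lies in $\fJ$, because $\fJ$ is a two-sided ideal of $\fS$ with $P\in\fJ$ and $S_v\in\fS$. The right-identity property gives $PS_vP = (PS_v)P = PS_v$, and taking adjoints (both $P$ and $S_v$ being self-adjoint) gives $PS_vP = S_vP$; comparing the two identities shows $PS_v = S_vP$, so $P$ commutes with every $S_v$. Consequently $P\fM P\subseteq \fM P = \fJ\subseteq\fS$, and $P\fM P$ is a von Neumann algebra (the corner of $\fM$ at the projection $P$), which is the asserted von Neumann subalgebra contained in $\fS$.
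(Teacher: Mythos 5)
Your proof is correct and follows essentially the same route as the paper: use Corollary~\ref{C:shift SGA_0^k} to show $\fJ$ is invariant under left multiplication by the $S_\mu^*$ and hence is a \wot-closed left ideal of $\fM$, invoke the standard form $\fM P$ for such ideals, and deduce the commutation $PS_v = S_vP$ from $PS_v = PS_vP$ by self-adjointness. The only difference is that you spell out the passage from invariance under the generating $*$-algebra to invariance under all of $\fM$ via \wot-continuity of right multiplication, which the paper leaves implicit.
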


\begin{proof}
By Corollary \ref{C:shift SGA_0^k}, for $e\in E$, we have
\[
 S_e^* \fJ = \bigcap_{k\geq 1}S_e^* \fS_0^k \subseteq \bigcap_{k\geq 1}\fS_0^{k-1} = \fJ.
\]
So $\fJ$ is an ideal of $\fS$ which is invariant under left multiplication by any $S_\mu^*$.
Thus, $\fJ$ is a left ideal of $\fM$.
By basic von Neumann algebra theory, there is a projection $P\in \fM$ such that $\fJ = \fM P$.
However, this lies in $\fS$, so $P \in \fS$. Hence, $\fS$ contains the self-adjoint algebra $P\fM P$.

For $v\in V$, since $S_v \in \fS$, we have that $\fJ = \fM P$ contains $\fJ S_v = \fM PS_v$. 
It follows that $PS_v = PS_vP$, and since the right term is self-adjoint, we get $S_vP=PS_v$. 
\end{proof}

This corollary characterizes when $\fS$ is self-adjoint.
There is one exceptional case, the \emph{trivial graph}  consisting of a vertex with no edges.

\begin{corollary} \label{C:fS=fS_0}
Let $\fS$ be the free semigroupoid algebra on $\cH$ generated by a TCK family $\rS= (S_v,S_e)$ for $G$.
Assume that the graph $G_\rS= G[\supp\rS]$ has no trivial components.
Then the following are equivalent:
\begin{enumerate}[label=\normalfont{(\arabic*)}]
\item $\fS_0=\fS$.
\item $\fS$ is a von Neumann algebra.
\item The set $V_w$ is empty.
\end{enumerate}
In this case, $G_\rS$ must be a disjoint union of transitive components.
\end{corollary}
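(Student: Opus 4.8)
The plan is to hang the whole argument on the canonical quotient $\fS/\fS_0\cong\ell^\infty(V_w)$ supplied by Theorem~\ref{T:dichotomy}, which makes $(1)\Leftrightarrow(3)$ transparent: $\fS_0=\fS$ exactly when this quotient is trivial, i.e.\ when $V_w=\emptyset$. For $(3)\Rightarrow(2)$ I would argue that $V_w=\emptyset$ means $V_0=V$, so condition (ND) forces $P_0=\sotsum_{v\in V}S_v=I$; Theorem~\ref{T:dichotomy}(2) then gives that $\fS=P_0\fS P_0$ is self-adjoint, and a unital \wot-closed self-adjoint algebra is a von Neumann algebra. (Passing to $G_\rS$ I may assume throughout that $\rS$ is fully supported.)

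The real content is $(2)\Rightarrow(3)$, which I would prove contrapositively: assuming $\fS=\fM:=W^*(\rS)$ is self-adjoint and $V_w\neq\emptyset$, I will exhibit a vertex $v$ with $S_v\in\fS_0$, contradicting $v\in V_w=V\setminus V_0$. Fix $v\in V_w$ and recall that $\fS_0$ is a two-sided \wot-closed ideal containing every $S_e$ with $e\in E$. If $v$ emits an edge $e$ (so $s(e)=v$), then $S_v=S_e^*S_e$; since self-adjointness gives $S_e^*\in\fS$ and $\fS\cdot\fS_0\subseteq\fS_0$, we get $S_v\in\fS_0$, the desired contradiction. Hence every $v\in V_w$ must be a \emph{sink}.

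The sink case is the main obstacle. Here the natural route — restrict to the invariant subspace $\fS[x]$ generated by a wandering vector $x$ at $v$, identify $\fS|_{\fS[x]}\cong\fL_{G[\{v\}]}$ via Corollary~\ref{cor:type-L-wandering}, and use reducibility to force $\fL_{G[\{v\}]}$ self-adjoint — degenerates, since for a sink $G[\{v\}]$ has no edges and $\fL_{G[\{v\}]}=\bC$ is already self-adjoint. The key idea I would use instead is that a sink makes $S_v$ a \emph{minimal} projection of $\fM$: as $v$ carries no cycles, Proposition~\ref{P:compress_to_vertex} gives $S_v\fM S_v=S_v\fS|_{S_v\cH}=\bC S_v$. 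Because $G_\rS$ has no trivial components and $\rS$ is fully supported, $v$ receives at least one edge $f$, and $S_fS_f^*$ is then a nonzero subprojection (by (TCK) with $F=\{f\}$) of the minimal projection $S_v$ in $\fM$, so $S_fS_f^*=S_v$. A second incoming edge $f'$ would give $S_fS_f^*+S_{f'}S_{f'}^*=2S_v\le S_v$ by (TCK), forcing $S_v=0$; hence $f$ is the unique edge into $v$ and $S_v=S_fS_f^*\in\fS_0$, again contradicting $v\in V_w$. This closes $(2)\Rightarrow(3)$ and, with the first paragraph, all three equivalences.

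For the closing assertion I would establish an edge-reversal principle: if $\fS$ is self-adjoint and $e\colon x\to y$ is an edge, there is a directed path from $y$ back to $x$. Indeed $S_e^*\in\fS=\wotclos{\spn}\{S_\mu\}$, so it is a \wot-limit of finite combinations $\sum a_\mu S_\mu$; choosing $\zeta\in S_x\cH$ with $S_e\zeta\neq0$ and testing against $\eta=S_e\zeta\in S_y\cH$ gives $\langle S_e^*\eta,\zeta\rangle=\|S_e\zeta\|^2>0$, so some $S_\mu$ with $\langle S_\mu\eta,\zeta\rangle\neq0$ must occur, and the constraints $S_\mu\eta\in S_{r(\mu)}\cH$ and $S_\mu\eta=S_\mu S_y\eta$ force $s(\mu)=y$ and $r(\mu)=x$, exhibiting the path. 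After reducing to a single connected component (as in Section~\ref{S:prelim}), any two vertices are joined by an undirected edge-path along which every edge is traversable in both directions, so the component is strongly connected, i.e.\ transitive; since no component is trivial, $G_\rS$ is a disjoint union of transitive components.
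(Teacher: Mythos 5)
Your proof is correct, and on the key implication $(2)\Rightarrow(3)$ it takes a genuinely different route from the paper's. The paper first shows that self-adjointness forces every connected component of $G_\rS$ to be transitive (if some edge $e$ had $r(e)\not\succ s(e)$, the relevant corner of $\fS$ would vanish while $S_e^*$ would have to live there), and only then concludes that every vertex emits an edge, so that $S_v=S_e^*S_e\in\fS\cdot\fS_0\subseteq\fS_0$ for every $v$; the sink case never arises. You instead run that easy computation first, isolate sinks as the only possible members of $V_w$, and eliminate them by noting that Proposition~\ref{P:compress_to_vertex} makes $S_v$ a \emph{minimal} projection of $\fM=\fS$ at a cycle-free vertex, so the nonzero subprojection $S_fS_f^*$ coming from an incoming edge must equal $S_v$ and lies in $\fS_0$. (The uniqueness of the incoming edge is a harmless digression: $S_v=S_fS_f^*\in\fS_0$ already follows from minimality alone.) Your edge-reversal principle, proved by testing $S_e^*$ against $\zeta$ and $S_e\zeta$, is an elementary unwinding of the same corner argument the paper uses for transitivity. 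What your organization buys is that the equivalence of (1)--(3) is closed without first proving transitivity, at the cost of the extra minimal-projection observation for sinks; the paper's order makes the sink case vacuous but needs the transitivity claim as a stepping stone rather than as a closing corollary. The remaining pieces --- the identification $\fS/\fS_0\cong\ell^\infty(V_w)$ from Theorem~\ref{T:dichotomy} for $(1)\Leftrightarrow(3)$, and $(3)\Rightarrow(2)$ via $P_0=I$ and the self-adjointness of $P_0\fS P_0$ --- match the paper in substance.
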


\begin{proof}
We assume without loss of generality that $G=G_\rS = (V,E)$.

The hypothesis $\fS_0=\fS$ implies that $\fS_0^k = \fS$ for every $k\ge 1$.
Therefore $\fJ = \fS$ contains $I$, so that $P=I$ and $\fS = \fM$ is self-adjoint.

Assume that $\fS$ is self-adjoint. 
If the connected components of $G$ are not transitive, then there is an edge $e\in E$ such that $v:=s(e)$ and $w:=r(e)$ such that $w \not\succ v$.
But then $0 \ne S_e = P_{\cW_w} S_e P_{\cW_v} \in\fS$ and $P_{\cW_v} \fS P_{\cW_w} = \{0\}$.
This implies that $\fS$ is not self-adjoint, a contradiction.
Hence, $G$ must be the disjoint union of its transitive components.

Since $G$ has no trivial components and each component is transitive, for each vertex $v\in V$, there is some edge $e$ with $s(e)=v$. 
As $\fS$ is self-adjoint, there is a net $A_\lambda$ of polynomials converging in the \wot\ topology to $S_e^*$. 
Consequently, $S_v = S_e^*S_e = \wotlim A_\lambda S_e$ belongs to $\fS_0$. 
Hence, $V_w = \emptyset$.

Finally, if $V_w$ is empty, Theorem~\ref{T:dichotomy} shows that $\fS/\fS_0 \cong \{0\}$; so that $\fS_0=\fS$. 
\end{proof}

\begin{remark}
The case of a trivial graph $G$ is pathological because every free semigroupoid algebra on $G$ is just $\bC$. It is self-adjoint, but $\fS_0=\{0\} \ne \fS$.

If $\rS$ is a TCK family such that $G_\rS=G[\supp\rS]$ is not all of $G$, then the hypothesis of no trivial components applies to $G_\rS$.
For example, if $V=\{v,w\}$ and $E=\{e\}$ with $s(e)=v$ and $r(e)=w$, consider the TCK family $S_v=S_e=0$ and $S_w=1$ on $\bC$.
Then $\fS = \bC$ is self-adjoint and $\fS_0=\{0\}$ because $G_\rS$ is the trivial graph.
\end{remark}

We can now provide a structure theorem for free semigroupoid algebras which is analogous to the structure theorem for free semigroup algebras \cite{DKP}.

\begin{theorem}[Structure theorem] \label{thm:structure}
Let $\fS$ be the free semigroupoid algebra on $\cH$ generated by a TCK family $\rS= (S_v,S_e)$ for $G$.
Let $\fM  = W^*(\rS)$ be the von Neumann algebra generated by $\rS$. When $V_w\neq \emptyset$, let $\phi:\fS \to \fL_{G[V_w]}$ be the homomorphism of Corollary~$\ref{cor:type-L-wandering}$. Then there is a projection $P$  belonging to $\fS \cap \{S_v : v\in V\}'$ that dominates the projection $P_0 = \sum_{v\in V_0} S_v$ such that
\begin{enumerate}[label=\normalfont{(\arabic*)}]
\item $\fM P = \bigcap_{k\geq 1}\fS_0^k = \fJ$.

\item If $V_w\neq \emptyset$ then $\ker\phi = \fJ$.

\item $P^{\perp}\cH$ is invariant for $\fS$.

\item
With respect to $\cH = P\cH \oplus P^\perp\cH$, there is a  $2\times 2$ block decomposition
\[
 \fS = 
 \begin{bmatrix}
  P\mathfrak{M} P & 0 \\
  P^{\perp}\mathfrak{M}P & \fS P^{\perp}
 \end{bmatrix}
\]

\item If $V_w\neq \emptyset$, then $\fS P^{\perp}$ is completely isometrically and weak-$*$-hom\-eo\-mor\-phically isomorphic to $\fL_{G[V_w]}$.

\item
$P^{\perp}\cH$ is the closed linear span of wandering vectors.

\item
If there is a cycle in $G[\supp(\rS]$ through every vertex $v\in \supp(\rS)$, then $P$ is the largest projection such that $P\fS P$ is self-adjoint.
\end{enumerate}
\end{theorem}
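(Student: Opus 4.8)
The plan is to let $P$ be the projection produced in Lemma~\ref{L:left ideal}, for which $\fJ := \bigcap_{k\ge1}\fS_0^k = \fM P$, with $P\in\fS$ commuting with every $S_v$; this already records the membership claim $P\in\fS\cap\{S_v:v\in V\}'$ and gives item~(1) by definition. To see $P\ge P_0$ I would revisit the proof of Theorem~\ref{T:dichotomy}, where $v\in V_0$ is shown to force $S_v\in\bigcap_{k\ge1}\fS_0^k=\fJ=\fM P$; since every element $X$ of $\fM P$ satisfies $X=XP$ and $S_v$ commutes with $P$, this yields $S_v\le P$, hence $P_0=\sum_{v\in V_0}S_v\le P$. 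Item~(2) then follows from Lemma~\ref{L:unique decomp}: since $\phi^{-1}(\fL_{G',0}^k)=\fS_0^k$ and $\bigcap_k\fL_{G',0}^k=\{0\}$, we get $\ker\phi=\bigcap_k\fS_0^k=\fJ$.

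For items~(3) and~(4) the key observation is that $\fJ$ is simultaneously a two-sided ideal of $\fS$ (an intersection of the WOT-closed ideals $\fS_0^k$) and a left ideal of $\fM$ with $\fJ=\fM P$. Since $P=IP\in\fM P=\fJ$, for any $A\in\fS$ we have $PA\in\fJ\fS\subseteq\fJ=\fM P$, and each $X\in\fM P$ satisfies $X=XP$; hence $PA=PAP$, i.e. $PAP^\perp=0$. This is precisely the invariance of $P^\perp\cH$ (item~(3)) and the vanishing of the $(1,2)$ corner. The diagonal corner is $P\fS P=P\fM P$ because $P\fM P\subseteq\fS$ by Lemma~\ref{L:left ideal} while $P\fS P\subseteq P\fM P$ trivially; the lower corner is $P^\perp\fS P=P^\perp\fM P$ using $\fS P=\fM P=\fJ$ (the inclusion $\fM P\subseteq\fS P$ holds because every $X\in\fM P\subseteq\fS$ satisfies $X=XP$). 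This assembles the block form of item~(4). Item~(5) then follows from item~(6): once $P^\perp\cH$ is identified with the closed span $\cM$ of wandering vectors, $\fS P^\perp=\fS|_{\cM}$ is completely isometrically and weak-$*$ homeomorphically isomorphic to $\fL_{G[V_w]}$ by Corollary~\ref{cor:type-L-wandering}.

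Item~(6) is where the real work lies. The inclusion $\cM\subseteq P^\perp\cH$ is easy: $\phi$ factors as $\psi$ composed with restriction to $\cM$, with $\psi$ injective, so $\phi(P)=0$ forces $P|_\cM=0$, i.e. $\cM\subseteq\ker P=P^\perp\cH$. For the reverse inclusion $P^\perp\cH\subseteq\cM$ I would reduce to a single vertex: since $P$ commutes with each $S_v$, both $\cM$ and $P^\perp\cH$ split along the subspaces $S_v\cH$, and by Propositions~\ref{P:compress_to_vertex} and~\ref{P:extend wandering} the vertex pieces are governed by the free semigroup algebra $\fS_v=S_v\fS|_{S_v\cH}$. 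The plan is to identify $PS_v$ with the structure projection of $\fS_v$, so that Kennedy's structure theorem for free semigroup algebras (and Theorem~\ref{T:cycle_structure} in the exceptional single-cycle case) identifies $P^\perp S_v\cH$ as the closed span of wandering vectors for $\fS_v$. The main obstacle is exactly this identification: the filtration $(\fS_v)_0^k$ counts irreducible cycles at $v$ while $\fS_0^k$ counts path length, so matching $\bigcap_k(\fS_v)_0^k$ with the compression of $\fJ$ to $S_v\cH$ requires careful bookkeeping.

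Finally, for item~(7), that $P\fS P=P\fM P$ is self-adjoint has already been recorded, so it remains to prove maximality under the hypothesis that every vertex of $G[\supp(\rS)]$ lies on a cycle. Given a projection $Q$ with $Q\fS Q$ self-adjoint, I would aim to show $QP^\perp=0$ by again compressing to the vertices: the cycle hypothesis guarantees each $\fS_v$ is a genuine (nontrivial) free semigroup algebra, for which the known free-semigroup-algebra structure theorem says its structure projection is the \emph{largest} projection with self-adjoint compression, forcing the vertex compression of $Q$ below that of $P$. The delicate point is that an arbitrary $Q$ need not commute with the $S_v$, so self-adjointness of $Q\fS Q$ does not transfer verbatim to the corners at each vertex; reconciling this — presumably by first replacing $Q$ by a comparable projection adapted to the vertex decomposition, or by arguing directly that any vector of $Q\cH$ meeting $P^\perp\cH=\cM$ produces, through a cycle, compressions of some $S_\mu$ and $S_\mu^*$ that cannot both lie in the self-adjoint algebra $Q\fS Q$ — is the crux of this last part.
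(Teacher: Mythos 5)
Your handling of items (1)--(5) is essentially the paper's own argument: $P$ comes from Lemma~\ref{L:left ideal}, the domination $P\ge P_0$ follows because $S_v=S_v^k\in\fS_0^k$ for $v\in V_0$ and $S_v=S_vP$ forces $S_v\le P$, item (2) is Lemma~\ref{L:unique decomp}, and your derivation of the block form from the facts that $\fJ$ is a two-sided ideal of $\fS$ containing $P$ and that every $X\in\fM P$ satisfies $X=XP$ is a correct (slightly repackaged) version of the paper's computation $\fS P\subseteq\fM P=\fJ=\fJ P\subseteq\fS P$. Deriving (5) from (6) via Corollary~\ref{cor:type-L-wandering} is legitimate, though the paper obtains (5) directly from $\fS/\fJ\cong\fL_{G[V_w]}$ via Proposition~\ref{P:complete_quotient} without waiting for (6). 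You should still dispose of the degenerate cases first (trivial components, and $V_w=\emptyset$ where $P=I$ and everything trivializes), since Lemma~\ref{L:unique decomp} assumes $\fS$ is nonself-adjoint.

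The genuine gaps are exactly the two points you flag, and they are the substance of the theorem. For the reverse inclusion $P^\perp\cH\subseteq\cM$ in (6), your plan to identify $PS_v$ with the structure projection of the free semigroup algebra $\fS_v$ runs into the filtration mismatch you describe: $(\fS_v)_0^k$ counts irreducible cycle factors while $\fS_0^k$ counts path length, and when $v$ admits irreducible cycles of unbounded length the compression of $\fS_0^k$ to $S_v\cH$ need not land in $(\fS_v)_0^2$, so the two intersections do not obviously coincide. The paper never makes this identification; after the reduction to $\fS_v$ it splits into three cases (no cycle at $v$, exactly one irreducible cycle, at least two), settles the single-cycle case by Theorem~\ref{T:cycle_structure}, and in the case of two or more generators invokes \cite[Corollary 2.5 and Lemma 3.2]{DLP}, which reduce the spanning statement to the mere \emph{existence} of a wandering vector in $P^\perp S_v\cH$ --- and that existence is precisely what Theorem~\ref{T:dichotomy} ($V_w=V\setminus V_0$) supplies. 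For (7), your worry about $Q$ not commuting with the $S_v$ is a red herring: since $Q\in\fS$, item (4) kills its $(1,2)$ corner, and $Q=Q^*$ then kills the $(2,1)$ corner, so $Q$ commutes with $P$ --- no commutation with the vertex projections is needed. The ingredient you are actually missing is that $P^\perp Q$ is then a self-adjoint idempotent of $\fS P^\perp\cong\fL_{G[V_w]}$, hence of the form $\sum_{v\in F}L_v$ by \cite[Corollary 4.6]{KP2004}; the cycle hypothesis forces $F=\emptyset$ because for a cycle $\mu$ at $v\in F$ one has $S_\mu^*P\in\fS$ (by Corollary~\ref{C:shift SGA_0^k}) but $S_\mu^*\notin\fS$ (else $S_v=S_\mu^*S_\mu\in\fS_0$, contradicting $v\in V_w$), so $S_\mu^*P^\perp$ cannot lie in $Q\fS Q$. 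Without these two inputs the proofs of (6) and (7) do not close.
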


\begin{proof}
We may assume at the outset that $G$ is equal to the induced subgraph $G_{\rS}$ on $\supp(\rS)$, so that $S_v\neq 0$ for every $v\in V$. Next, let us dispose of the case of trivial components. If $G$ is the trivial graph, with $V=\{v\}$ and $E=\emptyset$, then a TCK family for $G$ consists only of the projection $S_v$.
Indeed, as $\rS$ is non-degenerate, then $S_v=I$. Thus, $\fS = \bC S_v$, and the ideal $\fS_0=\{0\}$.
The projection $P=0$ and items (1--6) are trivial, and (7) does not apply. For the rest of the proof, up to decomposing $G$ as a disjoint union of undirected connected components, we assume there are no trivial components.

If $\fS$ is self-adjoint (and $G$ has no trivial components), by Corollary \ref{C:fS=fS_0} this is equivalent to $\fS_0^k = \fS$ for all $k\ge1$. Hence $P=I$, and items (1) through (4), (6) and (7) are clear. Hence, we may assume for the rest of the proof that $\fS$ is nonself-adjoint, and that $V_w$ is non-empty.
 
By Lemma~\ref{L:left ideal}, we get that $\fM P = \cap_{k\geq 1}\fS_0^k$ and $P$ commutes with each $S_v$. 
Hence, for every $S_v\in \fS_0$ we have $S_v = S_v^k \in \bigcap_{k\geq 1}\fS_0^k = \fJ$, 
so that $P\geq S_v$ for $v\in V_0$ and hence $P \geq P_0$.

Lemma~\ref{L:unique decomp} shows that 
\[  \fJ =  \bigcap_{k\geq 1} \phi^{-1}( \fL_{G[V_w],0}^k) = \phi^{-1}\big( \bigcap_{k\geq 1} \fL_{G[V_w],0}^k \big) = \ker\phi .\]
Since $\phi$ is non-zero, $\fJ \ne \fS$ and so $P \ne I$.

As for (3) and (4), observe that $P\cH = P \fM \cH = \fJ^*\cH$. Thus 
\[
 S_{\lambda}^* P\cH = S_{\lambda}^*\fJ \cH = (\fJ S_{\lambda})^* \cH \subseteq \mathfrak{J}^* \cH = P\cH
\]
Therefore $P\cH$ is invariant for $\fS^*$, whence $P^{\perp}\cH$ is invariant for $\fS$.
This shows that the $1,2$ entry of the matrix in (4) is $0$.
Now 
\[ \fS P \subseteq \fM P = \fJ = \fJ P \subseteq \fS P .\]
Hence $\fS P = \fM P = P \fM P + P^\perp \fM P$ form the $1,1$ and $2,1$ entries of the matrix. 
That leaves the $2,2$ entry $P^\perp \fS P^\perp = \fS P^\perp$ by the invariance of $P^\perp\cH$.

For item (5), Proposition \ref{P:complete_quotient} shows that $\phi$ induces a completely isometric weak-$*$-homeomorphic isomorphism $\widetilde{\phi} : \fS / \fJ \to \fL_{G[V_w]}$. 
However, as $ \fJ = \fS P$, we see that $\fS /  \fJ \cong \fS P^{\perp}$.
Therefore, $\fS P^\perp$ is completely isometrically isomorphic and weak-$*$ homeomorphic to $\fL_{G[V_w]}$.

For item (6), by Proposition \ref{cor:type-L-wandering} the closed linear span of the wandering vectors is contained in $P^{\perp}\cH$. 
For the converse, as $P$ commutes with $S_v$ for every $v\in V$, by Proposition \ref{P:compress_to_vertex} and Proposition \ref{P:extend wandering} it will suffice to show that the closed linear span of non-zero wandering vectors for $\fS_v$ is $P^{\perp} S_v \cH$ for each $v\in V$.

There are three cases to be handled. 
The first case is when there are no cycles at the vertex $v$, so that $\fS_v \cong \bC$. 
In this case, every vector in $P^{\perp}S_v\cH$ is wandering. 
The second case is when $\fS_v$ is an analytic type free semigroup algebra generated by at least two irreducible cycles. 
In this case \cite[Corollary 2.5]{DLP} and \cite[Lemma 3.2]{DLP} reduce the problem to establishing the \emph{existence} of a wandering vector in $P^{\perp}S_v \cH$ (where again the proofs work for $\aleph_0$ generators). 
By Theorem \ref{T:dichotomy} we have $V_w=V \setminus V_0$, which provides such a wandering vector at each vertex $v$ with $P^{\perp}S_v \neq 0$. 
The third case occurs when there is a single irreducible cycle at $v$. 
Here we apply Theorem~\ref{T:cycle_structure} to conclude that the wandering vectors span $P^\perp \cH_v$ which is the complement of the singular part.

For item (7), assume every vertex has a cycle through it and suppose that $Q\in \fS$ is a projection such that $Q\fS Q$ is self-adjoint. 
Decompose $\cH = P \cH \oplus P^{\perp} \cH$ to obtain a $2\times 2$ decomposition for $Q$ as in $(4)$. 
Since $Q$ is self-adjoint, its $2,1$ corner must vanish, and hence $P$ commutes with $Q$. 
So by (5), $P^{\perp}Q$ can be identified as a self-adjoint element of $\fL_{G[V_w]}$ via $\phi$.
By \cite[Corollary 4.6]{KP2004}, $\phi(P^{\perp}Q)$ must be the sum of projections $\sum_{v\in F}L_v$ for some subset $F\subseteq V_w$.
Hence, $P^{\perp}Q = \sum_{v\in F}S_vP^{\perp}$. 
If $F$ were non-empty, there would be a cycle $\mu$ with $s(\mu)=r(\mu)=v\in F$.
Now $S_\mu^*$ does not belong to $\fS$, for if it did, then $S_v = S_\mu^*S_\mu$ would lie in $\fS_0$ and so $v \in V_0 = V \setminus V_w$, contrary to hypothesis. As $S_\mu^* P$ does belong to $\fS$, we see that $S_\mu^* P^\perp$ does not belong to $Q\fS Q$. 
Thus, $Q\fS Q$ is not self-adjoint.
It follows that $F$ must be empty and $Q\leq P$. 
In other words, $P$ is the largest projection such that $P \fS P$ is self-adjoint.
\end{proof}

\begin{remark}
In \cite[Theorem 7.8]{MS2011} Muhly and Solel provide a structure theorem that works in the general context of $W^*$-correspondences. Their theorem applies to our context when the graph $G$ has finitely many vertices and edges to obtain items (1)--(5) in our theorem above. Our proof avoids $W^*$-correspondence machinery, works for all countable directed graphs, and also yields items (6) and (7).
\end{remark}

\begin{example}
Let $\rS=(S_v,S_e)$ be an arbitrary TCK family on $\cH$. 
Let $U$ be the bilateral shift on $\ell^2(\bZ)$. 
Then the TCK family $\rT = (S_v \otimes I, S_e\otimes U)$ always generates an analytic free semigroupoid algebra $\fT$ on $\cH \otimes \ell^2(\bZ)$.

To see this, let $\{\xi_n:n\in\bZ\}$ be the standard basis for $\ell^2(\bZ)$. 
Fix any non-zero vector in $x \in \cH$. We claim that $x \otimes \xi_n$ is a wandering vector.
Note that if $|\mu|=k$, then 
\[ T_\mu (x \otimes \xi_n) = S_\mu \otimes U^k (x \otimes \xi_n) = S_\mu x \otimes \xi_{n+k} .\]
Therefore the subspace $\fT[x\otimes\xi_n]$ is graded, and hence if $|\mu|=k \ne l = |\nu|$, then 
\[ \ip{T_\mu (x \otimes \xi_n), T_\nu (x \otimes \xi_n)} =  \ip{S_\mu x \otimes \xi_{n+k}, S_\nu x \otimes \xi_{n+l}} = 0 .\]
Also if $|\mu|=|\nu|$, then $S_\mu$ and $S_\nu$ have pairwise orthogonal ranges; so that 
\[ \ip{T_\mu (x \otimes \xi_n), T_\nu (x \otimes \xi_n)} =  \ip{S_\mu x , S_\nu x } = 0 .\]
Thus $x \otimes \xi_n$ is a wandering vector.
This shows that the wandering vectors span $\cH\otimes \ell^2(\bZ)$, and therefore $\fT$ is analytic type.
\end{example}

\section{Absolute continuity and wandering vectors} \label{S:abs cont}

In \cite{DLP}, a notion of absolute continuity for representations of the Toeplitz-Cuntz algebra was introduced and used to tackle the question of the existence of wandering vectors for analytic type representations. 
This enabled the recognition of the analytic part of a representation via properties of the vector states.
This was an important step towards a solution of the problem of existence of wandering vectors for free semigroup algebras.
Absolute continuity was then further investigated by Muhly and Solel \cite{MS2011} in the context of $W^*$-correspondences, where a representation theoretic characterization was found. 
This characterization was then used by Kennedy in \cite{Kennedy2013} where he showed that every absolutely continuous representation of a free semigroup algebra on at least two generators is analytic type. 

In this section we extend these results to free semigroupoid algebras.
In particular, we show that the vectors that determine an absolutely continuous state determine the absolutely continuous part of the representation.
In particular, the span of all wandering vectors consist of absolutely continuous vectors, so that the analytic part is contained in the absolutely continuous part. On the other hand, there can be absolutely continuous part which is not in the analytic part because of the pathology that occurs in the case of a simple cycle (Theorem~\ref{T:cycle_structure}). Indeed this is the heart of the difference 
between algebras which are regular and those which are not.

Recall that TCK families $\rS$ are in bijection with $*$-representations $\pi_S$ of $\cT(G)$, and that $\pi_L$ is the left regular representation of $\cT(G)$.

\begin{definition}
A linear functional $\phi$ on $\cT_+(G)$ is  \textit{absolutely continuous} if there are vectors $\xi,\eta\in\cH_G$ so that $\phi(A) = \ip{\pi_L(A) \xi,\eta}$.
A TCK family $\rS$ (or the free semigroupoid algebra $\fS$ that it generates) associated to a representation $\pi$ of $\cT(G)$ on $\cH$ is \textit{absolutely continuous} if the functionals $\phi(A) = \ip{\pi_{\rS}(A) x,y}$ on $\cT_+(G)$ are absolutely continuous for every $x,y\in \cH$.
Also, a vector $x \in \cH$ is \textit{absolutely continuous} if the vector state $\phi(A) = \ip{\pi(A)x,x}$ is absolutely continuous.
\end{definition}

We note that the restriction to $\fS[\cW]$ for a wandering subspace $\cW$ is always absolutely continuous, as on that space it is unitarily equivalent to a left-regular type algebra. 
Another important observation is that $\varphi : \cT_+(G) \rightarrow \bC$ is absolutely continuous if and only if it extends to a weak-$*$ continuous functional on $\fL_G$. 
Indeed, if it is absolutely continuous, then it clearly extends. 
Conversely, when $\varphi$ extends, as $\fL_G$ has property $\bA_1$ by \cite[Theorem 3.1]{JK2005}, we see that $\varphi$ is a vector functional. 
We note that the assumption of no sinks in \cite[Theorem 3.1]{JK2005} is unnecessary as the Wold decomposition of Theorem \ref{thm:Wold-decomp} can be used instead in its proof. 
In particular, we see that the set of absolutely continuous functionals is identified with the predual of $\fL_G$.
Thus it is complete, and forms a norm-closed subspace of the dual of $\cT_+(G)$. 
Also, from \cite[Corollary 3.2]{JK2005}, we see that the weak and weak-$*$ topologies coincide on $\fL_G$. 

The following explains why absolute continuity for free semigroupoid algebras is a natural analogue of the operator theory  fact that any absolutely continuous isometry has an $H^{\infty}$ functional calculus.

\begin{proposition}
Let $G$ be a directed graph and $\fS$ a free semigroupoid algebra generated by a TCK family $\rS$. 
Let $\pi_{\rS} : \cT(G) \rightarrow B(\cH)$ be the representation associated to $\rS$. 
Then the following are equivalent
\begin{enumerate} [label=\normalfont{(\arabic*)}]
\item
$\fS$ is absolutely continuous.
\item
The representation $\pi_{\rS} : \cT_+(G) \rightarrow B(\cH)$ extends to a completely contractive weak-$*$ continuous representation of $\fL_G$.
\end{enumerate}
\end{proposition}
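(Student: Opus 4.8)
The plan is to dispatch $(2)\Rightarrow(1)$ directly from the defining property of absolute continuity, and to build the extension in $(1)\Rightarrow(2)$ from the weak-$*$ continuous functionals that absolute continuity supplies. Throughout, the key input is the fact recorded just before the proposition: a functional on $\cT_+(G)$ is absolutely continuous precisely when it admits a weak-$*$ continuous extension to $\fL_G$, a consequence of property $\bA_1$ for $\fL_G$ \cite[Theorem 3.1]{JK2005}.

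For $(2)\Rightarrow(1)$, suppose $\pi_{\rS}$ extends to a weak-$*$ continuous completely contractive representation $\td\pi : \fL_G \to B(\cH)$. Then for every $x,y\in\cH$ the functional $L\mapsto \ip{\td\pi(L)x,y}$ is the composite of $\td\pi$ with an ultraweakly continuous vector functional on $B(\cH)$, hence weak-$*$ continuous on $\fL_G$, and it restricts on $\cT_+(G)$ to $\varphi_{x,y}(A):=\ip{\pi_{\rS}(A)x,y}$. By the fact above each $\varphi_{x,y}$ is absolutely continuous, so $\fS$ is absolutely continuous.

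For $(1)\Rightarrow(2)$, each $\varphi_{x,y}$ is absolutely continuous and hence extends uniquely to a weak-$*$ continuous functional $\hat\varphi_{x,y}$ on $\fL_G$. First I would establish the estimate $|\hat\varphi_{x,y}(L)|\le\|L\|\,\|x\|\,\|y\|$: by Remark~\ref{R:Cesaro_in_T_+(G)} the Ces\`aro net $\Sigma_k(L)L_F$ lies in $\cT_+(G)$ with norm at most $\|L\|$ and converges weak-$*$ to $L$, so weak-$*$ continuity of $\hat\varphi_{x,y}$ together with complete contractivity of $\pi_{\rS}$ on $\cT_+(G)$ yields the bound in the limit, along with the identity $\ip{\pi_{\rS}(\Sigma_k(L)L_F)x,y}\to\hat\varphi_{x,y}(L)$. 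Uniqueness of the extension makes $(x,y)\mapsto\hat\varphi_{x,y}(L)$ sesquilinear, so each $L$ determines an operator $\td\pi(L)\in B(\cH)$ with $\ip{\td\pi(L)x,y}=\hat\varphi_{x,y}(L)$ and $\|\td\pi(L)\|\le\|L\|$; on $\cT_+(G)$ it agrees with $\pi_{\rS}$, and $\pi_{\rS}(\Sigma_k(L)L_F)\to\td\pi(L)$ in the weak operator topology.

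It then remains to show $\td\pi$ is weak-$*$ continuous, multiplicative, and completely contractive. Weak-$*$ continuity follows because every normal functional on $B(\cH)$ is a norm-convergent sum $\sum_n\ip{\,\cdot\, x_n,y_n}$ with $\sum_n\|x_n\|\,\|y_n\|<\infty$, so its composite with $\td\pi$ equals $\sum_n\hat\varphi_{x_n,y_n}$; by the estimate above this series converges in the predual of $\fL_G$, which is norm closed, hence the composite is weak-$*$ continuous. Multiplicativity is then a density argument: left and right multiplications are separately weak-$*$ continuous on $\fL_G$ and on $B(\cH)$, the algebra $\cT_+(G)$ is weak-$*$ dense in $\fL_G$, and $\td\pi|_{\cT_+(G)}=\pi_{\rS}$ is multiplicative, so $\td\pi(ML)=\td\pi(M)\td\pi(L)$ holds first for $M\in\cT_+(G)$ and then for all $M,L\in\fL_G$. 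Complete contractivity is obtained by repeating the Ces\`aro estimate at the matrix level, using that $\Sigma_k^{(n)}$ applied entrywise gives elements of $M_n(\cT_+(G))$ of norm at most $\|[L_{ij}]\|$ converging weak-$*$ to $[L_{ij}]$, with $\td\pi^{(n)}$ weak-$*$ continuous to pass the bound to the limit. The main obstacle is precisely the weak-$*$ continuity of $\td\pi$, since it drives the density argument for multiplicativity; the estimate $\|\hat\varphi_{x,y}\|_*\le\|x\|\,\|y\|$ is exactly what makes the vector functionals summable in the predual, so that continuity survives pairing against an arbitrary normal functional on $B(\cH)$.
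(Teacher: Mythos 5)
Your proof is correct and follows essentially the same route as the paper's: both obtain the extension from the weak-$*$ continuous extensions $\hat\varphi_{x,y}$ of the vector functionals supplied by absolute continuity, together with the bounded Ces\`aro net of Remark~\ref{R:Cesaro_in_T_+(G)} --- the paper phrasing the construction as the unique \wot-cluster point of $\pi_{\rS}(\Sigma_k(T)L_F)$, and you as a bounded sesquilinear form $(x,y)\mapsto\hat\varphi_{x,y}(L)$. You additionally spell out the multiplicativity of the extension and its weak-$*$ continuity (via summability of the vector functionals in the predual of $\fL_G$), points the paper compresses into ``a similar argument shows\dots''; these verifications are correct.
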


\begin{proof}
Suppose (2) holds and that $\phi$ is given by $\phi(A) = \la \pi_{\rS}(A)x,y \ra$ for $x,y\in \cH$ and $A\in \cT_+(G)$. 
By hypothesis, $\phi$ extends to a weak-$*$ continuous functional on $\fL_G$. 
Hence, $\phi$ is absolutely continuous.

For the converse, suppose $T\in \fL_G$. 
By Remark \ref{R:Cesaro_in_T_+(G)} we can find a net $X_\alpha$ in $\cT_+(G)$ in the $\|T\|$-ball such that $\pi_L(X_\alpha)$ converges to $T$ in the weak operator topology on $\fL_G$. 
In order to define the extension of $\pi_{\rS}$ to $\fL_G$, it will suffice to show that $\pi_{\rS}(X_\alpha)$ converges \wot~to some element $Y \in B(\cH)$. 
Since $\pi_{\rS}(X_\alpha)$ is a bounded net, it has a \wot~cluster point $Y$. 
If $x,y\in\cH$, let $\xi,\eta\in \cH_G$ be chosen so that
\[ \phi(A) := \ip{ \pi_\rS(A) x,y } = \ip{ \pi_L(A) \xi, \eta } .\]
Then $Y$ satisfies
\[
 \ip{ Yx,y} = \lim \ip{ \pi_\rS(X_\alpha) x,y} = \lim \ip{ \pi_L(X_\alpha) \xi, \eta} = \ip{T\xi,\eta}.
\]
Therefore $Y$ is uniquely determined among cluster points of $\pi_{\rS}(X_{\alpha})$, and thus $\wotlim_\alpha \pi_\rS(X_\alpha) = Y$.
So we may define the extension $\pi_{\rS} : \fL_G \rightarrow B(\cH)$ by 
\[ \pi_{\rS}(T) = \wotlim_{\alpha}\pi_{\rS}(X_\alpha) .\]
A similar argument shows that this is independent of the choice of the net $X_\alpha$, and that this extension is \wot-continuous.
Note that since $\pi_{\rS}(X_\alpha)$ is in the $\|T\|$-ball of $B(\cH)$, we get that $\pi_{\rS}$ is contractive on $\fL_G$. 
A similar argument for matrices shows that $\pi_{\rS}$ is completely contractive.
\end{proof}

Following the approach for free semigroup algebras in \cite{DLP}, we also define intertwining maps.

\begin{definition}
Let $\rS=(S_v,S_e)$ be a TCK family for $G$ on $\cH$. 
An \emph{intertwiner} is a map $X \in B(\cH_G,\cH)$ satisfying $S_\mu X = XL_\mu$ for all $\mu\in \bF_G^+$.
We let $\cX(\rS)$ denote the set of all intertwining maps.
Let 
\[ \cV_{ac}(\rS) = \{ x = X\eta : X\in \cX(\rS) \} \]
be the union of the ranges of intertwining maps.
\end{definition}

The following is a straightforward extension of \cite[Theorem 1.6]{DKP}.

\begin{lemma} \label{L:add L}
Suppose that $\rS = (S_v,S_e)$ is a TCK family for a graph $G$ on $\cH$, and let $x \in \cH$ be an absolutely continuous unit vector.
Let $\rT = \rS \oplus \rL = (S_v\oplus L_v, S_e\oplus L_e)$ generate the free semigroupoid algebra $\fT$ on $\cH \oplus \cH_G$.
For any $\ep > 0$, there exists a vector $\zeta \in \cH_{G}$ with $\| \zeta \| < \ep$ such that the restriction of $\fT$ to the cyclic subspace 
$\cN := \fT[x \oplus \zeta]$ is unitarily equivalent to $\fL_{G,V'}$ for some subset $V' \subseteq V$.  
\end{lemma}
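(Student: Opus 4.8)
The plan is to realize the conclusion through an isometric intertwiner rather than by producing a wandering vector directly. Concretely, I would look for an isometry $U\colon \cH_{G,V'}\to\cH\oplus\cH_G$ satisfying $UL_\mu = T_\mu U$ for all $\mu\in\bF_G^+$ with source in $V'$, together with a vector $\Omega\in\cH_{G,V'}$ that is cyclic for $\fL_{G,V'}$ and with $U\Omega = x\oplus\zeta$. Writing $U = (X,Z)$ with $X\colon\cH_{G,V'}\to\cH$ and $Z\colon\cH_{G,V'}\to\cH_G$, the intertwining relation forces $XL_\mu = S_\mu X$ (so $X$ is an intertwiner for $\rS$) and $ZL_\mu = L_\mu Z$ (so $Z$ intertwines the left regular family with itself), while isometry of $U$ means $X^*X + Z^*Z = I$. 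The point of this formulation is that, computing adjoints inside $B(\cN)$ for $\cN := \ran U$, one gets $U^*T_\mu U = L_\mu$ and $U^*T_\mu^* U = L_\mu^*$, so $\rT|_\cN\cong\rL_{G,V'}$ as families \emph{without} needing $\cN$ to be reducing in $\cH\oplus\cH_G$; moreover $\cN=\overline{\fT(x\oplus\zeta)}=U\,\overline{\fL_{G,V'}\Omega}=\ran U$ exactly because $\Omega$ is cyclic. The norm bookkeeping is then transparent: $\|\zeta\|^2 = \|\Omega\|^2-\|x\|^2 = \|\Omega\|^2-1$, so the whole problem is to build a \emph{contractive} intertwiner $X$ that is nearly isometric at a cyclic $\Omega$ with $X\Omega=x$.

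The construction of $X$ (equivalently, of the left regular model of $x$) is where absolute continuity enters, and I would reduce it vertex by vertex. By Proposition~\ref{P:compress_to_vertex} each compression $\fS_v = S_v\fS|_{S_v\cH}$ is a free semigroup algebra on the irreducible cycles at $v$, and by Proposition~\ref{P:extend wandering} the left regular behaviour of $\rT$ supported at $v$ is detected on $\fS_v$; absolute continuity of $x$ passes to its components $S_v x$. At vertices carrying at least two irreducible cycles, Kennedy's theorem \cite{Kennedy2011} (in the form already used for Theorem~\ref{T:dichotomy}) supplies the needed analytic/left regular structure once $\fS_v\neq\fS_{v,0}$. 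The substantive case is a single cycle, where Theorem~\ref{T:cycle_structure} and Lemmas~\ref{L:cycle_CK}--\ref{L:cycle_wand} describe everything through one unitary $V=V_a\oplus V_s$ (or $U_+\oplus V_a\oplus V_s$): here $x$ absolutely continuous means its density $w_x\ge0$ on $\bT$ is absolutely continuous, and the cyclic isometry generated by $x\oplus\zeta$ is a \emph{pure shift}, hence left regular, precisely when the combined density $w_x+|\zeta|^2$ is log-integrable. Since a small additive floor suffices to make a log-integrable density (for instance taking $\zeta$ to be a tiny constant vector in the relevant $\cH_{G,v}$-component), one can achieve this with $\|\zeta\|<\ep$. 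This is exactly the mechanism of \cite[Theorem 1.6]{DKP}, now read through the cycle analysis of Section~\ref{S:special-alg}; the matching $\bA_1$ property of $\fL_G$ is what lets the resulting functional be realized by genuine vectors.

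Finally I would assemble the per-vertex data into a single $U$, cyclic $\Omega$, and $\zeta=\bigoplus_v\zeta_v$ with $\|\zeta\|<\ep$, reading off $V'=\supp(x\oplus\zeta)$. The main obstacle I expect is not the per-vertex log-integrability step, which is routine given Section~\ref{S:special-alg}, but the \textbf{globalization together with the cycle pathology}. Absolute continuity by itself does \emph{not} force analyticity: Theorem~\ref{T:cycle_structure} shows that an absolutely continuous CK family for a cycle can generate a von Neumann algebra, so the perturbation $\zeta$ must actively convert the absolutely continuous but non-analytic part into a bona fide left regular summand. Simultaneously, the $\cH$-valued components $X_v$ coming from different vertices need not have orthogonal ranges, so the coupling between them must be controlled to keep $X^*X+Z^*Z=I$ (equivalently, to guarantee that $\rT|_\cN$ is unitarily, and not merely completely isometrically, isomorphic to $\rL_{G,V'}$) while preserving the norm estimate $\|\Omega\|^2-1=\|\zeta\|^2<\ep^2$. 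Managing this interaction, rather than any single local computation, is the delicate part of the argument.
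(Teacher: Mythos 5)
Your proposal does not follow the paper's route, and as written it has a genuine gap rather than a complete alternative. The paper's proof is short and global: since $x$ is absolutely continuous, the vector state $\psi(A)=\ip{\pi_\rS(A)x,x}$ equals $\ip{\pi_L(A)\zeta,\eta}$ for some $\zeta,\eta\in\cH_G$, and rescaling $(\zeta,\eta)\mapsto(t\zeta,t^{-1}\eta)$ makes $\|\zeta\|<\ep$. The whole point is then a two-step orthogonality argument: (i) the identity $\ip{(\pi_\rS(A)\oplus\pi_L(A))(x\oplus\zeta),\,x\oplus(-\eta)}=0$ shows $x\oplus(-\eta)\perp\cN$; (ii) in the Wold decomposition $\cN=\cL+\cM$ of $\rT|_\cN$, the fully coisometric part satisfies $\cM=\bigcap_k\fT_0^k[x\oplus\zeta]\subseteq\cH\oplus 0$, because $\fT_0^k(\cH\oplus\cH_G)\subseteq\cH\oplus Q_k\cH_G$ and the left regular summand has no fully coisometric part. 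Writing $x\oplus\zeta=(y_0\oplus 0)+z$ with $y_0\oplus 0\in\cM$, $z\in\cL$, the two orthogonality relations give $\ip{y_0,x-y_0}=0$ and $\ip{x,y_0}=0$, hence $y_0=0$; so $x\oplus\zeta\in\cL$, $\cN=\cL$, and since $\cN$ is singly generated each wandering space $\cW_v$ has dimension at most one, which is exactly $\cN\cong\cH_{G,V'}$. None of this requires any per-vertex case analysis, Kennedy's theorem, or Szeg\H{o}-type log-integrability.

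The concrete problems with your plan are these. First, it is close to circular within the paper's development: you reduce the lemma to ``build a contractive intertwiner $X$ with $X\Omega=x$,'' but the statement that every absolutely continuous vector lies in the range of an intertwiner is precisely what Theorem \ref{T:V_ac}(2) derives \emph{from} this lemma; the only handle the hypothesis gives you is the functional identity $\psi(A)=\ip{\pi_L(A)\zeta,\eta}$, and your proposal never actually uses it except implicitly in the single-cycle case. Second, the vertex-by-vertex reduction does not apply to this statement: Propositions \ref{P:compress_to_vertex} and \ref{P:extend wandering} localize \emph{wandering vectors supported at a single vertex}, whereas $\fT[x\oplus\zeta]$ is the cyclic subspace of a vector supported on many vertices, and its wandering/CK decomposition is not assembled from the compressions $\fS_v$; you acknowledge this ``globalization'' as the delicate point but do not resolve it, and at multi-cycle vertices the appeal to Kennedy's wandering vector theorem gives information about the algebra $\fS_v$, not about the particular cyclic subspace of $S_vx\oplus\zeta_v$. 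Your instinct about the single-cycle case (a small additive floor makes $w_x+|\zeta|^2$ log-integrable, converting an absolutely continuous non-analytic piece into a shift) is correct and is indeed the phenomenon the lemma must handle, but the mechanism of \cite[Theorem 1.6]{DKP} that the paper actually extends is the orthogonality-plus-Wold argument above, which dispatches all cases at once.
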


\begin{proof}
Let $\pi_\rS$ be the representation of $\cT(G)$ associated to $\rS$.
Consider the vector state on $\cT^+(G)$ given by $\psi(A) = \ip{ \pi_\rS(A)x, x }$ for $A \in \cT^+(G)$.   
Since $x$ is absolutely continuous, there are vectors $\zeta$ and $\eta$ in $\cH_{G}$ such that
\[ \psi(A) = \ip{ \pi_L(A) \zeta,\eta } .\]
As $\psi(A) = \ip{  \phi(S)\, t\zeta, t^{-1}\eta }$ for any non-zero scalar $t$, we may assume that $\| \zeta \| < \ep$.  
We then  compute for all $A \in \cT^+(G)$,
\[
  \ip{  (\pi_\rS(A) \oplus \pi_L(A)) (x \oplus \zeta), x \oplus -\eta } =
  \ip{ \pi_\rS(A) x, x } - \ip{ \pi_L(A) \zeta,\eta } =  0 .
\]
So $x \oplus -\eta$ is orthogonal to the cyclic subspace $\cN := \fT[x \oplus \zeta]$. Let $\rN = (N_v,N_e)$ be the restriction of $\rT$ to $\cN$, and let $\fN$ be its free semigroupoid algebra.

By the Wold decomposition (Theorem \ref{thm:Wold-decomp} and Proposition \ref{P:fully-coisometric space}) for $\rN$, we obtain an \textit{orthogonal} decomposition\footnote{which is not necessarily compatible with the decomposition $\cH \oplus \cH_G$ and is hence denoted $+$ to emphasize this.} $\cN = \cL + \cM$, where $\cL$ is the left regular part determined by the wandering subspaces $\cW_v=(N_v-\sum_{e\in r^{-1}(v)} N_eN_e^*)\cN$, so that
\[
 \cL := \sum_{v\in V} \fT[\cW_v]
\]
and $\cM$ is the fully coisometric CK part, given by Proposition~\ref{P:fully-coisometric space} as
\[
 \cM = \bigcap_{k\ge1} \fT_0^k [x \oplus \zeta] .
\]
Now, since $\fT_0^k (\cH \oplus \cH_G) \subseteq \cH \oplus Q_k \cH_G$
where $Q_k$ is the projection onto the span of $ \{ \xi_\mu : |\mu| \ge k \}$.
The spaces on the right side have intersection $\cH \oplus 0$.
Thus, $\cM$ is contained in $\cH \oplus 0$.

Since $x \oplus \zeta$ belongs to $\cN = \cM + \cL$, we may write $x\oplus \zeta = y + z$, 
where $y = y_0 \oplus 0 \in \cM$ and $z = (x- y_0) \oplus \zeta$ lies in $\cL$.
Moreover, $y$ and $z$ are orthogonal, whence  $0 = \ip{  y_0 , x - y_0 }$.
We saw above that  $x \oplus -\eta$ is orthogonal to $\cN = \cM + \cL$,
and, in particular, is  orthogonal to $y_0 \oplus 0$.
Therefore, we also have $\ip{ x, y_0} =0$.  
Hence, $\ip{y_0,y_0} = 0$,  i.e., $y_0 = 0$.   

Therefore,  $x \oplus \zeta$ is contained in $\cL$.  
But $\cL$ is invariant for $\fT$, and $x\oplus\zeta$ generates $\cN$; whence $\cN = \cL$ and $\cM = 0$.   
Thus, since $\cN$ is a cyclic subspace for $\fT$ generated by a single vector, the dimension of
$\cW_v$ is at most one for each $v\in V$. Define $V'$ to be those $v\in V$ for which $\cW_v$ is of dimension 1.  
Therefore, $\cN$ is unitarily equivalent to $\bigoplus_{v\in V'} \fL_{G, v}$; whence $\fN = \fT|_{\cN} \cong \fL_{G,V'}$.
\end{proof}

With this in hand, we can prove an analogue of \cite[Theorem~2.7]{DLP} relating intertwiners and absolute continuity.

\begin{theorem} \label{T:V_ac}
Let $\rS = (S_v,S_e)$ be a TCK family on $\cH$ for a graph $G$ corresponding to a representation $\pi_\rS$ of $\cT(G)$. Then
\begin{enumerate} [label=\normalfont{(\arabic*)}]
\item $\psi(A) = \ip{\pi_\rS(A) x,y}$ is absolutely continuous for all $x,y\in \cV_{ac}$.
\item $\cV_{ac}$ is a closed subspace consisting of all absolutely continuous vectors.
\item $\cV_{ac}$ is invariant for $\fS$.
\item $\cV_{ac} \oplus 0$ lies in the closed span of the wandering vectors for $\rS \oplus \rL$ in $\cH \oplus \cH_G$.
\end{enumerate}
\end{theorem}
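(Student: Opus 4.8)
The plan is to use the intertwiners directly for the three easy parts and to invoke Lemma~\ref{L:add L} only for the one genuinely hard step. For part~(1), write $x = X\xi$ and $y = Y\eta$ with $X,Y \in \cX(\rS)$ and $\xi,\eta\in\cH_G$. Since $X$ intertwines, $\pi_\rS(S_\mu)X = S_\mu X = X L_\mu = X\pi_L(L_\mu)$ on generators, and hence $\pi_\rS(A)X = X\pi_L(A)$ for all $A\in\cT_+(G)$ by linearity and norm-continuity ($X$ being bounded). Therefore
\[
 \ip{\pi_\rS(A)x,y} = \ip{X\pi_L(A)\xi, Y\eta} = \ip{\pi_L(A)\xi, X^*Y\eta},
\]
and since $X^*Y\eta\in\cH_G$, this functional is absolutely continuous by definition. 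This settles (1) with no further work.

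For part~(2), the fact that every vector of $\cV_{ac}$ is absolutely continuous is immediate from (1) with $y=x$. The reverse inclusion is the crux. Given an absolutely continuous unit vector $x$, I would apply Lemma~\ref{L:add L} to $\rT=\rS\oplus\rL$ to produce $\zeta\in\cH_G$ with $\cN:=\fT[x\oplus\zeta]$ unitarily equivalent to $\fL_{G,V'}$ via a unitary $W:\cH_{G,V'}\to\cN$ satisfying $W(L_\mu|_{\cH_{G,V'}}) = (T_\mu|_\cN)W$. Compressing to $\cH$ and using $P_\cH T_\mu = S_\mu P_\cH$ together with the $\rT$-invariance of $\cN$, the map $Z = P_\cH W$ satisfies $S_\mu Z = Z(L_\mu|_{\cH_{G,V'}})$; extending $Z$ by $0$ on the reducing subspace $\cH_{G,V\setminus V'}$ produces a genuine intertwiner $\tilde Z\in\cX(\rS)$. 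Since $\tilde Z\bigl(W^{-1}(x\oplus\zeta)\bigr) = P_\cH(x\oplus\zeta)=x$, we get $x\in\cV_{ac}$. Thus $\cV_{ac}$ is precisely the set of absolutely continuous vectors. It is a subspace: for $x,x'\in\cV_{ac}$ the cross terms in $\ip{\pi_\rS(A)(x+x'),x+x'}$ are absolutely continuous by (1), and the absolutely continuous functionals form a norm-closed linear space, being the predual of $\fL_G$. Closedness of $\cV_{ac}$ follows because if $x_n\to x$ in norm with $x_n\in\cV_{ac}$, then $\bigl|\ip{\pi_\rS(A)x,x}-\ip{\pi_\rS(A)x_n,x_n}\bigr|\le \|A\|\bigl(\|x\|+\|x_n\|\bigr)\|x-x_n\|$, so the associated vector functionals converge in the dual norm of $\cT_+(G)$; as each is absolutely continuous and these form a norm-closed set, so is the limit.

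Parts~(3) and~(4) are then short. For (3), each $x=X\xi\in\cV_{ac}$ satisfies $S_\mu x = S_\mu X\xi = X L_\mu\xi\in\ran X\subseteq\cV_{ac}$, so the closed subspace $\cV_{ac}$ is invariant for every generator $S_v,S_e$; since the operators leaving a fixed closed subspace invariant form a \wot-closed unital algebra, $\cV_{ac}$ is invariant for all of $\fS$. For (4), given $x\in\cV_{ac}$, which we may take to be a unit vector, and any $\ep>0$, Lemma~\ref{L:add L} supplies $\zeta$ with $\|\zeta\|<\ep$ and $\cN=\fT[x\oplus\zeta]$ of left-regular type, hence spanned by wandering vectors for $\rT|_\cN$; these are wandering vectors for $\rT=\rS\oplus\rL$ as well, since wandering only concerns the orthogonality of $\{T_\mu w\}$. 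Thus $x\oplus\zeta$ lies in the closed span of wandering vectors for $\rS\oplus\rL$, and letting $\ep\to0$ places $x\oplus 0$ in that span.

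The main obstacle is the reverse inclusion in part~(2): realizing an \emph{arbitrary} absolutely continuous vector as the image of a bona fide intertwiner. This is exactly where Lemma~\ref{L:add L}, and through it the Wold decomposition of Theorem~\ref{thm:Wold-decomp} together with Proposition~\ref{P:fully-coisometric space}, does the heavy lifting: it converts the purely analytic hypothesis (absolute continuity of a vector state) into a concrete left-regular summand from which the intertwiner is manufactured. Everything else reduces to bookkeeping with the intertwining identity and to the fact that the absolutely continuous functionals form a norm-closed predual of $\fL_G$.
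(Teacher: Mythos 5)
Your proposal is correct and follows essentially the same route as the paper: part (1) via the intertwining identity, the reverse inclusion in (2) by applying Lemma~\ref{L:add L} and compressing the resulting (partial) isometry onto $\cH$ to manufacture an intertwiner, and (3), (4) by the range-of-intertwiner and small-$\zeta$ arguments. The only differences are expository (you spell out the extension by zero and the norm-closedness of the absolutely continuous functionals, which the paper handles by citing completeness of the predual of $\fL_G$).
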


\begin{proof}
Since $x,y$ are in $\cV_{ac}$, there are maps $X,Y\in\cX(\rS)$ and $\zeta, \eta \in\cH_G$ such that $x = X\zeta$ and $y=Y\eta$.
Therefore,
\[
 \psi(A) = \ip{\pi_\rS(A) X\zeta, Y\eta} = \ip{X\pi_L(A) \zeta, Y\eta} = \ip{\pi_L(A) \zeta, X^*Y\eta} .
\]
Hence $\psi$ is absolutely continuous, and (1) holds.
In particular, every vector in $\cV_{ac}$ is absolutely continuous.

If $x$ is absolutely continuous, then by Lemma~\ref{L:add L} there is a vector $\zeta\in\cH_G$ so that $\cN=\fT[x\oplus\zeta] \cong \cH_{G,V'}$ for $V' \subseteq V$ in a way that yields a unitary equivalence between $\fS|_\cN$ and $\fL_{G,V'}$.
This means that there is a partial isometry $U\in B(\cH_G, \cH)$ with domain $\cH_{G,V'}$ and range $\cN$ so that 
\[ ( S_\mu \oplus L_\mu ) U = U L_\mu \qforal \mu\in \bF_G^+ .\]
Let $X = P_\cH U$ and observe that this is an intertwiner in $\cX(\rS)$.
There is a vector $\xi = U^*(x \oplus \zeta) \in \cH_G$ such that $U\xi = x \oplus \zeta$.
Therefore, $X\xi = x$ belongs to $\cV_{ac}$.
Combining this with (1) shows that $\cV_{ac}$ consists of all absolutely continuous vectors.
Also note that $x \oplus\zeta$ lies in the span of the wandering vectors.

It now follows that $\cV_{ac}$ is a closed subspace. Indeed, if $x,y\in\cV_{ac}$, then clearly so are scalar multiples. Moreover,
\[  \ip{\pi_\rS(A) x\!+\!y, x\!+\!y} =  \ip{\pi_\rS(A) x,x} +  \ip{\pi_\rS(A) x,y} +  \ip{\pi_\rS(A) y,x} +  \ip{\pi_\rS(A) y,y} \]
is a sum of weak-$*$ functionals on $\fL_{G}$, and hence is a weak-$*$ continuous functional.
So $x+y$ is absolutely continuous. 
Furthermore, $\cV_{ac}$ is closed because the predual of $\fL_G$ is complete.

If $x\in\cV_{ac}$ and $A\in\cT^+(G)$, select a $\xi\in\cH_G$ and $X\in\cX(\rS)$ so that $x=X\xi$.
Then
\[ \pi_\rS(A) x =  \pi_\rS(A) X\xi = X \pi_L(A) \xi \]
is in the range of $X$. 
Thus, $\cV_{ac}$ is invariant for $\pi_\rS(\cT^+(G))$, and hence is also invariant for its \wot\ closure $\fS$.

Finally, returning to the setup in the second paragraph, we see that $x \oplus\zeta$ lies in a subspace of left regular type, and hence lies in the closed span of wandering vectors. 
By Lemma~\ref{L:add L}, this can be accomplished with $\|\zeta\|<\ep$, and thus $x \oplus 0$ also lies in the closed span of wandering vectors. 
\end{proof}

\begin{corollary} \label{C:largest-ac}
If $\rS$ is a TCK family for $G$, then $\cV_{ac}$ is the unique largest invariant subspace $\cK$ such that $\fS|_{\cK}$ is absolutely continuous.
\end{corollary}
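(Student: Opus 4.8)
The plan is to read this corollary off directly from Theorem~\ref{T:V_ac}, so the argument is essentially bookkeeping with the definitions. Recall first that if $\cK$ is an invariant subspace for $\fS$, then $\cK$ is automatically invariant for $\pi_\rS(\cT_+(G))$, since $\pi_\rS(\cT_+(G))$ is the norm-closed algebra generated by $\{S_\mu\}$ and hence sits inside $\fS$. Thus $A\mapsto \pi_\rS(A)|_\cK$ is a genuine representation of $\cT_+(G)$ on $\cK$, and saying that $\fS|_\cK$ is absolutely continuous means precisely that $\phi(A) = \ip{\pi_\rS(A) x, y}$ is absolutely continuous for all $x, y\in\cK$. (Note that $\cK$ need not be reducing, so $\fS|_\cK$ need not be the algebra of a TCK family; it is the functionals, not the algebra structure, that carry the notion of absolute continuity.)

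First I would verify that $\cV_{ac}$ is itself one of the competing subspaces. By item~(3) of Theorem~\ref{T:V_ac}, $\cV_{ac}$ is invariant for $\fS$, and by item~(1) the functional $A\mapsto \ip{\pi_\rS(A) x, y}$ is absolutely continuous for every $x, y\in \cV_{ac}$. By the reading above, this is exactly the statement that $\fS|_{\cV_{ac}}$ is absolutely continuous.

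Next, for maximality, I would take an arbitrary invariant subspace $\cK$ for which $\fS|_\cK$ is absolutely continuous and show $\cK\subseteq \cV_{ac}$. Fix $x\in \cK$. Taking $y = x$ in the defining condition shows that the vector state $A\mapsto \ip{\pi_\rS(A) x, x}$ is absolutely continuous, i.e.\ $x$ is an absolutely continuous vector. But item~(2) of Theorem~\ref{T:V_ac} identifies $\cV_{ac}$ with the set of \emph{all} absolutely continuous vectors, so $x\in \cV_{ac}$. Since $x\in\cK$ was arbitrary, $\cK\subseteq \cV_{ac}$.

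Combining the two steps, $\cV_{ac}$ is an invariant subspace on which $\fS$ is absolutely continuous, and it contains every other such subspace; hence it is the largest, and it is the unique one with this property (any other maximal such subspace would be contained in $\cV_{ac}$ and, being maximal, equal to it). There is no substantive obstacle here: the only point requiring care is confirming that ``$\fS|_\cK$ is absolutely continuous'' is the intended notion for a merely invariant (not reducing) $\cK$, which is settled by the opening remark, since invariance of $\cK$ for $\fS$ yields a bona fide representation of $\cT_+(G)$ on $\cK$ whose vector functionals agree with those coming from $\pi_\rS$.
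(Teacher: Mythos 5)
Your proof is correct and follows essentially the same route as the paper: both deduce from Theorem~\ref{T:V_ac} that $\cV_{ac}$ is an invariant subspace on which $\fS$ is absolutely continuous, and both obtain maximality by noting that every vector of a competing subspace $\cK$ is absolutely continuous and hence lies in $\cV_{ac}$ by item~(2) of that theorem. Your version merely spells out the bookkeeping (taking $y=x$ to get a vector state, and the remark about invariant versus reducing subspaces) that the paper leaves implicit.
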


\begin{proof}
By Theorem \ref{T:V_ac} $\cV_{ac}$ is invariant and $\fS|_{\cV_{ac}}$ is absolutely continuous. Also, by definition of absolute continuity and Theorem \ref{T:V_ac} again, we see that if $\cK$ is any invariant subspace for which $\fS|_{\cK}$ is absolutely continuous, then $\cK \subseteq \cV_{ac}$.
\end{proof}

Our next goal is to understand the precise relationship between absolutely continuous and analytic free semigroupoid algebras.

\begin{corollary} \label{P:analytic-implies-ac}
Let $G$ be a graph and $\fS$ a free semigroupoid algebra generated by a TCK family $\rS$. 
If $\fS$ is analytic, then it is absolutely continuous.
\end{corollary}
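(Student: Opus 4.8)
The plan is to verify the characterization of absolute continuity proved earlier, namely that $\fS$ is absolutely continuous if and only if $\pi_{\rS}|_{\cT_+(G)}$ extends to a completely contractive, weak-$*$ continuous representation of $\fL_G$. So it suffices to produce such an extension, and analyticity provides exactly the ingredient needed. By hypothesis there is a completely isometric weak-$*$-homeomorphism $\phi:\fS\to\fL_{G[\rS]}$ with $\phi(S_\lambda)=L_\lambda$ for $\lambda\in\bF_{G[\rS]}^+$; its inverse $\phi^{-1}:\fL_{G[\rS]}\to\fS$ is then a weak-$*$ continuous, completely contractive homomorphism sending each $L_\lambda$ to $S_\lambda$.

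First I would recall that $\supp(\rS)$ is a directed subset of $V$, so that the paths with source in $\supp(\rS)$ are precisely the paths of the induced subgraph $G[\rS]$, and $\cH_{G,\supp(\rS)}$ is a reducing subspace of $\fL_G$ on which, by Corollary~\ref{C:fL_{G,F}}, the restricted left-regular algebra is canonically identified with $\fL_{G[\rS]}$. Since $V\succ\supp(\rS)$ holds trivially, Theorem~\ref{T:left_quotient} supplies a weak-$*$ continuous complete quotient map $\rho:=\rho_{V,\supp(\rS)}:\fL_G\to\fL_{G[\rS]}$ satisfying $\rho(L_\mu)=L_\mu|_{\cH_{G,\supp(\rS)}}$.

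The core of the argument is to set $\Psi:=\phi^{-1}\circ\rho:\fL_G\to\fS\subseteq B(\cH)$ and to check that $\Psi$ agrees with $\pi_{\rS}$ on every generator $L_\mu$, $\mu\in\bF_G^+$. If $s(\mu)\in\supp(\rS)$, then $\mu$ is a path of $G[\rS]$, $\rho(L_\mu)$ is the corresponding generator of $\fL_{G[\rS]}$, and hence $\Psi(L_\mu)=\phi^{-1}(L_\mu)=S_\mu=\pi_{\rS}(L_\mu)$. If $s(\mu)\notin\supp(\rS)$, then $S_{s(\mu)}=0$ forces $S_\mu=0$; simultaneously $\rho(L_\mu)=0$, because any basis vector $\xi_\nu\in\cH_{G,\supp(\rS)}$ not annihilated by $L_\mu$ would satisfy $r(\nu)=s(\mu)$, and directedness of $\supp(\rS)$ would then place $s(\mu)=r(\nu)$ in $\supp(\rS)$, a contradiction. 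Thus $\Psi(L_\mu)=S_\mu$ in every case, so $\Psi$ and $\pi_{\rS}$ coincide on the generators, and therefore on all of $\cT_+(G)$ by norm-continuity and density.

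Finally $\Psi$ is weak-$*$ continuous and completely contractive, being a composite of maps with these properties, so it is the required extension and absolute continuity follows. I expect the only genuinely delicate point to be the matching of generators when $s(\mu)\notin\supp(\rS)$, where one must invoke the directedness of $\supp(\rS)$ to see that both $S_\mu$ and $\rho(L_\mu)$ vanish; the rest is a formal composition of maps already known to be weak-$*$ continuous and completely contractive. As an alternative route one could instead show that analyticity forces the structure projection of Theorem~\ref{thm:structure} to vanish: since $\phi$ is a weak-$*$-homeomorphism carrying $S_\lambda$ to $L_\lambda$, it maps $\bigcap_{k\ge1}\fS_0^k$ onto $\bigcap_{k\ge1}\fL_{G[\rS],0}^k=\{0\}$, so $P=0$ and $\cH$ is the closed linear span of wandering vectors by Theorem~\ref{thm:structure}(6); these are absolutely continuous, whence $\cH=\cV_{ac}$ and $\fS$ is absolutely continuous by Corollary~\ref{C:largest-ac}.
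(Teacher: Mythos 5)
Your main argument is correct, and it takes a genuinely different route from the paper. The paper's proof stays entirely on the spatial side: it observes that every wandering vector is absolutely continuous (the unitary $\fS[x]\cong\cH_{G,\supp x}$ is itself an intertwiner, so $x\in\cV_{ac}$), and then invokes Theorem~\ref{thm:structure}(6) to conclude that $\cH$ is the closed span of wandering vectors, whence $\cV_{ac}=\cH$. Your main argument instead works on the algebra side: you build the required weak-$*$ continuous completely contractive extension of $\pi_{\rS}|_{\cT_+(G)}$ explicitly as $\phi^{-1}\circ\rho_{V,\supp(\rS)}$ and appeal to the extension characterization of absolute continuity. The details you flag as delicate are indeed the right ones and they check out: directedness of $\supp(\rS)$ gives both the identification $\fL_{G,\supp(\rS)}\cong\fL_{G[\rS]}$ and the vanishing of $S_\mu$ and $\rho(L_\mu)$ when $s(\mu)\notin\supp(\rS)$. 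What your route buys is independence from the structure theorem's item (6), which rests on the wandering vector machinery (ultimately Kennedy's theorem and the cycle analysis); you only need Theorem~\ref{T:left_quotient} and the definition of analyticity. What the paper's route buys is the sharper spatial fact that wandering vectors lie in $\cV_{ac}$, which is reused elsewhere in that section, and a one-line derivation once the structure theorem is in hand. Your closing ``alternative route'' is essentially the paper's own proof, correctly including the step (left implicit in the paper) that analyticity forces the structure projection to vanish because $\phi$ carries $\bigcap_{k\ge1}\fS_0^k$ into $\bigcap_{k\ge1}\fL_{G[\rS],0}^k=\{0\}$.
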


\begin{proof}
Each wandering vector $x$ is absolutely continuous because $\fS[x]\cong \cH_{G,\supp x}$, and this unitary equivalence is an intertwining operator with range $\fS[x]$. 
Thus, the span of wandering vectors lies in $\cV_{ac}$.
By item (6) of Theorem \ref{thm:structure}, it follows that $\cH$ is the closed linear span of wandering vectors. 
Therefore, $\cV_{ac}=\cH$, and $\fS$ is absolutely continuous.
\end{proof}

\begin{example} \label{E:cycle_abs_cont}
Let $G$ be a graph which contains a cycle $C_n$ that has no entries; i.e.\ the cycle has edges $e_1,\dots,e_n$ on vertices $v_1,\dots,v_n$
and if $f\in E$ such that $r(f) = v_{i\!\! \pmod n +1}$ for some $1 \le i \le n$, then $f= e_i$.
Let $A$ be a measurable subset of the circle $\bT$ such that $0 < m(A) < 1$.
Define a representation $\pi$ of $\cT^+(C_n)$ on $\cV_1=\bC^n \otimes L^2(A)$ by setting $\pi(e_i) = E_{i,i+1}\otimes I$ for $1 \le i < n$
and $\pi(e_n) = E_{n,1}\otimes M_{z,A}$, where $M_{z,A}$ is multiplication by $z$ on $L^2(A)$.
Now enlarge the space and extend this in any manner to a TCK family $\rS_1$ of $G$ by defining the images of the other generators, such that $\cV_1$ is cyclic for $\fS_1$. There are various ways to do this, and we leave the details to the reader.
This extension is now unique up to unitary equivalence because the Wold Decomposition (Theorem \ref{thm:Wold-decomp}) shows that $\cW =\big(  \bigoplus_{s(e)\in C_n} S_e\cV_1 \big)\ominus\cV_1$ is a wandering subspace generating $\cV_1^\perp$. Let $\rS_2$ be a second representation as above, using $A^c = \bT\setminus A$ instead of $A$ to define $\cV_2$, which generates the free semigroupoid algebra $\fS_2$.

We now apply Theorem~\ref{T:cycle_structure} about representations of a cycle.
The representation $\pi_{\rS_1} \oplus\pi_{\rS_2}$ compressed to  $\bC^n \otimes (L^2(A) \oplus L^2(A^c))$ is spanned by wandering vectors, so we see that $\pi_{\rS_1} \oplus\pi_{\rS_2}$ generates an analytic free semigroupoid algebras.
In particular, both $\pi_{\rS_1}$ and $\pi_{\rS_2}$ are absolutely continuous on $\bC^n \otimes L^2(A)$ and $\bC^n \otimes L^2(A^c)$ respectively.
However, $\fS_1$ and $\fS_2$ have proper non-zero structure projections because there are no wandering vectors for $\cV_1$ and $\cV_2$. Thus, $\fS_1$ and $\fS_2$ are absolutely continuous free semigroupoid algebras that are not analytic.
\end{example}

It turns out that this example is the only possible pathology. Note that if $\mu=e_n\dots e_1$ is a cycle in a graph $G$ and $S_\mu$ is a unitary map on $S_{s(\mu)}\cH$, then any edge $f$ distinct from $e_n,...,e_1$ with $r(f)\in\{v_1,\dots,v_n\}$ must have $S_f=0$. Conversely, for a CK family with this property, we must have that $S_\mu$ unitary on $S_{s(\mu)}\cH$.
If the representation is absolutely continuous, then the spectral measure of $S_\mu$ must be absolutely continuous with respect to Lebesgue measure on $\bT$. 
As the example above shows, in order to be analytic, the spectral measure must be equivalent to Lebesgue measure.
Equivalently, we could assume that the spectral measure dominates Lebesgue measure on $\bT$ and is absolutely continuous.

By analogy to the case of free semigroup algebras, we adopt the following definition from \cite{DLP}.

\begin{definition}
A free semigroupoid algebra $\fS$ is \emph{regular} if the absolutely continuous part is analytic.
\end{definition}

We note that a free semigroup algebra $\fS$ can fail to be regular only if the free semigroup has a single generator, 
and the isometry generating it is unitary and the spectral measure has a proper absolutely continuous part which is not equivalent to $m$. This follows from Wermer's Theorem as in Example~\ref{E:cycle_abs_cont}. When this notion was introduced in \cite{DLP}, it was not clear what could happen when the number of generators is two or more. Kennedy \cite{Kennedy2013} showed that free semigroup algebras on at least two generators are always regular. 
The following characterizes regular free semigroupoid algebras.

\begin{theorem}\label{T:abs_cont}
Suppose that $\fS$ is a free semigroupoid algebra of a graph $G$ generated by a TCK family $\rS$ on $\cH$. 
Then $\fS$ is regular if and only if it satisfies the measure theoretic condition
\begin{itemize} 
\item[\normalfont{(M)}] Whenever $\mu$ is a cycle such that $S_\mu$ is unitary on $S_{s(\mu)}\cH$, the spectral measure of $S_\mu$ is either singular or dominates Lebesgue on $\bT$.
\end{itemize}
\end{theorem}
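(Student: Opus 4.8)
The plan is to test regularity through the structure projection $P$ of Theorem~\ref{thm:structure} and then to localise the whole question to the individual vertices. By Corollary~\ref{P:analytic-implies-ac} the span of wandering vectors, which is $P^{\perp}\cH$ by Theorem~\ref{thm:structure}(6), is always contained in the absolutely continuous part $\cV_{ac}$; and $\fS$ is regular exactly when these two subspaces coincide. So the task is to decide when $\cV_{ac}\subseteq P^{\perp}\cH$, and I would do this one vertex at a time.

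First I would set up the localisation. Since $P$ commutes with every $S_v$ (Theorem~\ref{thm:structure}) and $\cV_{ac}$ is invariant for $\fS$ (Theorem~\ref{T:V_ac}(3)), both $P^{\perp}\cH$ and $\cV_{ac}$ respect the orthogonal decomposition $\cH=\bigoplus_{v\in V}S_v\cH$. On the summand $S_v\cH$ the compression $\fS_v:=S_v\fS|_{S_v\cH}$ is, by Proposition~\ref{P:compress_to_vertex}, a free semigroup algebra on the irreducible cycles $I_v$ at $v$. Proposition~\ref{P:extend wandering} identifies $P^{\perp}S_v\cH$ with the closed span of wandering vectors of $\fS_v$, and I would prove the matching statement for absolute continuity: a vector $x=S_vx$ is absolutely continuous for $\fS$ if and only if it is absolutely continuous for $\fS_v$. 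The point is that the vector functional $A\mapsto\la\pi_{\rS}(A)x,x\ra$ is carried by the cycles at $v$, i.e.\ by $\bFGv$, and that the cyclic $v$-to-$v$ part of the left regular representation $\pi_L$ of $\cT(G)$ is unitarily equivalent to the left regular representation of the free semigroup $\bFGv$; since absolute continuity means precisely weak-$*$ continuous extendability of such functionals, the two notions agree. Granting this, $\cV_{ac}\cap S_v\cH$ is the absolutely continuous part of $\fS_v$, and hence $\fS$ is regular if and only if each $\fS_v$ is regular.

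Next I would run the trichotomy on $|I_v|$. If $|I_v|=0$ then $\fS_v=\bC S_v$ is trivially regular and no cycle at $v$ can have $S_\mu$ unitary, so (M) is vacuous at $v$. If $|I_v|\ge 2$ then $\fS_v$ is a free semigroup algebra on at least two generators, which is regular by Kennedy \cite{Kennedy2013}; moreover distinct cycles at $v$ of equal length have orthogonal ranges (Proposition~\ref{P:compress_to_vertex}), so $S_\mu S_\mu^*$ is always a proper subprojection of $S_v$ and no $S_\mu$ is unitary, making (M) vacuous again. The only interesting case is $|I_v|=1$, where $\fS_v$ is generated by the single isometry $W=S_{\mu_0}|_{S_v\cH}$ for the irreducible generator $\mu_0$. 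Here I would invoke Theorem~\ref{T:cycle_structure} with $n=1$: if $W$ is not unitary then $\fS_v$ lands in case~(2) and is regular, while if $W$ is unitary with spectral measure $\nu$ then $\fS_v$ is a von Neumann algebra with no wandering vectors when $m\not\ll\nu$, even though $\cV_{ac}(\fS_v)$ still contains the nonzero absolutely continuous part of $\nu$; thus $\fS_v$ is regular precisely when $\nu$ is singular or $m\ll\nu$. This is exactly condition~(M) at $v$, and the failure is the pathology exhibited in Example~\ref{E:cycle_abs_cont}.

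Combining the three cases yields the equivalence: $\fS$ is regular if and only if every cycle $\mu$ with $S_\mu$ unitary has spectral measure that is singular or dominates $m$, which is (M). A small bookkeeping point is that it suffices to check (M) on the irreducible generators, since for a power $\mu_0^k$ the spectral measure is the pushforward of $\nu$ under $z\mapsto z^k$, and both ``singular'' and ``dominates $m$'' are preserved by this pushforward because the pushforward of $m$ is again $m$. I expect the main obstacle to be the absolute-continuity half of the localisation in the second step, namely matching $\cV_{ac}\cap S_v\cH$ with the absolutely continuous part of $\fS_v$: unlike the wandering-vector reduction already recorded in Proposition~\ref{P:extend wandering}, it requires identifying the cyclic $v$-to-$v$ part of $\pi_L$ with the left regular representation of $\bFGv$ and verifying that weak-$*$ continuous extendability of a vector functional supported on cycles at $v$ transfers faithfully across this identification.
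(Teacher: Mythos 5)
Your proposal is correct and follows essentially the same route as the paper: both reduce to the free semigroup algebras $\fS_v$ at each vertex via Propositions~\ref{P:compress_to_vertex} and~\ref{P:extend wandering}, and then run the trichotomy on the number of irreducible cycles, using Kennedy's theorem for two or more generators and Theorem~\ref{T:cycle_structure} (with Example~\ref{E:cycle_abs_cont}) for a single cycle. The only difference is organizational: you package the reduction as a symmetric identification of $\cV_{ac}\cap S_v\cH$ with the absolutely continuous part of $\fS_v$ (your sketch of which is sound), whereas the paper needs only the forward direction of this for the sufficiency of (M) and handles the failure of (M) by directly exhibiting the absolutely continuous non-analytic subspace from the cycle analysis.
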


\begin{proof}
First assume that (M) fails to hold. Then there is a cycle $\mu$ such that $S_\mu$ is unitary, 
say with decomposition $S_\mu\cong U_a \oplus U_s$ on $S_{s(\mu)}\cH \cong \cN_a \oplus \cN_s$ into its absolutely continuous and singular parts, such that $U_a \ne 0$ is supported on a subset $A\subseteq \bT$ with $0 < m(A) < 1$.
Since $S_\mu$ is unitary, the cycle $\mu$ has no non-zero entries; i.e., in the graph $G[\supp \rS]$ the cycle $\mu$ has no entries.
By the analysis in Theorem~\ref{T:cycle_structure} and Example~\ref{E:cycle_abs_cont}, we see that the subspace $\cN_a$ is absolutely continuous but contains no wandering vectors, and thus is not analytic.
Hence, $\fS$ is not regular.

Now suppose that (M) holds. By Proposition \ref{P:analytic-implies-ac}, the analytic part is contained in the absolutely continuous part. 
For the converse, by restricting to the absolutely continuous part, we may assume that $\fS$ is absolutely continuous, so that $\cH = \cV_{ac}$. By Theorem~\ref{thm:structure}(6), we need to show that $\cH$ is spanned by wandering vectors. By Proposition \ref{P:compress_to_vertex} and Proposition~\ref{P:extend wandering}, it is enough to show that each $S_v\cH$ is spanned by wandering vectors for the free semigroup algebra $\fS_v = S_v \fS |_{S_v\cH}$.  The free semigroup is generated by the irreducible cycles at $v$. Since $\fS$ is absolutely continuous, each $\fS_v$ is absolutely continuous.

There are then three cases. If there are no cycles through $v$, then $\fS_v \cong \bC$ and every vector in $S_v\cH$ is wandering. If there are at least $2$ irreducible cycle operators, then by \cite[Theorem 4.16]{Kennedy2013} we get that $\fS_v$ is analytic type and $\cH$ is spanned by wandering vectors.
Finally, if $v$ lies on a single irreducible cycle $\mu$, Theorem~\ref{T:cycle_structure} shows that $\fS_v$ is generated by $S_\mu$. If $S_\mu$ is not unitary, then its Wold decomposition contains copies of the unilateral shift $U_+$. As $\fS_v$ is absolutely continuous, this falls into case (2) of Theorem \ref{T:cycle_structure}.
If $S_\mu$ is unitary, as $\fS_v$ is absolutely continuous we get by hypothesis that the spectral measure of $S_\mu$ dominates $m$, which is also case (2) of Theorem \ref{T:cycle_structure}. Hence, in either case, the wandering vectors span the absolutely continuous part (i.e.\ all of $S_v \cH$).
\end{proof}


While it may not always be easy to verify if $\fS$ is regular, the existence of sufficiently many wandering vectors will always suffice.
For this reason, an important special case of the following corollary is that $\rL \oplus \rT$ is always regular for any TCK family $\rT$.

\begin{corollary}
Let $\rS$ and $\rT$ be TCK families for $G$ on $\cH_{\rS}$ and $\cH_{\rT}$ respectively, and assume that $\rS$ is regular. 
Then $\rS \oplus \rT$ is regular if and only if whenever $\mu$ is a cycle and $T_{\mu}$ is unitary on $T_{s(\mu)}\cH_{\rT}$, we either have that
\begin{enumerate}[label=\normalfont{(\arabic*)}]
\item
spectral measure of $T_{\mu}$ is singular or dominates Lebesgue on $\bT$; or
\item
$P_\rS^\perp \cH_{\rS,s(\mu)} \ne \{0\}$, where $P_\rS$ is the structure projection for $\rS$. 
\end{enumerate}
In particular, $\rL \oplus \rT$ is always regular.
\end{corollary}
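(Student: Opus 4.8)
The plan is to run the family $\rS\oplus\rT$ through the measure-theoretic criterion of Theorem~\ref{T:abs_cont} and then translate condition (M) into the stated disjunction. Write $v=s(\mu)$. The operator $(S\oplus T)_\mu=S_\mu\oplus T_\mu$ is unitary on $(S_v\cH_\rS)\oplus(T_v\cH_\rT)$ exactly when $S_\mu$ is unitary on $S_v\cH_\rS$ \emph{and} $T_\mu$ is unitary on $T_v\cH_\rT$, in which case its scalar spectral measure is the join $\nu_S\vee\nu_T$ of the two spectral measures. Thus (M) for $\rS\oplus\rT$ reads: for every cycle $\mu$ with both $S_\mu$ and $T_\mu$ unitary, $\nu_S\vee\nu_T$ is singular or dominates $m$. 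Since $m\ll\nu_S\vee\nu_T$ holds exactly when the absolutely continuous supports of $\nu_S$ and $\nu_T$ together exhaust $\bT$ (and $\nu_S\vee\nu_T$ is singular exactly when both are), the remaining work is to understand the $\rS$-side measure $\nu_S$, which is where regularity of $\rS$ enters.

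First I would record the dictionary coming from regularity. Suppose $\mu$ is a cycle at $v$ for which $\fS_v:=S_v\fS|_{S_v\cH_\rS}$ is generated by a \emph{single} irreducible cycle and $S_\mu$ is unitary on $S_v\cH_\rS$. Since $P_\rS$ commutes with each $S_v$, item~(6) of Theorem~\ref{thm:structure} identifies $P_\rS^\perp S_v\cH_\rS$ with the closed span of wandering vectors supported at $v$, and Theorem~\ref{T:cycle_structure} identifies this in turn with the absolutely continuous part of the unitary $S_\mu$. Regularity of $\rS$ forces this a.c.\ part to be either $\{0\}$ or all of the $m$-equivalent summand, giving the equivalences
\[
 P_\rS^\perp S_v\cH_\rS\ne\{0\}\iff m\ll\nu_S,
 \qquad
 P_\rS^\perp S_v\cH_\rS=\{0\}\iff \nu_S\ \text{singular}.
\]
Moreover, in the single-cycle case, if $S_\mu$ is a \emph{proper} isometry then its Wold decomposition has a shift summand, which by Theorem~\ref{T:cycle_structure}(2) produces wandering vectors and hence $P_\rS^\perp S_v\cH_\rS\ne\{0\}$.

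The main obstacle is justifying the phrase ``single irreducible cycle'': I must rule out the possibility that $\fS_v$ is a von~Neumann free semigroup algebra on two or more generators (a Read-type corner), in which $S_\mu$ could be a proper isometry with \emph{no} wandering vectors, breaking the dictionary. The key point is that the hypothesis forces this away. If $T_\mu$ is unitary on $T_v\cH_\rT$ with $v\in\supp(\rT)$, then $\mu$ has no entries in $G[\supp\rT]$. But two distinct irreducible cycles at $v$ would require an edge $f\notin\mu$ with $r(f)\in V(\mu)$ lying on a cycle through $v$; such an $f$ has $s(f)$ reachable from $v$, so $s(f)\in\supp(\rT)$ by directedness of the support, whence $T_f\ne0$ and $f$ is a genuine entry to $\mu$ in $G[\supp\rT]$ --- a contradiction. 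Therefore $v$ carries a unique irreducible cycle and $\fS_v$ is single-cycle, so the dictionary of the previous paragraph is available whenever $T_\mu$ is unitary.

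With this, both implications are short. For the forward direction, fix a cycle with $T_\mu$ unitary; we may assume $v\in\supp(\rT)$, since otherwise $\nu_T=0$ is singular and (1) holds. Suppose (2) fails, i.e.\ $P_\rS^\perp S_v\cH_\rS=\{0\}$ (the degenerate case $S_v\cH_\rS=\{0\}$ being immediate); the dictionary then forces $S_\mu$ to be unitary with $\nu_S$ singular, so $(S\oplus T)_\mu$ is unitary and the hypothesis (M) for $\rS\oplus\rT$ yields that $\nu_S\vee\nu_T$, which has the same a.c.\ part as $\nu_T$, is singular or dominates $m$; that is exactly (1). Conversely, assume the disjunction holds and take any cycle with $(S\oplus T)_\mu$ unitary. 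Then $T_\mu$ is unitary, so $\fS_v$ is single-cycle and $S_\mu$ is unitary; the dictionary gives that (2) is equivalent to $m\ll\nu_S$, while regularity of $\rS$ gives that $\nu_S$ is singular or dominates $m$. Checking the three cases ($\nu_S$ dominates $m$; $\nu_T$ dominates $m$; both singular) shows $\nu_S\vee\nu_T$ is singular or dominates $m$, so (M) holds for $\rS\oplus\rT$ and Theorem~\ref{T:abs_cont} yields regularity. Finally, the ``in particular'' statement is the special case $\rS=\rL$: the left regular family is analytic, hence regular, with structure projection $P_\rL=0$ and $P_\rL^\perp=I$; as $\rL$ is fully supported, $P_\rL^\perp\cH_{\rL,s(\mu)}=L_{s(\mu)}\cH_G\ne\{0\}$ for every cycle $\mu$, so condition (2) holds for all relevant cycles and the equivalence shows $\rL\oplus\rT$ is regular for every $\rT$.
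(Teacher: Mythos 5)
Your proposal is correct, and it is in fact more complete than the paper's own argument. The paper's proof only establishes the ``if'' direction: granting the disjunction, it verifies condition (M) for $\rS\oplus\rT$ directly, using Theorem~\ref{thm:structure}(6) to extract a wandering vector from (2) and observing that $S_\mu$ then either is a proper isometry (so $(S\oplus T)_\mu$ is not unitary and (M) is vacuous at $\mu$) or contains a bilateral shift summand (so the joint spectral measure dominates $m$). Your backward direction is essentially this same argument, merely routed through your ``dictionary.'' Where you go beyond the paper is the ``only if'' direction, which the paper does not address at all. The genuine content there is your key claim: if $T_\mu$ is unitary on $T_{s(\mu)}\cH_\rT\ne\{0\}$, then $s(\mu)$ carries a unique irreducible cycle, because every edge on a cycle through $s(\mu)$ has source in the directed set $\supp(\rT)$ and the unitarity of $T_\mu$ forces each $T_{e_i}T_{e_i}^*=T_{r(e_i)}$, so the backward path from $s(\mu)$ through $T$-nonzero edges is unique. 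This is exactly what rules out a Read-type corner $\fS_v$ on two or more generators with no wandering vectors, which is the scenario that would otherwise break the equivalence between (2) and $m\ll\nu_S$. Two small points of precision: your dictionary sentence ``Theorem~\ref{T:cycle_structure} identifies this in turn with the absolutely continuous part of $S_\mu$'' is literally valid only in case (2) of that theorem (when $m\not\ll\nu$ there are no wandering vectors even if the absolutely continuous part is nonzero); regularity of $\rS$ is what rescues the stated equivalences, as you then note. And in the backward direction you should dispose of the trivial case $T_{s(\mu)}=0$ (where $(S\oplus T)_\mu=S_\mu$ and regularity of $\rS$ alone gives (M)) before invoking the key claim, since the claim requires $s(\mu)\in\supp(\rT)$. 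Neither point affects the correctness of the argument.
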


\begin{proof}
Since $\rS$ is regular, it satisfies (M). So if (1) holds, then (M) is satisfied for $\rS \oplus \rT$.
On the other hand, if (2) holds, then $\rS$ has wandering vectors in $\cH_{\rS,s(\mu)}$ by the Structure Theorem~\ref{thm:structure} (6).
It follows that $S_\mu$ acts as a shift on the cyclic subspace generated by the wandering vector, and hence $S_\mu$ is either a proper isometry
or contains a summand unitarily equivalent to the bilateral shift, in which case its spectral measure dominates Lebesgue measure.
Either way, (M) holds for $\rS \oplus \rT$. Thus, $\rS \oplus \rT$ is regular.

Now $\rL$ has analytic type, and is supported on every vertex, so (2) holds generally. Thus, $\rL \oplus \rT$ is always regular.
\end{proof}

\section{Lebesgue-von Neumann-Wold decomposition} \label{S:LvNW}

We apply the Structure Theorem to obtain a decomposition theorem for regular Toeplitz-Cuntz-Krieger families. We show that the classification of regular representations of the Toeplitz-Cuntz-Krieger algebra $\cT(G)$ reduces to the classification of \emph{fully coisometric} absolutely continuous, singular and dilation type representations. 
This generalizes Kennedy's Leb\-esgue decomposition for isometric $d$-tuples \cite[Theorem 6.5]{Kennedy2013}, which is the free semigroup algebra case.

We already have left regular and analytic TCK families and now absolutely continuous ones.
We define a few other types.

\begin{definition}
Let $\rS = (S_v,S_e)$ be an TCK family for a graph $G$. We will say that $\rS$ is
\begin{enumerate} [label=\normalfont{(\arabic*)}]
\item
\emph{singular} if it has no absolutely continuous restriction to any invariant subspace.
\item 
\emph{von Neumann type} if $\fS$ is a von Neumann algebra.
\item
\emph{dilation type} if it has no restriction to a reducing subspace that is either analytic or a von Neumann algebra.
\end{enumerate}
\end{definition}

Note that if a free semigroupoid algebra $\fS$ is singular, then the structure projection $P=I$ since the range of $P^\perp$ is analytic and hence absolutely continuous. 
Therefore, $\fS$ is a von Neumann algebra.
But we know from Theorem~\ref{T:cycle_structure} that there are absolutely continuous free semigroupoid algebras which are von Neumann algebras. 
However, these examples are not regular.

The following is standard, but we include it for the reader's convenience.

\begin{lemma} \label{L:cyclic reducing}
Suppose that $\fS$ is a free semigroupoid algebra and that $\cV$ is a coinvariant subspace. 
Then $\fS[\cV]$ and $\fS^*[\cV^\perp]$ are reducing for $\fS$. 
\end{lemma}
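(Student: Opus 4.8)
The plan is to reduce each assertion to the two one-sided invariances, since a closed subspace reduces $\fS$ exactly when it is invariant for both $\fS$ and $\fS^*$. Now $\fS[\cV]=\ol{\fS\cV}$ is $\fS$-invariant by construction (left multiplication by a fixed element of $\fS$ is bounded, so $\fS\,\ol{\fS\cV}\subseteq\ol{\fS\fS\cV}\subseteq\ol{\fS\cV}$), and dually $\fS^*[\cV^\perp]=\ol{\fS^*\cV^\perp}$ is automatically $\fS^*$-invariant; moreover $\cV\subseteq\fS[\cV]$ and $\cV^\perp\subseteq\fS^*[\cV^\perp]$ because $I=\sotsum_{v}S_v$ lies in both $\fS$ and $\fS^*$. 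So the real content is the \emph{opposite} invariance in each case.

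For $\fS[\cV]$ I would prove $S_\mu^*\fS[\cV]\subseteq\fS[\cV]$ by testing the generators $S_e^*$ and $S_v^*=S_v$ on the spanning set $\{S_\lambda v:\lambda\in\bF_G^+,\ v\in\cV\}$. Using the cancellation $S_e^*S_f=\delta_{e,f}S_{s(e)}$, one finds that for $|\lambda|\ge1$ the product $S_e^*S_\lambda$ equals $S_{\lambda'}$ when $\lambda=e\lambda'$ and $0$ otherwise, so $S_e^*S_\lambda v\in\fS\cV\subseteq\fS[\cV]$; for the length-zero terms $S_e^*S_u v=S_e^*(S_u v)\in\cV$ by coinvariance, since $S_u v\in\cV$ and $\cV$ is $\fS^*$-invariant. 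The projections $S_v$ evidently preserve the spanning set as well. As the operators preserving a fixed closed subspace form a \wot-closed algebra and $\fS^*=\wotclos{\Alg}\{S_v^*,S_e^*\}$, it follows that $\fS^*\fS[\cV]\subseteq\fS[\cV]$, whence $\fS[\cV]$ reduces $\fS$. This step is routine.

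For $\fS^*[\cV^\perp]$ the remaining task is $\fS$-invariance, and this is where I expect the main obstacle. I would pass to the orthogonal complement and compute $(\fS^*[\cV^\perp])^\perp=\{y:\ip{y,S_\mu^*w}=0\ \forall\mu,\,w\in\cV^\perp\}=\{y:S_\mu y\in\cV\ \forall\mu\in\bF_G^+\}$, which is precisely the largest $\fS$-invariant subspace $\cW$ contained in $\cV$; it then suffices to show that $\cW$ is itself coinvariant, for then $\cW$ reduces $\fS$ and so does its complement $\fS^*[\cV^\perp]$. The clean telescoping from the previous paragraph is \emph{not} available here: it relies on the isometric relations $S_e^*S_e=S_{s(e)}$, whereas proving $S_e^*\cW\subseteq\cW$ forces one to control the wrong-order products $S_\mu S_e^*$, and $\fS^*$ is not itself a free semigroupoid algebra. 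My plan to overcome this is to first use the already-established fact that $\fS[\cV]$ reduces $\fS$ in order to peel off $\fS[\cV]^\perp$ and reduce to the case that $\cV$ is cyclic, and then to exploit the coisometric structure of the family on the remaining piece, resolving $S_e^*y=\sotsum_{r(f)=s(e)}S_f\,S_{ef}^*y$ by means of $\sotsum_{r(f)=s(e)}S_fS_f^*=S_{s(e)}$ and pushing $S_\mu S_e^*y$ back into $\cV$ via coinvariance; the anticipated difficulty is precisely that a one-sided (non-coisometric) defect, such as a wandering summand, must be absent for $\cW$ to close up under $S_e^*$, so I would expect to invoke the fully coisometric reduction that governs this section.
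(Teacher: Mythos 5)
Your proof that $\fS[\cV]$ reduces $\fS$ is correct and is exactly the paper's argument: the paper likewise evaluates $S_e^*$ on the spanning set $\{S_\mu\xi:\xi\in\cV\}$, obtaining $S_{\mu'}\xi$ when $\mu=e\mu'$, the coinvariance term $S_e^*\xi\in\cV$ when $\mu$ is a vertex, and $0$ otherwise. For the second assertion the paper offers no argument beyond the remark that ``the proofs of the two statements are similar,'' so the real comparison is between your plan and that claim --- and your instinct that the second statement is \emph{not} symmetric to the first is correct.

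However, your plan for $\fS^*[\cV^\perp]$ does not close, and in fact it cannot: the second assertion is false for a general coinvariant subspace. Restricting to $\fS[\cV]$ does not make the family fully coisometric, so the resolution $\sotsum_{r(f)=s(e)}S_fS_f^*=S_{s(e)}$ you intend to exploit is unavailable, and the wandering defect you flagged as the anticipated difficulty really does kill the statement. Concretely, let $G$ have one vertex $v$ and two loops $e_1,e_2$, let $\rS=\rL$ on $\cH_G$, and set $\cV^\perp=R_{e_2}\cH_G$; this is invariant for $\fL_G$ because $R_{e_2}$ commutes with it, so $\cV$ is coinvariant. Computing $L_\mu^*\xi_{\nu e_2}$ (each such vector is $0$, a basis vector $\xi_{\nu'e_2}$, or $\xi_v$) gives $\fL_G^*[\cV^\perp]=\bC\xi_v\oplus R_{e_2}\cH_G$, which is not invariant for $\fL_G$ since $L_{e_1}\xi_v=\xi_{e_1}$ is orthogonal to it. Equivalently, $\cW=(\fL_G^*[\cV^\perp])^\perp=R_{e_1}\cH_G$ is a nonzero, nonreducing invariant subspace sitting inside the coinvariant subspace $\cV$, so no completion of your outline (or of the paper's ``similar'' remark) is possible from coinvariance alone. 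The lemma is only ever applied with $\cV=P\cH$ for the structure projection $P$ of Theorem \ref{thm:structure}, and there the additional facts $P\in\fS$ and $\fS P=\fM P$ rescue it: one gets $(\fS^*[P^\perp\cH])^\perp=\{y\in P\cH:\fM y\subseteq P\cH\}$, the range of the largest projection of $\fM'$ dominated by $P$, which reduces $\fM$ and hence $\fS$. That is the hypothesis under which you should (and can) complete the second half.
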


\begin{proof}
The proofs of the two statements are similar, and we prove only the first.
Observe that $\fS[\cV]$ is invariant for $\fS$ by construction, and it is spanned by vectors of the form $S_\mu \xi$ for $\xi \in\cV$ and $\mu\in \bF_G^+$. 
It follows that for $e\in E$, 
\[
 S_e^* S_\mu \xi = 
\begin{cases}
 S_{\mu'} \xi &\qif \mu = e \mu'\\
 S_e^* \xi &\qif \mu = r(e). \\
 0 &\qif \text{else} 
 
\end{cases}
\]
The co-invariance of $\cV$ shows that $S_e^*\xi \in \cV$; and therefore $S_e^*\fS[\cV] \subseteq \fS[\cV]$.
Thus, $\fS[\cV]$ is reducing.
\end{proof}

\begin{lemma} \label{L:dilation type}
Let $G$ be a graph, and let $\rS=(S_v,S_e)$ be a TCK family for $G$. 
Let $P$ be the structure projection for $\fS$, and set $\cV=P\cH$. 
Then $\rS$ is of dilation type if and only if $\cV$ is cyclic for $\fS$ and $\cV^\perp$ is cyclic for $\fS^*$. 

If $\rS$ is regular, then $\rS$ is of dilation type if and only if it does not have an absolutely continuous summand or a singular summand.

Let $\rA := P\rS|_{P\cH}$. Then $\rA$ is a contractive $G$ family and $\rS$ is the unique minimal dilation of $\rA$ to a TCK family. Moreover, the restriction $\rS|_{P^\perp\cH}$ is unitarily equivalent to a left regular family.
\end{lemma}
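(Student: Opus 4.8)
The proof runs through the three assertions in order, leaning throughout on Theorem~\ref{thm:structure}: $P\in\fS\cap\{S_v:v\in V\}'$, the space $P^\perp\cH$ is invariant for $\fS$ (so $P\cH$ is coinvariant), $\fS P=\fM P$, and $P^\perp\cH$ is the closed span of the wandering vectors. The organizing device is that for \emph{any} reducing subspace $\cR$ for $\rS$ the projection $P_\cR$ commutes with all of $\fS$, hence with $P\in\fS$; so $P_\cR$ and $P$ commute and $\cR$ splits orthogonally as $(\cR\cap P\cH)\oplus(\cR\cap P^\perp\cH)$ with both summands reducing. I will also use repeatedly that a reducing subspace contained in $P^\perp\cH$ carries an analytic restriction (being a reducing cut-down of the left-regular family $\rS|_{P^\perp\cH}$ established in the last paragraph), while one contained in $P\cH$ carries a von Neumann type restriction, since there $\fS$ restricts to a reducing cut-down of the von Neumann algebra $P\fM P=P\fS P$.

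For the first assertion, since $P\cH$ is coinvariant and $P^\perp\cH$ invariant, Lemma~\ref{L:cyclic reducing} makes $\cR_1:=\fS[P\cH]\supseteq P\cH$ and $\cR_2:=\fS^*[P^\perp\cH]\supseteq P^\perp\cH$ reducing. A direct orthogonality computation shows $\cR_1^\perp$ is the largest reducing subspace inside $P^\perp\cH$ and $\cR_2^\perp$ the largest reducing subspace inside $P\cH$; by the previous paragraph $\rS|_{\cR_1^\perp}$ is analytic and $\rS|_{\cR_2^\perp}$ is von Neumann type. Conversely, any analytic reducing summand equals the closed span of its wandering vectors (Corollary~\ref{cor:type-L-wandering}), which are wandering for $\rS$, so it lies in $P^\perp\cH$ and hence in $\cR_1^\perp$; and any von Neumann reducing summand $\cR$ must lie in $P\cH$, for $\cR\cap P^\perp\cH$ would be simultaneously analytic and self-adjoint, forcing it to be $\{0\}$ by the dichotomy in Corollary~\ref{C:fS=fS_0}. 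Thus $\rS$ has no analytic and no von Neumann reducing summand — i.e.\ is of dilation type — exactly when $\cR_1^\perp=\cR_2^\perp=\{0\}$, which is to say $P\cH$ is cyclic for $\fS$ and $P^\perp\cH$ is cyclic for $\fS^*$.

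For the regular case I use that regularity means precisely $\cV_{ac}=P^\perp\cH$ (Corollary~\ref{C:largest-ac}), together with the elementary fact that a vector a.c.\ for a reducing restriction $\rS|_\cR$ is a.c.\ for $\rS$, so $\cV_{ac}(\rS|_\cR)\subseteq\cV_{ac}=P^\perp\cH$. Assume $\rS$ is dilation type. An a.c.\ reducing summand $\cR$ satisfies $\cR\subseteq\cV_{ac}=P^\perp\cH$, hence $\cR\subseteq\cR_1^\perp=\{0\}$; and a singular reducing summand is von Neumann type (its structure projection is $I$, by the remark following the definition of singular), hence lies in $\cR_2^\perp=\{0\}$. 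So there is neither an a.c.\ nor a singular summand. Conversely, assume no a.c.\ and no singular summand. An analytic summand would be a.c.\ by Corollary~\ref{P:analytic-implies-ac}, which is excluded; and a von Neumann reducing summand $\cR$ would be singular, since $\cV_{ac}(\rS|_\cR)\subseteq P^\perp\cH\cap\cR$ is a reducing subspace of $\cR$ that is both analytic and self-adjoint, hence $\{0\}$ — again excluded. Thus $\rS$ is of dilation type.

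For the last assertion, $\rA=P\rS|_{P\cH}$ is a contractive $G$-family because $P\cH$ is coinvariant, exactly as recorded in Section~\ref{S:prelim}. The restriction $\rS|_{P^\perp\cH}$ is a genuine non-degenerate TCK family — the relation $S_e^*S_e=S_{s(e)}$ survives restriction because $P$ commutes with each $S_v=S_e^*S_e$ — and since $P^\perp\cH$ is spanned by wandering vectors, its Wold decomposition (Theorem~\ref{thm:Wold-decomp}) has no fully coisometric summand, so $\rS|_{P^\perp\cH}\cong\bigoplus_{v\in V}\rL_{G,v}^{(\alpha_v)}$ is left-regular type. Finally, when $\rS$ is of dilation type the first assertion gives that $P\cH$ is cyclic for $\fS$, so $\rS$ is a minimal TCK dilation of $\rA$; uniqueness of the minimal dilation (\cite[Theorem~3.3]{MS1998}) then identifies $\rS$ as the unique minimal dilation of $\rA$. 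The main obstacle I anticipate is the first assertion — pinning down $\cR_1^\perp$ and $\cR_2^\perp$ as the largest reducing subspaces on the two sides and, in the converse directions, showing every analytic (resp.\ von Neumann) reducing summand actually sits inside $P^\perp\cH$ (resp.\ $P\cH$); the commutation $[P,P_\cR]=0$ and the analytic-versus-self-adjoint dichotomy are what make these go through.
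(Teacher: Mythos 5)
You have the right skeleton (the subspaces $\fS[\cV]$ and $\fS^*[\cV^\perp]$ via Lemma~\ref{L:cyclic reducing}, locating forbidden summands relative to $\cV$, and \cite[Theorem~3.3]{MS1998} for uniqueness of the minimal dilation), but two of your supporting claims are false, and they infect the first and third assertions. The clearest failure is your proof that $\rS|_{P^\perp\cH}$ is left regular: the inference ``$P^\perp\cH$ is spanned by wandering vectors, hence the Wold decomposition of $\rS|_{P^\perp\cH}$ has no fully coisometric summand'' does not hold. Take $G=C_1$ and $S_e=M_z$ the bilateral shift on $L^2(\bT)$: this is a fully coisometric CK family, so its Wold decomposition has \emph{no} left regular part, yet $L^2(\bT)$ is spanned by wandering vectors (the unimodular functions; this is case (2) of Theorem~\ref{T:cycle_structure}). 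The same example shows the conclusion itself is false without the dilation-type hypothesis, since there $P=0$ and $P^\perp\cH=\cH$ but the restriction is not left regular. The paper's proof genuinely uses dilation type: cyclicity of $\cV$ gives $\cH=\fS[\cV]=\cV\oplus\fS[\cW]$ where $\cW=\big(\cV+\bigoplus_{e\in E}S_e\cV\big)\ominus\cV$ is a wandering subspace, and a \emph{cyclic} wandering subspace forces left regular type via Theorem~\ref{thm:Wold-decomp}. No argument that ignores cyclicity of $\cV$ can work here.

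Your ``organizing device'' is also wrong: $P_\cR$ commutes with $P$ (since $P_\cR\in\fM'$ and $P\in\fM$), so $\cR=(\cR\cap\cV)\oplus(\cR\cap\cV^\perp)$ as subspaces, but the summands need \emph{not} be reducing, because $P\in\fM$ is not in $\fM'$; the pieces are merely coinvariant and invariant, respectively. (For an irreducible dilation-type family as in Theorem~\ref{T:fuller}, take $\cR=\cH$: then $\cR\cap\cV=\cV$ is cyclic and coinvariant, certainly not reducing.) This device is what you use to place every von Neumann summand inside $P\cH$, and that localization is false in general: for the bilateral shift above, $L^2(A)$ with $0<m(A)<1$ is reducing and the restricted algebra is the von Neumann algebra $L^\infty(A)$ (Theorem~\ref{T:cycle_structure}(1)), while $\cV=\{0\}$. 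The same example refutes your premise that a reducing subspace of $P^\perp\cH$ always carries an analytic restriction; moreover your justification of that premise is circular, since it appeals to left-regularity of $\rS|_{P^\perp\cH}$, whose correct proof requires the cyclicity of $\cV$ that the forward direction of the first assertion is trying to establish. The paper avoids all of this by working only with the two specific reducing subspaces $\fS[\cV]^\perp\subseteq\cV^\perp$ and $\fS^*[\cV^\perp]^\perp\subseteq\cV$ (the latter carries a cut-down of $P\fM P$, which is the one half you do have right). Finally, in your regular-case converse, rather than asserting that $\cV_{ac}(\rS|_\cR)$ is ``analytic and self-adjoint,'' note simply that it is invariant for the self-adjoint algebra $\fS|_\cR$, hence reducing, and its restriction is an absolutely continuous summand of $\rS$, which your hypothesis already excludes; that repairs the one spot in the second assertion that leaned on the false principle.
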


\begin{proof}
Assume that $\rS$ is dilation type, and let $\cV = P\cH$. 
Since $\cV$ is coinvariant, the subspaces $\fS[\cV]$ and $\fS^*[\cV^\perp]$ reduce $\rS$ by Lemma~\ref{L:cyclic reducing}.
Now $\fS[\cV]^\perp$ is a reducing subspace contained in $\cV^\perp$, and hence it is an analytic, and thus an absolutely continuous reducing subspace. 
Thus, $\fS[\cV]^{\perp}$ must be $\{0\}$, so that $\cV$ is cyclic.
Similarly, $\cN = \fS^*[\cV^\perp]^\perp$ is a reducing subspace for $\fS$ contained in $\cV$, and thus $\fS|_\cN$ is a von Neumann algebra.
Again this means that $\fS^*[\cV^{\perp}]^{\perp}$ is $\{0\}$, so that $\cV^\perp$ is cyclic for $\fS^*$.

Conversely, if $\cV$ is cyclic, then there is no reducing subspace in $\cV^\perp$; so there is no analytic summand.
Similarly if $\cV^\perp$ is cyclic for $\fS^*$, then there is no reducing subspace of $\cV$, which would be of von Neumann type.
Thus, $\fS$ is dilation type.

We now verify that if $\fS$ is regular, then it has no absolutely continuous or singular summand.
However, in the regular case, the absolutely continuous part is spanned by wandering vectors and lives in the analytic part.
Thus, any absolutely continuous summand is an analytic summand, which cannot occur.

Since $P\cH$ is coinvariant for $\fS$, the compression of $\pi_{\rS}$ to $P\cH$ is a completely contractive homomorphism when we restrict to $\cT_+(G)$. 
This representation is determined by the image of the generators, namely $\rA=(A_v,A_e)$. 
By a result of Muhly and Solel \cite[Theorem 3.3]{MS1998}, $\rA$ has a unique minimal dilation to a TCK family. As $\rS$ is also a minimal dilation of $\rA$ by the previous paragraph, it is the unique minimal dilation up to a unitary equivalence fixing $\cV$. 

Finally, let $\cW = \big( \cV + \bigoplus_{e\in E} S_e \cV \big) \ominus \cV$. It is readily verified that this is a non-empty wandering subspace for $\rS|_{\cV^\perp}$ and that 
\[ \cH = \fS[\cV] = \cV \oplus \fS[\cW] .\]
Therefore $\cW$ is a cyclic wandering subspace for $\fS|_{\cV^\perp}$, which then must be left regular type by Wold Decomposition (Theorem \ref{thm:Wold-decomp}).
\end{proof}

\begin{remark} \label{R:abs cnts diln}
Regularity is required for the second statement in this lemma.
Consider the graph $G$ on two vertices $v,w$ with edges $e,f,g$ such that $s(e)=r(e)=s(f)=v$ and $r(f)=s(g)=r(g)=w$.
Consider the representation $\pi_{\rS_1}$ coming from the cycle at $v$, and the corresponding CK family $\rS_1 = (S_v,S_e)$ of $G$ constructed for this graph as in Example~\ref{E:cycle_abs_cont}. This representation is absolutely continuous. However, $\cV= S_v \cH$ is cyclic and $\cV^\perp=S_w \cH$ is cyclic for $\fS_1^*$. Hence, $\rS_1$ is absolutely continuous and of dilation type.
\end{remark}

We are now able to provide the analogue of Kennedy's Lebesgue decomposition for isometric tuples \cite[Theorem 6.5]{Kennedy2013}.

\begin{theorem} \label{T:LvNW-decomp}
Let $G$ be a graph and let $\rS$ be a TCK family on $\cH$ generating a free semigroupoid algebra $\fS$. 
Then up to unitary equivalence we may decompose
\[
\rS \cong \rS_l \oplus \rS_{an}\oplus \rS_{vN} \oplus \rS_d
\]
where 
\begin{enumerate}[label=\normalfont{(\arabic*)}]
\item
$\rS_l$ is unitarily equivalent to a left-regular TCK family.
\item
$\rS_{an}$ is an analytic fully coisometric CK family. 
\item
$\rS_{vN}$ is a von Neumann type fully coisometric CK family.
\item
$\rS_d$ is a dilation type fully coisometric CK family.
\end{enumerate}

If $\fS$ is regular, then the absolutely continuous summand $\rS_a$ coincides with the analytic part $\rS_{an}$, and the largest singular summand $\rS_s$ coincides with the von-Neumann summand $\rS_{vN}$. Thus, our decomposition can be written instead as
\[ 
\rS \cong \rS_l \oplus \rS_a\oplus \rS_s \oplus \rS_d .
\]
\end{theorem}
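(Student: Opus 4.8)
The plan is to obtain the four summands in two stages: split off the left-regular part with the Wold decomposition, and then cut the fully coisometric remainder using the structure projection. First I would apply Theorem~\ref{thm:Wold-decomp} to write $\rS\cong\rS_l\oplus\rT$, where $\rS_l=\bigoplus_{v}\rL_{G,v}^{(\alpha_v)}$ is the left-regular summand of item~(1) and $\rT=(T_v,T_e)$ is a non-degenerate fully coisometric CK family. Since (F), the CK relations, and non-degeneracy are all inherited by restrictions to reducing subspaces, every piece I carve out of $\rT$ is again a fully coisometric CK family, so the task reduces to producing three reducing subspaces for $\rT$ and naming the type of each. Passing to the reducing summand carrying $\rT$ (still written $\cH$), I let $P\in\fT$ be the structure projection of Theorem~\ref{thm:structure} and put $\cV=P\cH$, a coinvariant subspace.

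Mirroring the reducing subspaces already used in the proof of Lemma~\ref{L:dilation type}, I would set
\[
\cN_{an}:=\fT[\cV]^\perp,\qquad \cN_{vN}:=\fT^*[\cV^\perp]^\perp,\qquad \cN_d:=\fT[\cV]\cap\fT^*[\cV^\perp].
\]
By Lemma~\ref{L:cyclic reducing} the subspaces $\fT[\cV]$ and $\fT^*[\cV^\perp]$ are reducing, hence so are all three, and $\cH=\cN_{an}\oplus\cN_{vN}\oplus\cN_d$. As in Lemma~\ref{L:dilation type}, $\cN_{an}\subseteq\cV^\perp=P^\perp\cH$ forces $\fT|_{\cN_{an}}$ to be analytic, while $\cN_{vN}\subseteq\cV=P\cH$ forces $\fT|_{\cN_{vN}}$ to be a von Neumann algebra; these two are orthogonal precisely because one lies in $P^\perp\cH$ and the other in $P\cH$. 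This gives $\rS_{an}:=\rT|_{\cN_{an}}$ and $\rS_{vN}:=\rT|_{\cN_{vN}}$, settling items~(2) and~(3).

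For item~(4) I would show $\rS_d:=\rT|_{\cN_d}$ is of dilation type through Lemma~\ref{L:dilation type}. The structure projection is compatible with reducing restrictions, $P_{\fT|_\cR}=P|_\cR$, because restriction carries $\fT_0^k$ onto $(\fT|_\cR)_0^k$ and hence $\bigcap_k(\fT|_\cR)_0^k=(\bigcap_k\fT_0^k)|_\cR=\fM P|_\cR$; so the structure projection of $\fT|_{\cN_d}$ is $P|_{\cN_d}$. As $P$ commutes with the projection onto the reducing subspace $\cN_d$, its range on $\cN_d$ is $\cV\cap\cN_d$. Using $\cN_d\subseteq\fT[\cV]$ and that the projection onto $\cN_d$ commutes with $\fT$, every $\xi\in\cN_d$ is a limit of finite sums $\sum_i T_{\mu_i}\eta_i$ with $\eta_i\in\cV\cap\cN_d$, so $P\cN_d$ is cyclic for $\fT|_{\cN_d}$; the symmetric computation from $\cN_d\subseteq\fT^*[\cV^\perp]$ shows $P^\perp\cN_d$ is cyclic for $(\fT|_{\cN_d})^*$. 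By Lemma~\ref{L:dilation type} this is exactly dilation type, completing the general decomposition.

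In the regular case the only new ingredient is the global identity $\cV_{ac}=P^\perp\cH$ for $\rT$: the inclusion $\supseteq$ holds since the analytic subspace $P^\perp\cH$ is absolutely continuous by Corollary~\ref{P:analytic-implies-ac}, and $\subseteq$ holds because regularity makes $\fT|_{\cV_{ac}}$ analytic, whence by Theorem~\ref{thm:structure}(6) every vector of $\cV_{ac}$ lies in the closed span of wandering vectors, namely $P^\perp\cH$. Granting this, $\cN_{an}$ is the largest reducing subspace inside $\cV^\perp=P^\perp\cH=\cV_{ac}$ while every absolutely continuous reducing summand lies in $\cV_{ac}$, so $\cN_{an}$ is the largest absolutely continuous summand and $\rS_{an}=\rS_a$; dually $\cN_{vN}\subseteq P\cH$ meets $\cV_{ac}=P^\perp\cH$ only in $0$, so $\rS_{vN}$ is singular, and any singular summand has no analytic part, hence trivial structure-projection complement, hence sits in $P\cH$ and therefore in $\cN_{vN}$, giving $\rS_{vN}=\rS_s$. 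The step I expect to fight hardest with is the dilation-type verification together with the compatibility $P_{\fT|_\cR}=P|_\cR$; the essential subtlety is that analyticity and regularity are \emph{not} inherited by arbitrary reducing subspaces (a reducing piece $L^2(A)$ of an analytic cycle family need not be analytic), which is exactly why the regular identification must be routed through the global equality $\cV_{ac}=P^\perp\cH$ rather than through any claim that regularity passes to summands.
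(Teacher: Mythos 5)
Your overall architecture is the paper's own: Wold decomposition first, then cut the fully coisometric remainder by the reducing subspaces $\fT[\cV]$ and $\fT^*[\cV^\perp]$ built from the structure projection, identify the three pieces, and in the regular case route everything through the identity $\cV_{ac}=P^\perp\cH$. Almost all of this matches the published proof step for step. The one step that fails is your asserted compatibility $P_{\fT|_{\cR}}=P|_{\cR}$ for an arbitrary reducing subspace $\cR$, together with its justification $\bigcap_k(\fT|_\cR)_0^k=(\bigcap_k\fT_0^k)|_\cR$. This is false, and the paper's own Example~\ref{E:cycle_abs_cont} and Theorem~\ref{T:cycle_structure} --- the very example you invoke in your closing sentence --- refute it: for the cycle family whose unitary is $M_z$ on $\bC^n\otimes L^2(\bT)$ the algebra is analytic, so $P=0$ and $\bigcap_k\fT_0^k=\{0\}$, yet the restriction to the reducing subspace $\cR=\bC^n\otimes L^2(A)$ generates the von Neumann algebra $M_n(L^\infty(A))$, whose structure projection is $I_\cR$ and for which every ideal $(\fT|_\cR)_0^k$ is the whole algebra. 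Two things go wrong in your computation: $\fT_0^k|_\cR$ is in general only \textsc{wot}-dense in $(\fT|_\cR)_0^k$, not equal to it; and even granting equality one cannot interchange $\bigcap_k$ with restriction, since an element lying in every $\fT_0^k|_\cR$ has a preimage in each $\fT_0^k$ separately but need not have a common preimage in $\fJ=\bigcap_k\fT_0^k$. In general only the inequality $P_{\fT|_\cR}\ge P|_\cR$ holds, because wandering vectors for $\fT|_\cR$ are wandering for $\fT$ and hence $P_{\fT|_\cR}^\perp\cR\subseteq P^\perp\cH$.

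The good news is that the false claim is not needed. The paper does not invoke any general restriction principle for the structure projection; it verifies the two cyclicity identities $\fS[\cV_1]\cap\cV^\perp=\cW_1$ and $\fS^*[\cW_1]\cap\cV=\cV_1$ (using that $\cV\ominus\cV_1$ and $\cV^\perp\ominus\cW_1$ are reducing) and feeds exactly these into Lemma~\ref{L:dilation type}. You prove essentially the same cyclicity statements in the second half of that paragraph, so the remedy is to delete the compatibility claim and argue directly from cyclicity of $\cV\cap\cN_d$ for $\fT|_{\cN_d}$ and of $\cV^\perp\cap\cN_d$ for $(\fT|_{\cN_d})^*$, as in the proof of that lemma (an analytic reducing summand of $\fT|_{\cN_d}$ is spanned by wandering vectors and so lies in $P^\perp\cH\cap\cN_d$, which cyclicity of $\cV\cap\cN_d$ forbids; the von Neumann case is handled dually). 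Everything else --- the identifications $\cN_{an}=\fT[\cV]^\perp$ and $\cN_{vN}=\fT^*[\cV^\perp]^\perp$, the orthogonality bookkeeping, and the regular-case matching of $\rS_{an}$ with $\rS_a$ and $\rS_{vN}$ with $\rS_s$ via $\cV_{ac}=P^\perp\cH$ --- agrees with the paper's argument.
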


\begin{proof}
By the Wold-decomposition, we may write $\rS =\rS_l \oplus \rU$ with respect to a decomposition $\cH = \cH_l \oplus \cH'$, where $\rS_l$ is a left-regular TCK family, and $\rU$ is a fully coisometric CK family on $\cH'$.
Let $P$ be the structure projection for $\rS$; and denote $\cV:= P \cH$. 
Note that since $\rS_l$ is analytic, $\cV$ is orthogonal to $\cH_l$.

Define reducing subspaces 
\[ \cM = \fS[\cV] \qand \cN = \fS^*[\cV^\perp] .\]
Since $\cH_l$ is a reducing subspace contained in $\cV^\perp$, we see that $\cM$ is orthogonal to $\cH_l$.
Define
\[ \cH_{an} = (\cN \cap \cM^\perp) \ominus \cH_l, \quad \cH_{vN} = \cM \cap \cN^\perp \qand \cH_d = \cM \cap \cN. \]
Set $\rS_{an}= \rS|_{\cH_{an}}$, $\rS_{vN}= \rS|_{\cH_{vN}}$ and $\rS_d = \rS|_{\cH_d}$.

Because $\cM \supseteq \cV$ and $\cN \supseteq \cV^\perp$, it follows that the projections $P_\cM$ and $P_\cN$ commute with each other and with $P$. 
We compute that 
\[
 \cM^\perp \cap \cN^\perp \subseteq \cV^\perp \cap \cV = \{0\} .
\]
Therefore,
\begin{align*}
 \cH &= (\cN \cap \cM^\perp) \oplus (\cM \cap \cN^\perp) \oplus (\cM \cap \cN)\\
  &= (\cH_l  \oplus \cH_{an}) \oplus \cH_{vN} \oplus \cH_d . 
\end{align*}
Note that $\cH_{an}$ is contained in $\cV^\perp$, and hence $\rS_{an}$ has analytic type.
As the left regular summand has been removed from $\cH_{an}$, what remains is a fully coisometric CK family.
Next observe that $\cH_{vN}$ is contained in $\cV$, and therefore $\rS_{vN}$ is von Neumann type.
To verify that $\rS_d$ has dilation type, we apply Lemma~\ref{L:dilation type}.
Let $\cV_1 = P\cH_d$ and $\cW_1 = P^\perp \cH_d$.
Then $\cH_d = \cV_1 \oplus \cW_1$ because $P_\cM$ and $P_\cN$ commute with $P$.
Because $\cV\ominus \cV_1 = \cH_{vN}$ is reducing and likewise $\cV^\perp \ominus \cW_1 = \cH_{an}$ is reducing, we obtain
\[
 \fS[\cV_1] \cap \cV^\perp = \fS[\cV] \cap \cV^\perp = \cW_1 \qand
 \fS^*[\cW_1] \cap \cV = \fS^*[\cV^\perp] \cap \cV = \cV_1 .
\]
Therefore $\cH_d$ is dilation type.

Now suppose that $\rS$ is regular.
By definition, the absolutely continuous subspace $\cV_{ac}$ coincides with $P^\perp\cH$.
Thus, the largest absolutely continuous summand coincides with the largest analytic summand, namely $\rS_l \oplus\rS_{an}$.
So the largest fully coisometric CK absolutely continuous summand $\rS_a$ is equal to $\rS_{an}$.
Also, the von Neumann part is contained in $\cV$ and thus is singular; so $\rS_s = \rS_{vN}$. Hence, we obtain the decomposition as in the second part of the statement.
\end{proof}

The following proposition discusses the permanence of the various parts of the decomposition.

\begin{proposition} \label{P:intrinsic}
Let $G$ be a graph, and let $\rS, \rT$ be TCK families generating free semigroupoid algebras $\fS, \fT$ on $\cH_{\rS}, \cH_{\rT}$ respectively. 
The summands of a free semigroupoid algebra of left regular, absolutely continuous and singular type are intrinsic in the sense that
\[ \rS_l \oplus \rT_l = (\rS\oplus \rT)_l,\quad \rS_a \oplus \rT_a= (\rS\oplus \rT)_a\qand \rS_s \oplus \rT_s = (\rS\oplus \rT)_s .\]
Furthermore, if $\rS$ is regular, then the summands of analytic, von Neumann and dilation type are unchanged by adding a direct summand to $\rS$ in the sense that
\[ \rS_{an} = (\rS \oplus \rT)_{an}|_{\cH_\rS}, \quad \rS_{vN} = (\rS \oplus \rT)_{vN}|_{\cH_\rS} \qand \rS_d = (\rS \oplus \rT)_d|_{\cH_\rS} .\]
Finally, if both $\rS$ and $\rT$ are regular, then $\rS \oplus \rT$ is regular and 
\[
\rS_{an} \oplus \rT_{an}= (\rS\oplus \rT)_{an}\qand \rS_{vN} \oplus \rT_{vN} = (\rS\oplus \rT)_{vN}.
\]
\end{proposition}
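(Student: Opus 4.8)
The plan is to reduce the whole proposition to two intrinsic invariants that manifestly split over a direct sum, read off the left‑regular, absolutely continuous and singular summands from them, and only then bring in regularity for the remaining (analytic, von Neumann, dilation) pieces. First I would record the two splitting facts. For the absolutely continuous part, recall $\cV_{ac}(\rS)=\{X\eta:X\in\cX(\rS)\}$. Any intertwiner $X\in\cX(\rS\oplus\rT)$, $X\colon\cH_G\to\cH_\rS\oplus\cH_\rT$, decomposes as $X=(X_\rS,X_\rT)$ with $X_\rS\in\cX(\rS)$ and $X_\rT\in\cX(\rT)$, and conversely padding an intertwiner by $0$ is again an intertwiner; with Theorem~\ref{T:V_ac}(2) this gives $\cV_{ac}(\rS\oplus\rT)=\cV_{ac}(\rS)\oplus\cV_{ac}(\rT)$. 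For the left‑regular part, the defining operator $(S_v\oplus T_v)-\sum_{e\in r^{-1}(v)}(S_e\oplus T_e)(S_e\oplus T_e)^*$ is block diagonal, so each wandering space $\cW_v$, and hence the maximal left‑regular reducing subspace $\cH_l$ of Theorem~\ref{thm:Wold-decomp}, splits; this already yields $\rS_l\oplus\rT_l=(\rS\oplus\rT)_l$. I would also note the elementary lemma that, because words in the generators and their adjoints act coordinatewise, both the reducing subspace generated by a split subspace and the largest reducing subspace contained in a split invariant subspace split again.

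Next, for the first group of equalities I would give intrinsic descriptions of the two remaining summands. Since a reducing projection $P_\cL$ carries $\cV_{ac}$ into itself (because $P_\cL X$ is again an intertwiner when $P_\cL$ commutes with $\rS$), any \emph{singular} reducing summand is orthogonal to $\cV_{ac}$; hence the largest singular summand is $\cH_s=\cR^{\perp}$, where $\cR$ is the reducing subspace generated by $\cV_{ac}$. Likewise, a fully coisometric CK reducing summand $\cL$ is orthogonal to $\cH_l$ (its wandering space is $P_\cL\cW$), so, using $\cH_l\subseteq\cV_{ac}$ (Corollary~\ref{P:analytic-implies-ac}), the largest absolutely continuous such summand is $\cH_a=\cV_{ac}^{\mathrm{red}}\ominus\cH_l$, where $\cV_{ac}^{\mathrm{red}}$ is the largest reducing subspace inside $\cV_{ac}$. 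Both $\cR$ and $\cV_{ac}^{\mathrm{red}}$ split by the lemmas of the previous paragraph, giving $\rS_a\oplus\rT_a=(\rS\oplus\rT)_a$ and $\rS_s\oplus\rT_s=(\rS\oplus\rT)_s$.

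For the groups involving regularity, I would first observe that $\cV=P\cH$, $\cM=\fS[\cV]$, $\cN=\fS^*[\cV^\perp]$ and $\cH_l$ \emph{always} split along $\cH_\rS\oplus\cH_\rT$: the structure projection $P_{\rS\oplus\rT}\in\fM$ commutes with the reducing projection $Q_\rS\in\fM'$ onto $\cH_\rS$, and $Q_\rS$ commutes with every generator, so each subspace $\cH_{an},\cH_{vN},\cH_d$ built from these in the proof of Theorem~\ref{T:LvNW-decomp} splits too. The real content of the middle group is the identification $\cH_\bullet(\rS\oplus\rT)\cap\cH_\rS=\cH_\bullet(\rS)$, which reduces to $P_{\rS\oplus\rT}|_{\cH_\rS}=P_\rS$. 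For regular $\rS$ I would prove this by combining: $P_{\rS\oplus\rT}^{\perp}\cH$ is the closed span of wandering vectors (Theorem~\ref{thm:structure}(6)), which lies in $\cV_{ac}(\rS\oplus\rT)=\cV_{ac}(\rS)\oplus\cV_{ac}(\rT)$; regularity of $\rS$, giving $\cV_{ac}(\rS)=P_\rS^{\perp}\cH_\rS$; and the fact that wandering vectors of $\rS$ stay wandering for $\rS\oplus\rT$. The two resulting containments pin down $P_{\rS\oplus\rT}^{\perp}\cH_\rS=P_\rS^{\perp}\cH_\rS$, which is the second group. For the last group I would first check that $\rS\oplus\rT$ is regular: for a cycle $\mu$, $(S\oplus T)_\mu$ is unitary exactly when both $S_\mu$ and $T_\mu$ are, with spectral measure the sum of the two, so condition $(\mathrm M)$ of Theorem~\ref{T:abs_cont} passes from $\rS,\rT$ to $\rS\oplus\rT$. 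Then $P_{\rS\oplus\rT}$ restricts on both summands, every LvNW piece splits into its $\rS$‑ and $\rT$‑parts, and $\rS_{an}\oplus\rT_{an}=(\rS\oplus\rT)_{an}$, $\rS_{vN}\oplus\rT_{vN}=(\rS\oplus\rT)_{vN}$.

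The hard part, and the reason the argument must pass through these intrinsic invariants rather than naive coordinate projections, is twofold: reducing subspaces of $\rS\oplus\rT$ need not respect the decomposition $\cH_\rS\oplus\cH_\rT$, and, crucially, analyticity is \emph{not} inherited by reducing subspaces, as the simple‑cycle pathology of Theorem~\ref{T:cycle_structure} and Example~\ref{E:cycle_abs_cont} shows (restricting the bilateral shift $M_z$ to $L^2(A)$ turns an analytic cycle into a von Neumann algebra). This is exactly why the absolutely continuous summand (intrinsic, hence safe in the first group) must be separated from the analytic summand, and why the identification $P_{\rS\oplus\rT}|_{\cH_\rS}=P_\rS$ — and hence the middle group — is available only once regularity of $\rS$ forces the structure projection to restrict; without it, adding $\rT$ can manufacture diagonal wandering vectors that enlarge $P^{\perp}\cH$ and spoil the identification.
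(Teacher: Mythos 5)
Your proposal is correct and follows essentially the same route as the paper: the first group is obtained from the intrinsic nature of $\cV_{ac}$ (via Corollary~\ref{C:largest-ac}/intertwiners) and of the Wold decomposition, and the regularity hypotheses are used exactly as in the paper to sandwich the analytic part between the wandering-vector span and the absolutely continuous part, with condition (M) of Theorem~\ref{T:abs_cont} giving regularity of the direct sum. Your version is somewhat more explicit than the paper's (notably in distinguishing invariant from reducing subspaces and in proving $P_{\rS\oplus\rT}|_{\cH_\rS}=P_\rS$ directly), but the underlying argument is the same.
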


\begin{proof} 
The left regular part is determined by the Wold decomposition, and depends on the complement of the ranges of edges with range $v$. A direct sum just produces the direct sum of the left-regular spaces, which is the left-regular part.
The absolutely continuous part is supported on the subspace $\cV_{ac}$ of absolutely continuous vectors, which by Corollary~\ref{C:largest-ac} is preserved under direct sums. Thus, it is unaffected by other summands. Therefore, the complementary singular part is also intrinsic.

Next, when $\rS$ is regular, we see that $(\rS \oplus \rT)_a = \rS_a \oplus \rT_a = \rS_{an} \oplus \rT_a$, and clearly the analytic part of $\rS \oplus \rT$ includes $\rS_{an}$ as a summand and is contained in the absolutely continuous part of $\rS \oplus \rT$.  
Hence, $(\rS \oplus \rT)_{an}|_{\cH_{\rS}} = \rS_{an}$. 
It similarly follows that the same holds for von Neumann and dilation types.

Finally, when both $\rS$ and $\rT$ are regular, it is clear from condition (M) of Theorem \ref{T:abs_cont} that so is $\rS \oplus \rT$, and the rest follows from the first part.
\end{proof}

\section{Applications} \label{S:applications}

In this section we provide three applications of the theory we have developed. We show that every free semigroupoid algebra is reflexive, that every regular free semigroupoid algebra satisfies a Kaplansky-type density theorem, and that isomorphisms of nonself-adjoint free semigroupoid algebras can recover any transitive, row-finite graph from their isomorphism class.

First, we apply our structure theorem to obtain reflexivity of free semigroupoid algebras. Recall that a subalgebra $\fA \subseteq B(\cH)$ is \emph{reflexive} if $\fA = \Alg\Lat \fA$ where
\[
\Alg\Lat \fA = \{ \ T\in B(\cH) \ | \ P^{\perp}TP = 0 \ \text{for every} \ P\in \Lat \fA \ \}
\]
and $\Lat \fA$ is the lattice of closed invariant subspaces for $\fA \subseteq B(\cH)$. 
Kribs and Power \cite[Theorem 7.1]{KP2004} show that the left regular free semigroupoid algebra $\fL_G \subseteq B(\cH_G)$ is reflexive for every directed graph $G$. 
A careful look at their proof shows that they actually show that $\fL_{G,v} \subseteq B(\cH_{G,v})$ is reflexive for any vertex $v$.
By taking direct sums, we see that every left regular type free semigroupoid algebra is reflexive. 
We now leverage this result to obtain reflexivity of free semigroupoid algebras.

\begin{theorem} \label{T:reflexivity}
Let $\fS$ be a free semigroupoid algebra for a graph $G$ generated by a TCK family $\rS=(S_v,S_e)$ on $\cH$. 
Then $\fS \subseteq B(\cH)$ is reflexive.
\end{theorem}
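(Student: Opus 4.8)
The plan is to use the structure theorem (Theorem~\ref{thm:structure}) to reduce the problem to two already-understood pieces and then glue them together using the $2\times 2$ block decomposition. Recall that the structure theorem gives a projection $P \in \fS \cap \{S_v\}'$ with
\[
 \fS =
 \begin{bmatrix}
  P\fM P & 0 \\
  P^\perp\fM P & \fS P^\perp
 \end{bmatrix},
\]
where $\fM = W^*(\rS)$, the corner $P\fM P$ is a von Neumann algebra, and $\fS P^\perp$ is completely isometrically and weak-$*$ homeomorphically isomorphic to $\fL_{G[V_w]}$. Since von Neumann algebras are reflexive, and since the left-regular algebras $\fL_{G,v}$ (hence all left-regular type algebras, and in particular $\fL_{G[V_w]}$ acting on $P^\perp\cH$) are reflexive by Kribs--Power \cite[Theorem 7.1]{KP2004} as noted in the paragraph preceding the theorem, both diagonal corners are reflexive on their respective spaces. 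The task is to leverage these two facts to show $\fS = \Alg\Lat\fS$.

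First I would dispose of the self-adjoint case: if $\fS$ is a von Neumann algebra then $P = I$ and reflexivity is classical (von Neumann's double commutant and the fact that self-adjoint algebras are reflexive). So assume $\fS$ is nonself-adjoint, so that $P \ne I$ and $V_w \ne \emptyset$. Next I would take an arbitrary $T \in \Alg\Lat\fS$ and write it as a $2\times 2$ operator matrix with respect to $\cH = P\cH \oplus P^\perp\cH$. Since $P^\perp\cH$ is invariant for $\fS$ (item (3) of the structure theorem), $P \in \Lat\fS$... more precisely $P^\perp\cH \in \Lat\fS$, which forces the $(1,2)$ entry of $T$ to vanish: $P T P^\perp = 0$. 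The goal then becomes to identify the diagonal entries $PTP$ and $P^\perp T P^\perp$ as lying in the correct corners, and to control the off-diagonal $(2,1)$ entry $P^\perp T P$.

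For the diagonal entries, the key observation is that any invariant subspace of a corner lifts to an invariant subspace of $\fS$, so that $\Lat\fS$ restricted/compressed to each corner refines $\Lat$ of that corner. Concretely, for the $(2,2)$ corner: if $\cK \subseteq P^\perp\cH$ is invariant for $\fS P^\perp$, then since $P^\perp\cH$ is itself invariant, $\cK$ is invariant for $\fS$; applying $T \in \Alg\Lat\fS$ gives $P^\perp T P^\perp \in \Alg\Lat(\fS P^\perp) = \fS P^\perp$ by reflexivity of the left-regular type algebra. A symmetric argument using that $P\cH$ is coinvariant (so $\Lat$ relates to the self-adjoint corner $P\fM P$) shows $P T P = P(\Alg\Lat\fS)P \subseteq \Alg\Lat(P\fM P) = P\fM P$. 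The subtle point is the $(2,1)$ corner $P^\perp T P$: I expect the main obstacle to be showing that this entry is the correct one matching an element of $\fS$, i.e.\ that once the diagonal entries are pinned down as $a \in P\fM P$ (appearing via $P^\perp\fM P$) and $d \in \fS P^\perp$, the operator $T$ actually lies in $\fS$. The strategy here is to show the $(2,1)$ entry is automatically determined, or to produce enough invariant subspaces of $\fS$ whose annihilation conditions force $P^\perp T P \in P^\perp\fM P$ and, together with compatibility between the diagonal pieces, force $T \in \fS$. I would exploit that $P^\perp\fM P = \{P^\perp m P : m \in \fM\}$ and that $\fM = W^*(\rS)$ is reflexive, so that the whole matrix $T$, having all three nonzero entries controlled by $\fM$ and by the analytic corner, must already sit inside $\fS$; the interplay of the weak-$*$ homeomorphism $\widetilde\phi : \fS/\fJ \cong \fL_{G[V_w]}$ and the lattice of $\fM$ should be what closes the gap. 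The hardest step is thus the off-diagonal bookkeeping: verifying that $\Alg\Lat\fS$ does not produce spurious $(2,1)$-entries or mismatched diagonal pairs beyond what $\fS$ itself allows, which I anticipate requires a careful argument using wandering vectors (item (6)) and the density of the span of $\{S_\mu \cV\}$ to pin down the coupling between $P\cH$ and $P^\perp\cH$.
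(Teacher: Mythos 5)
Your reduction to the $2\times 2$ block decomposition is the right starting point, but there are two genuine gaps, one of which is the actual substance of the theorem.

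First, the left column of the matrix is handled in the paper by a single observation you do not make: since $\fS \subseteq \fM = W^*(\rS)$, one has $\Lat\fM \subseteq \Lat\fS$, hence $\Alg\Lat\fS \subseteq \Alg\Lat\fM = \fM$ by the double commutant theorem. Combined with $\fM P = \fS P \subseteq \fS$ from the structure theorem, this immediately gives $TP \in \fS$ for any $T \in \Alg\Lat\fS$, disposing of both the $(1,1)$ and $(2,1)$ entries at once. Your proposed ``symmetric argument using coinvariance of $P\cH$'' for the $(1,1)$ corner and your admittedly vague treatment of the $(2,1)$ corner are unnecessary detours; the coupling between $P\cH$ and $P^\perp\cH$ that you worry about is not an issue once you know $T \in \fM$.

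Second, and more seriously, your claim that $P^\perp T P^\perp \in \Alg\Lat(\fS P^\perp) = \fS P^\perp$ ``by reflexivity of the left-regular type algebra'' does not follow. The structure theorem gives only a completely isometric, weak-$*$ homeomorphic isomorphism $\fS P^\perp \cong \fL_{G[V_w]}$; reflexivity is a \emph{spatial} property of a concrete operator algebra and is not preserved under abstract isomorphism. The analytic corner acting on $P^\perp\cH$ is spanned by wandering vectors but is in general \emph{not} unitarily equivalent to a direct sum of left regular representations (the fully coisometric CK part can be analytic), so Kribs--Power reflexivity of $\fL_{G,v} \subseteq B(\cH_{G,v})$ does not apply directly. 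Establishing reflexivity of the analytic part is the bulk of the paper's proof: one fixes a vertex $v$, compares the restrictions of $T$ to the cyclic subspaces $\cM_i = \fS[x_i]$ generated by two wandering vectors $x_0, x_1$ (each of which \emph{is} spatially left regular, so Kribs--Power applies there), and then uses the perturbed intertwiners $V_t = V_0 + tV_1$ to show that the discrepancy vector $y$ lies in $\cM_0 \cap \cM_1$; a three-way case analysis on the number of irreducible cycles at $v$ (none, one, or at least two) then forces $y = 0$. Without an argument of this kind, the key step of your proof is missing.
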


\begin{proof}
By the double commutant theorem, every von Neumann algebra is reflexive. 
Thus,
\[ \Alg\Lat \fS \subseteq  \Alg\Lat \fM = \fM .\]
Let $P$ be the structure projection of $\fS$ from Theorem \ref{thm:structure}. 
Since $P^\perp\cH$ is invariant for $\fS$, it is also invariant for any $T \in \Alg\Lat \fS$.
Thus, we may write $T = TP + P^\perp T P^\perp$, where $T \in \fM$. 
The Structure Theorem also shows that $TP \in \fM P = \fS P \subseteq \fS$.
Hence, the problem reduces to showing that the analytic part, $\fS|_{P^\perp\cH}$, is reflexive.
So we may suppose that $P=0$ and $\fS$ is analytic.
The key to our proof is the fact that $\cH$ is spanned by wandering vectors, another consequence of the Structure Theorem.

Observe that since $S_v\in \fS$ and $T = \wotsum_{v\in V} TS_v$, it suffices to show that each $TS_v$ belongs to $\fS$ when $T \in \Alg\Lat \fS$. Denote $\cH_w = P_w \cH$ for each $w\in V$. Next, fix a vertex $v\in V$ such that $S_v \ne 0$, and note that $\cH_v$ is spanned by wandering vectors, since if $x$ is wandering, so is $S_v x$.
Let $G[v]$ be the directed subgraph generated by $v$ on the vertices $V_v$.
Then there is no harm is restricting our attention to $\fS|_\cK$ where $\cK = \bigoplus_{w\in V_v} \cH_w$. Indeed, as $TS_v(\cK^{\perp}) = \{0\}$, if $TS_v|_{\cK} \in \fS|_{\cK}$ then we must have that $TS_v \in \fS$. Thus, as $\fS$ is analytic we assume without loss of generality that $\cH = \cK$ and that there is a canonical isomorphism $\phi:\fL_{G,v}\to \fS$.

Let $x_0$ be a fixed wandering vector in $\cH_v$, and let $x_1$ be any other wandering vector in $\cH_v$.
Let $\cM_i = \fS[x_i]$, and let $V_i: \cH_{G,v} \to \cM_i$ be the canonical isometry given by $V_i\xi_\mu = S_\mu x_i$.
Then $\phi_i(A) = V_i A V_i^*$ is a unitary equivalence of $\fL_{G,v}$ onto $\fS|_{\cM_i}$, and moreover $\phi_i(A) = \phi(A)|_{\cM_i}$.
Now $\fL_{G,v}$ is reflexive by \cite[Theorem 7.1]{KP2004} (and the observation preceding this proof).
Thus, $TS_v$ leaves $\cM_i$ invariant, and $TS_v|_{\cM_i} \in \Alg\Lat \fS|_{\cM_i} = \fS|_{\cM_i}$.
Therefore, there are operators $A_i\in\fL_{G,v}$ so that $TS_v|_{\cM_i} = \phi_i(A_i)$.
By replacing $TS_v$ by $TS_v - \phi(A_0)$, we may also suppose that $A_0=0$.
If we can show that $A_1=0$ for any wandering vector $x_1$, then it follows that $TS_v=0$, and hence it lies in $\fS$.
Now $A_1 = A_1 L_v$ has a Fourier series determined by the coefficients of $\eta = A_1\xi_v$; and if $\eta=0$, then we would obtain that $A_1=0$. Since $V_1$ is an isometry, this is further reduced to showing that $y: =V_1\eta =0$.

Let $V_t = V_0 + tV_1$ for $0 < t \le 0.5$. 
Observe that $V_i A = \phi(A) V_i$ for $i=0,1$ and $A \in \fL_{G,v}$, and therefore $V_t A = \phi(A) V_t$ holds as well; i.e. $V_t$ are intertwining maps.
Moreover, the operator $V_t$ is bounded below by $1-t \ge .5$, and therefore has closed range $\cM_t = V_t \cH_{G,v}$.
The fact that $V_t$ intertwines $\fS$ and $\fL_{G,v}$ shows that $\cM_t$ is invariant for $\fS$, and hence for $TS_v$.
Let $x_t = V_t \xi_v = x_0 + tx_1$, and note that
\[ y = V_1\eta = V_1 A_1 \xi_v = \phi_1(A_1)V_1 \xi_v = TS_v x_1 .\]
Then, $y \in \cM_1$ and since $T|_{\cM_0} = 0$, we have that $\cM_t$ contains
\[
 TS_v x_t = Tx_0 + tTx_1= ty .
\]
We next explain why $y$ lies in $\cM_0$. Indeed,
\[
t y = TS_v x_t = TS_v (V_0 + t V_1) \xi_v = V_t (A_0 + t A_1)\xi_v,
\]
so that with $\zeta_t = \frac{1}{t} (A_0 + t A_1)\xi_v \in \cH_{G,v}$ we get that $V_t\zeta_t = y$. As $V_t$ is bounded below by $1-t$, we have $\|\zeta_t\| \le (1-t)^{-1} \|y \| \le 2 \|y \|$.
Thus, for $0 < s < t < 1$, 
\[ \| \zeta_s - \zeta_t \| \leq 2 \| V_t (\zeta_s - \zeta_t) \| \|y\| = 2 \| (t-s) V_1 \zeta_s \| \|y \| = 2 (t-s) \|\zeta_s\| \|y \| \le 4(t-s) \|y\|^2. \]
So $\zeta_t$ is Cauchy as $t\to 0$ with limit $\zeta_0$, and 
\[ y = \lim_{t\to 0} V_t \zeta_t = V_0 \zeta_0 \in \cM_0 .\]
Therefore $y \in \cM_0 \cap \cM_1$.

We wish to show that $y =0$, and there are three cases depending on how many irreducible cycles there are at $v$. 
If $v$ does not lie on any cycles, then every vector in $\cH_v$ is wandering.
We may select an orthonormal basis $\{ x_k: 0 \le k < \Dim \cH_v \}$ for $\cH_v$.
Then $\fS[x_k]$ are pairwise orthogonal. 
Since $y \in \fS[x_0]\cap \fS[x_k]=\{0\}$ for each $x_k$, $k\ge1$, we obtain that $TS_v|_{\fS[x_k]} = 0$.
Since $\cK = \bigoplus_{k\ge0} \fS[x_i]$, we deduce that $TS_v$ belongs to $\fS$.

If $v$ lies on two or more irreducible cycles, say $\mu_1$ and $\mu_2$ in $\bFGv$, then $x_{0i} = S_{\mu_i}x_0$ are
also wandering vectors such that $\cM_{0i} := \fS[x_{0i}]$ are pairwise orthogonal subspaces of $\cM_0$.
Thus, we have that $TS_v|_{\cM_{0i}} = 0$. 
The same analysis as above can be repeated for both of these invariant subspaces.
We deduce that $y\in\cM_{01}\cap\cM_{02}=\{0\}$. 
Again we see that $TS_v$ belongs to $\fS$.

The final case occurs when there is exactly one irreducible cycle $\mu$ at $v$. Here we use the fact that $\fS$ is analytic to see that $S_\mu$ is unitarily equivalent to $U_+^{(\alpha)} \oplus U$
where $U$ is a unitary with spectral measure absolutely continuous to Lebesgue measure $m$, and
if $\alpha = 0$, then the spectral measure is equivalent to $m$ (Theorem~\ref{T:cycle_structure}).
In the latter case, $U$ has a reducing subspace on which it is unitarily equivalent to the bilateral shift,
and hence has an invariant subspace on which it is unitarily equivalent to the unilateral shift $U_+$.
So in either case, we have a subspace $\cM_0$ of $\cH_v$ with an orthonormal basis $y_k$ for $k\ge0$ such that $S_\mu y_k = y_{k+1}$.
Observe that each $y_k$ is a wandering vector for $\fS_v$, as this is the algebra generated by $S_\mu|_{\cH_v}$.
By Lemma~\ref{P:extend wandering}, the $y_k$ are also wandering vectors for $\fS$.
Now we take $x_0=y_0$. 
As in the previous paragraph, we observe that we could use $y_k$ in place of $y_0$.
By intersecting over the information from all $k\ge1$, it follows that $y$ lies in $\bigcap_{k\ge0} \fS[y_k] = \{0\}$.
Again we find that $TP_x = 0 \in \fS$.
Thus, $\fS$ is reflexive.
\end{proof}

Our next goal is to establish an analogue of the Kaplansky density theorem akin to \cite[Theorem 5.4]{DLP} for regular free semigroupoid algebras, or alternatively those free semigroupoid algebras whose TCK families satisfy condition (M).

\begin{theorem} \label{T:Kaplansky}
Let $G$ be a graph, and let $\rS$ be a regular TCK family generating the free semigroupoid algebra $\fS$.
Let  $\pi_{\rS}$ be the corresponding $*$-representation of $\cT(G)$.
Then the unit ball of $M_n(\pi_{\rS}(\cT_+(G)))$ is weak-$*$ dense in the unit ball of $M_n(\fS)$.
\end{theorem}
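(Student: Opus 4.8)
The plan is to follow the strategy of \cite[Theorem 5.4]{DLP}, using the block decomposition of the Structure Theorem~\ref{thm:structure} to separate an analytic part, where Ces\`aro means do the work, from a singular von Neumann part, where condition (M) and Wermer's theorem are needed. One inclusion is automatic: since $\pi_{\rS}$ is completely contractive, the weak-$*$ closure of the unit ball of $M_n(\pi_{\rS}(\cT_+(G)))$ is contained in the (weak-$*$ closed) unit ball of $M_n(\fS)$, so only the forward density must be shown. Note that $M_n(\pi_{\rS}(\cT_+(G)))$ is \emph{not} of the form $\pi_{\rT}(\cT_+(G))$ for a single family $\rT$, so I cannot reduce to $n=1$; instead I would carry the amplification along and exploit that all the tools involved (Ces\`aro means, classical Kaplansky density, Wermer approximation) are completely contractive or completely bounded.

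Let $P$ be the structure projection. By Theorem~\ref{thm:structure}(4), every $A$ in the unit ball of $M_n(\fS)$ is block lower triangular with respect to $\cH=P\cH\oplus P^\perp\cH$, with $A=AP+P^\perp A P^\perp$, first column $AP\in M_n(\fM P)=M_n(\fS P)$ and analytic corner $P^\perp A P^\perp\in M_n(\fS P^\perp)$. First I would dispose of the analytic corner. By Theorem~\ref{thm:structure}(5) the compression $\widetilde\phi$ identifies $\fS P^\perp$ completely isometrically and weak-$*$ homeomorphically with $\fL_{G[V_w]}$, so $P^\perp A P^\perp$ pulls back to an element of the unit ball of $M_n(\fL_{G[V_w]})$; its Ces\`aro means together with the $L_F$ truncation of Remark~\ref{R:Cesaro_in_T_+(G)} produce polynomials in $\cT_+(G)$ of norm at most $\|A\|$ converging weak-$*$ to it, and pushing these through $\pi_{\rS}$ gives the required approximants. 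Regularity is used precisely here, to guarantee that $P^\perp\cH=\cV_{ac}$ is exactly the analytic part, so that this corner really is $\fL_{G[V_w]}$.

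Next I would treat the first column $AP\in M_n(\fM P)$. Since $\fM P=\fS P$ already lies inside $\fS$, the only difficulty is to approximate it by images of the \emph{nonself-adjoint} algebra $\cT_+(G)$ within the unit ball. I would begin from the classical Kaplansky density theorem for the C$^*$-algebra $M_n(\fT_S)\subseteq M_n(\fM)$, which approximates $AP$ weak-$*$ in the unit ball by elements $\sum S_\mu S_\nu^*$ of $\fT_S$, and then eliminate the adjoints. This is exactly where condition (M) enters: on the singular subspace $P\cH$, Proposition~\ref{P:compress_to_vertex} reduces matters to the vertex free semigroup algebras $\fS_v$, and for each cycle $\mu$ with $S_\mu$ unitary the spectral measure is singular, so by Wermer's theorem \cite{Wermer} (as in the proof of Theorem~\ref{T:cycle_structure}) $W(S_\mu)$ is self-adjoint and equal to $L^\infty(\mu_s)$, with the analytic polynomials weak-$*$ dense in its unit ball; the Cuntz--Krieger relation then lets me propagate adjoint-free approximations of the backward terms $S_\nu^*$ across the graph. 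I expect this elimination of adjoints within the unit ball on the singular von Neumann part to be the main obstacle of the proof.

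Finally I would glue the two approximations. Because the $(1,2)$ corner vanishes and the domains $P\cH$ and $P^\perp\cH$ are orthogonal, $\fS$ splits as a vector-space direct sum $\fS=\fM P\oplus\fS P^\perp$, so the first-column data and the analytic corner are independent; the remaining subtlety is that a single element of $\cT_+(G)$ controls both compressions simultaneously through the same coefficients. I would resolve this by building a combined net directly from the data of $A$ and extracting a weak-$*$ cluster point in the unit ball of $M_n(\fS)$, exactly as in the proof of Proposition~\ref{P:complete_quotient}, using the triangular block structure and a $2\times 2$ operator-matrix estimate to keep the net in the unit ball of $M_n(\pi_{\rS}(\cT_+(G)))$ while forcing weak-$*$ convergence to $A$. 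This yields the forward density and completes the argument.
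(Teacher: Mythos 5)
There is a genuine gap, and it sits exactly where you predicted: the von Neumann corner. Your plan is to start from the classical Kaplansky density theorem for the C*-algebra generated by $\rS$, obtaining unit-ball approximants of $AP$ of the form $\sum S_\mu S_\nu^*$, and then to ``eliminate the adjoints'' using condition (M), Wermer's theorem, and the Cuntz--Krieger relations. But condition (M) and Wermer's theorem only describe the structure of $W(S_\mu)$ for a cycle (e.g.\ that it is self-adjoint when the spectral measure is singular); they do not produce a \emph{unit-ball} approximation of an arbitrary element of $\fM P$ by analytic polynomials, which is itself a Kaplansky-type statement requiring proof. ``Propagating adjoint-free approximations of the backward terms $S_\nu^*$ across the graph'' is not a mechanism, and no norm control is offered. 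The same problem resurfaces in your gluing step: you have two separate approximating nets (one for the analytic corner, one for the column), and adding the corresponding elements of $\cT_+(G)$ need not keep the image in the unit ball; the $2\times2$ operator-matrix estimate you invoke is exactly what is missing.

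The paper's proof avoids all of this by a different key idea: pass to the universal representation $\pi_u$, identify $\cT_+(G)^{**}$ with the universal free semigroupoid algebra $\fS_u\subseteq\fM_u\cong\cT(G)^{**}$, and invoke Goldstine's theorem, which gives weak-$*$ density of the unit ball of $\pi_u(\cT_+(G))$ in the unit ball of $\fS_u$ \emph{for free} --- including the entire von Neumann corner $\fM_u P_u$, with no elimination of adjoints ever needed. The whole burden then shifts to showing that $\tilde\pi_{\rS}:\fS_u\to\fS$ is a complete quotient map, which is where regularity enters (to identify $\bar\pi_{\rS}(P_u)$ with the structure projection $P$ of $\fS$, since both complements are spanned by wandering vectors and regularity forces the absolutely continuous part to equal the analytic part), and where the section $j_{V_w,V}$ of Theorem~\ref{T:left_quotient} and the centrality of the support projection $Q_{\pi_\rS}$ give the norm identity $\|Y\|=\max\{\|YQ_{\pi_\rS}\|,\|YQ_{\pi_\rS}^\perp\|\}=\|X\|$. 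This is also how \cite[Theorem 5.4]{DLP} proceeds, so your opening claim to be following that strategy does not match the argument you then sketch. To repair your proposal you would need either to supply a genuine unit-ball polynomial approximation scheme on the singular part (which I do not believe can be done directly), or to adopt the second-dual/Goldstine route.
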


\begin{proof}
Fix a separable Hilbert space $\cH$, and let $\pi_u$ denote the direct sum of all $*$-representations of $\cT(G)$ on $\cH$.
Then the universal von Neumann algebra $\fM_u = \pi_u(\cT(G))''$ is canonically $*$-isomorphic and weak-$*$ homeomorphic to $\cT(G)^{**}$.
It is a standard result that every representation $\pi$ of $\cT(G)$ extends uniquely to a weak-$*$ continuous $*$-representation $\bar\pi$
of $\fM_u$, and that $\bar\pi(\fM_u) = \pi(\cT(G))''$. 
Moreover, there is a central projection $Q_\pi$ so that $\ker\bar\pi = \fM_u Q_\pi^\perp$.
The representation $\pi_u$ is unitarily equivalent to its infinite ampliation.
Hence, the \wot\ and weak-$*$ topologies coincide on $\fM_u$.

Since $\cT_+(G)$ is a closed subalgebra of $\cT(G)$, basic functional analysis shows that $\cT_+(G)^{**}$ may be naturally identified
with the weak-$*$ closure $\fS_u$ of $\pi_u(\cT_+(G))$ in $\fM_u$. 
This may be considered to be the universal free semigroupoid algebra for $G$.
We apply the Structure Theorem~\ref{thm:structure} to $\fS_u$ to obtain the structure projection $P_u$ so that 
\[  \fS_u = \fM_u P_u + P_u^\perp \fS_u P_u^\perp \qand P_u^\perp \fS_u P_u^\perp = \fS_u P_u^\perp \cong \fL_G .\]
By Goldstine's Theorem, the unit ball of $\cT_+(G)$ is weak-$*$ dense in the unit ball of $\cT_+(G)^{**}$;
whence the unit ball of $\pi_u(\cT_+(G))$ is weak-$*$ dense in the unit ball of $\fS_u$.

Let $\tilde\pi_{\rS}$ be the restriction of $\bar\pi_{\rS}$ to $\fS_u$.
Because $\tilde\pi_{\rS}$ is weak-$*$ continuous, it will map $\fS_u$ into $\fS$.
Let $P$ be the structure projection for $\fS$.
We wish to show that $P =  \bar\pi_{\rS}(P_uQ_{\pi_{\rS}}) = \bar\pi_{\rS}(P_u)$. 
From the Structure Theorem \ref{thm:structure}(6), we know that both $P_u^{\perp} \cH_u$ and $P^{\perp} \cH_{\pi_{\rS}}$ are spanned by wandering vectors, so that $\bar\pi_{\rS}(P_u^{\perp}) \geq P^{\perp}$. 
On the other hand, by Proposition~\ref{P:analytic-implies-ac} we know that the analytic part $P_u^\perp\cH_u$ of $\fS_u$ is absolutely continuous. 
Hence, the subspace $P_u^{\perp}\cH_{\pi_{\rS}}$ is absolutely continuous for $\fS$ and contains the analytic part $P^{\perp}\cH_{\pi_{\rS}}$. 
Since $\fS$ is regular, these spaces coincide, so that $\bar\pi_{\rS}(P_u^{\perp}) = P^{\perp}$.

If $\fS$ is a von Neumann algebra, then the structure projection is $P=I$. 
Since $\bar\pi_{\rS}(P_u^\perp Q_{\pi_{\rS}}) = P^\perp = 0$, we see that $Q_{\pi_{\rS}} \le P_u$. 
Thus, $Q_{\pi_{\rS}} \in \fM_u P_u$, and hence belongs to $\fS_u$.
The map taking $X\in\fS_u$ to $XQ_{\pi_{\rS}} \in \fM_u Q_{\pi_{\rS}} = \fS$ is clearly a normal complete quotient map.

Otherwise, $P^\perp \ne 0$. In this case the restriction of  $\tilde\pi_{\rS}$ to the corner $\fS_u P_u^\perp$ factors as
\[  \fS_u P_u^\perp \cong \fL_G \to \fL_{G[V_w]} \cong \fS P^\perp ,  \]
where the map in the middle is the complete quotient map $\rho_{V,V_w}$ of Theorem~\ref{T:left_quotient}, where $V_w$ are the vertices $v$ such that $S_v \notin \fS_0$, as in the Structure Theorem \ref{thm:structure}. 
In particular, $\bar\pi_{\rS}|_{\fS_u P_u^{\perp}}$ is surjective onto $\fS^{\perp}$. Furthermore, the restriction of $\tilde\pi_{\rS}$ to the left ideal $\fM_u P_u$ is given by multiplication by $Q_{\rS}$ and maps onto $\fM P$. 
Combining these two together we obtain that $\tilde\pi_{\rS}$ maps $\fS_u$ onto $\fS$.

For $X \in \fS$, we write $X = XP + XP^\perp$. 
As $P =  \bar\pi_{\rS}(P_uQ_{\rS})$, we get that $\fM P$ is canonically $*$-isomorphic to $\fM_u P_u Q_{\rS}$.
Thus, there is an element $Y_1 = Y_1P_u Q_{\rS} \in \fM_u$ such that $\bar\pi_{\rS}(Y_1) = XP$.
The corner $XP^\perp$ is identified with an element $A \in \fL_{G[V_w]}$.
Let $B = j_{V_w,V}(A)$ be the image of $A$ in $\fL_G$ via the completely isometric section $j_{V_w,V}$ as in Theorem~\ref{T:left_quotient}.
Let $Y_2$ be the element of $\fS_u P_u^\perp$ which is identified with $B$.
Then $Y = Y_1+Y_2\in\fS_u$ and $X = \tilde\pi_{\rS}(Y) = YQ_{\rS}$.
Moreover, by construction, $Y-YQ_{\rS} = Y_2 Q_{\rS}^\perp$.
Therefore, since $Q_{\rS}$ is central,
\begin{align*}
  \|Y\| & = \max\{ \|YQ_{\rS}\|, \|YQ_{\rS}^\perp\| \} \\
	&= \max\{ \|YQ_s \| , \|Y_2 Q_s^{\perp} \| \} \\
  &= \max\{ \|X\|, \|B\| \} \\
  &=  \max\{ \|X\|, \|A\| \} = \|X\| .
\end{align*}
It follows that $\tilde\pi_{\rS}$ is a quotient map of $\fS_u$ onto $\fS$. The identical argument works for matrices over $\fS_u$, and thus $\tilde\pi_{\rS}$ is a complete quotient map. 

Finally, let $X$ belong to the unit ball of $\fS$. Pick a $Y$ in the unit ball of $\fS_u$ such that $\bar\pi_{\rS}(Y) = X$.
Using Goldstine's Theorem, select a net $A_\alpha$ in the unit ball of $\cT_+(G)$ so that $\pi_u(A_\alpha)$ converges weak-$*$ to $Y$.
Then $\bar\pi_{\rS} \pi_u(A_\alpha) = \pi_{\rS}(A_\alpha)$ is a net in the unit ball of $\pi_{\rS}(\cT_+(G))$ which converges weak-$*$ to $X$. The same argument then works for any matrix ampliation.
\end{proof}

\begin{example}
This theorem is false for some free semigroupoid algebras of a cycle. 
For simplicity, consider the graph $C_1$ on a single vertex $v$ with one edge $e$ from $v$ to itself.
Then $\cT_+(C_1)$ is naturally isomorphic to $A(\bD)$.
Fix a subset $A \subset \bT$ with $0< m(A) < 1$, and consider the representation $\pi_A$ on $L^2(A)$ sending $S_e$ to $M_{z,A}$ as in Example~\ref{E:cycle_abs_cont}.
Then the free semigroupoid algebra is $L^\infty(A)$.
However, the unit ball of $\pi_A(A(\bD))$ cannot be weak-$*$ dense in the unit ball of $L^\infty(A)$.
Indeed, if this were true, then for each $f\in L^\infty(A)$ of norm 1, there would be a sequence $h_n$ in the unit ball of $A(\bD)$ such that $\pi_A(h_n)$ converges weak-$*$ to $M_f$. 
In particular, $h_n$ converges to $f$ $m$ a.e.\ on $A$.
Considering this as a sequence in the unit ball of $H^\infty$, there would be a subsequence which also converges weak-$*$ to some $h$ in the unit ball of $H^\infty$.
This would mean that in $\cB(L^2(\bT))$ that $M_{h_n}$ converges weak-$*$ to $M_h$, and in particular $h_n$ converges to $h \ae (m)$ on $\bT$.
By restriction to $A$, we can conclude that $f=h|_A$.
But not every $f\in L^\infty(A)$ is the restriction of an $H^\infty$ function, let alone one of the same norm. As an example we can take $B\subseteq A$ with $0 < m(B) < m(A)$, and define $f= \upchi_B$. If there was $h\in H^\infty$ such that $h|_A = \upchi_B$, we see that $h$ is a non-zero function in $H^{\infty}$ which vanishes on a subset of $\bT$ with non-zero measure. This contradicts the F. and M. Riesz Theorem (see \cite[Theorem 6.13]{Dou98}). 
\end{example}

Finally, we conclude this section by obtaining some isomorphism theorems for nonself-adjoint free semigroupoid algebras of transitive row-finite graphs. We recall the definition of the Jacobson radical of a Banach algebra.

\begin{definition}
Let $\cA$ be a Banach algebra. The Jacobson Radical of $\cA$, denoted $\rad(\cA)$, is the intersection of kernels of all algebraically irreducible representations of $\cA$. We say that $\cA$ is semi-simple if $\rad(\cA) = \{0\}$.
\end{definition}

It is well-known that all elements of $\rad(\cA)$ are quasi-nilpotent, that any nil ideals of $\cA$ are contained in $\rad(\cA)$, and that $\rad(\cA)$ is an algebraic isomorphism invariant. We refer to Bonsall and Duncan \cite{BD73} for these results and more. 

\begin{proposition} \label{P:semisimple}
Let $\fS$ be a free semigroupoid algebra for a transitive graph $G$ generated by a TCK family $\rS= (S_v,S_e)$ on $\cH$.
Let $\mathfrak{M} =W^*(\rS)$ be the von Neumann algebra generated by $\rS$, and let $P$ be the structure projection given in Theorem~$\ref{thm:structure}$. 
Then the Jacobson radical of $\fS$ is $P^{\perp} \fS P = P^{\perp} \fM P$.
\end{proposition}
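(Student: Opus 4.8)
The plan is to treat the two inclusions $P^{\perp}\fM P\subseteq\rad(\fS)$ and $\rad(\fS)\subseteq P^{\perp}\fM P$ separately. Throughout I write $\fN:=P^{\perp}\fM P$; by the Structure Theorem~\ref{thm:structure} we have $\fS P=\fM P$, so $\fN=P^{\perp}\fS P$ really is a subset of $\fS$, namely the $(2,1)$-corner in the block decomposition of Theorem~\ref{thm:structure}(4). The first inclusion is elementary: writing an element of $\fS$ as a block-lower-triangular matrix $\left[\begin{smallmatrix}a&0\\c&d\end{smallmatrix}\right]$ with $a\in P\fM P$, $c\in P^{\perp}\fM P$, $d\in\fS P^{\perp}$, an element of $\fN$ is $\left[\begin{smallmatrix}0&0\\n&0\end{smallmatrix}\right]$; multiplying on either side by such a matrix lands back in the $(2,1)$-corner (using $\fS\subseteq\fM$ so that $a,d\in\fM$), while the product of two elements of $\fN$ vanishes since $P\,P^{\perp}=0$. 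Thus $\fN$ is a two-sided ideal with $\fN^{2}=\{0\}$, hence a nil ideal, hence contained in the Jacobson radical.

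For the reverse inclusion I would exploit that the $(1,2)$-corner of the decomposition is zero. This makes the two diagonal compressions $\Psi_{1}(X)=PXP$ and $\Psi_{2}(X)=P^{\perp}XP^{\perp}$ \emph{unital algebra homomorphisms} of $\fS$ onto $P\fM P$ and onto $\fS P^{\perp}$ respectively: indeed $P(XY)P=PX(P+P^{\perp})YP=(PXP)(PYP)$ because $PXP^{\perp}=0$, and symmetrically for $P^{\perp}$ because $PYP^{\perp}=0$. Their common kernel is exactly $\fN$. Since a unital homomorphism carries the radical into the radical, it suffices that both targets be semisimple. The first target $P\fM P$ is a von Neumann algebra, hence semisimple, so $PXP=0$ for every $X\in\rad(\fS)$. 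The second target is $\fS P^{\perp}$, which by Theorem~\ref{thm:structure}(5) together with Corollary~\ref{C:dichotomy} is completely isometrically isomorphic to $\fL_{G}$: transitivity forces either $\fS$ to be a von Neumann algebra with $P=I$ (in which case $\fN=\{0\}=\rad(\fS)$ and we are done), or $V_{w}=V$ and hence $G[V_{w}]=G$. Granting that $\fL_{G}$ is semisimple, we get $P^{\perp}XP^{\perp}=0$ as well, so that $X=P^{\perp}XP\in\fN$, completing the reverse inclusion.

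The crux, and the step I expect to be the main obstacle, is therefore the semisimplicity of $\fL_{G}$ for transitive $G$. This is precisely the special case $\fS=\fL_{G}$ of the theorem itself: the left regular family is analytic, so its structure projection is $P=0$ and $P^{\perp}\fM P=\{0\}$. Consequently it cannot be deduced from the present theorem and must be supplied independently. For a single vertex this is the semisimplicity of Popescu's algebra $\fL_{n}$, proved by Davidson and Pitts \cite{DP1998a}, and the extension to arbitrary $\fL_{G}$ is available from the structural analysis of Kribs and Power \cite{KP2004}, which I would cite rather than reprove.

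If a self-contained argument were wanted, the model case is the cycle: via the identification $\fL_{C_{n}}\cong M_{n}^{+}(H^{\infty})$ of Lemma~\ref{L:cycle_wand}, one checks that evaluation at any point $z\in\bD\setminus\{0\}$ is an algebra homomorphism onto $M_{n}(\bC)$ (pointwise multiplication of matrix functions) and that, since the strictly lower entries lie in $H^{\infty}_{0}$ but may take arbitrary values at $z\neq 0$, these evaluations are jointly separating; as $M_{n}(\bC)$ is semisimple this forces $\rad(\fL_{C_{n}})=\{0\}$. The obstruction to pushing this through for a general transitive graph is that the natural vertex compression $A\mapsto L_{v}A|_{L_{v}\cH}$ of Proposition~\ref{P:compress_to_vertex} is \emph{not} multiplicative on all of $\fL_{G}$ (only on the subalgebra generated by loops at $v$), so one genuinely needs the finite-dimensional representation theory of $\fL_{G}$ to produce a separating family of matrix evaluations. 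Once $\rad(\fL_{G})=\{0\}$ is in hand, the two inclusions combine to give $\rad(\fS)=\fN=P^{\perp}\fM P=P^{\perp}\fS P$.
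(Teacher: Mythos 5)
Your argument is correct and follows essentially the same route as the paper: one inclusion from $P^{\perp}\fM P$ being a square-zero (hence nil) ideal, and the other from the fact that the quotient $\fS/P^{\perp}\fM P$ is a direct sum of the von Neumann algebra $P\fM P$ and the analytic corner $\fS P^{\perp}\cong\fL_G$, whose semisimplicity for transitive $G$ the paper likewise imports from \cite[Theorem 5.1]{KP2004} rather than reproving. Your identification of that citation as the irreducible external input, and your handling of the degenerate case $P=I$, both match the paper's intent.
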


\begin{proof}
Clearly $P^{\perp}\fS P$ is a nil ideal, and is hence contained in the Jacobson radical. On the other hand, by \cite[Theorem 5.1]{KP2004} $\fL_G$ is semisimple because $G$ is transitive.
This equals $P^{\perp} \fM P$ by the Structure Theorem~\ref{thm:structure}(1).
Thus, the analytic type part $\fS P^\perp$ is semisimple. Therefore, the quotient 
\[ \fS/ P^{\perp}\fS P  \cong P \mathfrak{M} P \oplus \fS P^{\perp} \]
is a direct sum of semisimple operator algebras, and hence is semisimple. 
Thus, $P^{\perp} \fS P$ is the Jacobson radical of $\fS$.
\end{proof}

\begin{lemma} \label{lem:finite-idemp}
Let $G$ be a countable directed graph. Then every idempotent $0 \neq P \in \fL_G$ must be finite. 
That is, if $Q\in \fL_G$ is an idempotent such that $PQ=QP=Q$ and $P$ and $Q$ are algebraically equivalent, 
then $P=Q$.
\end{lemma}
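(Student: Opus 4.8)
The plan is to reduce the statement to showing that the idempotent $R := P - Q$ is zero. Since $PQ = QP = Q$, a direct computation gives $R^2 = P - PQ - QP + Q = P - Q = R$, so $R$ is an idempotent in $\fL_G$, and $P = Q$ is equivalent to $R = 0$. The engine of the proof will be the observation that the zeroth Fourier coefficient map $\Phi_0 : \fL_G \to \ell^\infty(V)$ is \emph{tracial}, i.e.\ $\Phi_0(AB) = \Phi_0(BA)$ for all $A, B \in \fL_G$.

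To establish this trace property, recall that for $A \in \fL_G$ with Fourier series $\sum_\mu a_\mu L_\mu$ one has $a_v = \langle A\xi_v, \xi_v\rangle$, and that $A\xi_\mu$ is supported on basis vectors $\xi_{\lambda\mu}$ with $|\lambda\mu| \ge |\mu|$. Hence $\langle A\xi_\mu, \xi_v\rangle = 0$ unless $\mu = v$, which shows $A^*\xi_v = \overline{a_v}\,\xi_v$ for every $v \in V$. Consequently, writing $b_v = \langle B\xi_v,\xi_v\rangle$, for all $A, B \in \fL_G$ we obtain
\[
\langle AB\xi_v, \xi_v\rangle = \langle B\xi_v, A^*\xi_v\rangle = a_v\langle B\xi_v, \xi_v\rangle = a_v b_v = \langle BA\xi_v, \xi_v\rangle,
\]
so that $\Phi_0(AB) = \Phi_0(BA)$. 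Writing the algebraic equivalence of $P$ and $Q$ as $P = XY$ and $Q = YX$ with $X, Y \in \fL_G$, this immediately yields $\Phi_0(P) = \Phi_0(XY) = \Phi_0(YX) = \Phi_0(Q)$, and therefore $\Phi_0(R) = 0$.

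It then remains to deduce $R = 0$ from the facts that $R$ is idempotent and $\Phi_0(R) = 0$. Since $\xi_v$ is a wandering vector for the left regular family supported at $v$, every vertex is wandering, so Theorem~\ref{T:dichotomy} applied to $\fL_G$ gives $V_0 = \emptyset$, $P_0 = 0$, and $\ker \Phi_0 = \fL_{G,0}$; thus $\Phi_0(R) = 0$ places $R$ in $\fL_{G,0} = \fL_{G,0}^1$. I will then show by induction that $R \in \fL_{G,0}^n$ for every $n \ge 1$. Using Lemma~\ref{L:SGA_0^k} to write any element of $\fL_{G,0}^n$ as $\sotsum_{|\lambda|=n} L_\lambda A_\lambda$ with $A_\lambda \in \fL_G$, together with the fact that $\fL_{G,0}$ is a two-sided ideal, one checks that $\fL_{G,0}^n \cdot \fL_{G,0} \subseteq \fL_{G,0}^{n+1}$; feeding $R = R^2$ with $R \in \fL_{G,0}^n$ and $R \in \fL_{G,0}$ into this inclusion advances the induction. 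Finally, $\bigcap_{n \ge 1} \fL_{G,0}^n = \{0\}$ (as used in the proof of Theorem~\ref{T:dichotomy}) forces $R = 0$, i.e.\ $P = Q$.

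The one genuinely substantive point is the trace property of $\Phi_0$; once it is in hand the argument is pure bookkeeping with the Fourier expansion and the filtration $\{\fL_{G,0}^n\}$. The only place demanding care is the inclusion $\fL_{G,0}^n \cdot \fL_{G,0} \subseteq \fL_{G,0}^{n+1}$, where one must handle the \sot-convergent decompositions from Lemma~\ref{L:SGA_0^k}; this is routine since left multiplication by $L_\lambda$ preserves these expansions and $\fL_{G,0}$ is an ideal. I note in passing that applying $\Phi_0$ to $P^2 = P$ forces the diagonal coefficients to satisfy $a_v \in \{0,1\}$, so $\Phi_0(P)$ is automatically a diagonal projection $\sum_{v \in W} L_v$, although the argument above does not actually require this.
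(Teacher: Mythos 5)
Your proof is correct and follows essentially the same route as the paper: both arguments hinge on the zeroth Fourier coefficient map $\Phi_0$ satisfying $\Phi_0(XY)=\Phi_0(YX)$ (the paper gets this from $\Phi_0$ being a homomorphism into the commutative algebra $\ell^\infty(V)$, which is exactly what your computation $\langle AB\xi_v,\xi_v\rangle = a_vb_v$ establishes), and both conclude by observing that a non-zero idempotent cannot have vanishing $\Phi_0$ since it would then lie in $\bigcap_{k\ge1}\fL_{G,0}^k=\{0\}$. Your write-up merely fills in the filtration bookkeeping that the paper compresses into the line $Q=Q^k\in\fL_{G,0}^k$.
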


\begin{proof}
Let $\Phi_0 : \fL_G \rightarrow \ell^{\infty}(V)$ be the canonical conditional expectation, which is also a homomorphism. 
Then for every non-zero idempotent $Q \in \fL_G$, we have that $\Phi_0(Q) \neq 0$. 
Indeed, if not, then $Q=Q^k \in \fL_{G,0}^k$ for all $k\geq 0$ and hence $Q$ must be $0$.

Next, suppose that $A,B\in \fL_G$  satisfy $AB=P$ and $BA=Q$. 
Since $\Phi_0$ is  a homomorphism of $\fL_G$ into a commutative algebra, we see that 
\[ \Phi_0(P) = \Phi_0(A) \Phi_0(B) = \Phi_0(B) \Phi_0(A) =  \Phi_0(Q) .\]
Hence $\Phi_0(P-Q) = 0$. 
Our assumptions guarantee that $P-Q$ is an idempotent, and therefore $P=Q$ by the first paragraph.
\end{proof}

Next we will show that when the graphs are transitive, the structure projection $P$ appearing in Theorem \ref{thm:structure} is intrinsic to the free semigroupoid algebra. 
This will allow us to recover the underlying graph from an algebraic isomorphism class.

\begin{theorem} \label{thm:internal-structure-proj}
Let $\fS_1$ and $\fS_2$ be \emph{nonself-adjoint} free semigroupoid algebras for transitive graphs $G_1$ and $G_2$.
Suppose that $\phi: \fS_1 \rightarrow \fS_2$ is an algebraic isomorphism. 
Let $P_1$ and $P_2$ be the structure projections of $\fS_1$ and $\fS_2$ as in Theorem~$\ref{thm:structure}$. 
Then there is an invertible element $V \in \fS_2$ such that $\hat{\phi}:= \Ad_V \circ \phi$ satisfies $\hat{\phi}(P_1) = P_2$. 
Hence, there is an algebraic isomorphism between $\mathfrak{L}_{G_1}$ and $\mathfrak{L}_{G_2}$.
\end{theorem}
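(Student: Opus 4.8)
The plan is to produce an invertible $V \in \fS_2$ conjugating $\phi(P_1)$ into $P_2$, and then read off the isomorphism of the left-regular algebras from the structure theorem. By Proposition~\ref{P:semisimple}, the Jacobson radical of $\fS_i$ is $P_i^\perp \fS_i P_i = P_i^\perp \fM_i P_i$, and since $\rad$ is an algebraic isomorphism invariant, $\phi$ carries $\rad(\fS_1)$ onto $\rad(\fS_2)$. Passing to the semisimple quotients, $\phi$ induces an algebraic isomorphism
\[
\bar\phi : P_1\fM_1 P_1 \oplus \fS_1 P_1^\perp \;\longrightarrow\; P_2\fM_2 P_2 \oplus \fS_2 P_2^\perp .
\]
The key point is that $\phi(P_1)$ is an idempotent in $\fS_2$ with the same image as $P_2$ in the quotient $\fS_2/\rad(\fS_2)$; that is, $\phi(P_1) - P_2 \in \rad(\fS_2) = P_2^\perp \fS_2 P_2$, so $\phi(P_1)$ and $P_2$ are idempotents with identical ``diagonal'' parts.

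First I would verify that $\phi(P_1)$ and $P_2$ are algebraically equivalent idempotents in $\fS_2$. Writing $Q := \phi(P_1)$, the relation $Q - P_2 \in P_2^\perp \fS_2 P_2$ means that with respect to $\cH = P_2\cH \oplus P_2^\perp\cH$ the idempotent $Q$ has the $2\times 2$ form $\left[\begin{smallmatrix} P_2 & 0 \\ R & 0\end{smallmatrix}\right]$ for some $R = P_2^\perp R P_2$. A standard computation shows that $V := I + (R - R^*)$ or, more robustly, the element $V := I + (Q - P_2)$ together with its counterpart, conjugates $Q$ onto $P_2$: explicitly one checks that $V$ is invertible in $\fS_2$ (its inverse is $I - (Q-P_2)$ since $(Q-P_2)^2 = 0$ as $R$ lands in the off-diagonal corner with $R P_2^\perp = 0 = P_2 R$) and that $V Q V^{-1} = P_2$. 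Since $R \in P_2^\perp \fS_2 P_2 \subseteq \fS_2$, both $V$ and $V^{-1}$ lie in $\fS_2$, and $\Ad_V$ is an algebraic automorphism of $\fS_2$. Setting $\hat\phi := \Ad_V \circ \phi$ then gives $\hat\phi(P_1) = P_2$.

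With $\hat\phi(P_1) = P_2$ in hand, the final statement follows from the block structure in Theorem~\ref{thm:structure}(4). Since $\hat\phi$ is an algebra isomorphism fixing the structure projections, it respects the decomposition $\fS_i = \left[\begin{smallmatrix} P_i\fM_i P_i & 0 \\ P_i^\perp \fM_i P_i & \fS_i P_i^\perp \end{smallmatrix}\right]$, and in particular $\hat\phi$ maps $P_1^\perp \fS_1 P_1^\perp = \fS_1 P_1^\perp$ onto $\fS_2 P_2^\perp = P_2^\perp \fS_2 P_2^\perp$. By Theorem~\ref{thm:structure}(5), $\fS_i P_i^\perp$ is (completely isometrically, weak-$*$ homeomorphically) isomorphic to $\fL_{G_i[V_w^{(i)}]}$; but since $G_i$ is transitive, $G_i[V_w^{(i)}] = G_i$ whenever $V_w^{(i)} \ne \emptyset$, which holds because $\fS_i$ is nonself-adjoint (Corollary~\ref{C:fS=fS_0}). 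Hence $\fS_i P_i^\perp \cong \fL_{G_i}$, and composing gives an algebraic isomorphism $\fL_{G_1} \cong \fL_{G_2}$.

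The main obstacle I anticipate is the bookkeeping in the conjugation step: one must confirm that the off-diagonal element $R$ implementing $Q - P_2$ genuinely lies in $\fS_2$ (not merely in $\fM_2$) so that $V \in \fS_2$, and that $(Q-P_2)^2 = 0$ so the inverse formula is valid. This is where Proposition~\ref{P:semisimple}'s precise identification $\rad(\fS_2) = P_2^\perp \fS_2 P_2$ is essential, since it forces $R$ into the correct corner of $\fS_2$. The rest is a routine application of the structure and transitivity, though one should be slightly careful that $\phi$ is only assumed to be an \emph{algebraic} isomorphism, so all arguments must be purely algebraic (using semisimplicity and idempotent equivalence) rather than relying on continuity or self-adjointness.
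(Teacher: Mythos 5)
Your proposal has a genuine gap at precisely the point you label ``the key point'': you assert, but never prove, that $\phi(P_1)-P_2\in\rad(\fS_2)$, i.e.\ that the induced isomorphism $\bar\phi$ of the semisimple quotients $\fN_1\oplus\fS_1P_1^\perp\to\fN_2\oplus\fS_2P_2^\perp$ (where $\fN_i=P_i\fM_iP_i$) sends the class of $P_1$ to the class of $P_2$. This is the substantive content of the theorem, not a formality: an abstract isomorphism of direct sums $A\oplus B\cong C\oplus D$ need not carry $(1_A,0)$ to $(1_C,0)$, so one must rule out that $\bar\phi$ mixes the von Neumann corner with the analytic corner. The paper does this by observing that $\bar\phi$ preserves centers, that $P_1^\perp$ is a \emph{minimal} idempotent of $Z(\fN_1)\oplus\bC P_1^\perp$, and that if $\bar\phi(P_1^\perp)$ were a minimal central idempotent $Q$ of $\fN_2$ one would obtain an algebraic isomorphism of $\fL_{G_1}$ with the von Neumann algebra $Q\fN_2Q$ --- impossible because $\fL_{G_1,0}$ properly contains its square $\fL_{G_1,0}^2$ (using transitivity of $G_1$), whereas every ideal of a C*-algebra equals its own square. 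Without some such argument your proof does not get off the ground; everything after the unproved claim is routine.

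Two smaller points. First, your candidate $V=I+(R-R^*)$ involves the adjoint $R^*$, which need not lie in the nonself-adjoint algebra $\fS_2$; you rightly prefer the purely algebraic choice, but note the sign: with $R=Q-P_2=P_2^\perp RP_2$ one has $RP_2=R$ and $P_2R=0$, so $(I+R)Q(I-R)=P_2+2R$, whereas $(I-R)Q(I+R)=P_2$. So the correct conjugator is $V=I-(Q-P_2)$ (equivalently, the paper's $V=I+X$ applied to $P^\perp$ with $X=\phi(P_1^\perp)-P_2^\perp$). Second, your concluding paragraph (corner-to-corner restriction of $\hat\phi$ and the identification $\fS_iP_i^\perp\cong\fL_{G_i}$ via transitivity and nonself-adjointness) agrees with the paper and is fine.
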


\begin{proof}
Since both $\fS_1$ and $\fS_2$ are nonself-adjoint, we see that $P^{\perp}_1$ and $P^{\perp}_2$ are both non-zero. 
Let $\fN_1 =P_1\fS_1 P_1$ and $\fN_2= P_2 \fS_2 P_2$ be the von Neumann algebra corners of the free semigroupoid algebras, as in Theorem \ref{thm:structure}. 
The isomorphism must carry the Jacobson radical of $\fS_1$ onto the radical of $\fS_2$.
Therefore, by Proposition \ref{P:semisimple}, there is an induced isomorphism between the quotients by the Jacobson radicals. That is, there is an isomorphism $\widetilde{\phi}$ from $\fN_1 \oplus \fS_1 P_1^{\perp}$ onto $\fN_2 \oplus \fS_2 P_2^{\perp}$. However, as $G_1$ and $G_2$ are transitive, we get that $\fS_i P_i^{\perp}$ is completely isometrically and weak-* homeomorphically isomorphic to $\fL_{G_i}$ for $i=1,2$.
In particular, $\widetilde{\phi}$ preserves the centers, so we get that $\widetilde\phi$ carries $Z(\fN_1) \oplus \bC P_1^\perp$ onto $Z(\fN_2) \oplus \bC P_2^\perp$.

Note that $P_i^\perp$ is a minimal projection in the respective centers.
So $\widetilde\phi(P_1^\perp)$ is a minimal projection in $Z(\fN_2) \oplus \bC P_2^\perp$.
If this is not $P_2^\perp$, then it is a minimal projection $Q\in Z(\fN_2)$.
However, this would imply an isomorphism between $\fL_{G_1}$ and a von Neumann algebra $Q \fN_2 Q$.
Since $G_1$ is transitive, $\fL_{G_1}$ is infinite dimensional, and the ideal $\fL_{G_1,0}$ properly contains its square $\fL_{G_1,0}^2$.
However, every ideal of any C*-algebra is equal to its own square; so $\fL_{G_1}$ is not isomorphic to a von Neumann algebra.
Hence, $\widetilde\phi(P_1^\perp) = P_2^\perp$.

This means that $\phi(P_1^{\perp}) = P_2^{\perp} + X$ where $X = P_2^{\perp}XP_2$ lies in the Jacobson radical of $\fS_2$ and satisfies $X^2 = 0$. 
Hence, taking $V = I + X$, we see that $V^{-1} = I - X$ and $V\phi(P_1^{\perp})V^{-1} = P_2^{\perp}$.
The algebraic isomorphism $\hat{\phi} : \fS_1 \rightarrow \fS_2$ given by $\hat{\phi}(A) = V \phi(A)V^{-1}$ maps $P_1^{\perp}$ to $P_2^{\perp}$, and thus $\hat{\phi}(P_1) = P_2$. 

Thus, this yields an algebraic isomorphism between $\fL_{G_1} \cong \fS_1 P_1^{\perp}$ and $\fL_{G_2} \cong \fS_2 P_2^{\perp}$.
\end{proof}

\begin{corollary} \label{C:isomorphisms}
Let $G_1$ and $G_2$ be transitive \emph{row-finite} graphs. The following are equivalent
\begin{enumerate} [label=\normalfont{(\arabic*)}]
\item
$G_1$ is isomorphic to $G_2$.
\item 
$\mathfrak{L}_{G_1}$ and $\mathfrak{L}_{G_2}$ are completely isometrically isomorphic and weak-$*$ homeomorphic.
\item
There exist two algebraically isomorphic \emph{nonself-adjoint} free semigroupoid algebras $\fS_1$ and $\fS_2$ for $G_1$ and $G_2$ respectively.
\end{enumerate}
\end{corollary}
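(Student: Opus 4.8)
The plan is to prove the cycle of implications $(1)\Rightarrow(2)\Rightarrow(3)\Rightarrow(1)$, with essentially all of the content concentrated in the last implication. Throughout I assume each $G_i$ has at least one edge, so that nonself-adjoint free semigroupoid algebras exist and condition $(3)$ is meaningful; the degenerate one-vertex-no-edge case makes $(3)$ vacuous and would be treated separately.

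\textbf{$(1)\Rightarrow(2)$.} A directed graph isomorphism $G_1\to G_2$ consists of bijections on vertices and on edges intertwining $s$ and $r$, and hence relabels the path space $\bF_{G_1}^+$ bijectively onto $\bF_{G_2}^+$. This induces a unitary $U:\cH_{G_1}\to\cH_{G_2}$ with $U\xi_\mu=\xi_{\mu'}$ which conjugates the left regular family $(L_v,L_e)$ of $G_1$ to that of $G_2$. Conjugation by a unitary is completely isometric and weak-$*$ homeomorphic, so $\fL_{G_1}$ and $\fL_{G_2}$ are isomorphic in the required sense.

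\textbf{$(2)\Rightarrow(3)$.} Each $\fL_{G_i}$ is itself the free semigroupoid algebra of the nondegenerate left regular TCK family $\rL$, it is analytic, and its structure projection is $P=0$. Since $G_i$ is transitive with an edge it has no trivial components, so by Corollary~\ref{C:fS=fS_0} the algebra $\fL_{G_i}$ is nonself-adjoint (indeed $V_w=V\neq\emptyset$). A completely isometric isomorphism is in particular an algebraic one, so $(3)$ holds with $\fS_i=\fL_{G_i}$.

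\textbf{$(3)\Rightarrow(1)$.} Given an algebraic isomorphism of nonself-adjoint free semigroupoid algebras $\fS_1$ and $\fS_2$, Theorem~\ref{thm:internal-structure-proj} produces, after an inner perturbation, an algebraic isomorphism $\psi:\fL_{G_1}\to\fL_{G_2}$ through the identifications $\fS_i P_i^\perp\cong\fL_{G_i}$. It then remains to recover $G$ from the abstract algebra $\fL_G$ for $G$ transitive and row-finite. I would recover the vertices as similarity classes of primitive idempotents: each $L_v$ is primitive because, by Proposition~\ref{P:compress_to_vertex}, $L_v\fL_G L_v$ is the free semigroup algebra on the loops at $v$, whose only idempotents are $0$ and $L_v$ (the grading forces any idempotent into $\bigcap_k\fL_{G,0}^k=\{0\}$ or its complement); and $L_v\not\sim L_w$ for $v\neq w$ because $\Phi_0:\fL_G\to\ell^\infty(V)$ is an algebra homomorphism into a commutative algebra, so a similarity would give $\Phi_0(L_v)=\Phi_0(L_w)$, i.e. $v=w$. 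Granting that every primitive idempotent is similar to some $L_v$, the map $\psi$ carries primitive idempotents to primitive idempotents and descends to a bijection $V_1\to V_2$. For the edges I would use the $\ell^\infty(V)$-bimodule $\fL_{G,0}/\fL_{G,0}^2$: its corner $L_w\,(\fL_{G,0}/\fL_{G,0}^2)\,L_v$ has dimension equal to the number of edges from $v$ to $w$, finite by row-finiteness. As $\psi$ preserves the filtration $\{\fL_{G,0}^k\}$ and the recovered vertices, it preserves these multiplicities and all incidence data, producing a directed graph isomorphism $G_1\cong G_2$.

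\textbf{Main obstacle.} The recovery in $(3)\Rightarrow(1)$ must be performed for a \emph{merely algebraic} isomorphism, and two points require genuine work. First, the grading filtration must be shown intrinsic, i.e.\ that any algebraic isomorphism preserves $\fL_{G,0}$ and each $\fL_{G,0}^k$; this cannot be read off from the gauge action or from weak-$*$ continuity and is the crux. One route is to characterize $\fL_{G,0}$ purely through the primitive-idempotent structure together with the semisimplicity of $\fL_G$ for transitive $G$ (\cite[Theorem~5.1]{KP2004}) and the finiteness of idempotents (Lemma~\ref{lem:finite-idemp}). Second, one must prove that every primitive idempotent is similar to a vertex projection, so that $V$ is faithfully recovered. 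Both are precisely the subtleties of the isomorphism theory for these algebras, and where available I would instead invoke the graph-recovery results of \cite{KP2004,KK2005} to pass directly from $\psi$ to $G_1\cong G_2$.
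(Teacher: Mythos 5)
Your skeleton is exactly the paper's: $(1)\Rightarrow(2)$ by the obvious unitary relabeling of path spaces, $(2)\Rightarrow(3)$ because the $\fL_{G_i}$ are themselves nonself-adjoint free semigroupoid algebras, and $(3)\Rightarrow(1)$ via Theorem~\ref{thm:internal-structure-proj} to obtain an algebraic isomorphism $\fL_{G_1}\cong\fL_{G_2}$ followed by a graph-recovery result. Where you diverge is the final recovery step: the paper simply cites \cite[Corollary 3.17]{KK2004} (Katsoulis--Kribs, \emph{Isomorphisms of algebras associated with directed graphs}), which is precisely the ``where available I would instead invoke'' escape hatch you offer at the end --- note that the right reference is \cite{KK2004}, not \cite{KP2004} or \cite{KK2005}. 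Your attempted direct recovery should not be read as a complete proof: as you yourself flag, the crux is showing that a \emph{merely algebraic} isomorphism of $\fL_{G}$'s preserves the ideal $\fL_{G,0}$ and its powers, and this cannot be extracted from the Jacobson radical (for transitive $G$ the algebra $\fL_G$ is semisimple by \cite[Theorem 5.1]{KP2004}, so the radical is zero and gives no filtration), nor is it obvious that every primitive idempotent is similar to a vertex projection. Settling these points is essentially the content of the Katsoulis--Kribs theorem, so your argument as written is circular-adjacent unless you carry out that work; with the correct citation in place, however, your proof coincides with the paper's.
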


\begin{proof}
Clearly if $G_1$ and $G_2$ are isomorphic, then $\mathfrak{L}_{G_1}$ and $\mathfrak{L}_{G_2}$ are completely isometrically isomorphic and weak-$*$ homeomorphic. So (1) implies (2).
$\mathfrak{L}_{G_1}$ and $\mathfrak{L}_{G_2}$ are examples of nonself-adjoint free semigroupoid algebras, so (2) implies (3).

Finally, by Theorem \ref{thm:internal-structure-proj}, an algebraic isomorphism between $\fS_1$ and $\fS_2$ yields an algebraic isomorphism between $\mathfrak{L}_{G_1}$ and $\mathfrak{L}_{G_2}$. 
By \cite[Corollary 3.17]{KK2004} $G_1$ and $G_2$ are isomorphic, so that (3) implies (1).
\end{proof}

\begin{corollary}
Let $\fS$ be a free semigroupoid algebra of a transitive graph $G$ generated by a TCK family $S= (S_v,S_e)$ on $\cH$. 
If $\fS$ is algebraically isomorphic to $\fL_G$, then $\fS$ is analytic type.
\end{corollary}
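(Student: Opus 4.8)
The plan is to combine the Dichotomy for transitive graphs (Corollary~\ref{C:dichotomy}) with the intrinsic identification of the structure projection from Theorem~\ref{thm:internal-structure-proj}. Since $\rS$ is non-degenerate and $G$ is transitive, $\supp(\rS)=V$ and $G[\rS]=G$, so ``analytic type'' here means precisely that $\fS$ is completely isometrically and weak-$*$ homeomorphically isomorphic to $\fL_G$ via a map sending $S_\lambda\mapsto L_\lambda$. If $G$ is the trivial one-vertex graph then $\fS=\bC=\fL_G$ and the claim is immediate, so I may assume $G$ has an edge; then $\fL_G$ is nonself-adjoint and infinite dimensional. By Corollary~\ref{C:dichotomy} together with Corollary~\ref{C:fS=fS_0}, exactly one of two things happens: either $V_w=\emptyset$ and $\fS$ is a von Neumann algebra, or $V_w=V$ and $\fS$ is nonself-adjoint. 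The whole proof reduces to excluding the first alternative and then feeding the second into Theorem~\ref{thm:internal-structure-proj}.

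The first, and main, obstacle is to rule out that $\fS$ is a von Neumann algebra. If it were, then $\fS$ would be a C*-algebra algebraically isomorphic to $\fL_G$, and I would derive a contradiction exactly as in the proof of Theorem~\ref{thm:internal-structure-proj}: the \wot-closed ideal $\fL_{G,0}$ of $\fL_G$ strictly contains its square, since for any edge $e$ the element $L_e$ has nonzero first Fourier coefficient $\Phi_1(L_e)=L_e$, while every element of the algebraic span of products from $\fL_{G,0}$ has vanishing Fourier coefficients in degrees $0$ and $1$; on the other hand every closed two-sided ideal of a C*-algebra coincides with its square by Cohen factorization. The delicate point is that an algebraic isomorphism need not a priori respect closures of ideals; this is handled by noting that both $\fS$ and $\fL_G$ are semisimple (the latter by \cite[Theorem 5.1]{KP2004}), so any algebraic isomorphism between them is automatically continuous, hence carries the closed ideal $\fL_{G,0}$ back to a closed ideal of the C*-algebra $\fS$ to which Cohen factorization applies. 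This would force $\fL_{G,0}=\fL_{G,0}^2$, a contradiction, so $\fS$ must be nonself-adjoint.

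With the von Neumann case excluded, both $\fS$ and $\fL_G$ are nonself-adjoint free semigroupoid algebras for the same transitive graph $G$, and the given map $\phi$ is an algebraic isomorphism between them. I then apply Theorem~\ref{thm:internal-structure-proj}: after composing $\phi$ with an inner automorphism $\Ad_V$ for some invertible $V\in\fL_G$, the resulting isomorphism $\hat\phi$ carries the structure projection $P$ of $\fS$ to the structure projection of $\fL_G$. Since $\fL_G$ is itself analytic (each basis vector $\xi_v$ is wandering, so $\cH_G$ is spanned by wandering vectors and its structure projection is $0$ by Theorem~\ref{thm:structure}(6)), I conclude $\hat\phi(P)=0$ and hence $P=0$. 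Finally, $P=0$ forces $P_0=0$, so $V_0=\emptyset$, $V_w=V$, and $G[V_w]=G$; Theorem~\ref{thm:structure}(5), equivalently the map of Corollary~\ref{cor:type-L-wandering}, then yields a completely isometric, weak-$*$ homeomorphic isomorphism $\fS=\fS P^\perp\to\fL_G$ sending $S_\lambda\mapsto L_\lambda$. This is exactly the assertion that $\fS$ is analytic type.
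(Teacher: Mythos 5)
Your proof is correct and takes essentially the same route as the paper: both arguments hinge on Theorem \ref{thm:internal-structure-proj}, which (after an inner perturbation) carries the structure projection of $\fS$ to that of $\fL_G$, namely $0$, whence Theorem \ref{thm:structure}(5) yields that $\fS$ is analytic. The only difference is that you explicitly verify the nonself-adjointness hypothesis of Theorem \ref{thm:internal-structure-proj} by ruling out the von Neumann alternative via the ideal-square/Cohen factorization argument (and dispose of the trivial graph), a check the paper's one-line proof leaves implicit.
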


\begin{proof}
By Theorem \ref{thm:internal-structure-proj} there is an isomorphism $\hat{\phi} : \fS \rightarrow \fL_G$ that maps the structure projection of $\fS$ to the structure projection of $\fL_G$, namely to $0$. Hence, the structure projection of $\fS$ is $0$, so that $\fS$ is weak-$*$ homeomorphic completely isometrically isomorphic to $\fL_G$ via the canonical surjection as in Theorem \ref{thm:structure}, and $\fS$ is analytic type.
\end{proof}

\section{Self-adjoint free semigroupoid algebras} \label{S:self-adjoint}

In spite of the nonself-adjoint analysis of our algebras, a free semigroupoid algebra can occasionally turn out to be a von Neumann algebra. We have seen this phenomenon occur for cycle algebras, and Corollary \ref{C:fS=fS_0} assures us that this can only occur when the supported graph for the TCK family is a disjoint union of transitive components. An example of Read \cite{Read} (c.f. \cite{D2006}) shows that free semigroup algebras can also be self-adjoint, and this is our starting point. Our goal in this section is to provide a large class of graphs on which this is also possible.

\begin{theorem}[Read] \label{T:Read}
There exist isometries $Z_1,Z_2$ with complementary pairwise orthogonal ranges so that the free semigroup algebra generated by this family is $B(\cH)$.
\end{theorem}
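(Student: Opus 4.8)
The plan is to produce a representation of the Cuntz relations, i.e.\ isometries $Z_1,Z_2$ with $Z_1^*Z_1=Z_2^*Z_2=I$ and $Z_1Z_1^*+Z_2Z_2^*=I$, and to show that the nonself-adjoint \wot-closed algebra $\fS$ they generate is all of $B(\cH)$. I would reduce the whole statement to two ingredients: (i) a distinguished unit vector $e_0\in\cH$ that is cyclic for $\fS$ and co-cyclic (i.e.\ cyclic for $\fS^*$); and (ii) the single rank-one operator $e_0e_0^*$ (the projection onto $\bC e_0$) lying in $\fS$. Granting these, the conclusion is soft: for any two words $\mu,\nu\in\bF_2^+$ the product
\[
 Z_\mu\,(e_0e_0^*)\,Z_\nu = (Z_\mu e_0)(Z_\nu^* e_0)^*
\]
again lies in $\fS$, since $Z_\mu$, $Z_\nu$, and $e_0e_0^*$ all belong to $\fS$ (the adjoint $Z_\nu^*$ enters only through its action on the \emph{vector} $e_0$, not as an algebra element). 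As $\mu$ runs over $\bF_2^+$ the vectors $Z_\mu e_0$ have dense span by cyclicity, and as $\nu$ runs the vectors $Z_\nu^* e_0$ have dense span by co-cyclicity; taking linear combinations and \wot-limits then shows that every rank-one operator, hence every finite-rank operator, lies in $\fS$. Since $\fS$ is \wot-closed and the finite-rank operators are \wot-dense in $B(\cH)$, this forces $\fS=B(\cH)$.

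For the construction itself I would build $\cH$ with an orthonormal basis organized into rapidly growing finite blocks and define $Z_1,Z_2$ as a weighted modification of the left creation operators on the full Fock space $\ell^2(\bF_2^+)$, arranging the ranges of $Z_1$ and $Z_2$ to be complementary so that the Cuntz relations hold on the nose. The weights must be chosen so that, although each $Z_i$ remains an isometry, suitable long \emph{balanced} words in $Z_1,Z_2$ behave like partial isometries that funnel a controlled portion of the space back toward $e_0$. Cyclicity and co-cyclicity of $e_0$ should then be read off directly from the combinatorics: the forward orbit $\{Z_\mu e_0\}$ and the backward orbit $\{Z_\nu^* e_0\}$ are each designed to exhaust a total subset of $\cH$, so (i) is essentially built into the setup.

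The genuinely hard part is ingredient (ii): exhibiting $e_0e_0^*$ as an honest \wot-limit of polynomials $p_n(Z_1,Z_2)$ in the generators \emph{with no adjoints}. This is the whole point of the theorem and runs directly against intuition, since for the unmodified creation operators the \wot-closed algebra is the analytic algebra $\fL_2$, which contains no nonzero compact operator. Read's mechanism is to tune the weights so that a carefully chosen sequence of polynomials concentrates all of its mass onto $e_0$ in the limit. Making this precise means controlling two competing quantities simultaneously: the off-diagonal entries of $p_n(Z_1,Z_2)$ (which must vanish in the limit) and the diagonal mass (which must localize entirely at $e_0$ rather than dissipating across the infinitely many blocks).

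I expect this energy-concentration estimate to be by far the main obstacle; it is a delicate analytic-combinatorial balancing act, and it is exactly the step where the explicit choice of weights in Read's construction (and in Davidson's later simplification) does all of the real work. Everything preceding it --- the Cuntz relations, cyclicity, co-cyclicity --- is routine bookkeeping, and everything after it --- the spreading of a single rank-one operator to all of $B(\cH)$ --- is the soft argument recorded in the first paragraph. So my proposal is to isolate the problem as ``find weights realizing (ii)'' and then let the cyclicity argument finish, rather than attempting to manufacture all matrix units directly.
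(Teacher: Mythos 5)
The paper does not actually prove this theorem: it is imported verbatim from Read's paper \cite{Read} and Davidson's simplification \cite{D2006}, and the only ingredient the paper extracts from those sources is the stronger fact that every rank-one operator is a \wot-limit of a \emph{bounded} sequence $T_n\in\spn\{Z_\gamma:|\gamma|=2^n\}$, which is then used in Lemma \ref{L:ReadIso}. So there is no internal proof to compare against; what can be assessed is whether your proposal would constitute a proof on its own. It would not, because the entire analytic core is missing. Your first paragraph --- the reduction from ``$e_0e_0^*\in\fS$ for a cyclic and co-cyclic vector $e_0$'' to ``$\fS=B(\cH)$'' via the identity $Z_\mu(e_0e_0^*)Z_\nu=(Z_\mu e_0)(Z_\nu^*e_0)^*$ --- is correct and is essentially the same soft spreading trick the paper itself uses in Lemma \ref{L:ReadIso} (there in the form $xy^*Z_1=x(Z_1^*y)^*$ together with surjectivity of $Z_1^*$). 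But ingredients (i) and (ii) are precisely the theorem's content, and you supply neither: no orthonormal basis is specified, no weights are written down, the isometries $Z_1,Z_2$ are never actually defined, and the claim that a sequence of adjoint-free polynomials $p_n(Z_1,Z_2)$ converges \wot\ to $e_0e_0^*$ is asserted as a goal rather than established. As you yourself observe, this step runs against the behaviour of the unweighted creation operators (where the \wot-closed algebra is $\fL_2$ and contains no nonzero compact), so it cannot be waved through; it requires the delicate quantitative estimates that occupy essentially all of \cite{Read} and \cite{D2006}.

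In short: the architecture you describe is the right one and matches the known proofs, but a proof outline that explicitly defers ``find weights realizing (ii)'' has deferred the theorem itself. To close the gap you would need to produce the explicit block structure and weighted shifts (or follow Davidson's streamlined construction), verify the Cuntz relations and the cyclicity/co-cyclicity claims for that concrete model, and then prove the concentration estimate showing that long balanced words can be summed into a bounded net converging \wot\ to a rank-one operator. None of that is present here.
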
 

For any word $\gamma=\gamma_1\cdots\gamma_n\in\bF_2^+$ of $\{1,2\}$, we denote $Z_\gamma=Z_{\gamma_1}Z_{\gamma_2}\cdots Z_{\gamma_n}$. 
It is further shown in \cite{D2006} that every rank one operator is a \wot-limit of a bounded sequence of operators $T_n\in\spn\{Z_\gamma: \gamma\in\bF_2^+, |\gamma|=2^n\}$. 
We first show for any $d\geq 2$, we can find a family of Cuntz-isometries $W_1,\cdots,W_d$, so that the free semigroup algebra generated by this family is $B(\cH)$. 
This provides a correct argument for \cite[Corollary 1.8]{D2006}. 

\begin{lemma}\label{L:ReadIso} 
For every $d\ge 3$, there exist Cuntz isometries $W_1,\cdots,W_d$ in $B(\cH)$ so that the \wot-closed algebra that they generate is $B(\cH)$. 
\end{lemma}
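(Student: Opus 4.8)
The plan is to realize $d$ Cuntz isometries as words in Read's pair $Z_1,Z_2$ (Theorem~\ref{T:Read}), chosen along a complete prefix code, and then to show that the weakly closed algebra they generate already contains every rank-one operator; since the finite-rank operators are \wot-dense in $B(\cH)$ and the algebra is \wot-closed, this forces it to be all of $B(\cH)$.

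First I would fix the \emph{comb code} $C=\{1^k2:0\le k\le d-2\}\cup\{1^{d-1}\}\subseteq\bF_2^+$, which has exactly $d$ words, and set $W_i=Z_{c_i}$ for the $d$ words $c_i\in C$. Each $W_i$ is an isometry (a product of isometries), the ranges are pairwise orthogonal because $C$ is prefix-free, and $\sum_i W_iW_i^*=I$ because $C$ is a maximal prefix code (the usual telescoping $I=Z_1Z_1^*+Z_2Z_2^*$, split repeatedly along the caterpillar tree of $C$). Thus $W_1,\dots,W_d$ is a Cuntz family, and a product $W_{i_1}\cdots W_{i_k}$ is precisely $Z_\gamma$ for $\gamma=c_{i_1}\cdots c_{i_k}\in C^*$. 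Writing $\fA:=\wotclos{\Alg}\{W_1,\dots,W_d\}$, this shows $Z_\gamma\in\fA$ for every $\gamma\in C^*$.

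The key combinatorial observation is that although $C^*$ misses some words of $\bF_2^+$, every missing word is repaired by appending a single letter $2$. Indeed, greedily decoding a word $\gamma$ over the prefix code $C$ can only get stuck when the unread remainder is $1^m$ with $1\le m\le d-2$; hence an undecodable $\gamma$ has the form $\gamma=\alpha 1^m$ with $\alpha\in C^*$, and then $\gamma 2=\alpha(1^m2)\in C^*$ since $1^m2\in C$. Consequently $Z_\gamma Z_2=Z_{\gamma2}\in\fA$ for every word $\gamma$ (for decodable $\gamma$ this is immediate, as $Z_2=W_1\in\fA$). In particular $Z_\gamma Z_2\in\fA$ for all $\gamma$ with $|\gamma|=2^n$, so $T Z_2\in\fA$ for every $T\in\spn\{Z_\gamma:|\gamma|=2^n\}$.

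Finally I would invoke the quoted fact from \cite{D2006}: every rank-one operator $R$ is the \wot-limit of a bounded sequence $T_n\in\spn\{Z_\gamma:|\gamma|=2^n\}$. Since right multiplication by $Z_2$ is \wot-continuous and $\fA$ is \wot-closed, $RZ_2=\wotlim_n T_nZ_2\in\fA$. As $Z_2^*$ is surjective, the operators $RZ_2=\xi(Z_2^*\eta)^*$ (for $R=\xi\eta^*$) range over all rank-one operators, so $\fA$ contains every rank-one, hence every finite-rank operator, and therefore $\fA=B(\cH)$. The main obstacle is precisely the gap flagged in the statement (the one the naive argument for \cite[Corollary~1.8]{D2006} overlooks): the products of the $W_i$ realize only the words decodable over $C$, a \emph{proper} subset of $\bF_2^+$, so one cannot directly assert $\spn\{Z_\gamma:|\gamma|=2^n\}\subseteq\fA$. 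The two ingredients that close this gap are the uniform repair $\gamma\mapsto\gamma2$, valid for the comb code at every $d$, together with the surjectivity of the coisometry $Z_2^*$, which together let a single right multiplication by $Z_2$ recover all rank-one operators.
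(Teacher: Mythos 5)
Your proof is correct and follows essentially the same route as the paper's: both realize the $W_i$ as $Z_c$ over a maximal prefix ("comb") code in $\bF_2^+$, observe that every word becomes decodable after appending one fixed letter so that $Z_\gamma Z_x\in\fA$ for all $\gamma$, and then combine the rank-one approximation fact from \cite{D2006} with the surjectivity of $Z_x^*$. The only difference is cosmetic — you use the mirror-image code $\{1^k2\}\cup\{1^{d-1}\}$ and repair with the letter $2$, where the paper uses $\{2^k1\}\cup\{2^{d-1}\}$ and repairs with $1$.
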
 

\begin{proof} 
Let $Z_1,Z_2$ be a pair of Cuntz-isometries as in Read's example.
Define isometries $W_i$ by
\[
W_1 = Z_1 ,\ \ 
W_2 = Z_{21} ,\ \ 
 \dots  ,\ \ 
W_{d-1} = Z_{2\cdots 21} ,\AND 
W_d = Z_{2\cdots 22} .
\]

It is not hard to see that $\sum_{i=1}^d W_i W_i^* = I$. 
Let $\cA$ be the unital algebra generated by $\{W_1,\dots,W_d\}$. 
For every word $\gamma\in\bF_2^+$ of $\{1,2\}$, we claim that $Z_{\gamma 1}=Z_\gamma Z_1$ belongs to $\cA$. 
This can be shown by induction on $|\gamma|$. 
It is trivial when $\gamma$ is the empty word. 
If $\gamma$ starts with the letter $1$ and $\gamma=1\cdot \gamma'$, then $Z_{\gamma 1}=W_1 Z_{\gamma' 1}\in\cA$ by the induction. 
Otherwise, $\gamma$ starts with letter $2$. 
If $\gamma=2\cdots 21 \gamma'$, then we can factor $Z_{\gamma 1}=W_d^k W_i Z_{\gamma'1}\in\cA$ for some choice of $k\geq 0$ and $1\leq i\leq d-1$. 
If $\gamma=22\cdots 2$, then $Z_{\gamma 1}=W_d^k W_i\in\cA$ for some choice of $k\geq 0$ and $1\leq i\leq d-1$. 

Thus, for every $T_n\in\spn\{Z_\gamma: |\gamma|=2^n\}$, $T_n Z_1\in\cA$. 
Since every rank one operator $xy^*$ is a \wot-limit of a sequence of such $T_n$,  
\[ xy^* Z_1=x(Z_1^* y)^* \in \wotclos{\cA} . \]
However, $Z_1^*$ is surjective, and thus every rank one operator lies in $\wotclos{\cA}$. 
Since rank one operators are \wot-dense in $B(\cH)$, we have $\wotclos{\cA} = B(\cH)$. 
\end{proof} 

For free semigroupoid algebras of graphs that have more than one vertex, we would like to find examples that are self-adjoint. In the case of free semigroup algebras, which is just a free semigroupoid algebra where the graph has a single vertex and at least two loops, $B(\cH)$ is the only self-adjoint example known. We will construct a large class of directed graphs for which there exist free semigroupoid algebra that are self-adjoint.

\begin{definition} A directed graph $G$ is called \textit{in-degree regular} if the number of edges $e\in E$ with $r(e)=v$ is the same for every vertex $v\in V$. 
Similarly, we define \textit{out-degree regular} graphs.
A \emph{transitive} graph $G$ is called \textit{aperiodic} if for any two vertices $v,w\in V$ there exists a positive integer $K_0$ so that for any $K\geq K_0$ there exists a directed path $\mu$ with $|\mu|=K$ and $s(\mu)=v$ and $r(\mu)=w$.
\end{definition} 

Note that a transitive graph is $p$-periodic for $p\ge2$ if the vertices $V$ can partitioned into disjoint sets $V_1,\dots,V_p$ so that $s(e)\in V_i$ implies that $r(e)\in V_{i\!\!\pmod p +1}$.

Our main goal of this section is to show that the free semigroupoid algebra associated with an aperiodic, in-degree regular, transitive, finite directed graph can be self-adjoint. The graph in Figure~\ref{fig.graph} is an example of such graph on three vertices with in-degree $2$. 

\begin{figure}[hbt] 
\begin{center}
\begin{tikzpicture}

\draw[decoration={ markings,
  mark=at position 0.5 with {\arrow[line width=1.2pt]{>}}},
  postaction={decorate}] plot [smooth]
	coordinates {(0,1) (-0.3,1.4) (0,1.6) (0.3,1.4) (0,1)};
\draw[decoration={ markings,
  mark=at position 0.5 with {\arrow[line width=1.2pt]{>}}},
  postaction={decorate}] 
	(0,1) arc (90:180:1);
\draw[decoration={ markings,
  mark=at position 0.5 with {\arrow[line width=1.2pt]{>}}},
  postaction={decorate}]  
	(0,1) -- (-1,0);
\draw[decoration={ markings,
  mark=at position 0.5 with {\arrow[line width=1.2pt]{>}}},
  postaction={decorate}] 
	(-1,0) arc (-135:-45:1.42);
\draw[decoration={ markings,
  mark=at position 0.5 with {\arrow[line width=1.2pt]{>}}},
  postaction={decorate}]  
	(0,1) -- (1,0);
\draw[decoration={ markings,
  mark=at position 0.5 with {\arrow[line width=1.2pt]{>}}},
  postaction={decorate}] 
	(1,0) arc (0:90:1);
	
\node at (-1,0){$\bullet$};
\node at (1,0) {$\bullet$};
\node at (0,1) {$\bullet$};
\end{tikzpicture}
\end{center}
\label{fig.graph}
\caption{An aperiodic, in-degree regular, transitive,  directed graph on 3 vertices.}
\end{figure}
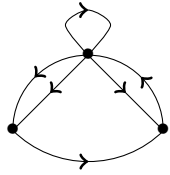

\begin{definition} 
Let $G=(V,E)$ be a directed graph. An \textit{edge colouring} of $G$ using $d$ colours is a function $c:E \to\{1,2,\cdots,d\}$. 
A colouring $c$ is called a \textit{strong edge colouring} if for every vertex $v\in V$, distinct edges with range $v$ have different colours.
\end{definition} 

Note that if $G$ has a strong edge colouring with $d$ colours, there can be at most $d$ edges with range $v$. 
Moreover, if $G$ is in-degree regular with in-degree $d$, this implies that for every colour $1\leq j\leq d$, there exists a unique edge $e$ with $r(e)=v$ and $c(e)=j$. 

An edge colouring $c$ of the directed graph induces a colouring of paths $c: \bF_G^+ \to \Fd$, where a directed path $\mu=e_k\dots e_1$ is given the colour $c(\mu)=c(e_k)\dots c(e_1)$ in the free semigroup $\bF_d^+$. It is then standard to show that for any vertex $v\in V$ and any word $\gamma$ of colours, there exists a unique path $\mu$ with $r(\mu)=v$ and $c(\mu)=\gamma$.

\begin{definition} \label{D:SyncWord}
Let $G$ be a finite graph with an edge colouring $c : E \rightarrow \{1,2,\dots,d\}$. A word of colours $\gamma =c_k \dots c_1 \in \bF_d^+$ is called a \textit{synchronizing word} for a vertex $v\in V$ if for any vertex $w\in V$ 
(including $v$ itself), there exists a path $\mu$ with $s(\mu)=v$ and $r(\mu)=w$ and $c(\mu)=\gamma$. 
\end{definition} 

In the graph theory literature, it is standard (and more natural) to reverse the direction of edges and consider out-degree regular graphs.
Then in a strong edge colouring, each vertex will have $d$ edges with source $v$ with distinct colours.
A word of colours then yields a unique path of that colour with source $v$. A synchronizing word is then a word in colours with the property that every path with that word of colours terminates at the same vertex, independent of the initial vertex.
The nomenclature \textit{synchronizing word} can be interpreted as saying that there are universal directions to get to a vertex $v$ by following the path of colour $\gamma$ independent of the starting point $w$.

When $G$ is a transitive, in-degree regular graph with a strong edge colouring $c$, then it is standard to show that there exists a vertex with a synchronizing word if and only if every vertex has a synchronizing word. Thus, for transitive in-degree regular graphs we can speak of a colouring being synchronizing without specifying the vertex. It is also easy to see that the existence of a synchronized colouring implies aperiodicity. The converse, known as the road colouring problem, was a highly coveted result that was eventually established by Trahtman \cite{Trahtman}.

\begin{theorem}[Trahtman] \label{T:Trahtman}
Let $G$ be an aperiodic in-degree $d$ regular transitive finite graph. 
Then $G$ admits a strong edge colouring with $d$ colours that has a synchronizing word.
\end{theorem}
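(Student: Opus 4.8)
The plan is to recognise the statement as the road colouring theorem and to run Trahtman's argument, after first dualising the graph-theoretic set-up as in the discussion preceding the statement. I would begin by passing to the reversed graph $\td G$, which is again finite, transitive (strongly connected) and aperiodic but now \emph{out}-degree $d$-regular. A strong edge colouring $c$ of $\td G$ with $d$ colours then assigns to each vertex exactly one out-edge of each colour, so $(\td G,c)$ is a complete deterministic automaton: each word of colours $\gamma\in\bF_d^+$ determines a transition map $f_\gamma\colon V\to V$ obtained by following, from each vertex, the unique out-path coloured $\gamma$. A synchronizing word in the sense of Definition~\ref{D:SyncWord} is exactly a $\gamma$ for which $f_\gamma$ is constant, so writing $\rank(\gamma)=|f_\gamma(V)|$ the goal becomes: produce a colouring of $\td G$ admitting a word of rank $1$.

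The organising notion is that of a \emph{stable pair}. Fixing a colouring, call a pair $\{p,q\}$ of distinct vertices stable if for every word $u$ there is a word $v$ with $f_{uv}(p)=f_{uv}(q)$. I would record the standard properties: stability is symmetric, it is preserved by the transition maps, and---using strong connectivity---it is transitive, so that together with the diagonal it generates a congruence $\rho$ on $V$. The proof then proceeds by induction on $|V|$. If some colouring makes $\rho$ nontrivial, I would form the quotient automaton $\td G/\rho$, check that it inherits strong connectivity, aperiodicity and out-degree $d$-regularity and carries a well-defined induced colouring, apply the inductive hypothesis to obtain a synchronizing colouring of the smaller quotient, and then lift: a synchronizing word for $\td G/\rho$ drives all of $V$ into a single $\rho$-class, and the definition of stability lets me append a further word collapsing that class to one vertex. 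Reversing edges returns a synchronizing strong edge colouring of $G$.

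The crux is therefore the \emph{main lemma}: whenever $|V|\ge 2$, some colouring of $\td G$ has a nontrivial stable pair (so that $\rho$ is nontrivial and the induction can turn over). Here I would follow Trahtman's combinatorial construction. Starting from any colouring that is not already synchronizing, fix a word of minimal rank $r\ge 2$ and examine the family of minimal-rank image sets. The geometric input is to single out one colour $\alpha$ whose transition map $f_\alpha$ has functional graph consisting of in-trees feeding into cycles; after a controlled recolouring one arranges a tree of maximal height, and redirecting the edges along a farthest branch produces two distinct vertices with a common $f_\alpha$-image lying inside a minimal-rank set. Minimality of $r$ then forces this merged pair to be stable. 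I expect this recolouring-plus-minimal-rank step to be the main obstacle: it is the only genuinely hard part of the argument, it is precisely where aperiodicity is indispensable (guaranteeing the requisite tree/cycle structure and excluding the periodic counterexamples), and it demands the delicate bookkeeping that separates Trahtman's theorem from the earlier partial results.

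Combining the main lemma with the inductive reduction finishes the proof: either the chosen colouring already has rank $1$, or it has a nontrivial stable pair, $\rho$ is nontrivial, and induction on $|V|$ applies; translating back across the edge reversal yields the desired strong edge colouring of $G$ with $d$ colours possessing a synchronizing word.
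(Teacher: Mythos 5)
The paper gives no proof of this statement: Theorem~\ref{T:Trahtman} is imported verbatim from Trahtman's paper \cite{Trahtman} (the resolution of the Adler--Weiss road colouring conjecture), so there is nothing internal to compare your argument against. What you have written is a correct high-level roadmap of Trahtman's published proof: the dualisation to an out-degree regular automaton matches the remark the authors make just before Definition~\ref{D:SyncWord}, and the architecture --- stable pairs, the congruence $\rho$ they generate, induction on $|V|$ via the quotient automaton, and the lifting of a synchronizing word from the quotient followed by collapsing a single $\rho$-class --- is faithful to the actual argument. The routine verifications you list (symmetry and $f_\gamma$-invariance of stability, transitivity via strong connectivity, inheritance of strong connectivity, aperiodicity and completeness by the quotient) all go through as you describe.

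The gap is that the proposal does not prove the main lemma, and the main lemma \emph{is} the theorem. Everything else in your outline was already known before Trahtman (the stable-pair/congruence reduction goes back to Culik, Karhum\"aki and Kari); the entire difficulty is the claim that every strongly connected, aperiodic, out-degree $d$-regular automaton on at least two states admits a colouring with a nontrivial stable pair. Your paragraph on this point --- ``fix a word of minimal rank $r\ge 2$,'' ``single out one colour $\alpha$ whose functional graph consists of in-trees feeding into cycles,'' ``after a controlled recolouring one arranges a tree of maximal height, and redirecting the edges along a farthest branch produces two distinct vertices with a common $f_\alpha$-image'' --- names the ingredients but does not carry out any of them: it does not specify the recolouring, does not prove that the two merged vertices can be placed inside a set of minimal rank, does not handle the degenerate case in which every $f_\alpha$ is a permutation (where one must instead exploit aperiodicity directly to find a vertex of in-degree at least $2$ under some letter after recolouring), and does not verify that minimality of $r$ forces stability of the merged pair. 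You correctly flag this step as ``the main obstacle,'' but flagging it is not the same as surmounting it. As it stands the proposal is an accurate summary of how the theorem is proved elsewhere rather than a proof; for the purposes of this paper that is in fact the appropriate stance --- the result should simply be cited, as the authors do --- but judged as a proof attempt it is incomplete at precisely the load-bearing point.
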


A strong edge colouring enables us to define a CK family. 
Let $G$ be in-degree regular with a strong edge colouring $c$.
Let $W_1,\cdots,W_d$ be Cuntz isometries on $B(\cH)$, i.e. $W_i^*W_i=I$ for $1 \le i \le d$ and
\[ \sum_{j=1}^d W_j W_j^* = I. \]
Define $\cK=\bigoplus_{v \in V} \cH_v$, where each $\cH_v\cong\cH$ via a fixed unitary $J_v$. We will henceforth think of $J_v$ as a partial isometry from $\cK$ onto $\cH$ with cokernel $\cH_v$.
For each vertex $v\in V$, define $S_v = P_{\cH_v}$ to be the projection onto $\cH_v$. 
For each edge $e\in E$, with $s(e)=w$ and $r(e)=v$, define $S_e = J_v^* W_{c(e)}  J_w$. 
The family $\rS_c = (S_v,S_e)$ is the CK family associated with the strong edge colouring $c$ and the isometries $W_1,\dots,W_d$. 

Indeed, it is not hard to verify $\rS_c$ is a CK family. For each edge $e$, 
\[ S_e^* S_e = J_w^*J_w = S_w = S_{s(e)} \neq 0 .\]
and, because the $d$ distinct edges with $r(e)=v$ have distinct colours,
\[
 \sum_{r(e)=v} S_e S_e^* = \sum_{r(e)=v} J_v^* W_{c(e)} W_{c(e)} ^* J_v 
 = J_v^* \sum_{i=1}^d  W_i W_i^* J_v = J_v^*J_v = S_v .
\]
Therefore, $\rS_c$ is a CK family.

Let us fix an aperiodic, in-degree regular, transitive, finite directed graph $G$ of in-degree $d$. 
Use Trahtman's Theorem~\ref{T:Trahtman} to select a strong edge colouring which is synchronizing.
Let us also use Lemma \ref{L:ReadIso} to choose the isometries $W_1,\cdots,W_d \in B(\cH)$ so that the free semigroup algebra that they generate is $B(\cH)$. 
For each word $\gamma = c_m\dots c_1$ of colours, denote $W_\gamma = W_{c_m}\dots W_{c_1}$. 
Let $\fS_c$ be the free semigroupoid algebra that $\rS_c$ generates.

We now prove a lemma towards the main result.
 
\begin{lemma}\label{L:SyncDiag} 
Let $c$ be a strong edge colouring of an in-degree regular directed graph $G$. Suppose there exists a synchronizing word $\gamma$ for a vertex $v\in V$. Then $\fS_{c,v}:= S_v\fS_cS_v=B(\cH_v)$.
\end{lemma}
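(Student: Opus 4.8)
The plan is to transport the whole problem to $B(\cH)$ through the unitaries $J_v$ and reduce it to a density statement about the Read isometries $W_1,\dots,W_d$ from Lemma~\ref{L:ReadIso}. First I would record how $S_\mu$ acts along a path. Writing $\mu=e_k\cdots e_1$ with consecutive vertices $w=w_0\to w_1\to\cdots\to w_k=v$, the defining relation $S_{e_i}=J_{w_i}^*W_{c(e_i)}J_{w_{i-1}}$ together with $J_{w_i}J_{w_i}^*=I_\cH$ telescopes to
\[ S_\mu = J_v^* W_{c(\mu)} J_w, \qquad w=s(\mu),\ v=r(\mu). \]
Since $S_vS_\mu S_v=S_\mu$ precisely when $\mu$ is a cycle at $v$ and is $0$ otherwise, compressing $\fS_c=\wotclos{\spn}\{S_\mu:\mu\in\bF_G^+\}$ by the projection $S_v\in\fS_c$ shows (as in Proposition~\ref{P:compress_to_vertex}, a compression by a projection lying in the algebra being again WOT-closed) that $\fS_{c,v}=S_v\fS_cS_v$ is, under the $*$-isomorphism $\Phi=\Ad_{J_v}:B(\cH_v)\to B(\cH)$, exactly the WOT-closed linear span of $\{W_{c(\mu)}:\mu\text{ a cycle at }v\}$. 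So it suffices to prove that this algebra of $W$'s is all of $B(\cH)$.

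The heart of the argument is to show that appending the synchronizing word realizes \emph{every} colour word inside a cycle at $v$. Given an arbitrary $\beta\in\bF_d^+$, let $\mu$ be the unique path with $r(\mu)=v$ and $c(\mu)=\beta$ (unique by in-degree regularity and strongness of the colouring), and put $u=s(\mu)$. Because $\gamma$ is synchronizing for $v$, there is a path $\nu$ with $s(\nu)=v$, $r(\nu)=u$ and $c(\nu)=\gamma$; then $\mu\nu$ is a cycle at $v$ with $c(\mu\nu)=\beta\gamma$, so by the first paragraph $\Phi(\fS_{c,v})$ contains $W_{\beta\gamma}=W_\beta W_\gamma$ for every $\beta\in\bF_d^+$. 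This is where the graph hypotheses enter essentially: a priori the cycles at $v$ realize only a restricted set of colour words, and the synchronizing word $\gamma$ is exactly the device guaranteeing that the realized set contains all of $\bF_d^+\gamma$, which is all that Read's theorem will need. I expect the only delicate point to be the bookkeeping of directions in the concatenation (on which side $\gamma$ is appended, and the uniqueness of $\mu$).

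Finally I would feed this into Read's theorem. By Lemma~\ref{L:ReadIso}, $\wotclos{\spn}\{W_\beta:\beta\in\bF_d^+\}=B(\cH)$, and right multiplication by the fixed operator $W_\gamma$ is WOT-continuous; hence from $W_\beta W_\gamma\in\Phi(\fS_{c,v})$ for all $\beta$ and the WOT-closedness of $\Phi(\fS_{c,v})$ I conclude $B(\cH)W_\gamma\subseteq\Phi(\fS_{c,v})$. Each $W_i$ is an isometry, so $W_\gamma$ is an isometry and $W_\gamma^*$ is surjective; thus for any $x,z\in\cH$ I may choose $y$ with $W_\gamma^*y=z$ and obtain the rank-one operator $xz^*=x(W_\gamma^*y)^*=(xy^*)W_\gamma\in\Phi(\fS_{c,v})$. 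As the rank-one operators are WOT-dense in $B(\cH)$ and $\Phi(\fS_{c,v})$ is WOT-closed, $\Phi(\fS_{c,v})=B(\cH)$, that is $\fS_{c,v}=B(\cH_v)$.
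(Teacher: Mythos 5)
Your proof is correct and follows essentially the same route as the paper: append the synchronizing word $\gamma$ to realize every colour word $\beta\gamma$ as the colour label of a cycle at $v$, identify the compression $S_\lambda = J_v^*W_\beta W_\gamma J_v$, and then invoke Lemma~\ref{L:ReadIso} together with \wot-continuity of right multiplication by $W_\gamma$. The only (cosmetic) difference is the final step, where the paper simply takes $A' = AW_\gamma^*$ and uses $W_\gamma^*W_\gamma = I$ rather than your rank-one/surjectivity-of-$W_\gamma^*$ argument.
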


\begin{proof} 
For each word $\gamma'$ of colours, we have a unique word $\mu'$ with $r(\mu')=v$ and $c(\mu')=\gamma'$. 
Let $w=s(\mu')$. Since $\gamma$ is synchronizing, there exists a path $\mu$ with $s(\mu)=v$, $r(\mu)=w$ and $c(\mu)=\gamma$. 
Since $r(\mu)=w=s(\mu')$, the path $\lambda=\mu'\mu$ satisfies $s(\lambda)=r(\lambda)=v$. 
Therefore, $S_\lambda$ belongs to $\fS_{c,v} = S_v\fS_c S_v$. 
Notice that $S_\lambda = J_v^* W_{\gamma'} W_\gamma J_v$.
Therefore, for any word $\gamma'$ of colours, $J_v^* W_{\gamma'} W_\gamma J_v \in B(\cH_v)$ belongs to $\fS_c$. 

Since $\wotclos{\Alg}\{W_{\gamma'}: \gamma' \in \Fd \} = B(\cH)$, we see that $\fS_c$ contains $J_v^* A'W_\gamma J_v$ for any $A' \in B(\cH_v)$.
In particular, taking $A'=AW_\gamma^*$ for  $A \in B(\cH)$ shows that $\fS_{c,v} = B(\cH_v)$.
\end{proof}

We can now complete the construction.

\begin{theorem} \label{T:Read ex}
If $G$ is an aperiodic, in-degree regular, transitive, finite directed graph, then there exists a strong edge colouring $c$ so that the free semigroupoid algebra $\fS_c$ is $B(\cK)$. 
\end{theorem}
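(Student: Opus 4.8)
The plan is to prove $\fS_c = B(\cK)$ by showing that $\fS_c$ contains every ``matrix block'' $B(\cH_w,\cH_v)$ of $B(\cK)$ relative to the decomposition $\cK = \bigoplus_{v\in V}\cH_v$. Since $G$ is finite, there are finitely many such blocks and they linearly span $B(\cK)$; as $\fS_c\subseteq B(\cK)$ trivially, producing all blocks forces equality.

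First I would assemble the ingredients. Use Trahtman's Theorem~\ref{T:Trahtman} to fix a strong edge colouring $c$ that admits a synchronizing word, and use Lemma~\ref{L:ReadIso} to choose the Cuntz isometries $W_1,\dots,W_d$ so that the free semigroup algebra they generate is $B(\cH)$. Because $G$ is transitive and in-degree regular, a synchronizing word at one vertex yields one at every vertex (as recalled just before Theorem~\ref{T:Trahtman}). Hence Lemma~\ref{L:SyncDiag} applies at each vertex and gives $\fS_{c,v} = S_v\fS_c S_v = B(\cH_v)$ for \emph{every} $v\in V$, which disposes of all the diagonal blocks simultaneously.

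The core step is to transport a full diagonal block onto an arbitrary off-diagonal block. Fix an ordered pair $(v,w)$. By transitivity there is a path $\mu\in\bF_G^+$ with $s(\mu)=w$ and $r(\mu)=v$, and a short induction on $|\mu|$ using $J_uJ_u^* = I_\cH$ at each intermediate vertex $u$ shows $S_\mu = J_v^* W_{c(\mu)} J_w$. Since $r(\mu)=v$ we have $S_vS_\mu = S_\mu$, so $B(\cH_v)S_\mu = \fS_{c,v}S_\mu \subseteq \fS_c$. I would then evaluate this product: writing $B(\cH_v) = J_v^* B(\cH) J_v$ and using $J_vJ_v^* = I_\cH$ gives $B(\cH_v)S_\mu = J_v^*\big(B(\cH)W_{c(\mu)}\big)J_w$. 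The decisive point is that $W_{c(\mu)}$ is an isometry, so $W_{c(\mu)}^*W_{c(\mu)} = I$ and every $A\in B(\cH)$ equals $(AW_{c(\mu)}^*)W_{c(\mu)}$; thus $B(\cH)W_{c(\mu)} = B(\cH)$, and therefore $B(\cH_v)S_\mu = J_v^* B(\cH) J_w = B(\cH_w,\cH_v)$, the entire $(v,w)$ block.

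Having shown $B(\cH_w,\cH_v)\subseteq\fS_c$ for all $v,w$, summing the finitely many blocks gives $B(\cK) = \sum_{v,w} B(\cH_w,\cH_v)\subseteq\fS_c$, which completes the argument. I expect the genuine difficulty to already be packaged into the preceding results: Trahtman's theorem and Lemma~\ref{L:SyncDiag} supply the diagonal corners, so the only new content here is the off-diagonal argument. Its one subtlety is that one must recover the \emph{whole} rectangular block $B(\cH_w,\cH_v)$ rather than merely the single operator $S_\mu$, and this is exactly what the isometry identity $B(\cH)W_{c(\mu)} = B(\cH)$ delivers.
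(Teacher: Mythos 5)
Your proposal is correct and follows essentially the same route as the paper: Trahtman's theorem plus Lemma~\ref{L:SyncDiag} give all diagonal corners $B(\cH_v)$, and an arbitrary off-diagonal block $B(\cH_w,\cH_v)$ is then recovered by right-multiplying $B(\cH_v)$ by $S_\mu$ for a path $\mu$ from $w$ to $v$. Your computation $B(\cH_v)S_\mu = J_v^*B(\cH)W_{c(\mu)}J_w = J_v^*B(\cH)J_w$ is just a more explicit version of the paper's observation that $A=BS_\mu^*$ satisfies $AS_\mu = BS_\mu^*S_\mu = BS_w = B$; both exploit that $S_\mu$ restricts to an isometry of $\cH_w$ into $\cH_v$.
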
 

\begin{proof} 
By Theorem~\ref{T:Trahtman} we have that $G$ has a strong edge colouring with a synchronizing word.
It follows that all vertices have synchronizing words. 
Hence by Lemma~\ref{L:SyncDiag}, we get that $S_v\fS_c S_v = B(\cH_v)$ for every vertex $v\in V$.

Let $v,w\in V$ be distinct vertices. 
There exists a path $\mu$ with $s(\mu)=w$ and $r(\mu)=v$.
Thus, $S_\mu$ maps $\cH_w$ isometrically into $\cH_v$. 
Since $B(\cH_v)$ is contained in $\fS_c$, we see that $A S_\mu$ lies in $\fS_c$ for all $A\in B(\cH_v)$. 
In particular, setting $A= B S_\mu^*$ for $B\in B(\cH_w,\cH_v)$ shows that $B$ lies in $\fS_c$.
Therefore, $B(\cH_w,\cH_v)\subseteq \fS_c$. 
We conclude that $\fS_c = B(\cK)$.
\end{proof}

\begin{example}[O'Brien \cite{Obrien}] 
Let $G$ be an aperiodic, in-degree regular, transitive, finite graph that contains a loop (i.e.\ an edge $e$ with $r(e)=s(e)=v$). 
We can explicitly construct a synchronizing word for this graph. 

We  first arbitrarily colour the loop $e$ at the vertex $v$ with colour $c_1$. 
Since $G$ is transitive, we can find a directed spanning tree starting at $v$. 
We colour all edges in this spanning tree by $1$ as well. 
Notice that for each vertex $w\neq v$, there is precisely one edge in the spanning tree that points into $w$. 
Therefore, we can colour the remaining edges to obtain a strong edge colouring. 
Suppose the depth of the spanning tree is $k$. 
Then the word $\gamma=1^k$ is a synchronizing word. 

Indeed, for any $w\in V$, we can find a path $\mu$ along the spanning tree with $s(\mu)=v$ and $r(\mu)=w$. 
Let the length of $\mu$ be $m\leq k$, and set $\mu'=\mu e^{k-m}$ be the path that begins with the loop $k-m$ times followed by $\mu$. 
Then the length of $\mu'$ is $k$ with $s(\mu')=v$ and $r(\mu')=w$. 
All edges in $\mu'$ are coloured by $1$. 
Therefore, $c(\mu')=\gamma$. 
Hence, $\gamma$ is a synchronizing word. 
\end{example}

When the graph is periodic, it is impossible to have a synchronizing word in the sense of the Definition \ref{D:SyncWord} due to periodicity. However, there are some generalizations of Trahtman's result to the periodic graphs \cite{BealPerrin} when one relaxes the definition of a synchronizing word. 

\begin{question} If $G$ is a transitive, periodic, in-degree $d$-regular finite graph with $d\geq 3$, is there a free semigroupoid algebra on $G$ which is von Neumann type? in particular $B(\cH)$? \footnote{This question had recently been resolved positively. In fact, in \cite{DL+} it is shown that every non-cycle finite transitive graph has $B(\cH)$ as a free semigroupoid algebra.}
\end{question} 

A technical question which we are unable to answer could resolve this: is there a family of Cuntz isometries $W_1,\cdots,W_d$ such that the \wot-closed algebra generated by $\{W_\mu: |\mu|=m, \mu\in\bF_d^+\}$ is $B(\cH)$ for every $m\ge1$? 
We know from \cite[Theorem 1.7]{D2006} that the answer is affirmative when $d=2$. 
As a result, we are able to extend Theorem \ref{T:Read ex} to the periodic case with in-degree $2$. 
If the answer to this question is affirmative for all $d\ge2$, one would be able to extend Theorem \ref{T:Read ex} for every in-degree regular, transitive, finite directed graph.

\begin{example}
The assumption of in-degree regular is essential in our construction as it allows us to convert our problem to edge colouring. 
However, this assumption is certainly not necessary for the free semigroupoid algebra to be self-adjoint. 
Consider a graph $G$ on $n$ vertices $v_1,\cdots,v_n$. 
Let $e_i$ be edges with $s(e_i)=v_i$ and $r(e_i)=v_{i+1}$ for all $1\leq i\leq n-1$. 
Let $f_1,f_2$ be two edges with $s(f_j)=v_n$ and $r(f_j)=v_1$. 
This graph is a periodic, transitive directed graph that is not in-degree regular. 
Define $S(e_i)=I$ and $S(f_j)=Z_j$ where $Z_1,Z_2$ are Cuntz isometries as in Read's example. 
It is not hard to check that $S$ is a CK family for $G$, and that the free semigroupoid algebra generated by $S$ is $B(\cK)$. 
\end{example}

\section{Atomic representations} \label{S:atomic}

In this section, we classify a set of interesting examples of free semigroupoid algebras
which generalize the atomic representations of the Cuntz-Toeplitz algebra studied in \cite[Section 3]{DP1999}.
These correspond to the case where $G$ consists of a single vertex with several edges. We shall prove a similar result for atomic free semigroupoid algebras. 

\begin{definition} \label{D:atomic}
Let $G$ be a directed graph. 
We call a TCK family $\rS=(S_v,S_e)$ \textit{atomic} if there exist pairwise disjoint sets $\{\Lambda_v\}_{v\in V}$, an orthonormal basis $\{\xi_{v,i}: v\in V,\  i\in \Lambda_v\}$ of $\cH$ and injections $\pi_e : \Lambda_{s(e)} \rightarrow \Lambda_{r(e)}$ such that   
\begin{enumerate} [label=\normalfont{(\arabic*)}]
\item For each vertex $v\in V$, $S_v$ is the projection onto $\overline{\spn}\{\xi_{v,i} :  i\in\Lambda_v \}$.
\item For each $e\in E$ and $i\in \Lambda_{s(e)}$, there are scalars $\lambda_{e,i} \in \bT$ so that $S_e\xi_{s(e),i}=\lambda_{e,i}\xi_{r(e),\pi_e(i)}$, and $S_e$ maps all other basis vectors to $0$. 
\item The ranges of $\{ \pi_{e} : e \in E, \ s(e) =v \}$ are pairwise disjoint for $v\in V$.  
\end{enumerate}
\end{definition}
    
It is not hard to see that $\rS = (S_v,S_e)$ defines a TCK family for the directed graph $G$. 
Indeed, each $S_v$ is a projection and their ranges are pairwise orthogonal.
If $e_1,\cdots,e_k$ have the same range $w$, then $S_{e_i}$ have pairwise orthogonal ranges dominated by the range of $S_w$. One is encouraged to think of atomic TCK families as a ``combinatorially" defined TCK family.

We shall prove a similar result to the one in \cite[Section 3]{DP1999} showing that the atomic representations of the free semigroupoid $\mathbb{F}^+(G)$ have three types: left regular type, inductive type, and cycle type.

First of all, let us define a labeled directed graph $H$ associated with the atomic representation. 
The vertices of $H$ are precisely the set of basis vectors $\{\xi_{v,j}\}$. 
An edge points from a vertex $\xi_{v,i}$ to $\xi_{u,j}$ if there exists some $e\in E$ with $s(e)=v$ and $i\in \Lambda_v$ so that $r(e) = u$ and $\pi_e(i)=j$, and the edge is labelled $e$. Equivalently, there's an edge when $S_e\xi_{v,i}=\lambda \xi_{u,j}$ for some $\lambda \in \bT$.
For each vertex $\xi_{v,i}$ in this graph, we let the number of edges pointing into this vertex be the in-degree, denoted by $\degin(\xi_{v,i})$. Similarly, let the number of edges pointing out from this vertex be the out-degree, denoted by $\degout(\xi_{v,j})$.

We begin with a few simple observations.

{\samepage
\begin{lemma}\label{L:atomic_easy} 
Let $G$ be a directed graph and let $\rS=(S_v,S_e)$ be an atomic TCK family for $G$ as in Definition $\ref{D:atomic}$. 
Suppose that $H$ is the labeled directed graph associated with $\rS$.
\begin{enumerate}[label=\normalfont{(\arabic*)}]
\item Each connected component of $H$ corresponds to a reducing subspace.
\item We have $\degin(\xi_{v,i})\leq 1$ and $\degout(\xi_{v,i}) = \degout(v)$ for each $i\in \Lambda_v$.
\item $S_v - \sum_{e \in r^{-1}(v)}S_e S_e^*$ is the projection onto $\spn\{\xi_{v,i} : \degin(\xi_{v,i}) = 0 \}$.
\end{enumerate}
\end{lemma}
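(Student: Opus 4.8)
The plan is to verify the three assertions about the labeled directed graph $H$ directly from Definition \ref{D:atomic}, working one basis vector at a time. Throughout I would use that the basis $\{\xi_{v,i}\}$ diagonalizes each $S_v$ (item (1) of the definition) and that each $S_e$ acts as a partial permutation modulo unimodular scalars (item (2)).

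For part (1), I would show that the span $\cR$ of the basis vectors in a single connected component of $H$ is reducing for the TCK family. Since $S_v$ is diagonal in the basis, each $S_v$ preserves $\cR$ and $\cR^\perp$. For the edges, observe that by item (2) of Definition \ref{D:atomic}, $S_e\xi_{v,i} = \lambda_{e,i}\xi_{r(e),\pi_e(i)}$ precisely when there is an edge in $H$ from $\xi_{v,i}$ to $\xi_{r(e),\pi_e(i)}$, so $S_e$ carries a basis vector of the component into the same component (or to $0$); hence $S_e\cR\subseteq\cR$. The adjoint $S_e^*$ sends $\xi_{r(e),\pi_e(i)}$ back to $\bar\lambda_{e,i}\xi_{v,i}$ and annihilates basis vectors not in the range of $\pi_e$, so $S_e^*$ also preserves $\cR$. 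Thus $\cR$ reduces every generator, hence reduces $\fS$.

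For part (2), the bound $\degin(\xi_{v,i})\le 1$ is the content of item (3) of the definition: an edge of $H$ into $\xi_{v,i}$ corresponds to an edge $e\in E$ with $r(e)=v$ together with a preimage $j$ with $\pi_e(j)=i$, and the disjointness of the ranges of the maps $\{\pi_e : r(e)=v\}$ forces at most one such pair $(e,j)$. For the out-degree, the edges of $H$ leaving $\xi_{v,i}$ are indexed by those $e\in E$ with $s(e)=v$, since by injectivity of $\pi_e$ each such $e$ produces exactly one outgoing edge $\xi_{v,i}\to\xi_{r(e),\pi_e(i)}$; therefore $\degout(\xi_{v,i})=|s^{-1}(v)|=\degout(v)$.

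For part (3), I would compute $\sum_{e\in r^{-1}(v)}S_eS_e^*$ on basis vectors. For $\xi_{v,i}$ with $r(e)=v$, the operator $S_eS_e^*$ acts as the projection onto $\spn\{\xi_{v,\pi_e(j)} : j\in\Lambda_{s(e)}\}$, i.e. onto the span of those $\xi_{v,i}$ with $i\in\ran\pi_e$. By the disjointness in item (3), summing over $e\in r^{-1}(v)$ gives the projection onto $\spn\{\xi_{v,i} : i\in\bigcup_{r(e)=v}\ran\pi_e\}$, which is exactly the set of $\xi_{v,i}$ with $\degin(\xi_{v,i})=1$. Subtracting from $S_v$, the projection onto all of $\spn\{\xi_{v,i}:i\in\Lambda_v\}$, leaves the projection onto $\spn\{\xi_{v,i}:\degin(\xi_{v,i})=0\}$, as claimed. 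I do not expect a serious obstacle here; the only point requiring care is bookkeeping the scalars $\lambda_{e,i}\in\bT$, which drop out because $|\lambda_{e,i}|=1$ makes each $S_eS_e^*$ an honest projection.
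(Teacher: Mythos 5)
Your proof is correct and takes essentially the same route as the paper's: a direct verification on the basis vectors, using that each $S_e$ and $S_e^*$ acts as a partial permutation of the basis modulo unimodular scalars and that the ranges of the maps $\pi_e$ attached to edges with a common range vertex are pairwise disjoint. The paper's argument is terser but identical in substance, so there is nothing to add.
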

}

\begin{proof}
Identify each undirected connected component $C$ of $H$ with the span $\cH_C$ of the basis vectors which are vertices of $C$. 
This subspace is evidently invariant under each $S_e$ and $S_e^*$ for $e\in E$,
and each basis vector is an eigenvector for every $S_v$.
So $\cH_C$ is reducing for $\rS$.

As the range of each $S_e$ corresponds to the range of $\pi_e$, and these are pairwise disjoint,
the in-degree of a vertex $x_{v,i}$ is either 1 or 0 depending on whether it is in the range of some $\pi_e$. 
On the other hand, each $e\in s^{-1}(v)$ determines a partial isometry with initial space $\spn\{\xi_{v,i} : i \in \Lambda_v \}$. Therefore, $\degout(\xi_{v,i}) = \degout(v)$. Finally, the \sot-sum $\sum_{e \in r^{-1}(v)}S_e S_e^*$ has range consisting of 
\[ \spn\{\xi_{v,i} : \degin(\xi_{v,i}) = 1 \} .\]
The complement in $\ran S_v$ is evidently $\spn\{\xi_{v,i} : \degin(\xi_{v,i}) = 0 \}$.
\end{proof}

The first part of Lemma \ref{L:atomic_easy} tells us that we may restrict our attention to the case in which $H$ is connected as an undirected graph, as the general case is a direct sum of such representations.

The next proposition uses the fact that the in-degree is at most $1$ to recursively select a unique predecessor of each vertex in the graph having in-degree $1$. 
Begin at some basis vector $\xi_{v_0,i_0}$ in $H$.
If $\degin(\xi_{v_0,i_0})=0$, the procedure stops.
If $\degin(\xi_{v_0,i_0})=1$, there is a unique vertex $\xi_{v_{-1}, i_{-1}}$ mapped to $\xi_{v_0,i_0}$ in $H$ by a unique edge $e_{-1}$.
Recursively define a sequence $\xi_{v_{-n},i_{-n}}$ and edges $e_{-n}$ for $n\ge0$ by repeating this procedure.
This backward path either terminates at a vector with in-degree 0, is an infinite path with distinct vertices in $H$, or eventually repeats a vertex and hence becomes periodic. 
This establishes the following.

\begin{proposition} \label{P:atomic types}
Let $G$ be a directed graph, and let $\rS=(S_v,S_e)$ be an atomic TCK family for $G$. 
Suppose that $H$ is the labeled directed graph associated with $\rS$.
Let $\xi_{v_0,i_0}$ be a standard basis vector. 
Construct the sequence $\xi_{v_{-n},i_{-n}}$ and edges $e_{-n}$ as above. Then there are three possibilities:
\begin{enumerate}[label=\normalfont{(\arabic*)}]
\item There is an $n\ge0$ so that $\degin(\xi_{v_{-n},i_{-n}}) = 0$.

\item The sequence $\xi_{v_{-n},i_{-n}}$ consists of distinct vertices in $H$.

\item There is a smallest integer $N$ and a smallest positive integer $k \ge1$ so that $\xi_{v_{-n-k}, i_{-n-k}} = \xi_{v_{-n}, i_{-n}}$ and $e_{-n-k-1}=e_{-n-1}$ for all $n\ge N$.
\end{enumerate} 
\end{proposition}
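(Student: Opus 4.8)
The plan is to reduce everything to the single structural fact that drives the whole statement: by Lemma~\ref{L:atomic_easy}(2), every basis vector $\xi_{v,i}$ has $\degin(\xi_{v,i}) \le 1$ in $H$. I would record first the consequence that the backward construction is genuinely deterministic. Indeed, whenever $\degin(\xi_{v_{-n},i_{-n}}) = 1$, there is a \emph{unique} incoming edge $e_{-n-1}$ and a \emph{unique} predecessor vertex $\xi_{v_{-n-1},i_{-n-1}}$, and both are determined by $\xi_{v_{-n},i_{-n}}$ alone. Abbreviating $w_n := \xi_{v_{-n},i_{-n}}$, this says that $w_{n+1} = f(w_n)$ for a partial predecessor function $f$ on the basis vectors, and that the edge $e_{-n-1}$ feeding $w_n$ is a function of $w_n$. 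Both the trichotomy of cases and the periodicity in case (3) will rest on this observation.

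Next I would run the case split on how the recursion behaves. If at some stage $\degin(w_n) = 0$, the procedure halts and we are in case (1). Otherwise $\degin(w_n) = 1$ for every $n$, so $f$ is defined at each $w_n$ and the backward path is infinite. For an infinite path there are precisely two sub-possibilities: either all the $w_n$ are pairwise distinct, which is case (2); or some vertex repeats. These three outcomes are visibly mutually exclusive and exhaustive.

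The substance lies in case (3). Assuming the path is infinite and not injective, I would pick $a < b$ with $w_a = w_b$. Since $w_{m+1} = f(w_m)$ with $f$ a function, the equality $w_a = w_b$ forces $w_{a+j} = w_{b+j}$ for all $j \ge 0$ by induction, so $w_{n+(b-a)} = w_n$ for every $n \ge a$; the sequence is eventually periodic. Setting $N := \min\{n : w_n = w_m \text{ for some } m > n\}$ and then $k := \min\{m - N : m > N,\ w_m = w_N\}$ selects the least starting index and least period, and the same iteration argument upgrades $w_{N+k} = w_N$ to $w_{n+k} = w_n$ for all $n \ge N$. For the edge labels I would invoke determinism once more: for $n \ge N$ the path is infinite, so $w_{n+k} = w_n$ has in-degree exactly $1$ and hence a uniquely determined incoming edge; that edge is $e_{-(n+k)-1} = e_{-n-k-1}$ computed at index $n+k$ and $e_{-n-1}$ computed at index $n$, whence $e_{-n-k-1} = e_{-n-1}$.

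I expect the only real care to be index bookkeeping: matching my $w_n$/predecessor notation to the paper's double-subscript notation, verifying that the edge index $e_{-n-k-1}$ (the edge into $w_{n+k}$) lines up with $e_{-n-1}$ (the edge into $w_n$) under the identification $w_{n+k} = w_n$, and checking that the minimal $N$ and $k$ are well defined (the set of admissible starting indices is a nonempty up-set of $\bN$, hence has a least element, and the period set is a nonempty subset of the positive integers). No analytic input is needed beyond the combinatorial in-degree bound supplied by Lemma~\ref{L:atomic_easy}.
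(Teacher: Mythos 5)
Your proof is correct and follows essentially the same route as the paper's: both arguments rest on the in-degree bound from Lemma~\ref{L:atomic_easy}(2), which makes the backward step a deterministic partial function, and then split into halting, injective, or eventually periodic, with the periodicity and the edge identities $e_{-n-k-1}=e_{-n-1}$ propagated by the same induction on the unique predecessor. Your explicit framing via the predecessor function $f$ and the well-ordering remarks for the minimality of $N$ and $k$ are just a slightly more formal packaging of what the paper does.
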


\begin{proof} Since each $\degin(\xi_{v_{-k},i_{-k}})\in\{0,1\}$, the construction of $\xi_{v_{-n},i_{-n}}$ either stops at a vertex with in-degree $0$ (which falls into case (1)) or produce an infinite sequence of vertices $\xi_{v_{-n},i_{-n}}$ in $H$. These vertices are either all distinct (which falls into case (2)) or we can find a smallest $N$ so that $\xi_{v_{-N},i_{-N}}$ appears at least twice in the sequence. Let $k\geq 1$ be the smallest integer so that $\xi_{v_{-N},i_{-N}}=\xi_{v_{-N-k},i_{-N-k}}$. Since they are the same vertex, $\xi_{v_{-N-1},i_{-N-1}}=\xi_{v_{-N-k-1},i_{-N-k-1}}$ is the unique vertex mapped to $\xi_{v_{-N},i_{-N}}=\xi_{v_{-N-k},i_{-N-k}}$ by the edge $e_{-N-1}=e_{-N-k-1}$. By induction, $\xi_{v_{-n-k}, i_{-n-k}} = \xi_{v_{-n}, i_{-n}}$ and $e_{-n-k-1}=e_{-n-1}$ for all $n\ge N$ (which falls into case (3)).
\end{proof}

\begin{lemma} \label{L:atomic wandering}
Let $G$ be a directed graph, and let $\rS=(S_v,S_e)$ be an atomic TCK family for $G$. 
Suppose that $H$ is the labeled directed graph associated with $\rS$.
Given a basis vector $\xi = \xi_{v,i}$, if there is no non-trivial path $\mu$ in $H$ so that $S_\mu\xi=\lambda\xi$ for a scalar $\lambda \in \bT$, then $\xi$ is a wandering vector. 
Moreover, we have the unitary equivalence $\rS|_{\fS[ \xi_{v,i}]} \cong \rL_{G,v}$.
\end{lemma}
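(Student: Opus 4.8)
The plan is to prove the two assertions in turn, exploiting the combinatorial structure of $H$ together with the observation from Lemma~\ref{L:atomic_easy}(2) that every vertex of $H$ has in-degree at most $1$. First I would record how $\rS$ acts along a path. For $\mu = e_n\cdots e_1 \in \bF_G^+$ with $s(\mu)=v$, iterating condition (2) of Definition~\ref{D:atomic} gives $S_\mu \xi_{v,i} = \lambda_\mu\, \xi_{r(\mu),\pi_\mu(i)}$, where $\lambda_\mu = \lambda_{e_n,\cdot}\cdots\lambda_{e_1,i} \in \bT$ and $\pi_\mu = \pi_{e_n}\circ\cdots\circ\pi_{e_1}$ is a composite of injections, so $\pi_\mu(i)$ is always defined; for $s(\mu)\neq v$ we have $S_\mu\xi = 0$. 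Hence each nonzero $S_\mu\xi$ is a unimodular multiple of a single basis vector, and for distinct paths $\mu\neq\nu$ both with source $v$ the vectors $S_\mu\xi$ and $S_\nu\xi$ fail to be orthogonal precisely when they are scalar multiples of the \emph{same} basis vector, i.e.\ when $r(\mu)=r(\nu)$ and $\pi_\mu(i)=\pi_\nu(i)$. Thus proving $\xi$ wandering amounts to ruling out this coincidence.

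The heart of the argument is a backward-tracing step. Suppose $\mu\neq\nu$ with $|\mu|=n\geq m=|\nu|$ and $S_\mu\xi,S_\nu\xi$ both proportional to a common basis vector $\xi_{w,j}$. In $H$ the two paths give forward walks $\xi = x_0\to\cdots\to x_n=\xi_{w,j}$ (edges $e_1,\dots,e_n$) and $\xi = y_0\to\cdots\to y_m=\xi_{w,j}$ (edges $f_1,\dots,f_m$). Since $\degin(\xi_{w,j})\leq 1$ and $\xi_{w,j}$ already receives the edge $e_n$, it receives exactly one edge; comparing the two incoming edges forces $e_n=f_m$ and $x_{n-1}=y_{m-1}$. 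Iterating this uniqueness of incoming edges for $m$ steps yields $e_{n-k}=f_{m-k}$ for $0\le k<m$ and, crucially, $x_{n-m}=y_0=\xi$. If $n=m$ this says $\mu=\nu$, a contradiction; if $m=0$ then already $S_\mu\xi=\lambda\xi$ with $\mu$ nontrivial; and if $n>m$ then the initial segment $\mu' = e_{n-m}\cdots e_1$ is a nontrivial path with $S_{\mu'}\xi = \gamma\xi$ for some $\gamma\in\bT$. In every case we produce a nontrivial path in $H$ returning $\xi$ to a scalar multiple of itself, contradicting the hypothesis. Hence no such coincidence occurs and $\xi$ is wandering. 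I expect this backward induction—keeping the bookkeeping of indices, edge labels, and the three boundary cases straight—to be the only real obstacle; everything else is formal.

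For the moreover statement I would argue exactly as in the proof of Theorem~\ref{thm:Wold-decomp}. Since $\xi=S_v\xi$ and $S_w\xi=0$ for $w\neq v$, the vector $\xi$ is wandering with $\supp(\xi)=\{v\}$, and by the first part the family $\{S_\mu\xi : \mu\in\bF_G^+,\ s(\mu)=v\}$ is orthonormal, each such $S_\mu\xi$ being a unit vector and distinct ones being orthogonal. As $\{\xi_\mu : s(\mu)=v\}$ is an orthonormal basis of $\cH_{G,v}$, the assignment $U\xi_\mu = S_\mu\xi$ extends to a unitary $U\colon \cH_{G,v}\to \fS[\xi]$ onto $\fS[\xi]=\bigoplus_{s(\mu)=v} \bC\, S_\mu\xi$. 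A direct check that $U L_\lambda = S_\lambda U$ on each $\xi_\mu$—both sides equal $S_{\lambda\mu}\xi$ when $\lambda\mu$ is composable and $0$ otherwise—shows that $U$ intertwines $\rL_{G,v}$ and $\rS|_{\fS[\xi]}$, yielding the desired unitary equivalence.
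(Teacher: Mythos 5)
Your proof is correct and takes essentially the same route as the paper's: both arguments use the fact that each $S_\mu\xi$ is a unimodular multiple of a basis vector together with the in-degree-at-most-one property of $H$ to backward-trace two coinciding paths, conclude that $\mu=\mu'\nu$ with $S_\nu\xi=\lambda\xi$, and derive a contradiction with the hypothesis; the unitary $U\xi_\mu=S_\mu\xi$ for the "moreover" part is likewise identical.
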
 

\begin{proof} 
Each $S_\mu \xi$, for $\mu\in \bF_G^+$ with $s(\mu)=v$, is a multiple of another basis vector by a unimodular scalar.
Thus, two such vectors are either orthogonal or scalar multiples of each other. 
Suppose that there are two distinct paths $\mu, \mu'$ so that $S_\mu\xi = \lambda S_{\mu'}\xi$ for some unimodular scalar $\lambda$. 
Write $\mu = e_n \dots e_1$ and $\mu'=f_m \dots f_1$. With no loss of generality, we may suppose that $n\ge m$.
Because the in-degree of $\xi$ is at most one, we see that $e_n = f_m$. 
Therefore,  we can inductively deduce that the last $m$ edges in $\mu$  and $\mu'$ coincide. 
It follows that $\mu=\mu'\nu$.  
Applying $S_{\mu'}^*$ to the identity above, we obtain that $S_\nu \xi= \lambda\xi$. 
Since this cannot happen non-trivially by hypothesis, $\nu$ must be the trivial path $\nu=v$, and thus $\mu = \mu'$. 
Hence, $\xi$ is a wandering vector. 

It follows that $\fS[ \xi_{v,i}] = \ol{\spn}\{ S_\mu \xi_{v,i} : s(\mu)=v \}$. 
It is evident from the action of $\rS$ on this basis that the natural identification $U: \cH_{G,v} \to \fS[ \xi_{v,i}] $ 
given by $U \xi_\mu = S_\mu \xi_{v,i}$ for $s(\mu)=v$ yields the desired unitary equivalence between
$\rL_{G,v}$ and $\rS|_{\fS[ \xi_{v,i}] }$.
\end{proof}

\begin{lemma} \label{L:one cycle}
Let $G$ be a directed graph, and let $\rS=(S_v,S_e)$ be an atomic TCK family for $G$. 
Suppose that $H$ is the labeled directed graph associated with $\rS$.
If $H$ is a connected graph, then it contains at most one cycle.
\end{lemma}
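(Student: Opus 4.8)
The only structural input I need is the in-degree bound from Lemma~\ref{L:atomic_easy}(2): in the labelled graph $H$ every vertex $\xi_{v,i}$ satisfies $\degin(\xi_{v,i})\le 1$, so each vertex has at most one incoming edge. The plan is to show, in order, that every vertex lying on a cycle has in-degree exactly $1$ with its unique incoming edge running along the cycle; that any two cycles sharing a vertex coincide; and finally that connectedness of $H$ forbids two vertex-disjoint cycles. Here by a cycle I mean a directed closed path, and I will note along the way that the in-degree bound forces every undirected cycle of $H$ to in fact be directed, so that the statement is unambiguous.

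First I would record the \emph{orientation} step. If $\xi_0-\xi_1-\dots-\xi_{m-1}-\xi_0$ is any undirected simple cycle of $H$ of length $m$, then each of its $m$ edges points into exactly one of its two endpoints, so the cycle edges contribute $m$ incoming edge-ends distributed among the $m$ cycle vertices. Since each vertex absorbs at most one incoming edge by the in-degree bound, every cycle vertex must absorb exactly one cycle edge and no other edge. Hence each cycle vertex has in-degree exactly $1$, its incoming edge lies on the cycle, and, being incident to two cycle edges and emitting the one it does not absorb, the orientations are forced to be consistent around the loop; that is, the cycle is a directed cycle. In particular every cycle vertex has a unique predecessor, which again lies on the cycle, so iterating this predecessor map backward from a cycle vertex yields a deterministic periodic sequence whose minimal period is the cycle itself. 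Consequently there is at most one cycle through any given vertex, and two cycles are either equal or vertex-disjoint.

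The main obstacle is the last step: ruling out two vertex-disjoint directed cycles $C_1,C_2$ in a connected $H$. I would choose a shortest undirected path $w_0-w_1-\dots-w_\ell$ from $C_1$ to $C_2$, so that $w_0\in C_1$, $w_\ell\in C_2$, $\ell\ge 1$, and by minimality no interior vertex meets $C_1\cup C_2$. Since $w_0$ already realizes its in-degree $1$ through its $C_1$-edge and $w_1\notin C_1$, the edge $w_0-w_1$ cannot point into $w_0$ and must point $w_0\to w_1$. Now propagate along the path: once $w_{i-1}\to w_i$ is established, $w_i$ has in-degree $1$, so the edge $w_i-w_{i+1}$ cannot point into $w_i$ and must point $w_i\to w_{i+1}$. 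Carrying this to the end gives $w_{\ell-1}\to w_\ell$, an edge entering $w_\ell\in C_2$ from outside $C_2$; but $w_\ell$ already realizes its in-degree $1$ through its $C_2$-edge, so this forces $\degin(w_\ell)\ge 2$, contradicting the bound. Hence $C_1$ and $C_2$ cannot coexist, and $H$ has at most one cycle. (For finite $H$ one can instead finish in a line: the edges of $H$ biject with the in-degree-$1$ vertices via $e\mapsto$ its head, so $\#E_H=\#V_H-\#\{\text{roots}\}$, whence the first Betti number $\#E_H-\#V_H+1\le 1$ for connected $H$, giving at most one cycle.)
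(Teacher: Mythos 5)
Your proof is correct, and it reaches the conclusion by a genuinely different route than the paper. The paper's argument takes the given cycle $C$, forms the induced subgraph $H[C]$ on the smallest forward-closed (directed) vertex set containing $C$, observes that every vertex there has in-degree exactly $1$ so that $H[C]$ is a full connected component and hence all of $H$, and then notes that the unique backward path from any vertex must terminate in $C$ --- so no vertex can lie on a second, disjoint cycle. You instead argue purely combinatorially in $H$: the in-degree bound forces every (even undirected) cycle to be consistently oriented with each of its vertices' in-degree saturated by a cycle edge, the uniqueness of predecessors shows two cycles sharing a vertex coincide, and a shortest undirected path between two disjoint cycles must be oriented outward from both endpoints, which collides with the in-degree bound at its terminal vertex. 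Both proofs rest on the same structural input, Lemma~\ref{L:atomic_easy}(2), but yours avoids the directed-closure machinery of Section~\ref{S:prelim} and does not use that every vertex of $H$ has the form $S_\mu\xi$; in exchange it needs the explicit orientation-propagation step along the connecting path, which the paper's component argument sidesteps. Your orientation step also yields the slightly stronger observation that $H$ contains no undirected cycles other than directed ones, which the paper leaves implicit. The parenthetical Betti-number count is a clean alternative finish but, as you note, only applies when $H$ is finite, so the path argument is the one to keep.
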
 

\begin{proof} 
Suppose that $H$ has a cycle $C$ and let $H_0=H[C]$ be the subgraph induced on the smallest directed set containing $C$. This is a directed subgraph of $H$ in which every vertex has in-degree 1. As the in-degree in $H$ is at most 1 at each vertex, it follows that $H_0$ is a connected component of $H$.
So $H_0=H$.

Consider a vertex of the form $\xi_{v,i} = S_\mu \xi$ which is not in the cycle.
Since the in-degree of each vertex is at most 1 and $H=H[C]$, we can follow a path backward uniquely until it enters the cycle $C$.
In particular, $\xi_{v,i}$ cannot be in another cycle (from which there is no escape along a backward path).
Therefore, the connected component containing $C$ has a unique cycle.
\end{proof}

The Wold decomposition theorem allows us to first identify representations of \textit{left regular type}.

\begin{proposition}[Left Regular Type] \label{P:lrtype} 
Let $G$ be a directed graph, and let $\fS$ be the free semigroupoid algebra generated by an atomic TCK family $\rS=(S_v,S_e)$ for $G$. Suppose that $H$ is the labeled directed graph associated with $\rS$.
If $\degin(\xi_{v,i}) = 0$, then $\xi_{v,i}$ is a wandering vector, and the connected component $C_{v,i}$ of $H$ containing $\xi_{v,i}$ determines the reducing subspace $\cH_{v,i} = \fS[\xi_{v,i}]$. Moreover, the restriction of $\rS$ to $\cH_{v,i}$ is unitarily equivalent to $\rL_{G,v}$.

Additionally, if $H$ is connected as an undirected graph and there is a vertex $\xi_{v,i}$ with $\degin(\xi_{v,i}) = 0$, then $\fS$ is analytic.
\end{proposition}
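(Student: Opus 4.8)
The cleanest route is through the analyticity criterion recorded in Corollary~\ref{cor:type-L-wandering}: once I know that $\cH$ is the closed linear span of wandering vectors, I may conclude immediately that $\fS$ is analytic type. So the whole task reduces to producing enough wandering vectors, and for an atomic family this can be read off directly from the combinatorics of $H$.

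The key step I would carry out is to show that $H$ is \emph{acyclic}. This is where the two hypotheses combine: the proof of Lemma~\ref{L:one cycle} shows that if a connected $H$ contains a cycle $C$, then $H = H[C]$, a graph in which every vertex has in-degree exactly one. Since we are given a vertex $\xi_{v,i}$ with $\degin(\xi_{v,i}) = 0$, no such cycle can exist, so $H$ has no cycle at all. With $H$ acyclic, there is no non-trivial path $\mu$ in $H$ returning any basis vector $\xi_{w,j}$ to a unimodular multiple of itself, so Lemma~\ref{L:atomic wandering} makes \emph{every} standard basis vector $\xi_{w,j}$ a wandering vector. As these basis vectors span $\cH$, Corollary~\ref{cor:type-L-wandering} gives that $\fS$ is analytic.

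Alternatively, I could argue through the first part of the proposition, which is perhaps more in the spirit of the surrounding results. Connectedness of $H$ forces the component $C_{v,i}$ to be all of $H$, so the reducing subspace $\cH_{v,i} = \fS[\xi_{v,i}]$ equals $\cH$ and hence $\rS \cong \rL_{G,v}$ outright. This unitary equivalence induces a completely isometric weak-$*$-homeomorphic isomorphism $\fS \cong \fL_{G,v}$ carrying generators to generators. Since $\supp(\rS) = \{\, w : v \succ w \,\} = \{v\}_G$, one has $G[\rS] = G[v]$, and because $\{v\}$ dominates every vertex of $G[v]$, Corollary~\ref{C:lr_max_element} applied to the graph $G[v]$ identifies $\fL_{G,v} = \fL_{G[v],v}$ with $\fL_{G[v]} = \fL_{G[\rS]}$ by a generators-to-generators map. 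Composing the two isomorphisms yields exactly the completely isometric weak-$*$-homeomorphism $\fS \to \fL_{G[\rS]}$ required by the definition of analytic.

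The only genuine obstacle is the acyclicity claim, and it is a mild one: it requires extracting from the proof of Lemma~\ref{L:one cycle} the stronger assertion that a cycle in a connected $H$ forces \emph{all} in-degrees to equal one, rather than merely the bare statement that there is at most one cycle. Everything past that point is bookkeeping against the already-established first part of the proposition and the cited corollaries.
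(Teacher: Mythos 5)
Your proposal is essentially correct, and your first route is genuinely different from the one the paper intends. The paper states this proposition with no written proof, immediately after invoking the Wold decomposition: the intended argument is that $\degin(\xi_{v,i})=0$ places $\xi_{v,i}$ in the wandering space $\cW_v$ of Lemma~\ref{L:atomic_easy}(3), so Theorem~\ref{thm:Wold-decomp} supplies the reducing subspace $\fS[\xi_{v,i}]$ and the unitary equivalence with $\rL_{G,v}$; connectedness of $H$ then forces this reducing subspace to be all of $\cH$, whence $\rS\cong\rL_{G,v}$ and analyticity follows via Corollary~\ref{C:lr_max_element} --- exactly your second route. Your first route instead extracts acyclicity of $H$ from the proof of Lemma~\ref{L:one cycle} (a connected $H$ with a cycle has every in-degree equal to one, which your in-degree-zero vertex forbids), concludes from Lemma~\ref{L:atomic wandering} that \emph{every} basis vector is wandering, and invokes Corollary~\ref{cor:type-L-wandering}. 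That argument is valid and self-contained for the analyticity claim, and it avoids having to identify $\fS[\xi_{v,i}]$ with the whole space; what the Wold route buys in exchange is the sharper conclusion $\rS\cong\rL_{G,v}$ with multiplicity one.

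The one assertion you make without justification --- needed both for the first clause of the proposition and for your second route --- is that $\fS[\xi_{v,i}]$ exhausts the span of the connected component $C_{v,i}$ (equivalently, that connectedness of $H$ gives $\fS[\xi_{v,i}]=\cH$). This is not among the cited lemmas and is not ``already established'': the first part of the proposition is being proved here, not earlier. The missing step is short: if a reduced undirected walk in $H$ takes a forward step $x\to y$ followed by a backward step along an edge $z\to y$, then $\degin(y)\le 1$ forces $z=x$, a backtrack; so every reduced walk is a string of backward steps followed by forward steps, and since $\degin(\xi_{v,i})=0$ no backward step is available at the start. Hence every vertex of $C_{v,i}$ lies in the forward orbit of $\xi_{v,i}$, which spans $\fS[\xi_{v,i}]$. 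With that observation added (and a sentence noting that $\xi_{v,i}$ is wandering by Lemma~\ref{L:atomic wandering}, since nothing maps into it), your proof is complete.
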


Next we consider the case of an \textit{infinite tail} (case (2) of Proposition \ref{P:atomic types}).
We synthesize the information in such a representation as follows.
 
\begin{definition} \label{D:tail type}
Let $G$ be a directed graph and $\tau = e_{-1}e_{-2}e_{-3} \dots$ be an infinite backward path in $G$. 
Let $s(e_{i-1}) = r (e_i) = v_i$ with $i \le 0$.
For each $v_i$ with $i\le0$, let $\cH_i = \cH_{G,v_i}$, and let $\rL^{(i)}=\rL_{G,v_i}$ act on $\cH_i$.
Write $L^{(i)}_{\mu}$, for $\mu\in \bF_G^+$ a path operator of this family.
There is a natural isometry $J_i:\cH_i \to \cH_{i-1}$ given by $J_i\xi_\mu = \xi_{\mu e_i}$ which satisfies
$L^{(i-1)}_{\mu} J_i = J_i L^{(i)}_{\mu}$.
Let $\cH_\tau$ be the direct limit Hilbert space $\cH_\tau  = \dirlim (\cH_i, J_i)$, which we can also consider as the completion of the union of an increasing sequence of subspaces. Let $K_i : \cH_i \to \cH_\tau$ be the natural injections.
Then we define $S_\mu$ on $\cH_\tau$ such that $S_\mu K_i = K_i L^{(i)}_{\mu}$.
This defines a TCK family $\rS_\tau$.
\end{definition}

Since $\bigoplus_{e\in E} S_e \cH_i = \cH_i \ominus \bC \xi_{v_i}$, it follows that
$\bigoplus_{e\in E} S_e \cH_\tau  = \cH_\tau$.
We deduce that $\rS_\tau$ is a non-degenerate fully coisometric CK family. In this case, we show that the free semigroupoid algebra is still analytic.
  
\begin{proposition} \label{P:S_lambda}
Let $G$ be a directed graph, and let $\tau = e_{-1}e_{-2}e_{-3} \dots$ be an infinite backward path in $G$. 
Let $G' = G[\{v_{-n}: n\ge0\}]$ where $v_i=s(e_{i-1})$ for $i\leq 0$. 
Suppose $\fS_{\tau}$ is the free semigroupoid algebra of $\rS_{\tau}$ as in Definition $\ref{D:tail type}$. 
Then $\fS_\tau$ is completely isometrically isomorphic and weak-$*$ homeomorphic to $\fL_{G'}$.
\end{proposition}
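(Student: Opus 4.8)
The plan is to produce enough wandering vectors to span $\cH_\tau$ and then invoke Corollary~\ref{cor:type-L-wandering}, which says that a free semigroupoid algebra whose Hilbert space is the closed span of wandering vectors is analytic type, hence canonically isomorphic to $\fL_{G[\rS]}$. The conceptual point is that, although $\rS_\tau$ is fully coisometric—so its Wold decomposition (Theorem~\ref{thm:Wold-decomp}) yields \emph{no} wandering subspaces of vacuum type—the images under the injections $K_i$ of the vacuum vectors of the building blocks $\cH_i$ nevertheless survive as genuine wandering vectors in the direct limit. This is precisely the analytic-yet-fully-coisometric behaviour already seen for the bilateral shift.

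First I would set $z_i := K_i\xi_{v_i}\in\cH_\tau$ for each $i\le 0$, where $\xi_{v_i}$ is the vacuum vector of $\cH_i=\cH_{G,v_i}$. Using the defining relation $S_\mu K_i=K_iL^{(i)}_\mu$ together with $L^{(i)}_\mu\xi_{v_i}=\xi_\mu$ whenever $s(\mu)=v_i$, I compute $S_\mu z_i=K_i\xi_\mu$. Since $K_i$ is isometric and $\{\xi_\mu:s(\mu)=v_i\}$ is orthonormal, the family $\{S_\mu z_i:s(\mu)=v_i\}$ is orthonormal; and because $z_i$ lies in the range of $S_{v_i}$, we have $S_\mu z_i=0$ whenever $s(\mu)\neq v_i$. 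Hence each $z_i$ is a nonzero wandering vector supported on $v_i$, with $\fS_\tau[z_i]=K_i\cH_i$ and $\rS_\tau|_{\fS_\tau[z_i]}$ unitarily equivalent to $\rL_{G,v_i}$. The identity $z_i=S_{e_i}z_{i-1}$ (from $K_i=K_{i-1}J_i$) records the full coisometry but in no way obstructs $z_i$ from being wandering.

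Second, since $\cH_\tau=\dirlim(\cH_i,J_i)$ is the completion of the increasing union $\bigcup_{i\le 0}K_i\cH_i$, and each $K_i\xi_\mu=S_\mu z_i$ is itself a wandering vector (forward translates of wandering vectors are wandering), the closed span of all wandering vectors for $\fS_\tau$ contains $\overline{\bigcup_{i}K_i\cH_i}=\cH_\tau$. Thus $\cH_\tau$ is the closed span of its wandering vectors, and Corollary~\ref{cor:type-L-wandering} gives that $\fS_\tau$ is analytic type, i.e.\ completely isometrically and weak-$*$ homeomorphically isomorphic to $\fL_{G[\rS_\tau]}$ via the canonical map sending generators to generators.

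Finally I would identify $G[\rS_\tau]$ with $G'$. Each $v_i$ belongs to $\supp(\rS_\tau)$ because $z_i\neq 0$, and for any path $\mu$ with $s(\mu)=v_i$ the vector $K_i\xi_\mu$ is a nonzero vector supported on $r(\mu)$; hence $\supp(\rS_\tau)$ contains the smallest directed set $\{v_{-n}:n\ge 0\}_G$ containing the tail vertices. Conversely, every spanning vector $K_i\xi_\mu$ of $\cH_\tau$ is supported on a vertex $r(\mu)$ of that directed set, giving the reverse inclusion. Therefore $\supp(\rS_\tau)=\{v_{-n}:n\ge 0\}_G$ and $G[\rS_\tau]=G[\{v_{-n}:n\ge 0\}]=G'$, so $\fS_\tau$ is completely isometrically isomorphic and weak-$*$ homeomorphic to $\fL_{G'}$. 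The main obstacle is the first step: one must resist trying to read wandering subspaces off the Wold decomposition (there are none, as the family is fully coisometric) and instead verify directly that the images of the vacuum vectors persist as wandering vectors in $\cH_\tau$.
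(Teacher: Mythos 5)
Your proof is correct, and it reaches the conclusion by a genuinely different route from the paper's. Both arguments begin from the same observation --- each $K_i\cH_i$ is an invariant subspace on which $\rS_\tau$ restricts to a copy of $\rL_{G,v_i}$ --- but they diverge at the limiting step. The paper notes that these subspaces increase to $\cH_\tau$, so that $\fS_\tau$ is a \wot-inductive (and projective) limit of the algebras $\fL_{G,v_{-n}}$ along the chain $\{v_{-n}\}\succ\{v_{-m}\}$ for $n\ge m$, and then invokes Corollary~\ref{C:lr_inductive} to identify this limit with $\fL_{G'}$. You instead check directly that the vectors $K_i\xi_\mu=S_\mu z_i$ are wandering --- the essential point being that the images $z_i$ of the vacuum vectors persist as wandering vectors even though $\rS_\tau$ is fully coisometric, so the Wold decomposition detects no wandering subspace --- whence $\cH_\tau$ is the closed span of wandering vectors and Corollary~\ref{cor:type-L-wandering} applies. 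Your computation that $\{S_\mu z_i : s(\mu)=v_i\}$ is orthonormal, your use of the fact that nonzero translates of wandering vectors remain wandering, and your identification $\supp(\rS_\tau)=\{v_{-n}:n\ge 0\}_G$ are all exactly right; the last of these is left implicit in the paper's one-line appeal to Corollary~\ref{C:lr_inductive}. What each approach buys: the paper's is shorter and stays entirely within the Section~\ref{S:left-reg-struc} machinery for left regular algebras, while yours is more self-contained at the limit step, makes the analyticity (spanning by wandering vectors) explicit --- a fact the paper only records afterwards as a corollary to Proposition~\ref{P:inductive_type} --- and avoids having to justify that the abstract inductive limit of Corollary~\ref{C:lr_inductive} coincides with the concrete one realized on $\cH_\tau$.
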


\begin{proof} 
We adopt the same notation as in Definition \ref{D:tail type}. 
For each $i \le 0$, the restriction of $\rS_\tau$ to the invariant subspace $\cH_i$ is unitarily equivalent to $\fL_{G,v_i}$.
Thus, $\fS_\tau$ is a \wot-inductive and \wot-projective limit of these algebras.
That $\fS_\tau$ is completely isometrically isomorphic and weak-$*$ homeomorphic to $\fL_{G'}$ follows from Corollary~\ref{C:lr_inductive}.
\end{proof}

The following result is now straightforward.

\begin{proposition}[Inductive Type] \label{P:inductive_type} 
Let $G$ be a directed graph and $\tau = e_{-1}e_{-2}\dots$ , be an infinite backward path in $G$. Let $\rS=(S_v,S_e)$ be an atomic TCK family for $G$ that generates a free semigroupoid algebra $\fS$. 
Suppose that the labeled directed graph $H$ associated with $\rS$ is connected as an undirected graph and that the sequence $(\xi_{v_{-n},i_{-n}})$ in Proposition $\ref{P:atomic types}$ consists of distinct vertices in $H$ with connecting labeled edges $e_i$. 
Then $\rS$ is unitarily equivalent to $\rS_\tau$, and $\fS$ is completely isometrically isomorphic and weak-$*$ homeomorphic to $\fL_{G'}$, where $G' = G[\{v_{-n}: n\ge0\}]$.
\end{proposition}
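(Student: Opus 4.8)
The plan is to show that the atomic representation $\rS$ with an infinite-tail backward sequence is, up to unitary equivalence, exactly the tail representation $\rS_\tau$ of Definition \ref{D:tail type}, and then invoke Proposition \ref{P:S_lambda} to conclude. Since Proposition \ref{P:S_lambda} already identifies $\fS_\tau$ completely isometrically and weak-$*$ homeomorphically with $\fL_{G'}$, the entire content of this proposition is the \emph{unitary equivalence} $\rS \cong \rS_\tau$; the isomorphism of algebras is then immediate, as unitary equivalence of generating TCK families induces a completely isometric weak-$*$ homeomorphic isomorphism of the generated free semigroupoid algebras.

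First I would set up the identification of basis vectors. By hypothesis $H$ is connected as an undirected graph, so by Lemma \ref{L:atomic_easy}(1) the representation is irreducible in the sense of living on a single connected component, and $\cH = \ol{\spn}\{\xi_{v,i}\}$. Each basis vector $\xi$ has $\degin(\xi)\le 1$ by Lemma \ref{L:atomic_easy}(2). The crucial structural fact is that, because the backward sequence $(\xi_{v_{-n},i_{-n}})$ consists of \emph{distinct} vertices (case (2) of Proposition \ref{P:atomic types}), following edges backward from \emph{any} vertex of $H$ must eventually merge into this tail: since $H$ is connected and has no vertex of in-degree $0$ feeding a separate tree that avoids $\tau$, and contains no cycle (a cycle would force a periodic backward orbit, contradicting distinctness via Lemma \ref{L:one cycle}), every vertex $\xi_{v,i}$ is reached from some $v_{-n}$ by a forward path $\mu$, i.e. $\xi_{v,i}$ is a unimodular multiple of $S_\mu \xi_{v_{-n},i_{-n}}$ for suitable $n$ and $\mu$ with $s(\mu)=v_{-n}$.

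Next I would construct the intertwining unitary. For each $n\ge 0$, Lemma \ref{L:atomic wandering} applies to $\xi_{v_{-n},i_{-n}}$ (distinctness of the backward orbit means no nontrivial $\nu$ with $S_\nu \xi = \lambda\xi$), giving $\rS|_{\fS[\xi_{v_{-n},i_{-n}}]}\cong \rL_{G,v_{-n}}$ via $U_{-n}\xi_\mu = S_\mu\xi_{v_{-n},i_{-n}}$. These reducing-to-invariant pieces are nested: $\fS[\xi_{v_{-n},i_{-n}}] \subseteq \fS[\xi_{v_{-n-1},i_{-n-1}}]$ because $S_{e_{-n-1}}\xi_{v_{-n-1},i_{-n-1}}$ is a unimodular multiple of $\xi_{v_{-n},i_{-n}}$, and this nesting is compatible with the connecting isometries $J_{-n}$ of Definition \ref{D:tail type} up to the unimodular scalars $\lambda_{e_{-n-1},\,i_{-n-1}}$. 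I would absorb these scalars by a diagonal unitary adjustment (rechoosing the phases of the $U_{-n}$ so that they commute with the $J$-maps on the nose), so that the $U_{-n}$ assemble into a single unitary $U:\cH_\tau \to \cH$ from the direct limit $\cH_\tau = \dirlim(\cH_i,J_i)$ onto $\ol{\bigcup_n \fS[\xi_{v_{-n},i_{-n}}]}=\cH$, the last equality being exactly the ``every vertex is eventually reached'' claim of the previous paragraph. By construction $U K_i = U_{-i}$ intertwines $S_\mu^\tau$ with $S_\mu$, giving $\rS\cong\rS_\tau$.

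The main obstacle I expect is the phase bookkeeping: the scalars $\lambda_{e,i}\in\bT$ in Definition \ref{D:atomic}(2) mean the backward identifications $U_{-n}$ agree with the direct-limit maps $J_i$ only up to unimodular factors, and these must be reconciled coherently across the whole tower so that the direct limit of the $U_{-n}$ is well-defined. This is a routine but genuine telescoping argument — one fixes phases recursively along the tail — rather than a conceptual difficulty; once it is handled, exhaustion of $\cH$ by the nested subspaces and the appeal to Proposition \ref{P:S_lambda} finish the proof.
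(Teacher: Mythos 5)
Your proposal is correct and follows exactly the route the paper intends: the paper states this result without proof (``the following result is now straightforward''), relying on Definition~\ref{D:tail type}, Lemma~\ref{L:atomic wandering} and Proposition~\ref{P:S_lambda} in precisely the way you spell out. The two points you flag as needing real work --- that connectivity, in-degree at most one, the absence of cycles and of in-degree-zero vertices force every basis vector to be a forward image of the tail, and that the unimodular phases can be telescoped away recursively along the infinite tail (unlike the cycle case, where the product of phases is an obstruction) --- are exactly the right ones, and your handling of them is sound.
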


\begin{corollary}
Suppose $G$ is a directed graph and $\rS$ is an atomic TCK family for $G$. 
If $\rS$ is a direct sum of inductive or regular types, then it is analytic.
\end{corollary}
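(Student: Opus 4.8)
The plan is to prove analyticity through the wandering-vector criterion rather than by trying to glue together the isomorphisms with $\fL_{G'}$ coming from Propositions~\ref{P:lrtype} and~\ref{P:inductive_type}. The reason to avoid the direct route is that a direct sum of analytic atomic families is not \emph{obviously} analytic: the canonical isomorphisms land in the $\fL_G$ algebras of the individual supports, and there is no immediate reason the sum should be canonically isomorphic to $\fL_{G[\rS]}$ for the combined support. By contrast, Corollary~\ref{cor:type-L-wandering} tells us it suffices to show that $\cH$ is the closed linear span of wandering vectors for $\fS$, and this is a condition that passes trivially to and from direct sums.

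Concretely, I would first reduce to a single connected component of the labeled graph $H$ attached to $\rS$. By Lemma~\ref{L:atomic_easy}(1) the space $\cH$ splits as an orthogonal sum over the connected components of $H$, each spanning a reducing subspace; and a vector that is wandering for the restriction of $\rS$ to a reducing subspace is wandering for $\rS$ itself. Since the hypothesis presents $\rS$ as a direct sum of inductive and (left-)regular type pieces, each of which is assembled from such components, it is enough to check that in each component every standard basis vector $\xi_{v,i}$ is a wandering vector for $\fS$.

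The heart of the matter is then a no-cycle observation. Fix a basis vector $\xi_{v,i}$ in a component of left-regular or inductive type. By Lemma~\ref{L:atomic wandering}, $\xi_{v,i}$ is wandering as soon as there is no nontrivial path $\mu\in\bF_G^+$ with $S_\mu\xi_{v,i}=\lambda\xi_{v,i}$ for some $\lambda\in\bT$; and such a $\mu$ is exactly a cycle through $\xi_{v,i}$ in $H$. So it remains to rule out cycles in the component, and here I would invoke Lemma~\ref{L:one cycle} together with Proposition~\ref{P:atomic types}. If the component contained a cycle, then by the proof of Lemma~\ref{L:one cycle} every vertex of that component would have in-degree $1$ and every backward path would feed into the cycle, forcing each basis vector into case~(3) of Proposition~\ref{P:atomic types} (eventual periodicity). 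This contradicts both the left-regular hypothesis, which supplies a vertex of in-degree $0$ (case~(1)), and the inductive hypothesis, which supplies an infinite backward path of distinct vertices (case~(2)). Hence the component has no cycle, every $\xi_{v,i}$ in it is wandering, and the basis vectors therefore span $\cH$. Applying Corollary~\ref{cor:type-L-wandering} gives that $\fS$ is analytic.

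The main obstacle, and the one step deserving care, is precisely the claim just used: that ``left-regular or inductive type'' genuinely excludes cycles in $H$. Everything hinges on the trichotomy of Proposition~\ref{P:atomic types} (in-degree $0$, infinite distinct tail, or eventual periodicity) and on the rigidity recorded in Lemma~\ref{L:one cycle}, namely that a single cycle forces the whole connected component to consist of in-degree-$1$ vertices with all backward paths entering the cycle. Once that incompatibility is pinned down, the remaining steps are formal.
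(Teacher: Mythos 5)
Your proof is correct and takes essentially the route the paper intends: reduce to connected components of $H$, show every standard basis vector is wandering, and conclude via Corollary~\ref{cor:type-L-wandering}. Your cycle-exclusion argument (via Proposition~\ref{P:atomic types} and Lemma~\ref{L:one cycle}) is a valid, slightly more self-contained way of establishing the wandering property that Propositions~\ref{P:lrtype} and~\ref{P:inductive_type} already exhibit directly, and your observation that the wandering-vector criterion is what makes analyticity pass through direct sums is exactly the point the paper leaves implicit.
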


Finally, we consider the cycle type (case (3) in Proposition \ref{P:atomic types}). 
Assume that $H$ is connected and contains a cycle $C$. 
Let us denote the vertices of $C$ as $\xi_1,\dots,\xi_k$ and $e_1,\dots,e_k$ the labels of edges between them, and $\lambda_1,\dots,\lambda_k$ the scalars in $\bT$ such that
\[ S_{e_j} \xi_i = \lambda_j \xi_{j+1} \FOR 1 \le j < k \qand S_{e_k}\xi_k = \lambda_k \xi_1 .\]
Let $v_j$ be the vertices of $G$ so that $\xi_j = S_{v_j}\xi_j$. 
Then for every $e \ne e_j$ in $E$ with $s(e)=v_j$, the vector $S_e\xi_j$ is a wandering vector by Lemma~\ref{L:atomic wandering}. 
We can visualize this setup as the cycle $C$ from which various edges map each node $\xi_j$ to basis vectors which do not lie in $C$.
By Lemma~\ref{L:atomic wandering}, these vectors are wandering vectors which generate a copy of $\cH_{G,v}$ if they are in the range of $S_v$. 
Think of the cycle as having many copies of $\cH_{G,v}$, for $v\in V$, attached.
Note also that the edges $e_j$ fit together, end to end, and form a cycle in $G$. We formalize this as a definition.

\begin{definition} \label{D:cycle type}
Suppose $G$ is a directed graph, let $w = e_k\dots e_1$ be a cycle in $G$, and let $\lambda\in \bT$. Form a Hilbert space 
\[
 \cH_w = \bC^k \oplus \bigoplus_{j=1}^k \bigoplus_{\substack{f \ne e_j\\s(f)=s(e_j)}}  \cH_{G, r(f)} = \bC^k \oplus \cK_w .
\]
Choose an orthonormal basis $\xi_1,\dots,\xi_k$ for $\bC^k$, and for each $1\le j\le k$ and each $f\in E$ such that
$s(f)=s(e_j)$ and $f \ne e_j$, denote the standard basis for a copy of $\cH_{G, r(f)}$ by $\xi_{j,f,\mu}$ for $s(\mu)=r(f)$.
Define a fully coisometric CK family $\rS_{w,\lambda}$ on $\cH_w$ as follows:
\[ S_v \xi_j = \delta_{v, s(e_j)} \xi_j \qand S_v \xi_{j,f,\mu} = \delta_{v,r(\mu)} \xi_{j,f,\mu} \qfor v\in V.\]
And for $e\in E$, define
\[
 S_e \xi_j = \begin{cases}
 \xi_{j+1} &\IF e=e_j,\ 1 \le j < k\\
 \lambda \xi_1 &\IF e=e_k,\ j = k\\
 \xi_{j,e,r(e)} &\IF e \ne e_j,\ s(e)=s(e_j)\\
 0 &\IF s(e) \ne s(e_j)
\end{cases}
\]
and
\[
S_e \xi_{j,f,\mu} = \begin{cases}
 \xi_{j,f,e\mu} &\IF s(e)=r(\mu)\\
 0 &\IF s(e) \ne s(\mu).
\end{cases} 
\]
\end{definition}

\begin{proposition}[Cycle Type] \label{P:cycle_type} 
Let $G$ be a directed graph, and let $\rS=(S_v,S_e)$ an atomic TCK family for $G$. 
Suppose that the labeled directed graph $H$ associated with $\rS$ is connected as an undirected graph and contains a cycle $C$ with basis $\xi_1,\dots,\xi_k$, edges $e_1,\dots,e_k$, 
and  scalars $\lambda_1,\dots,\lambda_k$ in $\bT$ such that
\[ S_{e_j} \xi_i = \lambda_j \xi_{j+1} \FOR 1 \le j < k \qand S_{e_k}\xi_k = \lambda_k \xi_1 .\]
Define $w = e_k \dots e_1$ and $\lambda =  \prod_{j=1}^k \lambda_k$.
Then $\rS$ is unitarily equivalent to $\rS_{w,\lambda}$.
\end{proposition}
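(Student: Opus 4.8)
The plan is to construct an explicit unitary $U$ from $\cH$ onto the model space $\cH_w$ of Definition~\ref{D:cycle type} that intertwines $\rS$ with $\rS_{w,\lambda}$. I would build $U$ in two stages: first decompose $\cH$ according to the geometry of the labeled graph $H$, matching it term-by-term with $\cH_w = \bC^k \oplus \cK_w$, and then reconcile the unimodular scalars by a coboundary.

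First I would collect the structural input. Since $H$ is connected and contains the cycle $C$, Lemma~\ref{L:one cycle} (and its proof) shows $C$ is the unique cycle of $H$ and that $H = H[C]$, so every basis vector equals $S_\mu \xi_j$ up to a unimodular scalar for some cycle vertex $\xi_j$ and some $\mu \in \bF_G^+$ with $s(\mu) = v_j := s(e_j)$. For each $j$ and each edge $f \ne e_j$ with $s(f) = v_j$, set $\zeta_{j,f} := S_f \xi_j$, a unit vector supported on $r(f)$. Because $f \ne e_j$, the vertex $\zeta_{j,f}$ lies off $C$, hence on no cycle of $H$, so Lemma~\ref{L:atomic wandering} applies: $\zeta_{j,f}$ is wandering and $\rS|_{\fS[\zeta_{j,f}]} \cong \rL_{G,r(f)}$ under the identification $S_\mu \zeta_{j,f} \leftrightarrow \xi_\mu$.

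Next I would establish the orthogonal decomposition
\[
 \cH = \spn\{\xi_1,\dots,\xi_k\} \ \oplus \bigoplus_{j=1}^{k}\ \bigoplus_{\substack{f\ne e_j\\ s(f)=v_j}} \fS[\zeta_{j,f}].
\]
The key tool is that each vertex of $H$ has in-degree at most $1$ (Lemma~\ref{L:atomic_easy}), so the backward path from any basis vector $\eta$ is unique. For $\eta$ off $C$ this backward path first meets $C$ at a unique vertex $\xi_j$, leaving it along a unique edge $f \ne e_j$ with $s(f)=v_j$; thus $\eta = S_{\mu'}\zeta_{j,f}$ for a unique triple $(j,f,\mu')$, placing $\eta$ in exactly one summand. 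Conversely every $S_{\mu'}\zeta_{j,f}$ and every cycle vertex is a basis vector, and distinct triples yield orthogonal basis vectors (by the wandering property inside a summand and by disjointness of the ranges of the $\pi_f$ across summands). So the sum is orthogonal and exhausts $\cH$, matching $\cH_w = \bC^k \oplus \cK_w$ exactly.

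Finally I would transport the scalars. Writing the model basis as $\eta_j$ (for $\bC^k$) and $\eta_{j,f,\mu}$ (for $\cK_w$), set $c_1 = 1$ and $c_{j+1} = \bar\lambda_j c_j$, so $c_j = \prod_{i<j}\bar\lambda_i$, and define
\[
 U\xi_j = c_j\, \eta_j, \qquad U\big(S_\mu \zeta_{j,f}\big) = c_j\, \eta_{j,f,\mu}.
\]
By the decomposition above $U$ carries the orthonormal basis of $\cH$ onto an orthonormal basis of $\cH_w$, hence is unitary. It remains to check $U S_v = S_v^{w}U$ and $U S_e = S_e^{w}U$ on each basis vector; the projection relations are immediate from matching supports, and the edge relations reduce to three cases — an on-cycle edge $e_j$ with $j<k$, the wrap-around edge $e_k$, and an off-cycle edge $f$ — each a direct substitution. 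The one nonformal point, and the step I expect to need the most care, is the wrap-around: the representation spreads the twist over $\lambda_1,\dots,\lambda_k$ while the model concentrates it in the single scalar $\lambda=\prod_{i=1}^k\lambda_i$ on $e_k$, and one must verify the coboundary $\{c_j\}$ reconciles these, i.e.\ that $\lambda_k c_1 = c_k\lambda$; this holds since $c_k\lambda = \big(\prod_{i<k}\bar\lambda_i\big)\big(\prod_{i\le k}\lambda_i\big) = \lambda_k$ using $|\lambda_i|=1$. With the intertwining verified on generators, $U$ implements $\rS \cong \rS_{w,\lambda}$.
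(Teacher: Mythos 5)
Your proof is correct and takes essentially the same route as the paper's: the paper's argument is precisely a rescaling of the basis, replacing $\xi_j$ by $\prod_{i=1}^{j-1}\lambda_i\,\xi_j$ (which is your coboundary $c_j$ up to conjugation) and absorbing the off-cycle scalars into the vectors $S_{\mu f}\xi_j$, so that the family visibly becomes $\rS_{w,\lambda}$. You have simply made explicit the unitary, the orthogonal decomposition of $\cH$ into $\spn\{\xi_1,\dots,\xi_k\}$ plus the wandering pieces $\fS[S_f\xi_j]$, and the wrap-around identity $\lambda_k = c_k\lambda$, all of which the paper leaves implicit.
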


\begin{proof} 
This proposition is established by rescaling the basis. First replace the basis vectors $\xi_j$ by $\prod_{i=1}^{j-1}\lambda_i\  \xi_j$, so that now
$S_{e_j} \xi_j = \xi_{j+1}$ for $1 \le j < k$ and $S_{e_k} \xi_k = \lambda \xi_1$.
Then for each $f\in E$ with $s(f)=s(e_j)$ and $f \ne e_j$ and $\mu\in \bF_G^+$ with $s(\mu)=r(f)$,
we rescale so that each $S_{\mu f}\xi_j$ is a basis vector rather than a scalar multiple of one.
After this change, it is clear that we have a representation unitarily equivalent to $\rS_{w,\lambda}$.
\end{proof}

We note that the decomposition of any atomic representation into a direct sum of atomic representations
on undirected connected components of $H$ is canonical. However, these subrepresentations are not always irreducible.
So we now consider the decomposition of these representations into irreducible representations.
This will provide an analog of \cite[Proposition 3.10]{DP1999} for the case of free semigroup algebras. The proof itself follows the lines of the free semigroup case in \cite{DP1999} quite closely, and so we omit the details.

Combining Proposition~\ref{P:atomic types} with the detailed information about each case yields the following.

\begin{theorem}\label{T:atomic}
Let $G$ be a directed graph and let $\rS=(S_v,S_e)$ be an atomic TCK family for $G$.
Then $\rS$ decomposes as a direct sum of representations of left regular type, inductive type and cycle type.
\end{theorem}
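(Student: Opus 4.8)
The plan is to leverage the structural results already assembled in this section, reducing the proof of Theorem~\ref{T:atomic} to a bookkeeping argument over connected components. First I would invoke Lemma~\ref{L:atomic_easy}(1), which tells us that each undirected connected component $C$ of the associated labeled graph $H$ corresponds to a reducing subspace $\cH_C$. Since $\cH = \bigoplus_C \cH_C$ and each $\rS|_{\cH_C}$ is again an atomic TCK family with connected $H$, it suffices to classify the case in which $H$ is connected. Thus the theorem reduces immediately to showing that a \emph{connected} atomic family is of left regular, inductive, or cycle type.

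Next I would apply Proposition~\ref{P:atomic types} to a chosen base vertex $\xi_{v_0,i_0}$ of the connected graph $H$. That proposition produces exactly three mutually exclusive possibilities for the backward path $(\xi_{v_{-n},i_{-n}})$: either it terminates at a vertex of in-degree $0$ (case (1)), consists of distinct vertices forming an infinite tail (case (2)), or eventually becomes periodic (case (3)). The key point is that, by Lemma~\ref{L:atomic_easy}(2), the in-degree of every vertex is at most $1$, so this backward path is uniquely determined and the trichotomy is exhaustive. I would then match each case to the corresponding classification result: case (1) to Proposition~\ref{P:lrtype} (left regular type, via the wandering vector $\xi_{v,i}$ with $\degin=0$), case (2) to Proposition~\ref{P:inductive_type} (inductive type, identifying $\rS$ with $\rS_\tau$), and case (3) to Proposition~\ref{P:cycle_type} (cycle type, identifying $\rS$ with $\rS_{w,\lambda}$ after the rescaling argument). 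In case (3) I would additionally cite Lemma~\ref{L:one cycle}, which guarantees that a connected $H$ contains at most one cycle, so that the cycle-type description is unambiguous.

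The main subtlety, rather than obstacle, is checking that the three cases of Proposition~\ref{P:atomic types} are genuinely independent of the choice of base vertex $\xi_{v_0,i_0}$, i.e.\ that the \emph{type} is a well-defined invariant of the connected component and not an artifact of where the backward path is started. For case (1) this follows because the existence of \emph{some} vertex of in-degree $0$ in a connected component is an intrinsic property; for cases (2) and (3) one must observe that whether the (unique) backward trajectory from any vertex eventually enters a cycle depends only on $H$, and by Lemma~\ref{L:one cycle} this cycle, if present, is unique. Since these three properties of $H$ are manifestly mutually exclusive and cover all connected $H$, the type assignment is consistent across all choices of base vertex.

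The proof then concludes by assembling the global decomposition: writing $H = \bigcupdot_\alpha C_\alpha$ as a disjoint union of connected components, we obtain $\rS \cong \bigoplus_\alpha \rS|_{\cH_{C_\alpha}}$ by Lemma~\ref{L:atomic_easy}(1), and each summand is of left regular, inductive, or cycle type by the case analysis above. Because all the hard analytic and combinatorial work has already been carried out in Propositions~\ref{P:lrtype}, \ref{P:inductive_type}, and \ref{P:cycle_type}, together with Lemmas~\ref{L:atomic_easy}, \ref{L:atomic wandering}, and \ref{L:one cycle}, this final step is essentially a matter of invoking the correct result in each branch of the trichotomy, and no further calculation is needed.
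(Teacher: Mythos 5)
Your proposal is correct and follows exactly the route the paper takes: the paper's proof of Theorem~\ref{T:atomic} is the one-line observation that Proposition~\ref{P:atomic types} combined with Propositions~\ref{P:lrtype}, \ref{P:inductive_type}, and \ref{P:cycle_type} (after reducing to connected components of $H$ via Lemma~\ref{L:atomic_easy}) yields the result. Your additional care about the independence of the trichotomy from the choice of base vertex is a reasonable elaboration of a point the paper leaves implicit, but it is not a different argument.
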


Next we need to identify which of these atomic representations are irreducible, and when two are unitarily equivalent.

\begin{definition}
Let $G$ be a directed graph. A cycle $w=e_k\dots e_1 \in \bF_G^+$ is called \textit{primitive} if it is not a power of a strictly smaller word. 
Alternatively, $w$ is not primitive when there is a path $u$ and a positive integer $p\ge2$ so that $w=u^p$.
\end{definition}

\begin{definition}
Two infinite paths $\tau= e_{-1}e_{-2}e_{-3} \dots$  and $\tau' = e'_{-1}e'_{-2}e'_{-3} \dots$ are \textit{shift tail equivalent} if there are integers $N \leq 0$ and $k \in \bN$ so that $e'_{m+k} = e_m$ for all $m \le N$. 
An infinite path is \textit{eventually periodic} if it is shift tail equivalent to a periodic path $\tau$, in the sense that $e_{m-k}=e_m$ for all $m\le0$. 
If $k$ is chosen to be minimal, so that the cycle $u=e_{-1}\dots e_{-k}$ is primitive in $\tau$, then $\tau$ has period $k$ and we write $\tau= u^\infty$.
\end{definition}

\begin{theorem} \label{T:atomic_irred}\ 
Let $G$ be a directed graph. Consider the atomic representations of  left regular type, inductive type and cycle type.
\begin{enumerate}[label=\normalfont{(\arabic*)}]
\item The left regular TCK family $\rL_{G,v}$ is irreducible for every $v\in G$. 
Two irreducible representations of left regular type are unitarily equivalent if and only if they arise from the same vertex.

\item Let $w=e_k\dots e_1$ be a cycle in $G$, and let $\lambda\in\bT$.
Then $\rS_{w,\lambda}$ is irreducible if and only if $w$ is primitive.
Two irreducible cycle representations $\rS_{w,\lambda}$ and $\rS_{u,\mu}$ are unitarily equivalent if and only if $u$ is a cyclic permutation of $w$ and $\mu=\lambda$.
If $w=u^p$ for some primitive cycle $u$, then letting $\theta_j$ denote the $p$th roots of $\lambda$, we have a decomposition into irreducible families
\[ \rS_{w,\lambda} \cong \sum_{j=1}^p \oplus \rS_{u,\theta_j} .\]

\item  Let $\tau = e_{-1}e_{-2}e_{-3} \dots$ be an infinite backward path. 
Two inductive type atomic representations are unitarily equivalent if and only if the two infinite paths are shift tail equivalent. 
Moreover, $\tau$ is not eventually periodic if and only if $\rS_\tau$ is irreducible, and when it is eventually periodic, say $\tau$ is shift tail equivalent to $u^\infty$ for a primitive cycle $u=e_{-1}\dots e_{-k}$, then
\[ \rS_\tau  \cong \int_\bT^\oplus \rS_{u,\lambda} \,d\lambda . \]
\end{enumerate}
\end{theorem}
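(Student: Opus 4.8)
The plan is to treat the three types separately, following the blueprint of Davidson--Pitts \cite{DP1999} for the single--vertex (Cuntz) case, with the free semigroup $\bF_n^+$ replaced by the path space $\bF_G^+$ throughout, and invoking the structural facts about the associated labelled graph $H$ from Lemmas~\ref{L:atomic wandering} and \ref{L:one cycle} and Proposition~\ref{P:atomic types}. For (1), I would single out $\xi_v$, the length--zero path at $v$, as the distinguished vector: up to a scalar it is the unique unit vector annihilated by every $L_e^*$, equivalently $\ran\big(L_v-\sum_{r(e)=v}L_eL_e^*\big)=\bC\xi_v$. Any projection $Q$ in the commutant of $\rL_{G,v}$ commutes with $\sum_e L_eL_e^*$ and hence fixes $\bC\xi_v$; since $L_\mu\xi_v=\xi_\mu$ ranges over the whole basis of $\cH_{G,v}$, the vector $\xi_v$ is cyclic and $Q\in\{0,I\}$, giving irreducibility. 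For the equivalence statement a unitary intertwining $\rL_{G,v}$ and $\rL_{G,w}$ must carry $\xi_v$ to a scalar multiple of $\xi_w$, and comparing $L_{v'}\xi_v=\delta_{v',v}\xi_v$ with its image recovers $v=w$.

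For (2) the key operator is the loop $S_w\in\fS$. On the cyclic core $\spn\{\xi_1,\dots,\xi_k\}$ one computes that $S_w\xi_j=\lambda\xi_j$ precisely for those $\xi_j$ whose forward orbit along the exact edge sequence of $w$ returns to $\xi_j$, while $S_w$ pushes every remaining core or tree vector strictly outward (on each attached tree $S_w$ is a product of left--regular operators, with $L_\mu^{*n}\to 0$ strongly, so it has no unimodular eigenvectors there). Hence $\ker(S_w-\lambda)$ is spanned by the period--spaced cycle vectors and has dimension exactly $p$ when $w=u^p$ for $u$ primitive of length $m=k/p$, and it is invariant under the commutant $\{S_\mu,S_\mu^*\}'$ since it is an eigenspace of $S_w\in\fS$. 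When $w$ is \emph{primitive} ($p=1$) we get $\ker(S_w-\lambda)=\bC\xi_1$; any commuting $X$ then fixes $\bC\xi_1$, and cyclicity of $\xi_1$ (with $\fS[\xi_1]=\cH_w$) forces $X\in\bC I$, so $\rS_{w,\lambda}$ is irreducible. When $w=u^p$, the primitive loop $S_u$ restricts to a $\lambda$--scaled cyclic permutation of the $p$--dimensional space $\ker(S_w-\lambda)$, whose eigenvalues are exactly the $p$--th roots $\theta_1,\dots,\theta_p$ of $\lambda$; each eigenvector is cyclic for a subrepresentation unitarily equivalent to $\rS_{u,\theta_j}$, and these are pairwise orthogonal and exhaust $\cH_w$, yielding $\rS_{w,\lambda}\cong\bigoplus_{j=1}^p\rS_{u,\theta_j}$. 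The equivalence classification then follows by recovering $\lambda$ as the eigenvalue of $S_w$ on the one--dimensional $\ker(S_w-\lambda)$ and the cyclic class of $w$ from the unique cycle of $H$ (Lemma~\ref{L:one cycle}).

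For (3), shift--tail equivalence of $\tau$ and $\tau'$ produces, after a shift, a cofinal isomorphism of the inductive systems $(\cH_i,J_i)$ of Definition~\ref{D:tail type}, hence a unitary equivalence of $\rS_\tau$ and $\rS_{\tau'}$; conversely the tail class is an invariant of the direct limit, read off from the unique infinite backward ray in $H$. For the dichotomy, when $\tau$ is not eventually periodic the spine is an increasing union of left--regular pieces whose connecting maps admit no nontrivial symmetry, so (as in the inductive case of \cite{DP1999}) the commutant is trivial and $\rS_\tau$ is irreducible. When $\tau$ is shift--tail equivalent to $u^\infty$ with $u=e_{-1}\dots e_{-k}$ primitive, the period loop $S_u$ acts on the spine as a \emph{bilateral} shift, hence is unitarily equivalent to $M_z$ on $L^2(\bT)$ with Lebesgue spectral measure; disintegrating over this spectrum via Lemma~\ref{L:cycle_CK} and Theorem~\ref{T:cycle_structure}, and identifying each fibre $\lambda\in\bT$ with the primitive cycle representation, gives $\rS_\tau\cong\int_\bT^\oplus\rS_{u,\lambda}\,d\lambda$, which in particular exhibits $\rS_\tau$ as reducible.

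The main obstacle throughout is the cycle case (2): the delicate point is the precise computation showing that $\ker(S_w-\lambda)$ is carved out exactly by the period--spaced cycle vectors (so that its dimension equals $p$ and collapses to $\bC\xi_1$ precisely for primitive $w$), together with verifying that $S_u$ restricts there to a clean scaled cyclic permutation whose eigenvectors generate the summands $\rS_{u,\theta_j}$. The second sensitive point is recognizing, in the eventually periodic case of (3), that the spine operator $S_u$ is a bilateral shift with Lebesgue spectrum, which is what forces the direct--integral form rather than a mere direct sum. Both steps follow the corresponding arguments of \cite{DP1999}, and the remaining bookkeeping (cyclicity, orthogonality of summands, and the tail invariants) is routine.
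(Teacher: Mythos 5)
The paper offers no proof of this theorem: it explicitly states that the argument follows the free semigroup case of \cite{DP1999} closely and omits the details. Your proposal is a correct reconstruction along exactly those lines, identifying the same invariants the cited argument uses (the one-dimensional wandering space in (1), the eigenspace $\ker(S_w-\lambda)$ together with the scaled cyclic action of $S_u$ on it in (2), and the bilateral-shift spine forcing the direct integral in (3)), so it matches the paper's intended approach.
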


\begin{remark}
To decide if two atomic representations are unitarily equivalent, first decompose the graph $H$ into connected components, which induces a decomposition of the representations into a direct sum of left regular, infinite tail and cycle types. 
Then split each cycle representation into irreducible ones. 
Next, count the multiplicities of each representation to decide unitary equivalence. 
It is not necessary to decompose periodic infinite tails into a direct integral as with two cycle representations, an infinite tail representation up to shift-tail equivalence either appears, or does not appear in the direct sum decomposition. 
In fact, the direct integral over a proper subset $A \subset \bT$ does not yield an atomic representation.
\end{remark}

Finally, we explain the structure of $\fS_{w,\lambda}$ for the cycle type representations.

\begin{proposition} \label{P:structure cycle}
Let $G$ be a directed graph, $w = e_k\dots e_1$ a primitive cycle in $G$ and $\lambda \in \bT$ and $s(e_j) = v_j$. 
Suppose that $\rS_{w,\lambda}$ is given as in Definition $\ref{D:cycle type}$. 
Then with respect to the decomposition $\cH_w = \bC^k \oplus \cK_w$, we obtain
\[ \fS_{w,\lambda} \cong  
\begin{bmatrix}
 \cM_k & 0 \\
 \cB(\bC^k, \cK_w) & \fL 
\end{bmatrix}
\]
Where $\fL$ is the free semigroupoid algebra generated by $\bigoplus_{v\in V} \rL_{G,v}^{(\alpha_v)}$ such that $\alpha_v = \sum_{j=1}^{k} \big| \{f \in E : s(f) = v_j, r(f) = v \AND f \ne e_j \} \big|$. Thus, the $2,2$ corner is isomorphic to $\fL_{G'}$ with $G'=G[\{ v \in V : \alpha_v \ne 0 \}]$. 
In fact, the projection of $\cH_w$ onto $\bC^k$ is the structure projection for $\fS_{w,\lambda}$, 
and $\fS_{w,\lambda}$ is of dilation type.
\end{proposition}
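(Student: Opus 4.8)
The plan is to reduce everything to two inputs and then quote the Structure Theorem. Since $w$ is primitive, Theorem~\ref{T:atomic_irred}(2) shows that $\rS_{w,\lambda}$ is irreducible, so that $\fM:=W^*(\rS_{w,\lambda})=\cB(\cH_w)$. Consequently, once I have identified the structure projection $P$ of $\fS_{w,\lambda}$ as the projection $P_{\bC^k}$ onto the cycle space $\bC^k$, the whole block decomposition falls out of Theorem~\ref{thm:structure}(4): the $(1,1)$ corner is $P\fM P=\cB(\bC^k)=\cM_k$, the $(2,1)$ corner is $P^\perp\fM P=\cB(\bC^k,\cK_w)$, and the $(2,2)$ corner is $\fS_{w,\lambda}P^\perp$, which Theorem~\ref{thm:structure}(5) identifies with $\fL_{G[V_w]}$. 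So the real content is the identification $P=P_{\bC^k}$.

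First I would record the description of $\cK_w$ as the analytic part. Directly from Definition~\ref{D:cycle type}, each generator $S_e$ sends $\xi_{j,f,\mu}$ to $\xi_{j,f,e\mu}$ or to $0$, so $\cK_w$ is invariant for $\fS_{w,\lambda}$ and each $\cH_{j,f}:=\ol{\spn}\{\xi_{j,f,\mu}:s(\mu)=r(f)\}$ is a reducing subspace of $\rS|_{\cK_w}$. As $\xi_{j,f,r(f)}=S_f\xi_j$ is a wandering vector, Lemma~\ref{L:atomic wandering} also gives $\rS|_{\cH_{j,f}}\cong\rL_{G,r(f)}$. Collecting the summands with $r(f)=v$ produces exactly the multiplicity $\alpha_v$ of the statement, so $\rS|_{\cK_w}\cong\bigoplus_{v\in V}\rL_{G,v}^{(\alpha_v)}$ and $\fL=\fS_{w,\lambda}|_{\cK_w}$.

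The crux is to show that the closed linear span of the wandering vectors is exactly $\cK_w$, for then Theorem~\ref{thm:structure}(6) yields $P=P_{\bC^k}$. The inclusion $\cK_w\subseteq\ol{\spn}(\text{wandering})$ follows from the previous paragraph. For the reverse inclusion I would take an arbitrary wandering vector $x$, write $x=y+x'$ with $y=P_{\bC^k}x=\sum_j c_j\xi_j$ and $x'\in\cK_w$, and test against the powers of the cycle $w_j$ based at $v_j$. Here primitivity of $w$ enters: its $k$ cyclic rotations are distinct, so for $i\neq j$ the vector $S_{w_j}\xi_i$ must leave $\bC^k$, whence $P_{\bC^k}S_{w_j^n}x=c_j\lambda^n\xi_j$ for every $n\ge1$. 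Setting $b_n=S_{w_j^n}x-c_j\lambda^n\xi_j\in\cK_w$ and using the pairwise orthogonality $\la S_{w_j^n}x,S_{w_j^m}x\ra=0$, one gets $\la \lambda^{-n}b_n,\lambda^{-m}b_m\ra=-|c_j|^2$ for $n\ne m$ while $\|\lambda^{-n}b_n\|^2\le\|x\|^2-|c_j|^2$; the nonnegativity of $\big\|\sum_{n=1}^N\lambda^{-n}b_n\big\|^2$ as $N\to\infty$ then forces $c_j=0$. Running over $j$ gives $y=0$, i.e. $x\in\cK_w$. I expect this Gram‑matrix estimate, together with the primitivity bookkeeping that sends $S_{w_j}\xi_i$ out of $\bC^k$, to be the main obstacle; the rest is routine.

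Finally, with $P=P_{\bC^k}$ and $\fM=\cB(\cH_w)$ I would read off the matrix form and the identification $\fS_{w,\lambda}P^\perp\cong\fL_{G'}$, where $G'=G[\{v:\alpha_v\neq0\}]=G[V_w]$ since $V_w$ is the directed closure of $\{v:\alpha_v\neq0\}$. To see that $\rS_{w,\lambda}$ is of dilation type I would invoke Lemma~\ref{L:dilation type}: the space $\bC^k=P\cH$ is cyclic for $\fS_{w,\lambda}$ because $S_{\nu f}\xi_j=\xi_{j,f,\nu}$ exhausts $\cK_w$, and $\cK_w=P^\perp\cH$ is cyclic for $\fS_{w,\lambda}^*$ because $S_f^*\xi_{j,f,r(f)}=\xi_j$ recovers every cycle vector.
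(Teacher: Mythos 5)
Your proof is correct, but it runs in the opposite direction from the paper's. The paper works bottom-up: using primitivity it shows that $(\ol{\lambda}S_w)^n$ converges \wot\ to the rank-one operator $\xi_1\xi_1^*$, then propagates this to get all of $\cM_k$ and $\cB(\bC^k,\cK_w)$ inside $\fS_{w,\lambda}$ by hand, and identifies the $(2,2)$ corner via the Wold decomposition and Theorem~\ref{T:left_quotient}; the identification of the structure projection and the dilation-type claim then fall out. You instead argue top-down: you import irreducibility from Theorem~\ref{T:atomic_irred}(2) to get $W^*(\rS_{w,\lambda})=\cB(\cH_w)$, and you prove the genuinely new ingredient that \emph{every} wandering vector lies in $\cK_w$ via the Gram-matrix positivity estimate on $\sum_n \lambda^{-n}b_n$ (your primitivity bookkeeping, that $S_{w_j}\xi_i$ is annihilated by $P_{\bC^k}$ for $i\ne j$, is exactly right, since otherwise $w$ would be invariant under a proper rotation). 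Combined with Theorem~\ref{thm:structure}(4)--(6) and Lemma~\ref{L:dilation type} this yields everything. What your route buys is a cleaner conceptual derivation and an explicit proof that the wandering vectors span precisely $\cK_w$ (the paper only needs, and only shows, the two containments $\bC^k\subseteq P\cH$ and $\cK_w\subseteq P^\perp\cH$). What it costs is self-containment: you lean on Theorem~\ref{T:atomic_irred}(2), whose proof the paper omits (deferring to \cite{DP1999}), and in that reference irreducibility of the cycle representations is itself established by essentially the $\wotlim(\ol{\lambda}S_w)^n=\xi_1\xi_1^*$ computation that the paper's proof of this proposition carries out directly. So your argument is logically valid given the paper's stated results, but anyone unwinding the dependencies ends up doing the paper's computation anyway; it would be worth noting this, or replacing the appeal to irreducibility with that one-line \wot-limit argument to make the proof stand on its own.
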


\begin{proof}
Notice that when $w$ is primitive, $\ol{\lambda}S_w \xi_1 = \xi_1$.
If we begin with any $\xi_j$ for $2 \le j \le k$, the fact that $w$ does not coincide with the cyclic permutation of $w$ which maps $\xi_j$ to $\lambda\xi_j$ means that $\ol{\lambda}S_w\xi_j$ is either $0$ or a multiple of a basis vector which is not in the cycle.
Hence, by Lemma~\ref{L:atomic wandering}, every standard basis vector except for $\xi_1$ is mapped by $\ol{\lambda}S_w$ to a wandering vector.
Therefore, higher powers of this word map each basis vector to a sequence of zeroes and basis vectors which converges weakly to 0.
Consequently,
\[ \wotlim_{n \to\infty} ( \ol{\lambda}S_w )^k = \xi_1\xi_1^*  \]
belongs to $\fS$. Thus so does $\xi_j\xi_1^* = S_{e_{j-1}\dots e_1}\xi_1\xi_1^*$ for $1 \le j \le k$.
Similarly, $\xi_j \xi_i^*$ belongs to $\fS_{w,\lambda}$ for $1 \le i,j \le k$, and these operators span $\cM_k = \cB(\bC^k)$.
Each $\xi_i$ is a cyclic vector for $\rS_{w,\lambda}$, and from this we can deduce that $\xi_{j,f,\mu}\xi_i^*$ belongs to $\fS_{w,\lambda}$;
whence $\fS_{w,\lambda}$ contains $\cB(\bC^k, \cK_w)$.
Also, $\cK_w$ is invariant, so the $1,2$ entry of the operator matrix is $0$.

So now we may consider the restriction of $\rS_{w,\lambda}$ to $\cK_w$.
Each $\xi_{j,f,r(f)}$ is a wandering vector, and it generates a subspace canonically identified with $\cH_{G,r(f)}$.
From Wold decomposition (Theorem \ref{thm:Wold-decomp}), the restriction of $\rS_{w,\lambda}$ to $\cK_w$ is a direct sum of copies of $\rL_{G,v}$ where the multiplicities are determined by the dimension of the corresponding wandering space.
These multiplicities are precisely the cardinalities of the sets of $\xi_{j,f,r(f)}$ for which $r(f)=v_j$, $f\neq e_j$ and $s(f)=v$ for some $j$. 
That is, the multiplicity at $v$ is exactly $\alpha_v$.

Finally, it follows from Theorem~\ref{T:left_quotient} that the free semigroupoid algebra $\fL$ on $\cK_w$ is completely isometrically isomorphic and weak-$*$ homeomorphic to $\fL_{G'}$ where $G'= G[\{ v \in V : \alpha_v \ne 0 \}]$. The rest follows from the Structure Theorem \ref{thm:structure}.
\end{proof}

\bibliographystyle{amsalpha}

\end{document}